\newcommand{\Addresses}{{
		\bigskip
		\footnotesize
		
	 \textsc{MIT, Dept. of Math., 77 Massachusetts Avenue, Cambridge, MA 02139-4307.}\par\nopagebreak
		\textit{E-mail address}, \texttt{ozuch@mit.edu}
	}}
\newtheorem{thm}{Theorem}[section]
\newtheorem{lem}[thm]{Lemma}
\newtheorem{prop}[thm]{Proposition}
\newtheorem{cor}[thm]{Corollary}
\newtheorem{defn}[thm]{Definition}
\newtheorem{conj}[thm]{Conjecture}
\newtheorem{exmp}[thm]{Example}
\newtheorem{quest}[thm]{Question}
\newtheorem{rem}[thm]{Remark}
\newtheorem{note}[thm]{Note}
\DeclareMathOperator{\Rm}{\textup{Rm}}
\DeclareMathOperator{\R}{\textup{R}}
\DeclareMathOperator{\vol}{\textup{Vol}}
\DeclareMathOperator{\Ric}{\textup{Ric}}
\author{Tristan OZUCH}
\affil{Massachusetts Institute of Technology}
\date{}
\title{Higher order obstructions to the desingularization of Einstein metrics}
\begin{document}
\maketitle
\begin{center}
    \textbf{Tristan Ozuch}\\
    Massachusetts Institute of Technology
\end{center}

    \begin{abstract}
        We find new obstructions to the desingularization of compact Einstein orbifolds by smooth Einstein metrics. These new obstructions, specific to the compact situation, raise the question of whether a compact Einstein $4$-orbifold which is limit of Einstein metrics bubbling out Eguchi-Hanson metrics has to be Kähler. We then test these obstructions to discuss if it is possible to produce a Ricci-flat but not Kähler metric by the most promising desingularization configuration proposed by Page in 1981. We identify $84$ obstructions which, once compared to the $57$ degrees of freedom, indicate that almost all flat orbifold metrics on $\mathbb{T}^4\slash\mathbb{Z}_2$ should not be limit of Ricci-flat metrics with generic holonomy while bubbling out Eguchi-Hanson metrics. Perhaps surprisingly, in the most symmetric situation, we also identify a $14$-dimensional family of desingularizations satisfying all of our $84$ obstructions.
    \end{abstract}
    
    \tableofcontents

\section*{Introduction}

An Einstein metric $\mathbf{g}$ satisfies, for some real number $\Lambda$, the equation
$$\Ric(\mathbf{g})=\Lambda \mathbf{g}.$$
In dimension $4$, they are considered optimal due to the homogeneity of their Ricci curvature but also as critical points of the Einstein-Hilbert functional with fixed volume, $g\mapsto\int_M \R_g dvol_g$, and more importantly as minimizers of the $L^2$-norm of the Riemann curvature tensor, $g\mapsto \int_M |\Rm_g|^2dvol_g$. 

From dimension $4$, even under natural assumptions of bounded diameter (compactness) and lower bound on the volume (non-collapsing) Einstein metrics can develop singularities. One major goal for $4$-dimensional geometry is therefore to understand the compactification of the moduli space of Einstein metrics on a differentiable manifold $M^4$ defined as
\begin{equation}
    \mathbf{E}(M^4) := \left\{(M^4,\mathbf{g})\;|\;\exists \Lambda\in \mathbb{R},\; \Ric(\mathbf{g})=\Lambda \mathbf{g},\; \vol(M^4,\mathbf{g})= 1\right\}\slash\mathcal{D}(M^4)\label{def moduli space}
\end{equation}
where $\mathcal{D}(M^4)$ is the group of diffeomorphisms of $M^4$ acting on metrics by pull-back and to compactify it with a useful structure. The metric spaces which are limit of Einstein $4$-manifolds with uniformly controlled diameter and volume as well as the associated singularity blow-ups have been understood for a long time in the Gromov-Hausdorff sense \cite{and,bkn}: they are respectively \emph{Einstein orbifolds} and \emph{Ricci-flat ALE orbifolds}. The \emph{metric completion} of $(\mathbf{E}(M^4),d_{GH})$ is 
\begin{equation}
    \mathbf{E}(M^4)\;\cup\;\partial_o\mathbf{E}(M^4),\label{def frontière}
\end{equation}
where $\partial_o\mathbf{E}(M^4)$ is the set of orbifold $d_{GH}$-limits with bounded diameter (i.e. at finite $d_{GH}$-distance) of Einstein metrics on $M^4$. 
\\

In the present article, we find new types obstructions to the desingularization of Einstein orbifolds which are special to the compact context. They motivate and indicate positive answers the following questions:
\begin{itemize}
    \item Assume that $ (M_o^4,\mathbf{g}_o) \in \partial_o\mathbf{E}(M^4)$ where $M^4$ has the topology of $M^4_o$ desingularized by Eguchi-Hanson metrics. Is $\mathbf{g}_o$ Kähler-Einstein?
    \item Under the same assumptions, is $\partial_o\mathbf{E}(M^4)$ of codimension $2$ in $\mathbf{E}(M^4)$?
    \item Can we desingularize a flat metric on $\mathbb{T}^4\slash\mathbb{Z}_2$ by smooth Ricci-flat metrics with generic holonomy thanks to Eguchi-Hanson metrics?
\end{itemize}

\subsection*{A new obstruction to the desingularization in the compact case}

\emph{Any} smooth Einstein $4$-manifold close to a compact Einstein orbifold in a mere Gromov-Hausdorff sense has recently been produced by a gluing-perturbation procedure \cite{ozu1,ozu2}. In the present paper, for simplicity, we will focus on an Einstein orbifold $(M_o,\mathbf{g}_o)$ with \emph{integrable} Einstein deformations (like all known $4$-dimensional examples) and consider only the simplest singularities modeled on $\mathbb{R}^4\slash\mathbb{Z}_2\sim \mathbb{C}^2\slash\mathbb{Z}_2$, whose minimal resolution has the topology $T^*\mathbb{S}^2$ of the Eguchi-Hanson metric \cite{eh}.

\begin{rem}
    This is conjectured to be the only possible topology for an Einstein desingularization of $\mathbb{R}^4\slash\mathbb{Z}_2$, see \cite{bkn}.
\end{rem}
Let us denote $M$ the differentiable manifold obtained by minimal resolution (in some orientation) of the $\mathbb{R}^4\slash\mathbb{Z}_2\approx\mathbb{C}^2\slash\mathbb{Z}_2$ singularities of $M_o$. Using \cite{ozu1,ozu2}, we know that if there exists a sequence of Einstein metrics on $M$ converging to $(M_o,\mathbf{g}_o)$ in the Gromov-Hausdorff sense, then an obstruction already noticed in \cite{biq1} is satisfied at every singular point of $M_o$. More precisely, denoting respectively $\mathbf{R}^+_{\mathbf{g}_o}(p)$ and $\mathbf{R}^-_{\mathbf{g}_o}(p)$ the selfdual and anti-selfdual parts of the curvature at a singular point $p\in M_o$ seen as an endomorphism on the space of $2$-forms, we have either $\det \mathbf{R}^+_{\mathbf{g}_o}(p)=0$ or $\det \mathbf{R}^-_{\mathbf{g}_o}(p)=0$. This means that in some basis, we have either
\begin{equation}
    \mathbf{R}^+_{\mathbf{g}_o}(p) =\begin{bmatrix}0&0&0\\
0&*&*\\
0&*&*
\end{bmatrix}\text{ or, }\mathbf{R}^-_{\mathbf{g}_o}(p)=\begin{bmatrix}0&0&0\\
0&*&*\\
0&*&*
\end{bmatrix}.\label{premiere obst}
\end{equation}

We will first show that an additional obstruction holds under weaker assumptions than those of \cite{biq1}. Another physically motivated assumption is that of the \emph{stability} of the Einstein metric. This condition essentially means that the linearization of the Ricci curvature has nonnegative spectrum, see Definition \ref{def stable}.

We are now ready to state the first main result of this paper:
\begin{thm}\label{2e obst}\label{obst generique general}\label{nouvelleobst}
    Let $(M_o^4,\mathbf{g}_o)$ be a compact Einstein orbifold  with $\Ric(\mathbf{g}_o) = \Lambda \mathbf{g}_o$ for $\Lambda\in \mathbb{R}$, with integrable Einstein deformations and with singularities $\mathbb{R}^4\slash\mathbb{Z}_2$. Assume that we have $(M_o^4,\mathbf{g}_o)\in \overline{\mathbf{E}(M^4)}_{GH}$ for $M = M_o \# T^*\mathbb{S}^2\# \ldots\# T^*\mathbb{S}^2$, where $\#$ denotes a gluing in the positive or negative orientation and along the cone $\mathbb{R}^4\slash\mathbb{Z}_2$. Assume additionally \emph{one} of the following properties:
    \begin{enumerate}
        \item there exists a sequence $(\mathbf{g}_n)_n$ of \emph{stable Ricci-flat} (Definition \ref{def stable}) metrics in $\mathbf{E}(M^4)$ converging to $ \mathbf{g}_o$ in the Gromov-Hausdorff sense, or,
        \item $M_o$ has only one singularity and is \emph{rigid} (i.e. does not admit infinitesimal Einstein deformations).
    \end{enumerate}
    
    Then, for any singular point $p$ of $M_o$, there exists a basis of the selfdual $2$-forms or anti-selfdual $2$-forms in which we have (depending on the orientation of the gluing):
    \begin{equation}
    \mathbf{R}_{\mathbf{g}_o}^+(p) =\begin{bmatrix}0&0&0\\
0&0&0\\
0&0&\Lambda
\end{bmatrix}\text{ or }\mathbf{R}_{\mathbf{g}_o}^-(p)=\begin{bmatrix}0&0&0\\
0&0&0\\
0&0&\Lambda
\end{bmatrix}.\label{courbure obst}
\end{equation}
\end{thm}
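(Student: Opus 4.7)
The plan is to refine the leading-order analysis of \cite{biq1} --- which gives $\det \mathbf{R}^{\pm}_{\mathbf{g}_o}(p) = 0$ --- by pushing the matched asymptotic expansion one order further in the gluing parameter. Invoking \cite{ozu1,ozu2}, I would decompose a subsequence of the converging Einstein metrics as $\mathbf{g}_n = \mathbf{g}_{t_n} + h_n$, where $\mathbf{g}_{t_n}$ is a naive gluing of $\mathbf{g}_o$ with Eguchi-Hanson bubbles rescaled at scale $t_n \to 0$, and $h_n$ is a small correction in a suitable weighted norm. The Einstein equation for $\mathbf{g}_n$ then reads $E(\mathbf{g}_{t_n}) + L_{\mathbf{g}_{t_n}}(h_n) + Q_{\mathbf{g}_{t_n}}(h_n) = 0$, where $E$ is the Einstein error of the approximate metric, $L$ its linearization and $Q$ the quadratic remainder.

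The first step is to recover the Biquard obstruction at the leading scale. Near a singular point $p$, fix an orientation so that the associated hyperkähler triple $(\omega_I, \omega_J, \omega_K)$ of Eguchi-Hanson is, say, self-dual. Pairing the error $E(\mathbf{g}_{t_n})$ against the lift of $\omega_I$ on the bubble produces at order $t_n^2$ the constraint $\mathbf{R}^{+}_{\mathbf{g}_o}(p)\, \omega_I = 0$, recovering \eqref{premiere obst}.

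The heart of the proof is to iterate this one order further. The crucial observation is that Eguchi-Hanson is \emph{hyperkähler}, so the two remaining forms $\omega_J$ and $\omega_K$ produce natural pairings at the next order; a priori these could be absorbed by the correction $h_n$ --- either by orbifold-side infinitesimal Einstein deformations or by ALE deformations of the bubble --- but the two alternative hypotheses prevent this. In case (1), stability of the Ricci-flat metrics $\mathbf{g}_n$ forces the Lichnerowicz operator to have non-negative spectrum and rules out destabilizing bubble modes along the obstruction directions; in case (2), the rigidity of $(M_o,\mathbf{g}_o)$ together with the uniqueness of the singular point eliminates any orbifold-side compensation. In both cases the residual next-order pairing must vanish on its own, yielding the two supplementary identities $\mathbf{R}^{+}_{\mathbf{g}_o}(p)\, \omega_J = \mathbf{R}^{+}_{\mathbf{g}_o}(p)\, \omega_K = 0$, so that $\mathbf{R}^{+}_{\mathbf{g}_o}(p)$ has rank at most $1$.

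To conclude, one uses the Einstein trace identity $\mathrm{tr}\, \mathbf{R}^{+}_{\mathbf{g}_o}(p) = \R(\mathbf{g}_o)/4 = \Lambda$: combined with the rank-one bound, this forces the eigenvalues of $\mathbf{R}^{+}_{\mathbf{g}_o}(p)$ to be $(0,0,\Lambda)$, which is exactly \eqref{courbure obst}; the other case comes from reversing the orientation of the gluing. The main obstacle will be the precise identification of the second-order obstruction tensor and the bookkeeping of its pairing: one must match the ALE expansion of Eguchi-Hanson at infinity against the local expansion of $\mathbf{g}_o$ at $p$ to one order beyond what is done in \cite{biq1}, and, most delicately, verify in each of cases (1) and (2) that no infinitesimal Einstein deformation --- of the orbifold nor of the bubble --- can absorb any of the three hyperkähler pairings above.
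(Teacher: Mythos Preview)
Your outline has the right architecture --- reduce via \cite{ozu1,ozu2} to a gluing with parameters, identify a first obstruction (Biquard's) and then a second one --- but the identification of the second obstruction is wrong, and this is where the actual work lies. The first-order obstruction already uses \emph{all three} deformation modes $\mathbf{o}_1,\mathbf{o}_2,\mathbf{o}_3$ of Eguchi-Hanson (these are variations of the parameter $\zeta$, not lifts of the K\"ahler forms): the three conditions $\lambda_k=0$ together say $\mathbf{R}^+(p)\,\zeta^+=0$, i.e.\ one column of $\mathbf{R}^+(p)$ vanishes. There is no separate ``pairing against $\omega_J,\omega_K$'' left over for the next order. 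The genuine second-order obstruction is the scalar $\mu_1$ computed in \cite[Lemme 9]{biq2}: it equals (a constant times) the determinant of the remaining $2\times2$ block of $\mathbf{R}^+(p)$. Vanishing of $\mu_1$ gives rank $\leq 1$; the trace identity then pins the nonzero eigenvalue to $\Lambda$.

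The real difficulty, which your proposal does not address, is that the obstruction estimate is only $t\lambda_1 + t^2\mu_1 = O(t^{5/2})$, so a priori a small nonzero $\lambda_1(\phi_n,v_n)$ (varying the gluing direction $\phi$ or the orbifold deformation $v$) could cancel $t\mu_1$ exactly as in Biquard's asymptotically hyperbolic construction. Ruling this out is where hypotheses (1) and (2) enter, and the mechanisms are not the ones you describe. In case (2), rigidity plus a single singularity makes the desingularization automatically \emph{transverse}; one then exploits the explicit first variations of $\phi\mapsto\lambda_k(\phi,0)$: if $a_2,a_3\neq0$ the map $\phi\mapsto(\lambda_2,\lambda_3)$ is a local diffeomorphism, forcing $|\phi_n-\phi_\infty|=O(t_n)$, hence $\lambda_1=O(t_n^2)$ (since $\partial_\phi\lambda_1=0$ at $\phi_\infty$), and the compensation is too small. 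In case (1), one does not argue by obstruction at all: instead, using \cite{biq3}, the numbers $a_2,a_3$ appear as approximate eigenvalues of $P_{\mathbf{g}_n}$ along explicit test tensors; Ricci-flatness forces $a_2+a_3=0$, so stability ($P_{\mathbf{g}_n}\geq 0$) gives $a_2=a_3=0$ directly.
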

Notice that a curvature of the form \eqref{courbure obst} is typical of Kähler-Einstein metrics.

Theorem \ref{nouvelleobst} is specific to the compact case and is in sharp contrast with Biquard's desingularization \cite{biq1} in the asymptotically hyperbolic case. Indeed, the degeneration of the asymptotically hyperbolic AdS Taub-Bolt  metrics converges to an orbifold which is rigid and has one singularity $\mathbb{R}^4\slash\mathbb{Z}_2$ but does not satisfy \eqref{courbure obst}, see Example \ref{adstaubbolt}. 

\begin{rem}
        Since the convergence is allowed to be only in the Gromov-Hausdorff sense, the assumptions of Theorem \ref{2e obst} are weaker than in \cite[Corollary 9.3]{biq1}, where the obstruction \eqref{premiere obst} at the singular point was first observed. Moreover, the present proof would also yield an obstruction for other Kleinian singularities by \cite[Lemme 9]{biq2} and \cite[Lemme 12]{biq2}.
\end{rem}

Another goal here is to obtain a new obstruction result under some technical assumption of existence of a sequence of metrics in $\mathbf{E}(M^4)$ sufficiently transverse to the boundary $\partial_o\mathbf{E}(M^4)$ of \eqref{def frontière}. We will say that a sequence of Einstein metrics is a \emph{transverse} desingularization of the orbifold $(M_o,\mathbf{g}_o)$ if the sequence is almost orthogonal to $\partial_o\mathbf{E}(M)$, see Definition \ref{non degenerate and transverse einstein} and Figure \ref{figure transverse}. Almost all Einstein metrics obtained by gluing-perturbation are transverse, see \cite{top1,top2,ls,don,bk}. Note that \cite{biq1,biq2,biq3} are notable exceptions. A weaker assumption than being transverse for an Einstein desingularization is being \emph{nondegenerate}. We will say that a sequence of Einstein metrics is a \emph{nondegenerate} Einstein desingularization of $(M_o,\mathbf{g}_o)\in \partial_o\mathbf{E}(M)$ if the sequence approaches $(M_o,\mathbf{g}_o)$ without being ``too tangent'' to $\partial_o\mathbf{E}(M)$, see Definition \ref{non degenerate and transverse einstein} and Figure \ref{figure non degenerate}. This notion is in the spirit of the assumption of \emph{genericity} of \cite{spo} and \emph{nondegeneracy} of \cite{br15}.

\begin{prop}\label{2e obst nondeg}
    Let $(M_o^4,\mathbf{g}_o)$ be a compact Einstein orbifold  with $\Ric(\mathbf{g}_o) = \Lambda \mathbf{g}_o$ for $\Lambda\in \mathbb{R}$, with integrable Einstein deformations and with singularities $\mathbb{R}^4\slash\mathbb{Z}_2$. Assume that we have $(M_o^4,\mathbf{g}_o)\in \overline{\mathbf{E}(M^4)}_{GH}$ for $M = M_o \# T^*\mathbb{S}^2\# \ldots\# T^*\mathbb{S}^2$, where $\#$ denotes a gluing in the positive or negative orientation and along the cone $\mathbb{R}^4\slash\mathbb{Z}_2$. Assume additionally \emph{one} of the following properties:
    \begin{enumerate}
        \item there exists a \emph{transverse} (Definition \ref{non degenerate and transverse einstein}) sequence $(\mathbf{g}_n)_n$ of metrics in $\mathbf{E}(M^4)$ converging to $ \mathbf{g}_o$ in the Gromov-Hausdorff sense,
        \item $\Lambda=0$ and there exists a \emph{nondegenerate} (Definition \ref{non degenerate and transverse einstein}) sequence $(\mathbf{g}_n)_n$ of metrics in $\mathbf{E}(M^4)$ converging to $ \mathbf{g}_o$ in the Gromov-Hausdorff sense,
        \item $M_o$ has only one singularity and there exists a \emph{nondegenerate} sequence $(\mathbf{g}_n)_n$ of metrics in $\mathbf{E}(M^4)$ converging to $ \mathbf{g}_o$ in the Gromov-Hausdorff sense.
    \end{enumerate}
    
    Then, for any singular point $p$ of $M_o$, $\mathbf{g}_o$ satisfies the obstruction \eqref{courbure obst}.
\end{prop}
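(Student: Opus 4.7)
The plan is to adapt the proof of Theorem \ref{nouvelleobst} (whose structure I would treat as given). In that theorem the analysis of the gluing-perturbation construction of \cite{ozu1,ozu2} produces, at each singular point $p$ of $M_o$, a higher-order obstruction equation of the schematic form
\begin{equation*}
 t_p^{2}\,\mathcal{O}(\mathbf{R}_{\mathbf{g}_o}^{\pm}(p)) \;+\; L_{\mathbf{g}_o}(h)\big|_p \;+\; \text{(coupling between bubbles)} \;+\; o(t_p^{2})\;=\;0,
\end{equation*}
where $t_p$ is the scale of the bubble at $p$, $h$ is the first-order deformation of $\mathbf{g}_o$ picked up by the sequence $(\mathbf{g}_n)$, and $\mathcal{O}$ is the tensorial expression whose vanishing is exactly the obstruction \eqref{courbure obst}. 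In Theorem \ref{nouvelleobst}, stability forces $L_{\mathbf{g}_o}(h)|_p$ to vanish in the relevant sign, and rigidity (with a unique singularity) rules out any $h$ at all. My goal here is to replace those two mechanisms by the weaker transversality / nondegeneracy hypotheses.

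For case (1), I would first use \cite{ozu1,ozu2} to decompose $\mathbf{g}_n$ along the gluing chart as $\mathbf{g}_o + h_n + \sum_p B_{t_p^n,\xi_p^n}+ \varepsilon_n$, where $h_n$ lies in the (integrable) Einstein moduli tangent space $T_{\mathbf{g}_o}\partial_o\mathbf{E}(M)$, the $B$'s encode the bubble parameters, and $\varepsilon_n$ is the gauge-fixed Einstein perturbation. Transversality in the sense of Definition \ref{non degenerate and transverse einstein} should say precisely that $\norm{h_n} = o(t^2)$ where $t^2 := \sum_p (t_p^n)^2$, so that the contribution of $L_{\mathbf{g}_o}(h_n)$ to the obstruction equation is negligible compared to the $\mathcal{O}$-term; the conclusion then follows by dividing by $t^2$ and passing to the limit, exactly as one does in the second-order analysis of \cite{biq1}, but now with the stronger tensorial quantity $\mathcal{O}$ in place of the determinant condition.

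For case (2), nondegeneracy only ensures $\norm{h_n} = O(t^2)$, so $L_{\mathbf{g}_o}(h_n)|_p$ no longer vanishes in the limit. However, when $\Lambda=0$ the Ricci-flat equation is scale-invariant, and I would exploit this by writing a Pohozaev-type identity: testing the obstruction equation against a fixed infinitesimal deformation $\dot h$ of $\mathbf{g}_o$ and using that $L_{\mathbf{g}_o}$ is formally self-adjoint on the space of integrable deformations, the coupling term $\langle L_{\mathbf{g}_o}(h_n), \dot h\rangle$ vanishes identically (since $\dot h$ lies in $\ker L_{\mathbf{g}_o}$ by integrability). Thus the $h_n$-contribution is pushed into a term quadratic in $h_n$, which is $O(t^4)$ and again negligible. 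Case (3) is handled similarly, the role of scale-invariance being played instead by the fact that with a single singularity the obstruction equation has no cross-terms between bubbles and can therefore be projected directly onto the one-dimensional ``bubble scale'' direction, in which nondegeneracy gives the required quantitative separation of orders.

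The main obstacle I anticipate is the second paragraph: verifying that Definition \ref{non degenerate and transverse einstein} really translates into the quantitative bound $\norm{h_n} = o(t^2)$ (respectively $O(t^2)$) on the tangential component, and that the second-order obstruction equation extracted from the Einstein system does indeed have the schematic form above with $\mathcal{O}$ equal to the full Kähler-type obstruction rather than merely its $\det\mathbf{R}^{\pm}$ projection. This last point requires running the matched asymptotic expansion one order beyond \cite{biq1}, carefully tracking the contributions of the $L^2$-kernel of the linearized Einstein operator on the Eguchi-Hanson bubble, which consists of hyperkähler rotations and rescalings; matching these against $\mathbf{R}^{\pm}_{\mathbf{g}_o}(p)$ should produce exactly the rank-one condition \eqref{courbure obst}.
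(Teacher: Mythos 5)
Your proposal captures the right general strategy (run the matched-asymptotic obstruction analysis one order beyond \cite{biq1}, then use transversality/nondegeneracy to defeat cancellations), but there are several concrete gaps that would sink the proof as written.

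First, you misquote the transversality and nondegeneracy conditions. Definition \ref{non degenerate and transverse einstein} (via Definitions \ref{transverse} and \ref{non degenerate}) states that transversality means $\|v_n\|_{L^2(\mathbf{g}_o)} = o(t_{n,\min})$ and nondegeneracy means $\|v_n\|^2_{L^2(\mathbf{g}_o)} = o(t_{n,\min})$, where $t_{n,j}=|\zeta_{n,j}|$. Your claim that transversality gives $\|h_n\|=o(t^2)$ is one order too strong; what saves you is that the orbifold deformation enters the first obstruction $\lambda_1^j$ linearly, so after multiplying by the bubble scale $t_{n,j}$ the contribution is $t_{n,j}\cdot o(t_{n,\min}) = o(t_{n,j}^2)$, which \emph{does} beat $t_{n,j}^2\mu_1^j$ once one also knows $t_{n,\max}=\mathcal{O}(t_{n,j})$. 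But you need to track the actual orders $t_{n,j}\lambda_1^j + t_{n,j}^2\mu_1^j = \mathcal{O}(t_{n,\max}^{5/2})$, not your schematic $t_p^2\mathcal{O}(\mathbf{R}) + L(h) + o(t_p^2)=0$, which collapses the two scales.

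Second, and more seriously, you never address the drift of the rotation parameter $\phi_{n,j}=\zeta_{n,j}^\pm/|\zeta_{n,j}|$ away from its limit $\phi_{\infty,j}$. This is the crux of the paper's argument: a priori, the first obstruction $\lambda_1^j(\phi_{n,j},0)$ could be of order $t_{n,j}$ and cancel $t_{n,j}\mu_1^j$ in the equation $t_{n,j}\lambda_1^j + t_{n,j}^2\mu_1^j=\mathcal{O}(t_{n,\max}^{5/2})$. The paper excludes this by first using the $k=2,3$ obstruction equations together with the lower bound \eqref{borne inf lambda 2 3} (which is where the assumption $a_2^j,a_3^j\neq 0$ toward a contradiction is used) to force $|\phi_{n,j}-\phi_{\infty,j}|=\mathcal{O}(\|v_n\|+t_{n,j}+t_{n,\max}^{5/2}/t_{n,j})$, and then feeding this into the quadratic vanishing \eqref{lambda1variations} $\lambda_1^j(\phi_{n,j},0)=\mathcal{O}(|\phi_{n,j}-\phi_{\infty,j}|^2)$. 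Without this step, the conclusion $\mu_1^j(\phi_{\infty,j},0)=0$ simply does not follow.

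Third, for case (2) your ``Pohozaev/scale-invariance'' heuristic misses why $\Lambda=0$ is essential. In the nondegenerate regime the linear-in-$v_n$ contribution to $t_{n,j}\lambda_1^j$ is only $o(t_{n,j}^{3/2})$, not $o(t_{n,j}^2)$, so the direct pointwise argument of case (1) genuinely fails. The paper instead sums the obstruction identities over all singular points and uses \eqref{estimée somme lambda 1}, which expresses $\sum_j|\zeta_j|\lambda_1^j(\zeta,v)$ in terms of the orbifold obstructions $\nu_l$ (themselves $\mathcal{O}(t_{\max}^{5/2})$ by \eqref{contrôle obst orb}) plus an error quadratic in $\|v_n\|$ -- and $\|v_n\|^2=o(t_{n,\min})$ is exactly nondegeneracy. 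But the result is only $\sum_j\mu_1^j(\phi_{\infty,j},0)=0$. To conclude each $\mu_1^j=0$ you then need Remark \ref{signe mu1 ricci plat}: when $\Lambda=0$ the trace-free condition on $\mathbf{R}^+$ forces every $\mu_1^j\leq0$, with equality iff $\mathbf{R}^+(H_2)=0$. Your appeal to ``the coupling term vanishes identically'' overshoots (it is merely controlled), and the sign argument is nowhere in your proposal; without it the sum $\sum_j\mu_1^j=0$ does not yield the pointwise vanishing. Case (3) then falls out as you roughly indicate, because the sum reduces to a single term.
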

\begin{rem}
     The proof of Proposition \ref{2e obst nondeg} relies on the fact that our \emph{compact} deformations add as many degrees of freedom as new obstructions to satisfy. The obstruction \eqref{courbure obst} is not satisfied by the orbifolds of the desingularizations of \cite{biq1,biq2,biq3} or Example \ref{adstaubbolt}, which rely on nondegenerate (but not transverse) Einstein deformations coming from the \emph{conformal} infinity. These deformations from the conformal infinity add an infinite number of degrees of freedom \emph{without} adding any obstruction. 
\end{rem}

It is somewhat expected that the completion $\mathbf{E}(M)\cup\partial_o\mathbf{E}(M)$ of the moduli space $\mathbf{E}(M)$ is real-analytic or real-subanalytic, see \cite{andsurv}. This would imply that the orbifold metrics can be approached by \emph{curves} of Einstein metrics. An initial intuition for Theorem \ref{2e obst} is \cite{biq3} which shows that if $\Lambda\neq 0$ and the obstruction \eqref{courbure obst} is not satisfied, then the orbifold metrics cannot be approached by curves of Einstein metrics. Our proofs however do not use this fact and even work in the case when $\Lambda=0$. They rely heavily on the computations of the second variations of the Ricci curvature of \cite{biq2} and the formalism of \cite{fine}.

\subsection*{Is $\partial_o\mathbf{E}(M)$ a boundary or a filling?}

An interesting problem on which Theorem \ref{2e obst} sheds some light is the size of $\partial_o\mathbf{E}(M^4)$ the set of singular metrics in the completion \eqref{def frontière} of the moduli space of Einstein metrics. Anderson proposes the following ``optimistic'' (in his own words) conjecture.
\begin{conj}[{\cite{andsurv}}]\label{conj codim 2}
    The subspace $\partial_o\mathbf{E}(M^4)$ is of codimension $2$ in $\overline{\mathbf{E}(M^4)}_{GH}$.
\end{conj}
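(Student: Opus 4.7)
The plan is to analyze the local structure of $\overline{\mathbf{E}(M^4)}_{GH}$ near a point $(M_o, \mathbf{g}_o) \in \partial_o\mathbf{E}(M^4)$ and combine a dimension count with the new obstructions of Theorem \ref{nouvelleobst}. The guiding heuristic is that each Eguchi-Hanson bubble contributes parameters to the smooth stratum which degenerate at the orbifold limit (at the very least the scale parameter, which shrinks to zero), while the Kähler-like curvature condition \eqref{courbure obst} cuts the admissible orbifold directions down to a subset whose codimension should balance the number of new bubble parameters.

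First, I would use the gluing-perturbation theorems of \cite{ozu1,ozu2} to parametrize a neighborhood of $(M_o,\mathbf{g}_o)$ in $\overline{\mathbf{E}(M^4)}_{GH}$ by pairs consisting of an orbifold Einstein deformation of $\mathbf{g}_o$ together with bubble data (scale and hyperkähler rotation) at each singular point; in this model $\mathbf{E}(M^4)$ corresponds to strictly positive scales and $\partial_o\mathbf{E}(M^4)$ to vanishing scales. Second, I would feed in Theorem \ref{nouvelleobst} to restrict the allowed orbifold deformations: only those preserving \eqref{courbure obst} at each singular point can lie in $\partial_o\mathbf{E}(M^4)$. The difference between the full Einstein deformation space of the orbifold and this constrained subset should then match the number of genuinely new bubble directions, yielding a codimension-$2$ contribution per singular point (once rotations are suitably quotiented).

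The hard part, and the reason the statement is still a conjecture rather than a theorem, is converting such heuristic dimension counts into honest codimension statements inside $\overline{\mathbf{E}(M^4)}_{GH}$. This would require (i) a real-analytic or real-subanalytic local structure on the compactified moduli space, which is itself conjectural as recalled in \cite{andsurv}; (ii) matching necessity with sufficiency, since Theorem \ref{nouvelleobst} only shows that \eqref{courbure obst} is necessary, so one would still have to produce enough Einstein desingularizations to fill out the expected bubble directions; and (iii) controlling the non-transverse or degenerate configurations, which the present paper handles only under the extra hypotheses of Proposition \ref{2e obst nondeg}. For these reasons, the ``plan'' above is best read as an outline of what a proof of Conjecture \ref{conj codim 2} would have to accomplish, with Theorem \ref{nouvelleobst} supplying precisely the kind of codimension input needed on the orbifold side.
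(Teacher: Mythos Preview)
The statement is a \emph{conjecture}, and the paper does not prove it; it only discusses how Theorem \ref{2e obst} provides partial evidence. You correctly recognize this and frame your proposal as an outline of what a proof would require rather than a proof, which is the right stance.

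Your heuristic aligns with the paper's own discussion, but the paper is more concrete about the mechanism. Rather than a general dimension count of ``bubble parameters versus orbifold constraints,'' the paper phrases the evidence as a trichotomy in terms of $\dim\ker\mathbf{R}^+_{\mathbf{g}_o}(p_o)$: if the kernel has dimension $0$ there are no desingularizations Einstein to order $o(t)$; if it has dimension $1$ there is a one-dimensional family Einstein to order $o(t)$ but no transverse one to order $o(t^2)$; and if it has dimension $2$ or $3$ the transverse desingularizations Einstein to order $o(t^2)$ form a set of that same dimension, in correspondence with the kernel. This is what gives the codimension-$2$ flavor: the boundary is reached only when $\ker\mathbf{R}^+$ has dimension at least $2$, and the approaching Einstein family then has at least two directions. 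Your items (i)--(iii) correctly identify the missing ingredients (analytic structure of the completion, sufficiency of the obstructions, control of non-transverse sequences), and these are indeed why the statement remains open.
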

This conjecture means that we should not think of $\partial_o\mathbf{E}(M^4)$ as a \emph{boundary} of $\mathbf{E}(M^4)$ but rather as a filling of missing pieces as it was pictured in \cite{andL2} in the case of $M^4=K3$.
\begin{rem}
    This is false in the asymptotically hyperbolic context where the desingularizations of \cite{biq1} applied to the AdS Taub-Bolt orbifold (Example \ref{adstaubbolt}) show that $\partial_o\mathbf{E}(M^4)$ is of codimension $1$.
\end{rem}

With our present vocabulary, Conjecture \ref{conj codim 2} means that given a compact Einstein orbifold $(M_o,\mathbf{g}_o)\in \partial_o\mathbf{E}(M^4)$, the space of Einstein desingularizations of $M_o$ which are \emph{transverse} to $\partial_o\mathbf{E}(M^4)$ is at least $2$-dimensional.

Theorem \ref{2e obst} can be seen as a step towards this conjecture in the situation where we desingularize by an Eguchi-Hanson metric. Indeed, its proof shows that:
\begin{itemize}
    \item if $\mathbf{R}^+_{\mathbf{g}_o}(p_o)$ is invertible, then there is no desingularization which is Einstein up to a $o(t)$ error,
    \item if the kernel of $\mathbf{R}^+_{\mathbf{g}_o}(p_o)$ is of dimension $1$, then there is no transverse desingularization which is Einstein up to a $o(t^2)$ error and there is exactly a one-dimensional set of desingularizations which are Einstein up to a $o(t)$ error,
    \item if the kernel of $\mathbf{R}^+_{\mathbf{g}_o}(p_o)$ is of dimension $2$ or $3$, then the transverse desingularizations which are Einstein up to a $o(t^2)$ error are of dimension $2$ or $3$ and in correspondence with the kernel of $\mathbf{R}^+_{\mathbf{g}_o}(p_o)$. 
\end{itemize}

Finally, recall that Einstein metrics with a curvature tensor of the form \eqref{courbure obst} for $\Lambda = 0$ at \emph{all} points are locally hyperkähler metrics. Since we have only identified the first two obstructions of an infinite list, we might wonder if the remaining obstructions impose this condition at all points. We ask the following question.
\begin{quest}\label{conj kahler desing}
    Let $(M_o^4,\mathbf{g}_o)$ be a Ricci-flat orbifold, and assume that it can be desingularized in the Gromov-Hausdorff sense by Einstein metrics $(M^4,\mathbf{g}_n)_{n\in \mathbb{N}^*}$ forming trees of Kähler Ricci-flat ALE metrics. Are the orbifold $(M_o^4,\mathbf{g}_o)$ and the smooth metrics $(M^4,\mathbf{g}_n)$ necessarily quotients of hyperkähler metrics?
\end{quest}
A positive answer to the above question combined with the folklore conjecture that all Ricci-flat ALE orbifolds are Kähler indicates that it may not be possible to produce Ricci-flat metrics which are not Kähler by gluing constructions on Ricci-flat orbifolds. This pessimistic perspective is related to one of the main issue in Riemannian geometry which is to understand the structure of Ricci-flat metrics on compact manifolds.

\subsection*{Ricci-flat metrics and reduced holonomy}

 For a long time, it was believed that compact Ricci-flat metrics should be flat. The resolution of Calabi's conjecture by Yau \cite{yau} provided many counter-examples. These so-called Calabi-Yau metrics are the only currently known examples  and they have reduced holonomy (that is different from the generic holonomy $SO(n)$ in dimension $n$). A lingering question since then is therefore the following one.
\begin{quest}\label{question Ricci flat holonomie}
    Do all compact Ricci-flat manifolds have reduced holonomy?
\end{quest}
A physically relevant additional property that one can ask from a Ricci-flat metric, is that of being \emph{stable}, see \cite{ach}.
\begin{quest}[{\cite{ach}}]\label{question Ricci flat holonomie stable}
    Do all compact \emph{stable} Ricci-flat manifolds have reduced holonomy?
\end{quest}

\subsection*{Construction of hyperkähler metrics by gluing-perturbation}
Compact non flat Ricci-flat metrics are not explicit at all, but a classical way to approach them and to obtain a more concrete picture is to produce them by gluing various explicit noncompact and singular blocks together into an approximate Ricci-flat metric and perturb it into an actual Ricci-flat metric. 

Gibbons-Pope \cite{gp} and Page \cite{pag} proposed a conjectural and physically motivated picture for some hyperkähler, hence Ricci-flat, metrics on some K3 surfaces. The idea was to desingularize the orbifold $\mathbb{T}^4\slash\mathbb{Z}_2$ which has $16$ singularities of type $\mathbb{R}^4\slash\mathbb{Z}_2$ by gluing $16$ Eguchi-Hanson metrics \emph{in the same orientation} and to perturb the result to a hyperkähler metric. This was rigorously obtained for Eguchi-Hanson metrics glued at comparable scales by Topiwala \cite{top1,top2} and Lebrun-Singer \cite{ls}, see also \cite{don}. The proof heavily relies on Kähler geometry arguments.

\subsection*{A Ricci-flat but not locally hyperkähler metric?}
In 1981, Page \cite{pag81} asks a new question.
\begin{quest}\label{question page}
Is it possible to perturb a gluing of Eguchi-Hanson metrics in \emph{different} orientations to the orbifold $\mathbb{T}^4\slash\mathbb{Z}_2$ into a Ricci-flat metric?
\end{quest}
\begin{rem}
    It is often conjectured, see for instance \cite{bkn}, that the only Ricci-flat metric asymptotic to  $\mathbb{R}^4\slash\mathbb{Z}_2$ is the Eguchi-Hanson metric. This would therefore describe any possible Einstein desingularization of $\mathbb{T}^4\slash\mathbb{Z}_2$.
\end{rem}
Having different orientations for the Eguchi-Hanson metrics prevents the use of Kähler geometry techniques. A positive answer to Question \ref{question page} would provide a Ricci-flat metric with generic holonomy and a negative answer to Question \ref{question Ricci flat holonomie}.
\subsection*{Brendle-Kapouleas configuration}
In \cite{bk}, Brendle and Kapouleas studied Question \ref{question page} in the most symmetric situation: the orbifold metric comes from the regular lattice $\mathbb{Z}^4$, the points where the gluing are done in the positive or negative orientation follow a so-called \emph{chessboard pattern}, and the Eguchi-Hanson metrics in the same orientation are glued with the same $SO(4)$ or $O(4)\backslash SO(4)$ parameter and the same size. In this $1$-dimensional set of configurations, they exhibit \emph{one} interaction between the bubbles glued in different orientation. They remarkably use this obstruction to construct an intriguing solution to the Ricci flow which exhibits a new behavior: it is an ancient Ricci-flow desingularizing the orbifold $\mathbb{T}^4\slash\mathbb{Z}_2$.

\subsection*{Partial answer to Page's question.}
In this article, we consider the general situation of the desingularization of a flat metric on $\mathbb{T}^4\slash\mathbb{Z}_2$ obtained from a lattice $L(\mathbb{Z}^4)$ for $L\in GL(4,\mathbb{R})$ and desingularized by Eguchi-Hanson metrics glued in various directions, sizes and orientations. For each configuration of positively and negatively oriented Eguchi-Hanson metrics, this yields a $57$-dimensional space of candidates to a Ricci-flat gluing-perturbation as in \cite{ozu2} and therefore of potential positive answers to Question \ref{question page}.

We however identify a great number of interactions between these metrics which induce obstructions interpreted as curvature conditions similar to those of \eqref{premiere obst} and \eqref{courbure obst}. 

\begin{thm}[Informal, Corollaries \ref{list obst} and \ref{cor freedom constraints}]\label{thm informel obst tore}
We have the following numbers of constraints in this $57$-dimensional space of desingularization configurations.
    \begin{enumerate}
        \item For \emph{general} parameters, there are $57$ \emph{necessary} polynomial equations that the gluing parameters of the Eguchi-Hanson metrics should satisfy for an Einstein desingularization to exist. $48$ of these equations are analogous to the obstruction \eqref{premiere obst}.
        \item For \emph{general} parameters, there are $84$ \emph{necessary} polynomial equations that the gluing parameters should satisfy for a \emph{stable} Ricci-flat desingularization to exist. $80$ of these equations are analogous to the obstruction \eqref{courbure obst}.
        \item For parameters corresponding to a \emph{nondegenerate} desingularization, there are $84$ \emph{necessary} polynomial equations that the gluing parameters of the Eguchi-Hanson metrics should satisfy for an Einstein desingularization to exist. $80$ of these equations are analogous to the obstruction \eqref{courbure obst}.
    \end{enumerate}
    The coefficients of these (quadratic and quartic) polynomial equations depend on the flat metric on $\mathbb{T}^4\slash\mathbb{Z}_2$ only.
\end{thm}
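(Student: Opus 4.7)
The plan is to apply the pointwise obstructions of Theorem~\ref{2e obst} and Proposition~\ref{2e obst nondeg} at each of the sixteen $\mathbb{R}^4/\mathbb{Z}_2$ singular points of $\mathbb{T}^4/\mathbb{Z}_2$, and to translate them into polynomial equations in the 57 gluing and flat-moduli parameters. The specific subtlety in the flat-orbifold case is that $\mathbf{R}_{\mathbf{g}_o}\equiv 0$, so both \eqref{premiere obst} and \eqref{courbure obst} are trivially satisfied at the orbifold level; the real content comes from the \emph{next-order} obstructions hidden inside the proofs of Theorem~\ref{2e obst} and Proposition~\ref{2e obst nondeg}, which are really conditions on the leading nontrivial correction to the approximate Einstein metric induced by bubble--bubble interactions.

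First, I would set up the naive approximate metric following \cite{ozu1,ozu2}: graft each Eguchi-Hanson bubble (with scale $t_k$, orientation $\epsilon_k\in\{+,-\}$ and $SO(4)/U(2)\cong S^2$ rotation parameter) onto the flat orbifold at $p_k$. Using the ALE expansion $g_{EH,t_k}=g_{\text{flat}}+t_k^4\, h_{EH}/r^4+O(t_k^8/r^8)$, where $h_{EH}$ is the explicit $\epsilon_k$-(anti-)selfdual harmonic tensor attached to the Kähler form of the bubble, one sees that the curvature of the approximate metric at another singular point $p_j$ is, to leading order, a quadratic polynomial in the $t_k$'s. Summing the tails over the lattice gives an \emph{effective} $\epsilon_j$-curvature block $\mathcal{R}^{\epsilon_j}(p_j)$ whose coefficients are Weierstrass-type sums $\sum_{k\neq j,\;\lambda\in L}|p_j-p_k-\lambda|^{-4}$ (convergent thanks to the $r^{-6}$ decay of $\Rm_{EH}$), weighted by tensors built from the bubbles' complex structures.

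Next, I would apply the obstructions at each $p_j$ in the basis of $\epsilon_j$-$2$-forms adapted to the bubble glued there. In case~(1), Theorem~\ref{2e obst} with the non-stable hypothesis yields the analogue of \eqref{premiere obst}: the first row and column of $\mathcal{R}^{\epsilon_j}(p_j)$ in the adapted basis must vanish, giving $3\cdot 16=48$ equations. In cases~(2) and~(3), Proposition~\ref{2e obst nondeg} strengthens this to the full vanishing $\mathcal{R}^{\epsilon_j}(p_j)=0$ (since $\Lambda=0$ and rank $\leq 1$ together with trace $0$ force the block to be zero), giving $5\cdot 16=80$ equations. The remaining $9$, resp.~$4$, equations come from the global integrability of the higher-order Einstein equation against the finite-dimensional cokernel of the linearized Einstein operator on flat $\mathbb{T}^4/\mathbb{Z}_2$ --- essentially variations of the flat Gram matrix and parallel tensors.

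The main obstacle will be the tensorial bookkeeping: one must track which $\epsilon$-factor each bubble affects (depending on $\epsilon_k$), verify that the pointwise obstructions decouple into exactly $3$ or $5$ independent conditions per bubble in the adapted basis (a finite-dimensional linear algebra check once the explicit form of $h_{EH}$ is used), and confirm that the Weierstrass-type lattice sums defining the quadratic-form coefficients remain generically independent so that no hidden identity collapses the effective count. The remaining global equations require computing the cokernel of the linearized Einstein operator on the flat $\mathbb{T}^4/\mathbb{Z}_2$ and identifying its pairing with the bubble data; the counts $9$ and $4$ should drop out of this pairing once the parallel symmetric $2$-tensors and the flat-moduli variations are explicitly tested against the second-order Ricci-flat equation.
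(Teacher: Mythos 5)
Your high-level plan—recognizing that $\mathbf{R}_{\mathbf{g}_o}\equiv 0$ makes the obstruction trivial at the flat orbifold, so the nontrivial content must come from the curvature that the Eguchi--Hanson tails induce at the other singular points, and that this leads to periodic lattice sums of $r^{-6}$ terms—does capture the core geometric insight, and your final counts $48=3\times16$, $80=5\times16$, plus global moduli, match the paper's.

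However, there is a substantive gap in how you would make this rigorous. The obstruction theory of Section~2 (Theorem~\ref{2e obst}, Proposition~\ref{2e obst nondeg}) applies to a compact \emph{Einstein} orbifold with integrable deformations. You cannot apply it directly to a ``naive approximate metric'' with all sixteen bubbles glued in, because that metric is not Einstein and its effective ``curvature at a singular point'' has no intrinsic meaning. The paper's crucial intermediate move is to first perturb the gluing of only the same-orientation bubbles to an honest hyperkähler orbifold $\mathbf{g}_\pm$ (Lemma~\ref{lem def bf g pm}), and then apply the Section~2 obstruction theory to the opposite-orientation bubbles glued to this genuinely Einstein base. This asymmetric two-step structure is what turns the heuristic Weierstrass-sum curvature into a rigorous obstruction: Lemma~\ref{curvature gA-} shows that the $\mathbf{R}^\mp$ curvature of $\mathbf{g}_\pm$ at $j_0\in S_\mp$ is exactly the lattice sum you wrote, and the error analysis (Propositions~\ref{controle gApm} and~\ref{prop construction gALzeta}, Corollary~\ref{control obst desing T4}) is needed because that curvature is $\mathcal{O}(|\zeta^\pm|^2)\to 0$ while the second obstruction is $\mathcal{O}(|\zeta^\pm|^4|\zeta^\mp|^2)$, forcing an iteration to a much higher approximation order than the leading-order picture you describe. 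Your phrase ``next-order obstructions hidden inside the proofs'' gestures at this but does not supply the mechanism.

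A second, more minor issue: you propose to ``confirm that the Weierstrass-type lattice sums remain generically independent so that no hidden identity collapses the effective count.'' In fact the paper relies on the opposite: Lemma~\ref{same obstruction} proves that the obstructions from deforming the $\zeta_j$ are \emph{implied} by the full-curvature obstruction \eqref{obst exp R=0} in the opposite orientation, and a similar redundancy reduces the a priori list in Remark~\ref{comptage obst} (far more than $84$ conditions) down to $84$. Your count comes out right only because you skipped straight to the final number rather than starting from the raw list; to actually justify $84$ one must exhibit these equivalences, not independences.
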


 This indicates that generic flat metrics on $\mathbb{T}^4\slash\mathbb{Z}_2$, should not be Gromov-Hausdorff limit of Einstein metrics bubbling out Eguchi-Hanson metrics in different orientations.
\begin{rem}
    In the configuration proposed by Brendle and Kapouleas \cite{bk}, \emph{none} of the $57$ obstructions of the first point is satisfied.
\end{rem}

A simpler consequence is that a lot of configurations of positively and negatively oriented Eguchi-Hanson metrics are impossible. We in particular obtain the following obstruction under a topological assumption.
\begin{thm}[Theorem \ref{obst un EH}]\label{obst desing un eh}
    There does not exist a sequence of Einstein metrics $d_{GH}$-converging to the regular torus (obtained from the lattice $\mathbb{Z}^4$) by bubbling out exactly \emph{one} positively oriented Eguchi-Hanson metric and 15 negatively oriented Eguchi-Hanson metrics. 
\end{thm}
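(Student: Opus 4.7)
The plan is to combine the compact obstructions of this paper (Theorem \ref{nouvelleobst} and Proposition \ref{2e obst nondeg}) with the more refined hierarchy of obstructions summarized in Theorem \ref{thm informel obst tore}, specialized to the symmetric configuration coming from the lattice $\mathbb{Z}^4$. First I would check that the hypotheses apply: the flat orbifold $(\mathbb{T}^4/\mathbb{Z}_2,\mathbf{g}_o)$ has integrable Einstein deformations (they are parametrized by the symmetric space of flat structures on $\mathbb{T}^4$), it is compact, and its singularities are exactly of type $\mathbb{R}^4/\mathbb{Z}_2$, so the framework of this paper is directly applicable.

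The subtle point is that the first-order obstructions \eqref{premiere obst} and \eqref{courbure obst} are \emph{automatically} satisfied on any flat orbifold, because $\mathbf{R}^+_{\mathbf{g}_o}\equiv \mathbf{R}^-_{\mathbf{g}_o}\equiv 0$. Consequently Theorem \ref{nouvelleobst} by itself does not rule out the $1$-positive $+$ $15$-negative configuration, and one is forced into the higher-order regime. The strategy is therefore to invoke the second and third items of Theorem \ref{thm informel obst tore}: a sequence of Ricci-flat desingularizations must either be stable or nondegenerate, and in both cases one obtains $84$ necessary polynomial equations in the $57$ gluing parameters (positions within each singular chart, the $O(4)$-rotation at each bubble, and the $16$ scales), whose coefficients are computable from the periodic Green's function of the Laplacian on the flat torus evaluated at pairs of fixed points of $\mathbb{Z}_2$.

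The core of the argument is to isolate the obstruction localized at the unique positively oriented bubble. This obstruction pairs a self-dual quantity produced at the positive bubble with the second-order perturbation of the curvature sourced by the other bubbles, and splits into a sum over the remaining $15$ sites. The key point is that when all other bubbles have the \emph{opposite} (negative) orientation, every contribution in this sum enters with the same sign: the anti-self-dual character of the source and the self-dual nature of the test object at the distinguished site remove the possibility of a cancellation that would otherwise be provided by oppositely oriented neighbors. One then checks, from the explicit values of the relevant lattice Green's sums at the half-integer points of $\mathbb{Z}^4$, that the only way to satisfy this polynomial equation is to take the size of the positively oriented bubble to zero, contradicting the hypothesis that an actual Eguchi-Hanson metric of that orientation bubbles off in the Gromov-Hausdorff limit.

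The main obstacle I expect is twofold. First, one has to carry out the sign-analysis of the interaction polynomial carefully enough to show that it is definite (not merely nonzero), so that no choice of the $15$ remaining scales and $SO(4)/O(4)$ parameters can compensate; this requires identifying the correct quadratic form on the bubble data and verifying its positivity from the structure of the selfdual-antiselfdual pairing on $\mathbb{T}^4/\mathbb{Z}_2$. Second, one has to justify that the nondegeneracy hypothesis of Proposition \ref{2e obst nondeg} (or the stability hypothesis of Theorem \ref{nouvelleobst}) can be assumed in this compact situation, which I would handle by reducing to the case of a maximally transverse approach curve guaranteed by the real-analytic (or subanalytic) structure of $\overline{\mathbf{E}(M)}_{GH}$ invoked earlier in the introduction.
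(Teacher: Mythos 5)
Your proposal has a genuine gap at the point where you try to discharge the nondegeneracy or stability hypothesis. You argue that Theorem~\ref{nouvelleobst} is vacuous here (correct: the orbifold is flat, so its own curvature vanishes at each cone point), and then reach for the $84$ equations of Theorem~\ref{thm informel obst tore}, which require a stable or nondegenerate desingularization. To cover the general statement you then propose to reduce to the transverse case ``via the real-analytic (or subanalytic) structure of $\overline{\mathbf{E}(M)}_{GH}$.'' That structure is explicitly only conjectural in this paper (``It is somewhat expected\ldots This would imply\ldots Our proofs however do not use this fact''), so you cannot lean on it. As a result your argument does not actually establish Theorem~\ref{obst un EH} for an arbitrary Gromov--Hausdorff sequence, which is what the statement asks for.

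The paper's route avoids this entirely. It uses only the \emph{first} obstruction from Proposition~\ref{controle obstructions T4Z2}(1), namely \eqref{obst R(xi)= 0}, which holds for \emph{every} Gromov--Hausdorff desingularizing sequence, with no stability or nondegeneracy needed. Concretely: because there is exactly one positively oriented bubble (at the origin, say), the rescaled anti-self-dual curvature $\mathbf{R}^-_{L_\infty,\zeta_\infty^+}(j)$ at a neighboring negative site $j=(1,0,0,0)$ reduces by Lemma~\ref{curvature gA-} to a single lattice sum $\sum_{a\in\mathbb{Z}^4}\pi_{tr}\frac{\theta_1^-((1,0,0,0)+2a)\otimes\theta_1^-((1,0,0,0)+2a)}{|(1,0,0,0)+2a|^6}$. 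The obstruction forces this operator to have a nontrivial kernel; the paper shows instead that it is invertible by estimating the lattice tail: the two dominant terms ($a=0$ and $a=(-1,0,0,0)$) each give $\pi_{tr}(\omega_1^-\otimes\omega_1^-)$, whose combined smallest eigenvalue in absolute value is $2/3$, while the sum of the norms of the remaining terms is numerically $\approx 0.19 < 2/3$. That is a quantitative domination estimate, not the qualitative ``definiteness of the interaction polynomial'' you sketched. Your intuition that having only one positive bubble removes cancellations is the right heuristic, but the actual mechanism is the invertibility of a single lattice sum of rank-one projectors, and it must be made quantitative.

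To repair your proposal, replace the appeal to Theorem~\ref{thm informel obst tore} and the real-analyticity reduction with a direct use of \eqref{obst R(xi)= 0} (equivalently \eqref{obst exp R(xi)=0} from Corollary~\ref{list obst}), and carry out the lattice estimate above to show invertibility of the induced curvature at a negative site adjacent to the lone positive one.
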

\begin{rem}
    It is likely that this result holds without assuming that the lattice is the usual $\mathbb{Z}^4$, see Conjecture \ref{conj 1 eh+}.
\end{rem}
    \begin{rem}
        The desingularization by \emph{stable} Ricci-flat metrics is much more restrictive. A direct extension of the proof of Theorem \ref{obst desing un eh} states that the desingularization is impossible with less than $3$ positively oriented Eguchi-Hanson metrics and the rest of negatively oriented, see Corollary \ref{obst 3 positive generale stable}.
    \end{rem}
    
\subsection*{A configuration satisfying the first obstructions.}
Seeing the above obstruction results, one would be tempted to try and prove that it is impossible to obtain a Ricci-flat but not hyperkähler metric by gluing-perturbation of Eguchi-Hanson metrics to a flat $\mathbb{T}^4\slash\mathbb{Z}_2$. One would maybe expect that the above $84$ obstructions should be enough to show that every configuration is obstructed. 

Perhaps surprisingly, we find a set of solutions to our $84$ equations. More precisely, there exists an explicit $14$-dimensional set of configurations of gluing of Eguchi-Hanson metrics which satisfy the $84$ above equations. The orientations of the Eguchi-Hanson metrics follow the chessboard pattern of \cite{bk}, but their directions and sizes are not constrained.

\subsection*{Acknowledgments}
The author would like to thank his PhD advisor, Olivier Biquard for his help and interest in the early stages of this project.

\section{Main definitions}

\begin{note}
    All along this article, when the name of a metric is in bold, then it is Einstein.
\end{note}
\subsection{Einstein orbifolds and ALE spaces}
For $\Gamma$ a finite subgroup of $SO(4)$ acting freely on $\mathbb{S}^3$, let us denote $(\mathbb{R}^4\slash\Gamma,\mathbf{e})$ the flat orbifold obtained by the quotient of the Euclidean metric on $\mathbb{R}^4$ by the action of $\Gamma$, and $r_\mathbf{e}:= d_\mathbf{e}(.,0)$.

\begin{defn}[Orbifold (with isolated singularities)]\label{orb Ein}
    We will say that a metric space $(M_o,g_o)$ is an orbifold of dimension $n\geqslant 2$ if there exists $\epsilon_0>0$ and a finite number of points $(p_k)_k$ of $M_o$ called \emph{singular} such that we have the following properties:
    \begin{enumerate}
        \item the space $(M_o\backslash\{p_k\}_k,g_o)$ is a manifold of dimension $n$,
        \item for each singular point $p_k$ of $M_o$, there exists a neighborhood of $p_k$, $ U_k\subset M_o$, a finite subgroup acting freely on $\mathbb{S}^{n-1}$, $\Gamma_k\subset SO(n)$, and a diffeomorphism $ \Phi_k: B_\mathbf{e}(0,\epsilon_0)\subset\mathbb{R}^n\slash\Gamma_k \to U_k\subset M_o $ for which, the pull-back of $\Phi_k^*g_o$  on the covering $\mathbb{R}^n$ is smooth. Here, \emph{diffeomorphism} is understood in the orbifold sense, i.e. it is a diffeomorphism between the cover $\mathbb{R}^4$ and the associated cover of $U_k$.
    \end{enumerate}
\end{defn}
\begin{rem}\label{analysis orbifold}
    The analysis on an orbifold is exactly the same as the analysis on a manifold up to using finite local coverings at the singular points.
\end{rem}

\begin{defn}[The function $r_o$ on an orbifold]\label{ro}
    We define $r_o$, a smooth function on $M_o$ satisfying $r_o:= (\Phi_k)_* r_\mathbf{e}$ on each $U_k$, and such that on $M_o\backslash U_k$, we have $\epsilon_0\leqslant r_o<1$ (the different choices will be equivalent for our applications).
    
    We will denote, for $0<\epsilon\leqslant\epsilon_0$, 
    \begin{equation}
        M_o(\epsilon):= \{r_o>\epsilon\} = M_o\backslash  \Big(\bigcup_k \Phi_k\big(\overline{B_\mathbf{e}(0,\epsilon)}\big) \Big).\label{eq Moepsilon}
    \end{equation}
    
\end{defn}

Let us now turn to ALE metrics.

\begin{defn}[ALE orbifold (with isolated singularities)]\label{def orb ale}
    An ALE orbifold of dimension $n\geqslant 4$, $(N,g_b)$ is a metric space for which there exists $\epsilon_0>0$, singular points $(p_k)_k$ and a compact $K\subset N$ for which we have:
    \begin{enumerate}
        \item $(N,g_b)$ is an orbifold of dimension $n$,
        \item there exists a diffeomorphism $\Psi_\infty: (\mathbb{R}^n\slash\Gamma_\infty)\backslash \overline{B_\mathbf{e}(0,\epsilon_0^{-1})} \to N\backslash K$ such that $$r_\mathbf{e}^l|\nabla^l(\Psi_\infty^* g_b - \mathbf{e})|_{\mathbf{e}}\leqslant C_l r_\mathbf{e}^{-n}.$$
    \end{enumerate}
\end{defn}

\begin{defn}[The function $r_{b}$ on an ALE orbifold]\label{rb}
We define $r_{b}$ a smooth function on $N$ satisfying $r_{b}:= (\Psi_k)_* r_\mathbf{e}$ on each $U_k$, and $r_{b}:= (\Psi_\infty)_* r_\mathbf{e}$ on $U_\infty$, and such that $\epsilon_0\leqslant  r_{b}\leqslant \epsilon_0^{-1}$ on the rest of $N$ (the different choices are equivalent for our applications).

For $0<\epsilon\leqslant\epsilon_0$, we will denote
\begin{equation}
    N(\epsilon):= \{\epsilon<r_b<\epsilon^{-1}\} = N\backslash  \Big(\bigcup_k \Psi_k\big(\overline{B_\mathbf{e}(0,\epsilon)}\big) \cup \Psi_\infty \big((\mathbb{R}^4\slash\Gamma_\infty)\backslash B_\mathbf{e}(0,\epsilon^{-1})\big)\Big).\label{eq Njepsilon}
\end{equation}
\end{defn}
Now, consider a subset $S_o$ of the singular points of $M_o$ (respectively $S$ of $N$).
\begin{defn}[Functionals $r_{o,S_o}$ and $r_{b,S}$]\label{def roSo rbS}
    We define the functional $r_{o,S_o}$ (respectively $r_{b,S}$) exactly like in Definitions \ref{ro} (respectively \ref{rb}) by only considering the sets $U_k$ containing points of $S_o$ (respectively $S$).
\end{defn}

A metric $g$ close to $\mathbf{g}_o$ (with some adapted decay at infinity in the noncompact case) is Einstein and is in Bianchi gauge with respect to $\mathbf{g}_o$, that is satisfies $B_{\mathbf{g}_o} g := (\delta_{\mathbf{g}_o} + \frac{1}{2}d \textup{tr}_{\textbf{g}_o})g =0$, if and only if it satisfies
$$\mathbf{\Phi}_{\mathbf{g}_o}(g) := \Ric(g) -\Lambda g + \delta^*_gB_{\mathbf{g}_o}g =0,$$
see for instance \cite[Section 6]{biq1}.
The linearization of $\mathbf{\Phi}_{\mathbf{g}_o}$ at $\textbf{g}_o$ is
$$P_{\mathbf{g}_o}:= \frac{1}{2}\nabla_{\textbf{g}_o}^*\nabla_{\textbf{g}_o} - \mathring{\R}_{\textbf{g}_o},$$
where $\mathring{\R}$ is the action of curvature on symmetric $2$-tensors: for an orthonormal basis $(e_i)_i$,
		$$\mathring{\R}(h)(X,Y)= \sum_i h\big(\Rm(e_i,X)Y,e_i\big).$$
\begin{defn}[Infinitesimal Einstein deformations]
    For a compact or ALE Einstein orbifold $(M,\mathbf{g})$ with $\Ric(\mathbf{g}) = \Lambda\mathbf{g}$, we define $\mathbf{O}(\mathbf{g})$ as the kernel of $P_{\mathbf{g}}$ on $L^2(\mathbf{g})$.
\end{defn}

\begin{defn}[Einstein modulo obstructions deformations]\label{def einst mod obst orbifold}
    According to \cite{ozu2} (see also \cite{koi}), there exists $\epsilon>0$ such that for any $v\in \mathbf{O}(\mathbf{g})$ with $\|v\|_{L^2(\mathbf{g})}<\epsilon$, there exists a unique metric $\Bar{g}_v$ satisfying
    \begin{enumerate}
        \item $\mathbf{\Phi}_{\mathbf{g}}(\Bar{g}_v)\in \mathbf{O}(\mathbf{g})$,
        \item $\Bar{g}_v-(\mathbf{g}+v)\perp_{\mathbf{g}} \mathbf{O}(\mathbf{g})$,
        \item $\|\Bar{g}_v-\mathbf{g}\|_{L^2(\mathbf{g})}\leqslant 2 \epsilon$.
    \end{enumerate}
\end{defn}

All along this article, in order to simplify the exposition and for lack of nonintegrable examples, we will only consider Einstein metrics with integrable Einstein deformations.

\begin{defn}[Einstein metric with integrable deformations]\label{einstein integrable deformations}
    An Einstein metric $\mathbf{g}$ only has \emph{integrable} infinitesimal Einstein deformations if for any $v\in \mathbf{O}(\mathbf{g})$, the metric $\Bar{g}_v = \mathbf{g}_v$ actually satisfies 
    \begin{enumerate}
        \item $\mathbf{\Phi}_{\mathbf{g}}(\mathbf{g}_v)=0$,
        \item $\mathbf{g}_v-(\mathbf{g}+v)\perp_{\mathbf{g}} \mathbf{O}(\mathbf{g})$,
        \item $\|\mathbf{g}_v-\mathbf{g}\|_{L^2(\mathbf{g})}\leqslant 2 \epsilon$.
    \end{enumerate}
\end{defn}
An additional class of metrics we will be interested in is that of \emph{stable Ricci-flat} metrics.
\begin{defn}[{Stable Ricci-flat metrics}]\label{def stable}
    We will say that $\mathbf{g}$ is a \emph{stable Ricci-flat metric} if it is Ricci-flat and satisfies: for all smooth compactly supported  symmetric $2$-tensor $h$:
    $$\big\langle P_{\mathbf{g}}h,h\big\rangle_{L^2(\mathbf{g})}\geqslant 0.$$
\end{defn}
All of the known examples of compact Ricci-flat metrics are \emph{stable}.

\subsection{Eguchi-Hanson metrics}\label{section eguchi hanson}

Let us describe the Eguchi-Hanson metric \cite{eh}. In this article, the Ricci-flat ALE metrics  $(N,g_b)$ we will consider will almost always be homothetic transformations of the Eguchi-Hanson metric.

The Eguchi-Hanson metric, which we will denote $\mathbf{eh}$, is defined on $T^*\mathbb{S}^2$. It is asymptotic to the flat cone $\mathbb{R}^4\slash\mathbb{Z}_2$. Denote $(x_1,x_2,x_3,x_4)$ coordinates in an orthonormal basis of $\mathbb{R}^4$, and define $r:= \sqrt{x_1^2+x_2^2+x_3^2+x_4^2}$, and a basis of invariant $1$-forms on the sphere $\mathbb{S}^3$, $(\alpha_1,\alpha_2,\alpha_3)$ by $$\alpha_1 :=\frac{1}{r^2}(x_1dx_2-x_2dx_1+x_3dx_4-x_4dx_3)$$ and the other by cyclic permutation of the indices $\{2,3,4\}$.

Outside of the zero section of $T^*\mathbb{S}^2$ represented by $r=0$, the metric $\mathbf{eh}$ has the following expression (with the identification $T^*\mathbb{S}^2\backslash \mathbb{S}^2\approx (\mathbb{R}^4\slash\mathbb{Z}_2) \backslash \{0\}$):
\begin{equation}
    \mathbf{eh}:= \sqrt{\frac{r^4}{1+r^4}}(dr^2+r^2\alpha_1^2) + \sqrt{1+r^4}(\alpha_2^2+\alpha_3^2).\label{eguchihanson}
\end{equation}
The metric extends to $ T^*\mathbb{S}^2 $ by adding the zero section $\mathbb{S}^2$ with metric $\alpha_2^2+\alpha_3^2$ at $r=0$. We will always denote $\mathbb{S}^2$ that zero-section of $ T^*\mathbb{S}^2 $.

This gives the asymptotic development at infinity $$\mathbf{eh}= \mathbf{e} - \frac{1}{2r^4}(dr^2 +r^2\alpha_1^2 -r^2\alpha_2^2-r^2\alpha_3^2)+\mathcal{O}(r^{-8}),$$
where $H^4 := -\frac{1}{2r^4}(dr^2 +r^2\alpha_1^2 -r^2\alpha_2^2-r^2\alpha_3^2)$ is divergence-free and trace-free with respect to the Euclidean metric $\mathbf{e}=dr^2+r^2(\alpha_1^2+\alpha_2^2+\alpha_3^2)$.
\subsubsection{Curvature of Eguchi-Hanson metrics.}
On $\mathbb{R}^4$, we define the $2$-forms $$\theta_1^\pm :=  r dr\wedge \alpha_1 \pm r^2\alpha_2\wedge\alpha_3,$$ and similarly $\theta_2^\pm$ and $\theta_3^\pm$ by cyclic permutations, and we define the usual bases $$\omega^\pm_1 := dx_1\wedge dx_2\pm dx_3\wedge dx_4$$ and similarly $\omega_2^\pm$ and $\omega_3^\pm$ by cyclic permutations. The $\theta_i^+$ and $\omega_i^+$ are selfdual and the $\theta_i^-$ and $\omega_i^-$ are anti-selfdual. Direct computations yield the following relation between them: $\theta_i^+ = \omega_i^+,$
    and for $x \in \mathbb{R}^4$,
    \begin{align}
        \theta_i^-(x) = \sum_{j=1}^3 \frac{x^T(\omega_i^+\circ\omega_j^-)x}{|x|^2}\omega_j^-,\label{rotation 2 forms}
    \end{align}
    where $ \omega_1^+\circ\omega_i^- $ is the symmetric traceless matrix given by the (commuting) product of the antisymmetric matrices associated to $\omega_i^+$ and $\omega_j^-$, and where $x^T$ is the transpose of $x$.
\begin{rem}
    The product $\omega^+_i\circ\omega^-_j =\omega^-_i\circ\omega^+_j$ is a trace-free involution and therefore it is characterized by two planes of eigenvalues $1$ and $-1$.
\end{rem}
\begin{rem}\label{identification traceless 2tensor 2 formes}
    We will often use the identification of the traceless symmetric $2$-tensors $\textup{Sym}^2_0(TM)$ and $\Omega^+\otimes \Omega^-$ where $(\omega^+,\omega^-)\in\Omega^+\times \Omega^-$ is associated to $\omega^+\circ \omega^- = \omega^-\circ \omega^+$.
\end{rem}

For any $\zeta= (\zeta_1,\zeta_2,\zeta_3)\in \mathbb{R}^3$, we denote
$$\zeta^\pm :=\zeta_1\omega_1^\pm +\zeta_2\omega_2^\pm +\zeta_3\omega_3^\pm\in \Omega^\pm.$$
\begin{defn}\label{def rho -}
        We define for any $x\in \mathbb{R}^4$ the linear transformation $\rho_x$ which to $\zeta =(\zeta_1,\zeta_2,\zeta_3)\in\mathbb{R}^3$ associates $\rho_x(\zeta)\in \mathbb{R}^3$ whose $j$'s coordinate is
        $$\rho_x(\zeta)_j =  \frac{x^T(\zeta^+\circ\omega_j^-)x}{|x|^2}.$$
    \end{defn}
    \begin{rem}
        Identifying $\mathbb{R}^4$ with the space of quaternions and $\mathbb{R}^3$ with the space of pure quaternions, for any $x\in \mathbb{R}^4$, the map $\rho_x: \mathbb{R}^3\to \mathbb{R}^3$ is exactly the rotation produced on the pure quaternionic part by conjugation by $x$. More precisely, identifying $ \zeta\in \mathbb{R}^3 $ with the pure quaternion $(0,\zeta)$ in the usual basis, we have $$(0,\rho_x(\zeta))=x\cdot(0,\zeta)\cdot x^{-1}.$$
    \end{rem}

    The metric $\mathbf{eh}$ is Ricci-flat, anti-selfdual and has the following anti-selfdual curvature:
    \begin{equation}
        \mathbf{R}^-(\mathbf{eh}) = \frac{1}{r^6}\begin{bmatrix}
8 & 0 & 0 \\
0 & -4 & 0\\
0 & 0 & -4
\end{bmatrix}
    \end{equation}
    in a basis asymptotic to the basis $(\theta_i^{-})_i$, see \cite{eh}. We also remark that we have $H^4 = -\frac{\omega_1^+\circ\theta_1^-}{2r^4}$ where $H^4$ is defined by $\mathbf{eh} = \textbf{e} +H^4 +\mathcal{O}(r^{-8})$ at infinity and where $\omega_1^+\circ\theta_1^-$ is the (commutating) composition of the antisymmetric matrices associated to $\omega_1^+$ and $\theta_1^-$.
    
    More generally, for any $\zeta = (\zeta_1,\zeta_2,\zeta_3)\in \mathbb{R}^3\backslash \{0\}$, consider $(\zeta^+_{(k)})_{k\in\{1,2,3\}}$ an orthogonal basis of $\Omega^+$ with constant length $\sqrt{2}=|\omega_1^+|$ and $\zeta^+_{(k)}=\zeta^+/|\zeta|=\sqrt{2}\zeta^+/|\zeta^+|$. We may replace the above $1$-forms $\alpha_k = \omega_k^+(dr)$ by the $1$-forms $\zeta^+_{(k)}(dr)$. By a change of variables $u=\frac{r}{\sqrt{|\zeta|}}$, one checks that the metric
    \begin{equation}
        \mathbf{eh}_{\zeta^+}:= \sqrt{\frac{r^4}{|\zeta|^2+r^4}}\left(dr^2+r^2\zeta^+_{(1)}(dr)^2\right) + \sqrt{|\zeta|^2+r^4}\left(\zeta^+_{(2)}(dr)^2+\zeta^+_{(3)}(dr)^2\right)\label{eh zeta+}
    \end{equation}
    is homothetic to $\mathbf{eh}$ and more precisely satisfies the following properties.

    \begin{prop}\label{differents zeta}
        Let $\zeta= (\zeta_1,\zeta_2,\zeta_3)\in \mathbb{R}^3\backslash \{0\}$ and denote $\zeta^+ := \zeta_1\omega_1^+ +\zeta_2\omega_2^+ +\zeta_3\omega_3^+$. We have the following properties:
        \begin{enumerate}
            \item at infinity, we have
            \begin{equation}
                \mathbf{eh}_{\zeta^+} = \mathbf{e} - \frac{\rho(\zeta)^-\circ \zeta^+}{2r^4} + \mathcal{O}(|\zeta|^4r^{-8}),\label{dvp eh}
            \end{equation}
            \item $\mathbf{eh}_{\zeta^+}$ is isometric to $|\zeta|\cdot\mathbf{eh}$, 
    \item $\mathbf{eh}_{(1,0,0)^+} = \mathbf{eh}$, and
    \item on $\mathbb{S}^2$, denoting $(\zeta^+_{(k)})_{k\in\{1,2,3\}}$ a basis of the selfdual $2$-forms of constant length with $\zeta^+_{(1)}=\zeta^+/|\zeta|$, the zero-section of $T^*\mathbb{S}^2$, the metric is $ |\zeta^+|(\zeta^+_{(2)}(dr)^2+ \zeta^+_{(3)}(dr)^2)$.
        \end{enumerate}
    \end{prop}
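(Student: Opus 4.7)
My plan is to dispose of (3) and (4) by direct substitution, to prove (2) by a change of variables, and to reduce (1) to a Taylor expansion combined with the identification $\mathrm{Sym}^2_0(T\mathbb{R}^4)\cong \Omega^+\otimes\Omega^-$ of Remark \ref{identification traceless 2tensor 2 formes}. Property (3) follows by specializing $\zeta=(1,0,0)$: then $|\zeta|=1$, $\zeta^+=\omega_1^+$, and $(\zeta^+_{(k)})_k$ may be chosen as $(\omega_k^+)_k$, so formula \eqref{eh zeta+} literally reduces to \eqref{eguchihanson}. For (4), evaluating \eqref{eh zeta+} at $r=0$ annihilates the first bracket via the factor $\sqrt{r^4/(|\zeta|^2+r^4)}$, while $\sqrt{|\zeta|^2+r^4}$ becomes $|\zeta|=|\zeta^+|$ (under the norm convention implicit in the statement), yielding the claimed metric on the zero section.

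For (2), I apply the substitution $u=r/\sqrt{|\zeta|}$ hinted in the text. A direct computation gives
\begin{equation*}
\sqrt{\tfrac{r^4}{|\zeta|^2+r^4}}=\sqrt{\tfrac{u^4}{1+u^4}},\quad \sqrt{|\zeta|^2+r^4}=|\zeta|\sqrt{1+u^4},\quad dr^2=|\zeta|\,du^2,\quad r^2=|\zeta|u^2,
\end{equation*}
and collecting these factors rewrites $\mathbf{eh}_{\zeta^+}$ as $|\zeta|$ times an Eguchi-Hanson metric \eqref{eguchihanson} built in the orthogonal frame $(\zeta^+_{(k)})_k$ instead of $(\omega^+_k)_k$. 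Since these two frames of $\Omega^+$ have the same length, they are related by an element of $SO(\Omega^+)=SO(3)$, which is covered by an isometry of $\mathbb{R}^4$ through the two-to-one map $\mathrm{Spin}(4)\to SO(3)\times SO(3)$; pulling back by this isometry realizes $|\zeta|\cdot\mathbf{eh}$ as $\mathbf{eh}_{\zeta^+}$.

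Property (1) is the substantive point. I Taylor expand in the small quantity $|\zeta|^2/r^4$:
\begin{equation*}
\sqrt{\tfrac{r^4}{|\zeta|^2+r^4}}=1-\tfrac{|\zeta|^2}{2r^4}+\mathcal{O}(|\zeta|^4 r^{-8}),\qquad \sqrt{|\zeta|^2+r^4}=r^2+\tfrac{|\zeta|^2}{2r^2}+\mathcal{O}(|\zeta|^4 r^{-6}).
\end{equation*}
The zeroth-order pieces reassemble into $\mathbf{e}$ because $(\zeta^+_{(k)})_k$ is an orthogonal basis of $\Omega^+$ of the fixed length $|\omega_1^+|$, and the leading correction is proportional to the trace-free tensor $-\tfrac{1}{2r^4}\bigl(dr^2+r^2\zeta^+_{(1)}(dr)^2-r^2\zeta^+_{(2)}(dr)^2-r^2\zeta^+_{(3)}(dr)^2\bigr)$, with the $\mathcal{O}(|\zeta|^4 r^{-8})$ remainder inherited from the next Taylor term. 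The step I expect to be the main obstacle is identifying this tensor with $-\rho(\zeta)^-\circ\zeta^+/(2r^4)$ via Remark \ref{identification traceless 2tensor 2 formes}. I plan to transport the standard identity $H^4=-\omega_1^+\circ\theta_1^-/(2r^4)$ through the isometry from (2): after writing $\zeta^+_{(1)}=\zeta^+/|\zeta|$ and using \eqref{rotation 2 forms} to express the rotated $\theta^-$-form at $x$ as $\rho(\zeta)^-/|\zeta|$ in the $(\omega_j^-)_j$ basis, the $|\zeta|$ factors combine to give exactly $-\rho(\zeta)^-\circ\zeta^+/(2r^4)$, as claimed.
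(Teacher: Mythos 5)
The paper leaves the proof of this proposition blank (the \emph{proof} environment is empty), so you are filling in an argument where the paper asserts the result without writing one. Your route — direct Taylor expansion of the explicit formula \eqref{eh zeta+} in the small quantity $|\zeta|^2/r^4$, the substitution $u=r/\sqrt{|\zeta|}$ for the homothety in part~(2), and the transport of the identity $H^4=-\omega_1^+\circ\theta_1^-/(2r^4)$ through the rotation \eqref{rotation 2 forms} to recover $-\rho(\zeta)^-\circ\zeta^+/(2r^4)$ in part~(1) — is the natural one, and the computations check out.

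Two small points worth tidying. In part~(2) the group-theoretic remark is slightly off: the double cover of $SO(3)\times SO(3)\cong SO(\Omega^+)\times SO(\Omega^-)$ is $SO(4)$ itself (the kernel being $\{\pm\mathrm{Id}\}$), not $\mathrm{Spin}(4)$; what your argument actually uses is only surjectivity, namely that any rotation of $\Omega^+$ fixing $\Omega^-$ is realized by an orientation-preserving isometry of $\mathbb{R}^4$ — concretely, by left quaternion conjugation. In part~(4) the factor emerging from \eqref{eh zeta+} at $r=0$ is $|\zeta|$, which differs from the statement's $|\zeta^+|$ by the $\sqrt{2}$ coming from the normalization $|\omega_1^+|=\sqrt{2}$ fixed just above the proposition; you correctly flag this as a normalization ambiguity, and it appears to be a minor inconsistency in the paper rather than a gap in your argument. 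Neither affects the substantive content, which is the $r^{-4}$ asymptotic in part~(1).
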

    \begin{proof}[]
    
    \end{proof}
    \begin{rem}
        These metrics $\mathbf{eh}_{\zeta^+}$ for $\zeta\in \mathbb{R}^3\backslash\{0\}$ reach \emph{all} of the metrics obtained by \emph{orientation-preserving} rotations and rescaling of $\mathbf{eh}$ \emph{up to isometry}.
    \end{rem}
    
    We will need to understand their curvature in the last sections of this article. From the computations of \cite{eh} and Proposition \ref{differents zeta}, we have the following result.
    
        For $\zeta = (\zeta_1,\zeta_2,\zeta_3)$, denote $ H^4(\zeta^+) := -\frac{\rho(\zeta)^-\circ \zeta^+}{2r_\textbf{e}^4}$. Then, the linearization of the curvature at $\mathbf{e}$ in the direction $H^4(\zeta^+)$ is Ricci-flat anti-selfdual and we have
        \begin{equation}
            d_{\mathbf{e}}\mathbf{R}^-(H^4(\zeta^+)) = \frac{|\zeta|^2}{r^6}\begin{bmatrix}
8 & 0 & 0 \\
0 & -4 & 0\\
0 & 0 & -4
\end{bmatrix}\label{courbure variation h4 matrix}
        \end{equation}
        in any orthogonal basis with constant length of the anti-selfdual $2$-forms of $\mathbb{R}^4$ whose first vector is $\frac{\rho(\zeta)^-}{|\zeta|}$.
        
        Identifying the space of endomorphisms of $\Omega^-$, $\textup{End}(\Omega^-)$ and $\Omega^-\otimes\Omega^-$, this can be rewritten as 
        \begin{equation}
             d_{\mathbf{e}}\mathbf{R}^-(H^4(\zeta^+)) = \frac{ 12 \pi_{ \textup{tr}}(\rho(\zeta)^-\otimes\rho(\zeta)^-)}{r^6}\label{courbure variation h4}
        \end{equation}
        where $\pi_{\textup{tr}}h := h - \frac{\textup{tr}h}{3}\textup{I}_3$ is a projection on trace-free matrices.

\begin{rem}
    In the constant basis $(\omega_1^-,\omega_2^-,\omega_3^-)$, the coefficients of $d_{\mathbf{e}}\mathbf{R}^-(H^4(\zeta^+))$ (seen as a $3\times 3$ matrix) are harmonic by \eqref{rotation 2 forms} as expected from \cite[Lemme 3]{biq2}.
\end{rem}
\subsubsection{Infinitesimal variations.}
The deformations of the Eguchi-Hanson metric are given by the variations of $\zeta$ in \eqref{eh zeta+}.

For $\zeta = (1,0,0)$, an orthogonal basis of the $L^2(\mathbf{eh})$-kernel of $P_{\mathbf{eh}}$ denoted $\mathbf{O}(\mathbf{eh})$ may be computed. Denote $(\mathbf{o}_{1},\mathbf{o}_{2},\mathbf{o}_{3})$ given by the infinitesimal variations of $\zeta$ respectively in the directions $(1,0,0)$, $(0,1,0)$ and $(0,0,1)$. They have the following developments in the coordinates of the above development \eqref{dvp eh},
\begin{equation}
        \mathbf{o}_{k} = O_{k}^{4} + \mathcal{O}(r_\mathbf{e}^{-8}) \text{ with } O_{k}^{4}=-\frac{\theta_1^-\circ \theta_k^+}{r^4}.\label{def O^4_k}
    \end{equation}
   
    \begin{rem}\label{O14 prop a H4}
        The symmetric $2$-tensor $O_{1}^4 $ is equal to twice to the asymptotic term $H^4$ of $\mathbf{eh}$, see \eqref{dvp eh}.
    \end{rem}
    The infinitesimal deformations $\mathbf{o}_1$, $\mathbf{o}_2$ and $\mathbf{o}_3$ respectively induce the following infinitesimal changes of anti-selfdual curvature in the basis $(\theta_1^-,\theta_2^-,\theta_3^-)$:
    \begin{equation}
             \frac{1}{r^6}\begin{bmatrix}
8 & 0 & 0 \\
0 & -4 & 0\\
0 & 0 & -4
\end{bmatrix}, \;\; \frac{1}{r^6}\begin{bmatrix}
0 & 4 & 0 \\
4 & 0 & 0\\
0 & 0 & 0
\end{bmatrix},\; \text{ and }\; \frac{1}{r^6}\begin{bmatrix}
0 & 0 & 4 \\
0 & 0 & 0\\
4 & 0 & 0
\end{bmatrix}.
\end{equation}

For another $\zeta\in \mathbb{R}^3\backslash\{0\}$, let us consider, thanks to the second point of Proposition \ref{differents zeta}, a diffeomorphism $\phi$ such that $\phi^*\mathbf{eh}_{\zeta^+} = |\zeta|\cdot \mathbf{eh}$. Then, $(\mathbf{o}_k(\zeta^+))_k:=(\phi_*\mathbf{o}_k)_k$ forms an orthogonal basis of $\mathbf{O}(\mathbf{eh}_{\zeta^+})$ with $\|\phi_*\mathbf{o}_k\|_{L^2(\mathbf{eh}_{\zeta^+})}=\|\mathbf{o}_k\|_{L^2(\mathbf{eh})}$
since the $L^2$-norm of $2$-tensors is invariant by rescaling in dimension $4$. Recalling that in the same coordinates at infinity, the first $r_\mathbf{e}^{-4}$-terms of $|\zeta|\phi_*\mathbf{eh}$ and of $\mathbf{eh}_{\zeta^+}$ coincide, we find 
$$|\zeta|\cdot\phi_*\Big(\frac{\omega_1^+\circ\theta_1^-}{2r_\mathbf{e}^4}\Big) =\frac{\zeta^+\circ\rho(\zeta)^-}{2r_\mathbf{e}^4}$$
and therefore, denoting $(\zeta^+_{(k)})_{k\in\{1,2,3\}}$ a basis of the selfdual $2$-forms of constant length with $\zeta^+_{(1)}=\zeta^+/|\zeta|$, the development at infinity of the $\phi_*\mathbf{o}_k$ is
\begin{equation}
    \mathbf{o}_k(\zeta^+)=\phi_*\mathbf{o}_k = \frac{\zeta^+_{(k)}\circ\rho(\zeta)^-}{r_\mathbf{e}^4} + \mathcal{O}(|\zeta^+|^3r_\mathbf{e}^{-8}).\label{dvp obst avec zeta}
\end{equation}
\begin{rem}\label{rem deformation eh zeta et noyau}
    We check that $\mathbf{o}_k(\zeta^+)=\partial_{\zeta^+_{(k)}}\mathbf{eh}_{\zeta^+},$
    where we denoted $\partial_{\zeta^+_{(k)}}\mathbf{eh}_{\zeta^+}$ the differential of $\zeta^+\mapsto \mathbf{eh}_{\zeta^+}$ at $\zeta^+$ in the direction $\zeta^+_{(k)}$.
\end{rem}

\subsubsection{Negative orientation.}

We will also consider \emph{negatively oriented} Eguchi-Hanson metrics. For $\zeta = (\zeta_1,\zeta_2,\zeta_3) \in \mathbb{R}^3\backslash\{0\}$, we define
\begin{equation}
    \mathbf{eh}_{\zeta^-}:= \sqrt{\frac{r^4}{|\zeta|^2+r^4}}\left(dr^2+r^2\zeta^-_{(1)}(dr)^2\right) + \sqrt{|\zeta|^2+r^4}\left(\zeta^-_{(2)}(dr)^2+\zeta^-_{(3)}(dr)^2\right).\label{eh zeta-}
\end{equation}

\noindent It is isometric to $|\zeta|\cdot\mathbf{eh}$ but has the opposite orientation.
At infinity, we have the development
$$ \mathbf{eh}_{\zeta^-}=\mathbf{e}-\frac{\zeta^-\circ \rho(\zeta)^+}{2r^4}+\mathcal{O}(|\zeta|^4r^{-8}).$$

\subsection{Naïve desingularizations}\label{reecriture controle}

Let us now recall the definition of a naïve desingularization of an orbifold from \cite{ozu1}.

\subsubsection*{Gluing of ALE spaces to orbifold singularities.}

    Let $0<2\epsilon<\epsilon_0$ be a fixed constant, $t>0$, $(M_o,g_o)$ an orbifold and $\Phi: B_e(0,\epsilon_0)\subset\mathbb{R}^4\slash\mathbb{Z}_2\to U$ a local chart of Definition \ref{orb Ein} around a singular point $p\in M_o$. Let also $(N,g_b)$ be an ALE metric asymptotic to $\mathbb{R}^4\slash\mathbb{Z}_2$, and $\Psi_\infty: (\mathbb{R}^4\slash\mathbb{Z}_2)\backslash B_e(0,\epsilon_0^{-1}t^\frac{1}{2}) \to N\backslash K$ a chart at infinity of Definition \ref{def orb ale}.
    
    For $s>0$, define $\phi_s: x\in \mathbb{R}^4\slash\Gamma\to sx \in \mathbb{R}^4\slash\Gamma$. For $t<\epsilon_0^4$, we define $M_o\#N$ as $N$ glued to $M_o$ thanks to the diffeomorphism 
$$ \Phi\circ\phi_{\sqrt{t}}\circ\Psi^{-1} : \Psi(A_\mathbf{e}(\epsilon_0^{-1},\epsilon_0t^{-1/2}))\to \Phi(A_\mathbf{e}(\epsilon_0^{-1}\sqrt{t},\epsilon_0)).$$
Consider $\chi:\mathbb{R}^+\to\mathbb{R}^+$, a $C^\infty$ cut-off function supported on $[0,2]$ and equal to $1$ on $[0,1]$.
    
\begin{defn}[Naïve gluing of an ALE space to an orbifold]\label{def naive desing}
    We define a \emph{naïve gluing} of $(N,g_b)$ at scale $0<t<\epsilon^4$ to $(M_o,g_o)$ at the singular point $p$, which we will denote $(M_o\#N,g_o\#_{p,t}g_b)$ by putting $g_o\#_{p,t}g_b=g_o$ on $M\backslash U$, $g_o\#_{p,t}g_b=tg_b$ on $K$, and
    $$g_o\#_{p,t}g_b =  \chi(t^{-\frac{1}{4}}r_e)\Psi_\infty^*g_b + \big(1-\chi(t^{-\frac{1}{4}}r_e)\big)\Phi^*g_o$$
    on $A_\mathbf{e}(\epsilon^{-1}\sqrt{t},2\epsilon)$.
\end{defn}

\begin{defn}[Function $r_D$ on a naïve desingularization]\label{rD}
    On a naïve $(M,g^D)$, we define the function smooth function $r_D$ in the following way:
    \begin{enumerate}
        \item $r_D = r_o$ on $M_o(\epsilon_0)$ defined in \eqref{eq Moepsilon},
        \item $r_D =\sqrt{t_j}r_{b_j}$ on each $N_j(\epsilon_0^{-1})$ defined in \eqref{eq Njepsilon} and
        \item $r_D=r_\mathbf{e}$ on $A_\mathbf{e}(\epsilon_0^{-1}\sqrt{t},\epsilon_0)$.
    \end{enumerate}
\end{defn}

Let us fix a notation for the desingularization by Eguchi-Hanson metrics.

\begin{defn}[Naïve desingularization by Eguchi-Hanson metrics]\label{desing eh}
    Let $(M_o,\mathbf{g}_o)$ be a compact Einstein orbifold with integrable Einstein deformations (Definition \ref{einstein integrable deformations}) and $\mathbb{R}^4\slash\mathbb{Z}_2$ singularities at points $j\in S$, and let $v\in \mathbf{O}(\mathbf{g}_o)$ and $\zeta=(\zeta_j^{\epsilon_j})_j$ with $\zeta_j\in\mathbb{R}^3\backslash\{0\}$ and $\epsilon_j\in \{+,-\}$.
    We define $g^D_{\zeta,v}$ the naïve gluing (as in Definition \ref{def naive desing}) of $\mathbf{eh}_{\zeta_j^{\epsilon_j}/|\zeta_j|}$ to $\mathbf{g}_v$ at scale $|\zeta_j|$ using the ALE coordinates of \eqref{eh zeta+} and \eqref{eh zeta-}.
\end{defn}

\begin{defn}[Approximate obstructions $\tilde{\mathbf{O}}(g^D_{\zeta,v})$]\label{def tilde O gDv zeta}
        We define $\tilde{\mathbf{O}}(g^D_{\zeta,v})$ the linear space spanned by the infinitesimal variations of $(v,\zeta)\mapsto g^D_{\zeta,v}$.
    \end{defn}
    \begin{rem}\label{rem OgD ozu}
    The space $\tilde{\mathbf{O}}(g^D_{\zeta,v})$ is very close to the approximate obstruction space of \cite{ozu2} and even yields better estimates. The results of \cite{ozu2} therefore hold when using $\tilde{\mathbf{O}}(g^D_{\zeta,v})$ instead of the similar space $\tilde{\mathbf{O}}(g^D)$ of \cite{ozu2}.
\end{rem}
    
We next define the notion of \emph{nondegenerate} sequence of naïve desingularizations in the spirit of \cite{br15} (see also the notion of \emph{generic} smoothings in \cite{spo}). It intuitively means that the desingularization sequence reaches an element $\partial_o\mathbf{E}(M)$ without being too tangent to $\partial_o\mathbf{E}(M)$.
\begin{defn}[Nondegenerate naïve desingularization]\label{non degenerate}
    We will say that a sequence of naïve desingularizations $(M,g^D_{t_n,v_n})$ of a compact Einstein orbifold $(M_o,\mathbf{g}_o)$ is \emph{nondegenerate} if, denoting $t_{n,\max} = \max t_{n,j}$ and $t_{n,\min} = \min t_{n,j}$, we have
    \begin{enumerate}
        \item $\lim_{n\to +\infty}\frac{t_{n,\min}}{t_{n,\max}}>0$ and
        \item $\frac{\|v_{n}\|^2_{L^2(\mathbf{g}_o)}}{t_{n,\min}}\to 0$.
    \end{enumerate}
\end{defn}
This technical definition essentially means that the gluing scales are comparable and the Einstein deformations of the orbifold are not too large. A stronger notion is that of \emph{transverse desingularization}.

\begin{defn}[Transverse naïve desingularization]\label{transverse}
We will say that a sequence of naïve desingularizations $(M,g^D_{t_n,v_n})$ of a compact Einstein orbifold $(M_o,\mathbf{g}_o)$ is \emph{transverse} if we have
\begin{enumerate}
        \item $\lim_{n\to +\infty}\frac{t_{n,\min}}{t_{n,\max}}>0$ and
        \item $\frac{\|v_{n}\|_{L^2(\mathbf{g}_o)}}{t_{n,\min}}\to 0$.
    \end{enumerate}
\end{defn}
\begin{rem}
    Usual desingularizations are transverse and often even \emph{orthogonal}, with $v_n = 0$. The desingularization of \cite{biq1} is a notable exception.
\end{rem}

Let us finally recall that according to \cite{ozu1,ozu2}, for any Einstein orbifold $(M_o,\mathbf{g}_o)$, and any $\delta>0$, there exists $\epsilon = \epsilon((M_o,\mathbf{g}_o),\delta)>0$ such that if an Einstein metric $(M,\textbf{g})$ satisfies 
$$ d_{GH}\big((M_o,\mathbf{g}_o),(M,\textbf{g})\big) <\epsilon, $$
then, there exists a naïve desingularization $(M,g^D)$ of $(M_o,\mathbf{g}_o)$ and a diffeomorphism $\phi:M\to M$ with
$$\|\phi^*\textbf{g}-g^D\|_{C^{2,\alpha}_{\beta,*}(g^D)}<\delta,$$
where the weighted Hölder norms $ C^{2,\alpha}_{\beta,*}(g^D) $ are defined in Section \ref{function spaces} in the Appendix.

We finally define the notions of \emph{nondegenerate} and \emph{transverse} sequence of Einstein metrics desingularizing the orbifold $(M_o,\mathbf{g}_o)\in \partial_o\mathbf{E}(M)$.
\begin{defn}[Nondegenerate and transverse Einstein desingularization]\label{non degenerate and transverse einstein}
    Let $(M^4,\mathbf{g}_n)_{n\in\mathbb{N}}$ be a sequence of Einstein metrics $d_{GH}$-converging to a compact Einstein orbifold $(M_o,\mathbf{g}_o)\in \partial_o\mathbf{E}(M^4)$. By \cite{ozu1,ozu2}, there exists a sequence of naïve desingularizations $(g^D_{t_n,v_n})_n$ such that up to acting on $\mathbf{g}_n$ by a diffeomorphism, we have
        \begin{enumerate}
            \item $\|\mathbf{g}_n-g^D_{t_n,v_n}\|_{C^{2,\alpha}_{\beta,*}(g^D_{t_n,v_n})}\leqslant \epsilon$ for some $\epsilon = \epsilon(\mathbf{g}_o)>0$ determined in \cite{ozu2},
            \item $\mathbf{g}_n$ is in reduced divergence-free gauge (defined in \cite{ozu2}) with respect to $g^D_{t_n,v_n}$            \item $\mathbf{g}_n-g^D_{t_n,v_n}$ is $L^2(g^D_{t_n,\zeta_n})$-orthogonal to $\tilde{\mathbf{O}}(g^D_{t_n,\zeta_n})$ of Definition \ref{def tilde O gDv zeta}
        \end{enumerate}
        
        We will say that $\mathbf{g}_n$ is a \emph{nondegenerate} (respectively \emph{transverse}) Einstein desingularization of $(M_o,\mathbf{g}_o)$ if the sequence $(g^D_{\zeta_n,v_n})_n$ is \emph{nondegenerate} (respectively \emph{transverse}) in the sense of Definitions \ref{non degenerate} and \ref{transverse}.
\end{defn}
\begin{figure}[hbt!]
\vspace{-5pt}
\caption{A \textbf{nondegenerate} desingularization $(M,\mathbf{g}_n)_{n\in\mathbb{N}}$ approaches $(M_o,\mathbf{g}_o)$ without being ``too tangent'' to the boundary $\partial_o\mathbf{E}(M)$: it stays in the plain green region.}\label{figure non degenerate} 
\begin{center}
\vspace{-13pt}
 \includegraphics[trim=0 180 0 180, clip, scale=.3]{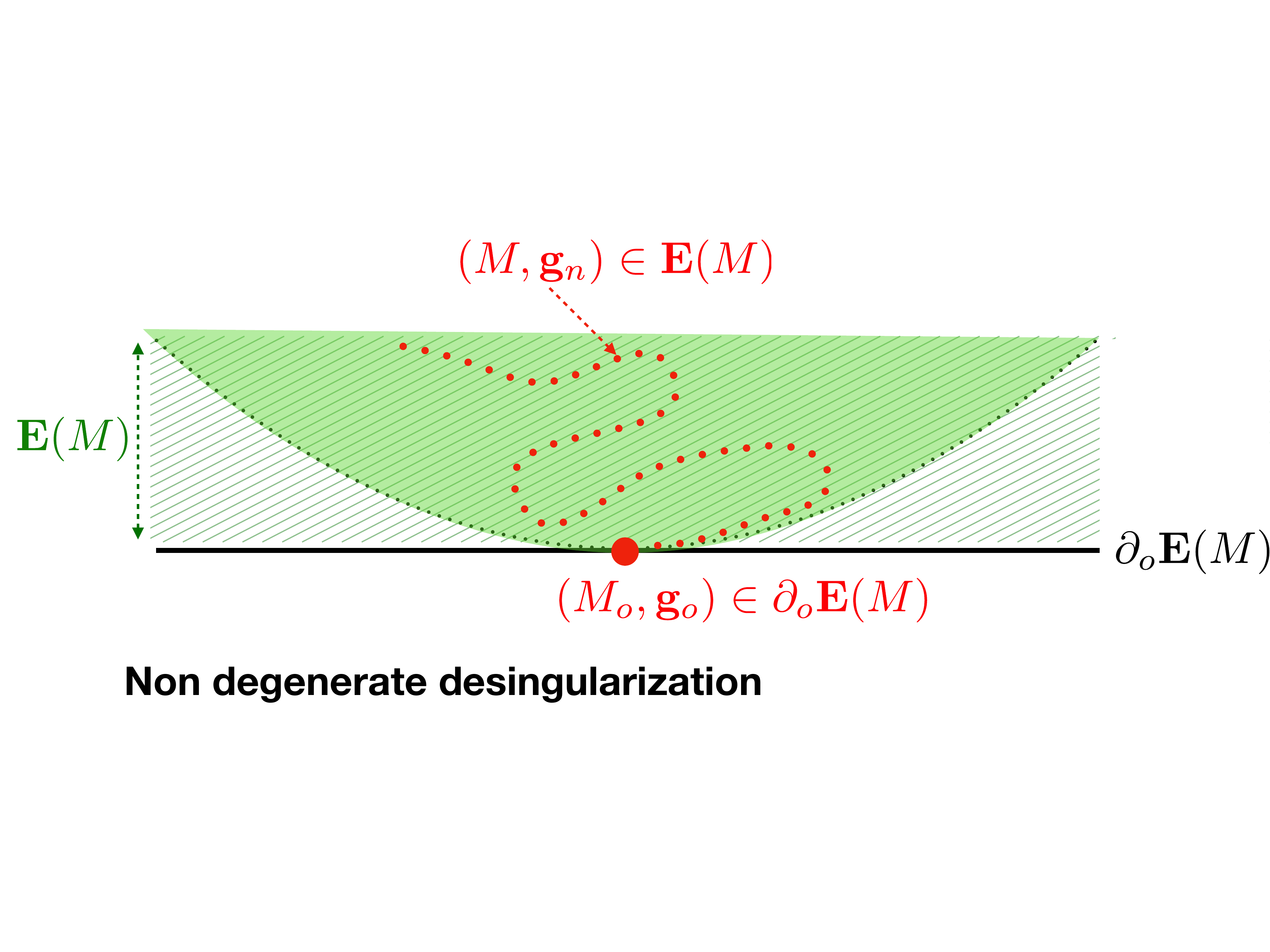} 
 \vspace{-13pt}
\end{center}
\end{figure}
\begin{figure}[hbt!]
\caption{A \textbf{transverse} desingularization $(M,\mathbf{g}_n)_{n\in\mathbb{N}}$ becomes almost orthogonal to the boundary $\partial_o\mathbf{E}(M)$ as it approaches $(M_o,\mathbf{g}_o)$: it stays in the plain green region.}\label{figure transverse}
\begin{center}
\vspace{-13pt}
 \includegraphics[trim=0 200 0 180, clip, scale=.3]{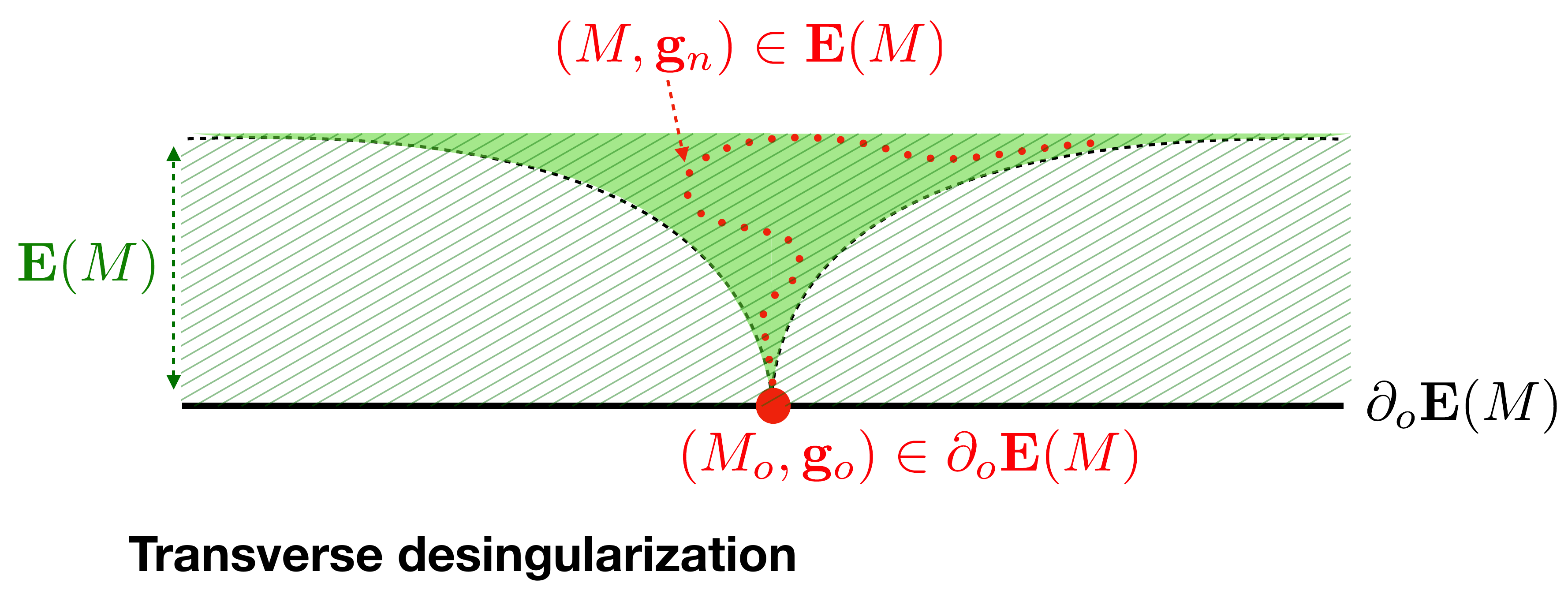}
 \vspace{-15pt}
\end{center}
\end{figure}
\newpage
\section{New obstructions to the desingularization of Einstein orbifolds}\label{section new section}

    We first deduce a new type of obstruction to the Gromov-Hausdorff desingularization of Einstein orbifolds which differs from the ones of \cite{biq1,biq2,ozu2}. The goal of this section is to prove Theorem \ref{2e obst} and Proposition \ref{2e obst nondeg}. 
    
    This obstruction is special to the compact case, or at least not present in the asymptotically hyperbolic (AH) context if one allows deformations of the conformal infinity (which are infinitely many new degrees of freedom which do not add obstructions). 
    
    \begin{exmp}\label{adstaubbolt}
        The AdS Taub-Bolt family of metrics constructed by Page-Pope \cite{pp} are asymptotically hyperbolic (AH) Einstein metrics and converge to an AH selfdual Einstein orbifold while bubbling out an Eguchi-Hanson metric (see \cite[Chapter B]{ozuthese} for a precise description of the degeneration). This orbifold has only one singularity $\mathbb{R}^4\slash\mathbb{Z}_2$ at which we have $\Ric^0= 0$ because it is Einstein, and in the usual bases of $\Omega^\pm$, we have
$$\mathbf{R}^- = \begin{bmatrix}
     -1&0 & 0 \\
     0 & -1 & 0\\
     0&0&-1
  \end{bmatrix},$$ because it is selfdual, and finally $$\mathbf{R}^+ = \begin{bmatrix}
     0&0&0 \\
     0&-\frac{3}{2}&0\\
     0&0&-\frac{3}{2}
  \end{bmatrix},$$
    only has a one-dimensional kernel. 
    
    The AdS Taub-Bolt orbifold is $L^2$-rigid. If we consider the Einstein deformations induced by the conformal changes at infinity, then, there exists an infinite dimensional space of \emph{nondegenerate} Einstein desingularizations, see \cite{biq1}. This shows that the compactness assumption in Proposition \ref{2e obst nondeg} is necessary.
    \end{exmp}

    \subsection{Degeneration and obstructions}

    Let $(M_o,\mathbf{g}_o)$ be a compact Einstein orbifold with integrable Einstein deformations (Definition \ref{einstein integrable deformations}) and with a $\mathbb{R}^4\slash\mathbb{Z}_2$ singularity at $p\in M_o$, and let $(N,\mathbf{eh})$ be an Eguchi-Hanson metric. At $p$, we can choose orbifold coordinates (Definition \ref{orb Ein}) in which the metric $\mathbf{g}_o$ is in Bianchi gauge with respect to the flat metric $\mathbf{e}$ (see \cite{ozu2} for a proof in the more difficult case of neck regions), that is satisfies $B_\mathbf{e}\mathbf{g}_o=0$. Since the metric $\mathbf{g}_o$ is smooth in the above orbifold coordinates and because of the $\mathbb{Z}_2$-invariance, we have the following development,
    \begin{equation}
        \mathbf{g}_o = \mathbf{e} + H_2+H_4+\mathcal{O}(r_\mathbf{e}^6),\label{dvp go}
    \end{equation}
    where the $H_i$ are homogeneous symmetric $2$-tensors with $|H_i|_{\mathbf{e}}\sim r_\mathbf{e}^i$ with $B_\mathbf{e} H_2 = 0$. At the infinity of $N$, in the coordinates of \eqref{eguchihanson}, we have the following development,
    \begin{equation}
        \mathbf{eh} = \mathbf{e} + H^4+ \mathcal{O}(r_\mathbf{e}^{-8}),\label{dvp gEH}
    \end{equation}
    where $|H^4|_{\mathbf{e}}\sim r_\mathbf{e}^{-4}$ and with $B_eH^4=0$.
    
        For a metric $g$, let us denote $Q_g^{(2)}$ the bilinear terms given by the second derivative of $$h\mapsto \mathbf{\Phi}_g(h) :=\Ric(h)+\delta^*_hB_gh$$ at $g$ (whose linearization is $P_g$). According to \cite[Section 3, Section 10 and Lemma 14.1]{biq1}, for any symmetric $2$-tensors $ |H_2|_\textbf{e} \sim r_\mathbf{e}^2$ with $P_\mathbf{e}H_2 = \Lambda \mathbf{e}$, and $ |H_4|_\textbf{e} \sim r_\mathbf{e}^4$ with $P_\mathbf{e}H_4 = \Lambda H_2 - Q_\mathbf{e}^{(2)}(H_2,H_2)$, there exist 
        \begin{enumerate}
            \item a \emph{unique} symmetric $2$-tensor $\underline{h}_2$ and reals $(\lambda_k)_{k\in\{1,2,3\}}$ on $N$ satisfying:
            \begin{equation}
  \left\{
      \begin{aligned}
        P_{\mathbf{eh}}\underline{h}_2&= \Lambda \mathbf{eh}+\sum_k \lambda_k \mathbf{o}_{k},\\
        \underline{h}_2 &= H_2 + \mathcal{O}(r^{-2}),\\
        \int_{\mathbb{S}^2}\langle\underline{h}_2, \mathbf{o}_k\rangle_{\mathbf{eh}} dv_{\mathbf{eh}_{|\mathbb{S}^2}}&= 0 \text{ for all  } k\in\{1,2,3\}. 
      \end{aligned}
    \right.\label{def lambda k}
\end{equation}

            \item and a symmetric $2$-tensor $\underline{h}_4$ and real numbers $(\mu_k)_{k\in\{1,2,3\}}$ on $N$ satisfying:
            \begin{equation}
            \left\{
      \begin{aligned}
        P_{\mathbf{eh}}\underline{h}_4&= \Lambda \underline{h}_2- Q_{\mathbf{eh}}^{(2)}(\underline{h}_2,\underline{h}_2)+\sum_k \mu_k\mathbf{o}_{k},\\
        \underline{h}_4 &= H_4 + \mathcal{O}(r^{\epsilon}),\\
        \int_{\mathbb{S}^2}\langle\underline{h}_4, \mathbf{o}_k\rangle_{\mathbf{eh}} dv_{\mathbf{eh}_{|\mathbb{S}^2}}&= 0 \text{ for all  } k\in\{1,2,3\}.
      \end{aligned}
    \right.\label{def mu k}
\end{equation}
        for all $\epsilon>0$.
\end{enumerate}

\begin{rem}
    The above tensor $\underline{h}_4$ is not unique since there are $2$-tensors which are $\mathcal{O}(1)$ at infinity in the kernel of $P_\mathbf{eh}$. There will be a best choice of $\underline{h}_4$ in our construction which will ensure that the asymptotics on the orbifold and the ALE match, see Remark \ref{H^4_4 terme de h_4}.
\end{rem}

\subsubsection{More precision on the first obstruction}

Let $\zeta\in\mathbb{R}^3\backslash \{0\}$, and denote $\mathbf{o}_k(\zeta^\pm)$ the basis of $\mathbf{O}(\mathbf{eh}_{\zeta^\pm})$ of \eqref{dvp obst avec zeta}. Since the metric $\mathbf{eh}_{\zeta^\pm}$ is homothetic to $\mathbf{eh}$, we also find solutions $\underline{h}_2$ and $\underline{h}_4$ to
\begin{enumerate}
            \item a unique symmetric $2$-tensor $\underline{h}_2(\zeta^\pm)$ on $N$ satisfying the following equations
            \begin{equation}
  \left\{
      \begin{aligned}
        P_{\mathbf{eh}_{\zeta^\pm}}\underline{h}_2(\zeta^\pm)&= \Lambda \mathbf{eh}_{\zeta^\pm}+\sum_k \lambda_k(\zeta^\pm) \mathbf{o}_{k}(\zeta^\pm),\\
        \underline{h}_2(\zeta^\pm) &= H_2 + \mathcal{O}(r^{-2}),\\
        \int_{\mathbb{S}^2}\langle\underline{h}_2(\zeta^\pm), \mathbf{o}_k(\zeta^\pm)\rangle_{\mathbf{eh}_{\zeta^\pm}} &dv_{\mathbf{eh}_{\zeta^\pm|\mathbb{S}^2}}= 0 \text{ for all  } k\in\{1,2,3\}. 
      \end{aligned}
    \right.\label{def lambda k zeta}
\end{equation}

            \item and a symmetric $2$-tensor $\underline{h}_4(\zeta^\pm)$ on $N$ satisfying the following equations
            \begin{equation}
            \left\{
      \begin{aligned}
        P_{\mathbf{eh}_{\zeta^\pm}}\underline{h}_4(\zeta^\pm)&= \Lambda \underline{h}_2(\zeta^\pm)- Q_{\mathbf{eh}_{\zeta^\pm}}^{(2)}(\underline{h}_2(\zeta^\pm),\underline{h}_2(\zeta^\pm))+\sum_k \mu_k(\zeta^\pm) \mathbf{o}_{k}(\zeta^\pm),\\
        \underline{h}_4(\zeta^\pm) &= H_4 + \mathcal{O}(r^{\epsilon}),\\
        \int_{\mathbb{S}^2}\langle\underline{h}_4(\zeta^\pm), \mathbf{o}_k&(\zeta^\pm)\rangle_{\mathbf{eh}_{\zeta^\pm}} dv_{\mathbf{eh}_{\zeta^\pm|\mathbb{S}^2}}= 0 \text{ for all  } k\in\{1,2,3\}. 
      \end{aligned}
    \right.\label{def mu k zeta}
\end{equation}
        for all $\epsilon>0$.
\end{enumerate}
We moreover have the following values for the $\lambda_k(\zeta^\pm)$: for some $c>0$,
\begin{equation}
    \lambda_k(\zeta^\pm) = c\left\langle\mathbf{R}^\pm(H_2)\zeta^\pm,\zeta^\pm_{(k)} \right\rangle,\label{valeur obst lambda zeta}
\end{equation}
where $(\zeta^\pm_{(k)})_{k\in \{1,2,3\}}$ is an orthogonal basis with $|\zeta^\pm_{(k)}|=\sqrt{2}$ and $\zeta^\pm_{(1)}=\zeta^\pm/|\zeta|$. 
\\

We can be more precise about the first obstruction and the symmetric $2$-tensor $\underline{h}_2$.
\begin{lem}\label{def H42}
    Let $\underline{h}_2$ be one of the above solution of \eqref{def lambda k zeta} (where we omit the dependence in $\zeta^\pm$ for simpler equations). Then, we have
    \begin{equation}
        \underline{h}_2 = H_2+H^4_2+ \mathcal{O}(r^{-4+\epsilon}) \text{ for any }\epsilon>0,
    \end{equation}
    where $H^4_2$ is a homogeneous solution to
    \begin{equation}
        P_\mathbf{e}H^4_2 = -Q_\mathbf{e}^{(2)}(H^4,H_2) + \Lambda H^4+\sum_k \lambda_k O_{k}^4, \text{ with } |H^4_2|_\mathbf{e} \sim r_\mathbf{e}^{-2}.\label{equation H^4_2}
    \end{equation}
    Moreover, when $\lambda_k = 0$, then $H^4_2 = 0$.
\end{lem}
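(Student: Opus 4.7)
The plan is to insert the ansatz $\underline{h}_2 = H_2 + H_2^4 + \tilde{r}$ into the defining equation \eqref{def lambda k zeta} and match homogeneous orders in $r_\mathbf{e}$ at infinity. First I would use the asymptotic expansions $\mathbf{eh}_{\zeta^\pm} = \mathbf{e} + H^4 + \mathcal{O}(r_\mathbf{e}^{-8})$ from \eqref{dvp eh} and $\mathbf{o}_k(\zeta^\pm) = O_k^4 + \mathcal{O}(r_\mathbf{e}^{-8})$ from \eqref{dvp obst avec zeta} (writing $H^4$ and $O_k^4$ for their $\zeta^\pm$-versions) to rewrite the right-hand side of \eqref{def lambda k zeta} as $\Lambda \mathbf{e} + \Lambda H^4 + \sum_k \lambda_k(\zeta^\pm) O_k^4 + \mathcal{O}(r_\mathbf{e}^{-8})$.

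Next I would expand the operator $P_{\mathbf{eh}_{\zeta^\pm}}$ around $P_\mathbf{e}$ on the ALE end. Starting from the Taylor expansion
\[
\mathbf{\Phi}_\mathbf{e}(\mathbf{e} + h) = -\Lambda \mathbf{e} + P_\mathbf{e} h + Q^{(2)}_\mathbf{e}(h, h) + \mathcal{O}(|h|^3),
\]
and polarizing at $h = H^4 + k$, the linearization of $\mathbf{\Phi}_\mathbf{e}$ at $\mathbf{e} + H^4$ acting on $k$ equals $P_\mathbf{e} k + Q^{(2)}_\mathbf{e}(H^4, k) + \mathcal{O}(r_\mathbf{e}^{-8})$ in the paper's polarization convention. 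Because $H^4$ is divergence-free and trace-free with respect to $\mathbf{e}$, the gauge discrepancy between this linearization and $P_{\mathbf{eh}_{\zeta^\pm}}$ also falls into the $\mathcal{O}(r_\mathbf{e}^{-8})$ tail. Using $P_\mathbf{e} H_2 = \Lambda \mathbf{e}$, which is obtained by matching the $r_\mathbf{e}^0$-part of $\mathbf{\Phi}_\mathbf{e}(\mathbf{g}_o) = 0$ in the expansion \eqref{dvp go}, the equation \eqref{def lambda k zeta} at homogeneous order $r_\mathbf{e}^{-4}$ for the ansatz $\underline{h}_2 = H_2 + H_2^4 + \tilde{r}$ with $\tilde{r} = o(r_\mathbf{e}^{-2})$ reduces precisely to \eqref{equation H^4_2}.

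Existence of a homogeneous degree $-2$ solution $H_2^4$ of \eqref{equation H^4_2} then follows because $-2$ is an indicial root of $P_\mathbf{e}$ on the cone $\mathbb{R}^4\slash\mathbb{Z}_2$ and the $\mathbb{Z}_2$-invariant source on the right-hand side lies in the range of the corresponding spherical operator: the obstructions to solvability of this class of equation are by construction the pairings $\lambda_k$ via \eqref{valeur obst lambda zeta}, and absorbing them on the right makes the remaining source solvable. For the improved decay $\tilde{r} = \mathcal{O}(r_\mathbf{e}^{-4+\epsilon})$, I would apply the weighted Fredholm theory on ALE ends developed in \cite{ozu1, ozu2} (in the spirit of \cite{biq1}): after subtracting $H_2^4$ the residual source decays like $\mathcal{O}(r_\mathbf{e}^{-8})$, and since no indicial root of $P_\mathbf{e}$ lies strictly between $-2$ and $-4$ for $\mathbb{Z}_2$-invariant symmetric $2$-tensors, the remainder decays at weight $-4$ up to the usual $\epsilon$-loss at the critical indicial root.

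For the final claim $H_2^4 = 0$ when all $\lambda_k$ vanish, a direct computation using the explicit formula \eqref{dvp eh} for $H^4$, the standard expression of $H_2$ in terms of the sectional curvature at the singular point, and the form of $Q^{(2)}_\mathbf{e}$, shows that $\lambda_k = 0$ for all $k$ (equivalently $\mathbf{R}^\pm(H_2)\zeta^\pm = 0$) forces the entire right-hand side of \eqref{equation H^4_2} to vanish, so that $H_2^4 = 0$ solves the homogeneous equation and is selected by the orthogonality condition of \eqref{def lambda k zeta}. The main technical obstacle is the precise bookkeeping of the Taylor expansion of $P_{\mathbf{eh}_{\zeta^\pm}}$ at infinity together with the careful tracking of the gauge corrections; however, the divergence-free and trace-free character of $H^4$ together with the explicit second-variation computations of \cite{biq1, biq2} makes this routine.
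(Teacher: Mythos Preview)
Your treatment of the first claim (the asymptotic expansion and the equation \eqref{equation H^4_2} for $H^4_2$) is essentially the paper's argument: expand $P_{\mathbf{eh}}$ at infinity using $\mathbf{eh}=\mathbf{e}+H^4+\mathcal{O}(r^{-8})$, match the $r^{-4}$ order, and then use weighted elliptic theory to control the remainder. This part is fine.

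The gap is in the second claim. Your assertion that when $\lambda_k=0$ the right-hand side of \eqref{equation H^4_2} vanishes is in fact correct---it is the leading-order consequence of \cite[Lemme 3]{biq2} applied to $\mathbf{o}_1$ together with $O_1^4=2H^4$---but it does not by itself give $H^4_2=0$. The point is that $H^4_2$ is \emph{defined} as the $r_\mathbf{e}^{-2}$ term in the expansion of the unique global solution $\underline{h}_2$ on $T^*\mathbb{S}^2$, and the homogeneous equation $P_\mathbf{e}H^4_2=0$ at weight $r_\mathbf{e}^{-2}$ has the nontrivial kernel $\{K_0/r_\mathbf{e}^2:K_0\text{ constant}\}$ (this is exactly the $K_0/r_\mathbf{e}^2$ appearing in the paper's own proof of the first part). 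So knowing that zero \emph{solves} \eqref{equation H^4_2} does not tell you that the actual $H^4_2$ coming from $\underline{h}_2$ is zero. Your appeal to the orthogonality condition of \eqref{def lambda k zeta} does not close this: that condition is an integral over the zero section $\mathbb{S}^2$ against elements of $\mathbf{O}(\mathbf{eh})$, which decay like $r^{-4}$ at infinity, and therefore it does not constrain the $r^{-2}$ asymptotic coefficient.

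The paper's argument for this step is genuinely different and global: one shows that $\mathcal{L}_{r\partial_r}\underline{h}_2$ satisfies the same equation as $4\underline{h}_2$ (the extra terms organize into $Q^{(2)}_\mathbf{eh}(\mathbf{o}_1,\underline{h}_2)-\Lambda\mathbf{o}_1$, which vanishes when $\lambda_k=0$ by \cite[Lemme 3]{biq2}), and then uses uniqueness of $\underline{h}_2$ modulo $\mathbf{O}(\mathbf{eh})$ to conclude $\mathcal{L}_{r\partial_r}\underline{h}_2=4\underline{h}_2+\mathcal{O}(r^{-4})$. Since $\mathcal{L}_{r\partial_r}H^4_2=0$ while $\mathcal{L}_{r\partial_r}H_2=4H_2$, this forces $H^4_2=0$. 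You are missing this scaling/uniqueness step, and without it the vanishing of $H^4_2$ is not established.
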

\begin{proof}
    At infinity, in ALE coordinates we have $\underline{h}_2 = H_2 + h'$ with $h' = \mathcal{O}(r_\mathbf{e}^{-2+\epsilon})$ for all $\epsilon>0$. Since $\mathbf{eh} = \mathbf{e}+ H^4 + \mathcal{O}(r_\mathbf{e}^{-8})$, we develop the equation \eqref{def lambda k} as:
    \begin{align*}
        P_{\mathbf{e}}H_2 + P_\mathbf{e}(h') + Q_\mathbf{e}^{(2)}(H^4,H_2) + \mathcal{O}(r_\mathbf{e}^{-8+\epsilon}) = \Lambda \mathbf{e} + \Lambda H^4 +\sum_k \lambda_k O_{k}^4 +  \mathcal{O}(r_\mathbf{e}^{-8+\epsilon}),
    \end{align*}
    and using the assumption that $P_{\mathbf{e}}H_2 = \Lambda \mathbf{e}$, we find 
    \begin{align*}
        P_\mathbf{e}(h') + Q_\mathbf{e}^{(2)}(H^4,H_2)= \Lambda H^4 +\sum_k \lambda_k O_{k}^4 +  \mathcal{O}(r_\mathbf{e}^{-8+\epsilon}).
    \end{align*}
    which is the stated equation. Now let us consider $H^2$ with $|H^2|_\mathbf{e}\sim r_\mathbf{e}^{-2}$ such that 
    $$P_e H^2 = -Q_\mathbf{e}^{(2)}(H^4,H_2) + \Lambda H^4+\sum_k \lambda_k O_{k}^4.$$
    It exists by Lemma \ref{resolution equation laplacien} in the appendix (one checks that there is no $\log$ term). Then, we see that $P_\mathbf{e}(h'- H^2) = \mathcal{O}(r_\mathbf{e}^{-8+\epsilon})$ while $h' = \mathcal{O}(r_\mathbf{e}^{-2+\epsilon})$. By the theory of elliptic operators between weighted Hölder spaces (see for instance \cite{pr} or \cite{lm}), we therefore find that $h'- H^2 = \frac{K_0}{r_\mathbf{e}^2} + \mathcal{O}(r_\mathbf{e}^{-6+\epsilon})$ for a constant symmetric $2$-tensor $K_0$, and consequently
    $$ h' = H^4_2+\mathcal{O}(r_\mathbf{e}^{-4+\epsilon}) = H^2 + \frac{K_0}{r_\mathbf{e}^2}+\mathcal{O}(r_\mathbf{e}^{-4+\epsilon}) $$
    which proves the stated result.
    \\
    
    Let us now prove that if the obstructions $\lambda_k$ of \eqref{def lambda k} vanish, then $H^4_2=0$.
    
    The idea is to use the uniqueness of $ \underline{h}_2 $, and show that $\mathcal{L}_{r\partial_r}\underline{h}_2$ satisfies the same equation. Indeed, we have: 
    \begin{equation}
    \begin{aligned}
        0=\mathcal{L}_{r\partial_r} \big(P_\mathbf{eh}  \underline{h}_2 - \Lambda \mathbf{eh}\big)&= Q_\mathbf{eh}^{(2)}(\mathcal{L}_{r\partial_r}\mathbf{eh}, \underline{h}_2) + P_\mathbf{eh}\mathcal{L}_{r\partial_r}\underline{h}_2 - \Lambda \mathcal{L}_{r\partial_r}\mathbf{eh},\\
        &= P_\mathbf{eh}\mathcal{L}_{r\partial_r}\underline{h}_2 - 4 \Lambda\mathbf{eh}\\
        &\;+Q_\mathbf{eh}^{(2)}(\mathcal{L}_{r\partial_r}\mathbf{eh}-2\mathbf{eh}, \underline{h}_2)-\Lambda(\mathcal{L}_{r\partial_r}\mathbf{eh}-2\mathbf{eh})
    \end{aligned}\label{Lrdr Peh h2}
    \end{equation}
    \begin{rem}
        In the last line, we used the fact that $Q_\mathbf{eh}^{(2)}(\mathbf{eh}, \underline{h}_2) + \Lambda \mathbf{eh} =0$, which comes from the (Bianchi-free version of) the identity $\Ric(s g) = \Ric(g)$ as $2$-tensors for all $s>0$. This implies $D_g\Ric(g) = 0$ for all $g$ and differentiating again at $g$ in the direction $h$ gives:
        $$ D^2_g\Ric(h,g) + D_g\Ric(h) = 0. $$
        We use this formula with $g = \mathbf{eh}$, $h = \underline{h}_2$ and use the assumption that $D_\mathbf{eh}\Ric(\underline{h}_2) = P_{\mathbf{eh}}\underline{h}_2 = \Lambda \mathbf{eh}$ because the $\lambda_k$ vanish.
    \end{rem}
   In \eqref{Lrdr Peh h2}, we recognize $-2 \mathbf{o}_1 = \mathcal{L}_{r\partial_r}\mathbf{eh}-2\mathbf{eh}$.
    Let us show that the last term vanishes. Using \cite[Lemme 3]{biq2}, we find for $c\neq 0$, $$Q_\mathbf{eh}^{(2)}(\mathbf{o}_1, \underline{h}_2) - \Lambda \mathbf{o}_1 = c\sum_j R_{1j}^+(H_2)\mathbf{o}_j$$ and by \cite{biq1}, if $\lambda_1=\lambda_2=\lambda_3=0$, then, $ R_{11}^+(H_2) = R_{12}^+(H_2)=R_{13}^+(H_2)=0$, hence, $$Q_\mathbf{eh}^{(2)}(\mathbf{o}_1, \underline{h}_2) - \Lambda \mathbf{o}_1 = 0.$$
    There remains $P_\mathbf{eh}\mathcal{L}_{r\partial_r}\underline{h}_2 = 4\Lambda \mathbf{eh}$ in \eqref{Lrdr Peh h2}. Since $ \mathcal{L}_{r\partial_r}\underline{h}_2 \sim \mathcal{L}_{r\partial_r}H_2 = 4 H_2$ at infinity, we find the same first two equalities of \eqref{def lambda k} up to a constant $4$. Since $4\underline{h}_2$ is the unique solution up to $\mathbf{O}(\mathbf{eh})$, we find that $ \mathcal{L}_{r\partial_r}\underline{h}_2 = 4\underline{h}_2 +\mathcal{O}(r_\mathbf{e}^{-4}) $.
    
    The point is that now $ \mathcal{L}_{r\partial_r} H^4_2 = 0$ hence $\mathcal{L}_{r\partial_r}\underline{h}_2 = 4H_2 + \mathcal{O}(r_\mathbf{e}^{-4})$, and consequently $ \underline{h}_2 = H_2 + \mathcal{O}(r_\mathbf{e}^{-4})$.
    
\begin{rem}
    Let us note that the vanishing of $H^4_2$ is dependent on the coordinate system we consider for Eguchi-Hanson metric. It seems that the crucial property if these coordinates we use here is that the vector field $r\partial_r$ is harmonic. Since we will always take $\mathbf{eh}$ in the coordinates of \eqref{eh zeta+}, we will always have $H^4_2=0$ as long as the $\lambda_k$ vanish.
\end{rem}

    \end{proof}
    
    Later on, we will consider deformations of our orbifold as well and we will need to consider metrics which are not quite asymptotic to our target Euclidean metric, but a nearby one, i.e. $\mathbf{e}+H_0$ for some constant $2$-tensor $H_0$.

Similarly, let us consider coordinates in which we have $\mathbf{g}_o=\mathbf{e}+H_0+H_2+\mathcal{O}(r_\mathbf{e}^4)$ for $H_0$ a traceless constant $2$-tensor and let us denote $\psi:\mathbb{R}^4\slash\mathbb{Z}_2\to\mathbb{R}^4\slash\mathbb{Z}_2$ a linear isomorphism for which $\psi^*\mathbf{e}=\mathbf{e}+H_0$, in order to extend $H_2$ to $\psi^*\mathbf{eh}_{\zeta^\pm}$. 
\begin{rem}\label{meaning diffeo}
    Here, by $\psi^*\mathbf{eh}_{\zeta^\pm}$, we really mean the same diffeomorphism applied to $\mathbf{eh} = \mathbf{e} + \sum_{k\geqslant 2}\tilde{H}_k(\theta)r^{-2k}$ in the polar coordinates $\theta\in \mathbb{RP}^3$ and $r\in [0,+\infty)$ as in \cite[Proposition 3.14]{kro}, where $(\mathbb{R}^4\slash\mathbb{Z}_2,\mathbf{e})= (\mathbb{R}^+\times \mathbb{RP}^3, dr^2 + dr^2g_{\mathbb{RP}^3})$. The metric has to be ``closed-up'' by a different $\mathbb{S}^2$ at $\psi(0)=0$.
\end{rem}
Then the obstructions are:
\begin{equation}
    \lambda_k(\zeta^\pm) = c\left\langle\mathbf{R}^\pm(\psi_*H_2)\zeta^\pm,\zeta^\pm_{(k)} \right\rangle. \label{valeur lambda k psi zeta}
\end{equation}
    
    We will moreover need to extend constant $2$-tensors to our Eguchi-Hanson metrics.
    \begin{lem}\label{def h0}
        Let $H_0$ be a constant $2$-tensor on $\mathbb{R}^4\slash \mathbb{Z}_2$, i.e. constant on $\mathbb{R}^4$. Then, there exists a unique $2$-tensor $\underline{h}_0$ on $T^*\mathbb{S}^2$ satisfying:
        \begin{equation}
            \left\{\begin{aligned}
                &\;P_\mathbf{eh} \underline{h}_0 = 0,\\
                &\;\;\underline{h}_0 = H_0 +H^4_0+ \mathcal{O}(r^{-6+\epsilon}) \text{ for all } \epsilon>0\\
                &\;\;\underline{h}_0 \perp_{L^2(\mathbf{eh})}\mathbf{O}(\mathbf{eh}).
            \end{aligned}\right.
        \end{equation}
    \end{lem}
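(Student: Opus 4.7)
The plan is to construct $\underline{h}_0$ as the infinitesimal change of $\mathbf{eh}$ induced by deforming its asymptotic Euclidean structure, and to use this geometric interpretation to see that no obstructions arise. Given a constant symmetric $2$-tensor $H_0$ on $\mathbb{R}^4/\mathbb{Z}_2$, pick a smooth family of linear isomorphisms $\psi_s$ of $\mathbb{R}^4/\mathbb{Z}_2$ with $\psi_0=\mathrm{id}$ and $\psi_s^*\mathbf{e}=\mathbf{e}+sH_0+\mathcal{O}(s^2)$. Using the extension $\psi_s^*\mathbf{eh}$ explained in Remark \ref{meaning diffeo}, each $\psi_s^*\mathbf{eh}$ is Ricci-flat on $T^*\mathbb{S}^2$, so $\tilde h_0:=\frac{d}{ds}|_{s=0}\psi_s^*\mathbf{eh}$ satisfies $D_\mathbf{eh}\Ric(\tilde h_0)=0$ and has the asymptotic $\tilde h_0=H_0+\mathcal{O}(r_\mathbf{e}^{-4})$. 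Adding a Lie derivative $\mathcal{L}_X\mathbf{eh}$ for $X$ chosen so that $B_\mathbf{eh}(\tilde h_0+\mathcal{L}_X\mathbf{eh})=0$ then yields a symmetric $2$-tensor $h'_0$ with $P_\mathbf{eh}h'_0=0$ and $h'_0=H_0+\mathcal{O}(r_\mathbf{e}^{-4})$.

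To obtain the sharper expansion $h'_0=H_0+H^4_0+\mathcal{O}(r_\mathbf{e}^{-6+\epsilon})$, I would proceed as in the proof of Lemma \ref{def H42}: write $\mathbf{eh}=\mathbf{e}+H^4+\mathcal{O}(r_\mathbf{e}^{-8})$ and expand
\begin{equation*}
    P_\mathbf{eh}H_0=P_\mathbf{e}H_0+Q_\mathbf{e}^{(2)}(H^4,H_0)+\mathcal{O}(r_\mathbf{e}^{-8+\epsilon})=Q_\mathbf{e}^{(2)}(H^4,H_0)+\mathcal{O}(r_\mathbf{e}^{-8+\epsilon}),
\end{equation*}
since $P_\mathbf{e}H_0=0$ for $H_0$ parallel. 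Lemma \ref{resolution equation laplacien} produces a homogeneous $2$-tensor $H^4_0$ with $|H^4_0|_\mathbf{e}\sim r_\mathbf{e}^{-4}$ solving $P_\mathbf{e}H^4_0=-Q_\mathbf{e}^{(2)}(H^4,H_0)$, and comparing $h'_0-H_0-H^4_0$ to the kernel of $P_\mathbf{e}$ in the usual weighted Hölder framework of \cite{pr,lm} refines the error to $\mathcal{O}(r_\mathbf{e}^{-6+\epsilon})$.

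The main potential obstacle is the purely analytic route: if one instead starts from a cutoff extension $\chi H_0$, computes $f:=P_\mathbf{eh}(\chi H_0)=\mathcal{O}(r_\mathbf{e}^{-6})$, and tries to solve $P_\mathbf{eh}h''=-f$ with $h''\to 0$, one must verify that the obstruction pairings $\langle f,\mathbf{o}_k\rangle_{L^2(\mathbf{eh})}$ vanish for $k=1,2,3$. This is where the geometric construction above is essential: because $\psi_s^*\mathbf{eh}$ is honestly Ricci-flat for every $s$, the linearization $\tilde h_0$ automatically provides a solution of $D_\mathbf{eh}\Ric(\tilde h_0)=0$ before any gauge fixing, which forces the projections to vanish (one can alternatively verify this directly through \cite[Lemme 3, Lemme 12]{biq2}, whose obstruction formula is proportional to $\mathbf{R}^{\pm}$ applied to the source and vanishes for the constant tensor $H_0$).

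Finally, uniqueness and the orthogonality condition are handled exactly as in Lemma \ref{def H42}: subtract from $h'_0$ its $L^2(\mathbf{eh})$-projection on $\mathbf{O}(\mathbf{eh})$ to obtain $\underline{h}_0$, which still solves $P_\mathbf{eh}\underline{h}_0=0$ and retains the same asymptotic expansion since each $\mathbf{o}_k=\mathcal{O}(r_\mathbf{e}^{-4})$. Any two candidates differ by an element $w\in\ker_{L^2}P_\mathbf{eh}=\mathbf{O}(\mathbf{eh})$ with the required decay, and the orthogonality condition then forces $w=0$.
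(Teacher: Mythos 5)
Your construction is correct but takes a genuinely different route from the paper. You build $\underline{h}_0$ geometrically, as the derivative of the family $\psi_s^*\mathbf{eh}$ of Ricci-flat metrics followed by a Bianchi gauge-fixing Lie derivative; the existence of this exact solution then forces the projections onto $\mathbf{O}(\mathbf{eh})$ to vanish a posteriori. The paper does the opposite: it starts from the cutoff extension $\chi H_0$, shows directly by an integration by parts that $P_\mathbf{eh}(\chi H_0)\perp\mathbf{O}(\mathbf{eh})$ (because the leading spherical-harmonic modes of the constant tensor $H_0$ and of $\mathbf{o}_k\sim r_\mathbf{e}^{-4}$ lie in distinct eigenspaces of the Laplacian on $\mathbb{RP}^3$), and then invokes \cite[Proposition 2.1]{biq1} to produce the corrector $h''$; the geometric identification $\underline{h}_0=\mathcal{L}_{\underline{x}_1}\mathbf{eh}$ that underlies your whole argument appears only as a remark. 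Your route is conceptually illuminating (it explains \emph{why} there is no obstruction), while the paper's route is more self-contained and transfers directly to the orbifold side where no such family of exact solutions need exist. One caution on your parenthetical alternative: \cite[Lemme 3, Lemme 12]{biq2} compute the obstruction for extending the \emph{quadratic} term $H_2$, where the pairing is indeed a curvature condition; for the constant term $H_0$ the vanishing is not of that form but of the representation-theoretic kind the paper uses (orthogonality of eigenspaces on the link). Also note that the gauge-fixing step needs a remark that the Bianchi equation $\nabla^*_\mathbf{eh}\nabla_\mathbf{eh}X=-B_\mathbf{eh}\tilde h_0$ admits a decaying solution $X=\mathcal{O}(r_\mathbf{e}^{-3+\epsilon})$, so that adding $\mathcal{L}_X\mathbf{eh}=\mathcal{O}(r_\mathbf{e}^{-4+\epsilon})$ does not disturb the leading term $H_0$; this follows from the absence of $L^2$ solutions to $\nabla^*\nabla X=0$ on $\mathbf{eh}$ and is compatible with the paper's remark that $\underline{x}_1=X_1+\mathcal{O}(r_\mathbf{e}^{-3+\epsilon})$.
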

    \begin{proof}
    Let $H_0 = \mathcal{L}_{X_1}\mathbf{e}$ be a constant symmetric $2$-tensor with $X_1$ a linear vector field. Then, since $P_\mathbf{e}H_0=0$, for $\chi$ a cut-off function supported in a neighborhood of infinity of $\mathbf{eh}$, we have 
    $P_\mathbf{eh}(\chi H_0) = \mathcal{O}(r_\mathbf{e}^{-6})$, and an integration by parts against the $\mathbf{o}_k$ proves that $P_\mathbf{eh}(\chi H_0)\perp \mathbf{O}(\mathbf{eh})$ and there is no obstruction to finding $ h'' $ with $P_\mathbf{eh}(\chi H_0+h'')= 0$ and $ h''= \mathcal{O}(r_\mathbf{e}^{-4+\epsilon})$ for all $\epsilon>0$, see \cite[Proposition 2.1]{biq1}. 
    This lets us find a solution $\underline{h}_0$ to 
    \begin{equation}
        P_\mathbf{eh}\underline{h}_0=0, \text{ with } \underline{h}_0 = H_0 + \mathcal{O}(r_\mathbf{e}^{-4}).
    \end{equation}
    with $B_\mathbf{eh}\underline{h}_0=0$, $\textup{tr}_\mathbf{eh}\underline{h}_0=0$ by the maximum principle since $\textup{tr}_\mathbf{eh}P_\mathbf{eh} = \frac{1}{2}\nabla_\mathbf{eh}^*\nabla_\mathbf{eh}\textup{tr}_\mathbf{eh}$ and $B_\mathbf{eh}P_\mathbf{eh} = B_\mathbf{eh}\delta_\mathbf{eh}^*B_\mathbf{eh}=\frac{1}{2}\nabla_\mathbf{eh}^*\nabla_\mathbf{eh}B_\mathbf{eh}$. We can make $\underline{h}_0$ unique by additionally assuming: $\underline{h}_0\perp \mathbf{O}(\mathbf{eh})$
    and we find a development:
    \begin{equation}
        \underline{h}_0=H_0+H_0^4+\mathcal{O}(r_\mathbf{e}^{-6+\epsilon}) \text{ with } |H_0^4|_\mathbf{e}\sim r_\mathbf{e}^{-4}.\label{asympt H_0^4}
    \end{equation}
    The $L^2$-product between $\underline{h}_0$ and $\mathbf{o}\in\mathbf{O}(\mathbf{eh})$ is a priori ill-defined because $\mathbf{o}=\mathcal{O}(r^{-4})$ and $\underline{h}_0 = \mathcal{O}(1)$, but since the coefficients of the leading terms restricted to spheres $\{ r = cst\} $ belong to different eigenspaces of the Laplacian on $\mathbb{RP}^3$, this is therefore well-defined.
    \begin{rem}
        We can prove that we have $\underline{h}_0 = \mathcal{L}_{\underline{x}_1} \mathbf{eh}$ for some vector field $\underline{x}_1$ satisfying 
        $0=B_{\mathbf{eh}}\mathcal{L}_{\underline{x}_1}\mathbf{eh} = \nabla_{\mathbf{eh}}^*\nabla_{\mathbf{eh}} \underline{x}_1$ and $\underline{x}_1 = X_1 + \mathcal{O}(r_\mathbf{e}^{-3+\epsilon})$. We however do not have any application for this at the moment.
    \end{rem}
    The link between the next asymptotic term $H^4_2$ of the Einstein modulo obstructions deformation $\underline{h}_2 = H_2 + H^4_2 + \ldots$ of Lemma \ref{def H42} and the development of $\underline{h}_0 = H_0 + H_0^4 + \ldots$ is given by the following integration by parts:
    \begin{align}
        0=&\int_N\big\langle P_{\mathbf{eh}}\underline{h}_2- \Lambda \mathbf{eh}-\sum_k \lambda_k \mathbf{o}_{k},\underline{h}_0 \big\rangle_\mathbf{eh}dv_\mathbf{eh}\nonumber\\
        =& \int_N\langle P_{\mathbf{eh}}\underline{h}_2,\underline{h}_0 \rangle_\mathbf{eh}dv_\mathbf{eh}\nonumber\\
        =& \int_N\langle \underline{h}_2,P_{\mathbf{eh}}\underline{h}_0 \rangle_\mathbf{eh}dv_\mathbf{eh}+\frac{1}{2}\lim_{r\to +\infty}\int_{\{r_\mathbf{e}=r\}}\langle \nabla_{\partial_{r_\mathbf{e}}} (\underline{h}_2),\underline{h}_0\rangle_{\mathbf{eh}} - \langle \underline{h}_2,\nabla_{\partial_{r_\mathbf{e}}} \underline{h}_0\rangle_{\mathbf{eh}}dv_{\{r_\mathbf{e}=r\}} \nonumber\\
        0=& -3\int_{\mathbb{S}^3\slash\mathbb{Z}_2}\langle H_2,H^4_0 \rangle_\mathbf{e}dv_{\mathbb{S}^3\slash\mathbb{Z}_2}-\int_{\mathbb{S}^3\slash\mathbb{Z}_2}\langle H^4_2,H_0 \rangle_\mathbf{e}dv_{\mathbb{S}^3\slash\mathbb{Z}_2},\label{egalite H24 H0}
    \end{align}
    where we used that by homogeneity, we have $\nabla_{\partial_{r_\mathbf{e}}} H_2 = \frac{2}{r_\mathbf{e}}H_2$, $\nabla_{\partial_{r_\mathbf{e}}} H_0 =0$, $\nabla_{\partial_{r_\mathbf{e}}} H^4_2 = -\frac{2}{r_\mathbf{e}}H^4_2$ and $\nabla_{\partial_{r_\mathbf{e}}} H^4_0 = -\frac{4}{r_\mathbf{e}}H^4_0$.
    
    Finally, if $\lambda_k = 0$, using the fact that $ H^4_2=0 $ from Lemma \ref{def H42}, we find: 
    \begin{equation}
        \int_{\mathbb{S}^3\slash\mathbb{Z}_2}\langle H_2,H^4_0 \rangle_\mathbf{e}dv_{\mathbb{S}^3\slash\mathbb{Z}_2}=0\label{orth H_2 H^0_4}
    \end{equation}
\end{proof}

\subsubsection{A first obstruction on the orbifold}

We have a similar result for the extension of tensors on the orbifold. Consider an infinitesimal Ricci-flat variation of $\mathbf{eh}_{\zeta_j^\pm}$ in the direction $\mathbf{o}_m=\mathbf{o}_m(\zeta_j^\pm)\in\mathbf{O}(\mathbf{eh}_{\zeta_j^\pm})$ where $\mathbf{o}_m(\zeta_j^\pm)$ is the $2$-tensor defined in \eqref{dvp obst avec zeta} and Remark \ref{rem deformation eh zeta et noyau}. In particular, $\mathbf{o}_m$ has a development 
$ \mathbf{o}_m=O_{m}^4+\mathcal{O}(|\zeta|^3r_\mathbf{e}^{-8}) $ with $|O_{m}^4|_\mathbf{e}\sim |\zeta|r^{-4}$.

\begin{lem}
    We define $\overline{\mathbf{o}}_m^4$ for $m\in\{1,2,3\}$ some solutions to the following equation
\begin{equation}
  \left\{
      \begin{aligned}
        P_o \overline{\mathbf{o}}_m^4 & \in \mathbf{O}(\mathbf{g}_o),\\
        \overline{\mathbf{o}}_m^4 &= O_{m}^4 +\mathcal{O}(r_o^{-2-\epsilon}) \text{ for any } \epsilon>0 \text{ at the singular point } j\\
        \overline{\mathbf{o}}_m^4&\perp \mathbf{O}(\mathbf{g}_o).\label{eq prol O4}
      \end{aligned}
    \right.
\end{equation}
They are determined up to the kernel of $P_{\mathbf{g}_o}$ in $\mathcal{O}(r_\mathbf{e}^{-2})$ at the singular points and this choice will not alter the result at this level of precision.
\end{lem}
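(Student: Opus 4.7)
The plan is to construct $\overline{\mathbf{o}}_m^4$ by first extending $O_m^4$ to a globally defined symmetric $2$-tensor on $M_o$ and then correcting via the Fredholm theory of $P_{\mathbf{g}_o}$. I would take a cutoff $\chi$ supported in the orbifold chart $\Phi_j$ around the singular point $p_j$, equal to $1$ on a smaller neighborhood, and set $u_0 := \chi\,(\Phi_j)_* O_m^4$, extended by zero elsewhere on $M_o$. Since $O_m^4$ is the leading asymptotic term of the kernel element $\mathbf{o}_m \in \mathbf{O}(\mathbf{eh}_{\zeta_j^\pm})$, it is a homogeneous TT tensor with $P_\mathbf{e} O_m^4 = 0$. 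Consequently the defect $f := P_{\mathbf{g}_o} u_0$ splits into a smooth, compactly supported commutator $[P_{\mathbf{g}_o},\chi]O_m^4$ living in the cutoff transition region, plus the interior perturbation $(P_{\mathbf{g}_o} - P_\mathbf{e})O_m^4$, which by the expansion $\mathbf{g}_o = \mathbf{e} + H_2 + \mathcal{O}(r_\mathbf{e}^4)$ of \eqref{dvp go} and $|O_m^4|_\mathbf{e} \sim r_\mathbf{e}^{-4}$ combines the curvature of $\mathbf{g}_o$ at $p_j$ with the leading $r_o^{-4}$ behavior of $O_m^4$.

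Next, I would apply the Fredholm theory for $P_{\mathbf{g}_o}$ on the orbifold between weighted Hölder spaces $C^{2,\alpha}_\beta(M_o)$ (using Remark \ref{analysis orbifold} to pass to the smooth cover in the orbifold chart, in the spirit of \cite{biq1,ozu2}). For weights $\beta$ lying strictly between two adjacent indicial roots of $P_\mathbf{e}$ on $\mathbb{R}^4\slash\mathbb{Z}_2$, the map $P_{\mathbf{g}_o} : C^{2,\alpha}_\beta \to C^{0,\alpha}_{\beta-2}$ is Fredholm, and by self-adjointness combined with the integrability hypothesis its cokernel is identified with $\mathbf{O}(\mathbf{g}_o)$ through the regularized duality pairing obtained by integrating on annular shells and taking limits, exactly as in \eqref{egalite H24 H0}. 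Choosing $\eta \in \mathbf{O}(\mathbf{g}_o)$ as the regularized projection of $f$ makes $-f + \eta$ orthogonal to the cokernel, and the Fredholm alternative produces $h$ solving $P_{\mathbf{g}_o} h = -f + \eta$ with decay $|h| \leqslant C_\epsilon\, r_o^{-2-\epsilon}$ for any $\epsilon > 0$ (the $-\epsilon$ loss reflects the indicial root at $-2$, at which genuine $r^{-2}\log r$ solutions may appear). Setting $\overline{\mathbf{o}}_m^4 := u_0 + h$ and subtracting its smooth $\mathbf{O}(\mathbf{g}_o)$-projection enforces the orthogonality condition without altering the asymptotic at $p_j$, producing a solution of \eqref{eq prol O4}.

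The uniqueness statement follows from the observation that any two solutions of \eqref{eq prol O4} differ by an element $k \in \ker P_{\mathbf{g}_o}$ with $k = \mathcal{O}(r_o^{-2})$ at $p_j$ and $k \perp \mathbf{O}(\mathbf{g}_o)$; such $k$ span a finite-dimensional space that may be nontrivial because $-2$ is an indicial root of $P_\mathbf{e}$ on $\mathbb{R}^4\slash\mathbb{Z}_2$ and some $r^{-2}$ homogeneous modes may globalize outside $\mathbf{O}(\mathbf{g}_o)$. This ambiguity only shifts $\overline{\mathbf{o}}_m^4$ at order $r_o^{-2}$ and below, while the leading $r_o^{-4}$ behavior is fixed by the matching with $O_m^4$; since the subsequent obstruction computations only pair this leading term against $r^{-4}$ asymptotics of objects like $\underline{h}_2(\zeta^\pm)$, the ambiguity is irrelevant at that level of precision. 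The main technical hurdle is the careful indicial-root analysis on $\mathbb{R}^4\slash\mathbb{Z}_2$ — verifying that $-2$ is an indicial root (responsible for the logarithmic loss in the weight) while $-4$ is safely bracketed so the prescribed leading term can be enforced — together with confirming that the cokernel identification survives the borderline non-$L^2$ behavior of $u_0$ near $p_j$, both of which are standard within the weighted elliptic framework developed in \cite{biq1,biq2,ozu2}.
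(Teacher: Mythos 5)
Your construction follows the same blueprint as the paper's: cut off $O_m^4$ near the singular point, apply the Fredholm alternative for $P_{\mathbf{g}_o}$ on weighted Hölder spaces to correct the defect modulo $\mathbf{O}(\mathbf{g}_o)$, then project to enforce the regularized orthogonality. The uniqueness discussion (ambiguity by $r_o^{-2}$ kernel elements, logarithmic modes at the indicial root $-2$) also matches the lemma.

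The one genuine difference from the paper is structural: the paper first constructs an explicit homogeneous corrector $O^4_{m,2}=\mathcal{O}(r^{-2-\epsilon})$ — essentially by inverting $P_\mathbf{e}$ on the leading $r^{-4}$ term of $(P_{\mathbf{g}_o}-P_\mathbf{e})O_m^4$, in the spirit of Lemma \ref{resolution equation laplacien} — which reduces the defect from $\mathcal{O}(r^{-4})$ to $\mathcal{O}(r^{-2-\epsilon})$ \emph{before} invoking the global Fredholm theory. After this reduction the remaining error lives at weight $-2-\epsilon$, so the correction $h'$ is sought at weight $-\epsilon\in(-2,0)$, where the cokernel of $P_{\mathbf{g}_o}$ is cleanly identified with $\mathbf{O}(\mathbf{g}_o)$ and all the $L^2$ pairings are absolutely convergent. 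You instead run the Fredholm alternative in one pass at weight $-2-\epsilon\in(-4,-2)$. This is workable, but your statement that "its cokernel is identified with $\mathbf{O}(\mathbf{g}_o)$" is imprecise at that weight: by duality, the cokernel there is only $\ker P_{\mathbf{g}_o}\cap C^{2,\alpha}_{\epsilon}$, the subspace of $\mathbf{O}(\mathbf{g}_o)$ whose elements \emph{vanish} at the singular points (for $\mathbb{T}^4/\mathbb{Z}_2$, for instance, this subspace is zero). The alternative still produces the desired $h$, since $f\sim r^{-4}$ does pair convergently against that smaller subspace, and one is free to add any $\eta\in\mathbf{O}(\mathbf{g}_o)$ to the right-hand side — but the freedom in $\eta$ is now larger than the constraint space, which is why the solution is non-unique up to $r^{-2}$-growing kernel elements. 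Spelling this out is what the paper's intermediate step buys: it puts the final Fredholm problem at a weight where cokernel, freedom, and orthogonality conditions line up transparently. Your proof reaches the same conclusion but leaves this bookkeeping implicit; it would be worth making the weight-dependent identification of the cokernel explicit to avoid appearing to over-claim.
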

\begin{proof}
Let us consider a cut-off function $\chi$ on $M_o$ supported in a small enough neighborhood of a singular point $p$. We have $P_{\mathbf{g}_o} (\chi O^4_m) = \mathcal{O}(r^{-4})$, hence there exists a $2$ tensor $|O^4_{m,2}| = \mathcal{O}(r^{-2-\epsilon})$ so that $$P_{\mathbf{g}_o} (\chi (O^4_m+O^4_{m,2})) = \mathcal{O}(r^{-2-\epsilon}).$$ Finally, using the Fredholm properties of $P_{\mathbf{g}_o}$ proven in \cite{ozu2}, there exists $|h'| =  \mathcal{O}(r^{-\epsilon})$ for all $\epsilon>0$ so that:
 $$P_{\mathbf{g}_o} (\chi (O^4_m+O^4_{m,2}) + h') \in \mathbf{O}(\mathbf{g}_o).$$
 
 Now, we can uniquely choose it by imposing $(\chi (O^4_m+O^4_{m,2}) + h')\perp_{L^2(\mathbf{g}_o)} \mathbf{O}(\mathbf{g}_o)$, where the $L^2$ product is a priori ill-defined as in the proof of Lemma \ref{def h0}, but makes sense via the orthogonality of the different eigenspaces of the Laplacian on $\mathbb{RP}^3$. 
\end{proof}

\begin{prop}\label{premiere obst orbifold 1 pt}
    Let us denote $(\mathbf{v}_l)_l$ an orthonormal basis of $\mathbf{O}(\mathbf{g}_o)$. Consider the development of the symmetric $2$-tensor $\underline{h}_2 = H_2+ H^4_2 + \mathcal{O}(r_\mathbf{e}^{-6+\epsilon})$ of \eqref{def lambda k zeta} (again, we will omit the dependence in $\zeta^\pm$).
    
    For $\zeta^\pm\in \Omega^\pm$ and the symmetric $2$-tensor $H^4=H^4(\zeta^\pm)$ with $P_\mathbf{e}H^4 = 0$ and $B_\mathbf{e}H^4 = 0$, there exists $(\overline{h}^4,(\nu_l)_l)$ with a symmetric $2$-tensor $\overline{h}^4$ and real numbers $(\nu_l)_l$ on $M_o$ satisfying the following equations
\begin{equation}
  \left\{
      \begin{aligned}
        P_o \overline{h}^4 &= \sum_l \nu_l\mathbf{v}_l+\sum_k \lambda_k\overline{\mathbf{o}}_k^4,\\
        \overline{h}^4 &= H^4 + H^4_2 + \mathcal{O}(|\zeta|^2r_o^{-\epsilon}) \text{ for any } \epsilon>0 \text{ at the singular point } j\\
        \overline{h}^4&\perp \mathbf{O}(\mathbf{g}_o).\label{eq prol H4}
      \end{aligned}
    \right.
\end{equation}

We moreover have the following value for the obstruction when $\lambda_k = 0$ for $k\in\{1,2,3\}$
    \begin{equation}
    \begin{aligned}
        \nu_l &=3 \int_{\mathbb{S}^3\slash\mathbb{Z}_2}\langle H^4,V_{l,2} \rangle_\mathbf{e} dv_{\mathbb{S}^3\slash\mathbb{Z}_2}.
    \end{aligned}\label{valeur nu l}
    \end{equation}
    where we have $\mathbf{v}_l = V_{l,0}+ V_{l,2}+ \mathcal{O}(r_\mathbf{e}^4)$ with $|V_{l,m}|_\mathbf{e} \sim r_\mathbf{e}^m$.
\end{prop}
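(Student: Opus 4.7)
The plan is to construct $\overline{h}^4$ by the same cutoff-and-correct procedure already used for $\underline{h}_0$ in Lemma \ref{def h0} and for $\underline{h}_2$ in \eqref{def lambda k zeta}, and then to extract the coefficients $\nu_l$ by an integration-by-parts calculation mirroring the derivation of \eqref{egalite H24 H0}. Fix an orbifold chart at $p$ in which $\mathbf{g}_o = \mathbf{e} + H_2 + \mathcal{O}(r_\mathbf{e}^4)$ and let $\chi$ be a cutoff localized near $p$. Start from the ansatz $\tilde h := \chi(H^4 + H^4_2)$, which already carries the prescribed asymptotic. A Taylor expansion of $P_{\mathbf{g}_o}$ around $\mathbf{e}$, combined with the hypothesis $P_\mathbf{e} H^4 = 0$ and the defining equation for $H^4_2$ from Lemma \ref{def H42}, produces a key cancellation: the term $-Q^{(2)}_\mathbf{e}(H^4, H_2)$ appearing in $P_\mathbf{e} H^4_2$ cancels the $Q^{(2)}_\mathbf{e}(H_2, H^4)$ coming from the first variation of $P_g$ in $g$. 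The only contribution surviving at the critical order $r_\mathbf{e}^{-4}$ is $\sum_k \lambda_k O_k^4$; subtracting $\sum_k \lambda_k \overline{\mathbf{o}}_k^4$, whose leading asymptotic at $p$ is exactly $\sum_k \lambda_k O_k^4$, absorbs this singular obstruction into the prescribed right-hand side of \eqref{eq prol H4}.

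The remaining equation has right-hand side in $L^2$, so by the Fredholm theory of $P_{\mathbf{g}_o}$ on the weighted Hölder spaces of \cite{ozu2} it is solvable modulo $\mathbf{O}(\mathbf{g}_o)$; the scalars $\nu_l$ on the right-hand side of \eqref{eq prol H4} encode the projection onto this kernel. Imposing $\overline{h}^4 \perp_{L^2(\mathbf{g}_o)} \mathbf{O}(\mathbf{g}_o)$ pins down $\overline{h}^4$ up to the mild $\mathcal{O}(r_\mathbf{e}^{-2})$ ambiguity coming from kernel elements of $P_{\mathbf{g}_o}$ at the singular point, which does not affect the value of $\nu_l$.

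To compute $\nu_l$ when all $\lambda_k$ vanish, Lemma \ref{def H42} gives $H^4_2 = 0$, so $\overline{h}^4 = H^4 + \mathcal{O}(r_o^{-\epsilon})$ near $p$ and the equation reduces to $P_o \overline{h}^4 = \sum_l \nu_l \mathbf{v}_l$. Pair with $\mathbf{v}_l$ in $L^2$ over $M_o(\epsilon)$, use $P_o\mathbf{v}_l = 0$, and integrate by parts: the bulk integral vanishes, leaving only a boundary term at $\{r_\mathbf{e} = \epsilon\}$. Feeding in the expansions $\overline{h}^4 = H^4 + \mathcal{O}(r^{-\epsilon})$, $\mathbf{v}_l = V_{l,0} + V_{l,2} + \mathcal{O}(r^4)$ together with the homogeneities $\nabla_{\partial_{r_\mathbf{e}}}H^4 = -\tfrac{4}{r_\mathbf{e}}H^4$ and $\nabla_{\partial_{r_\mathbf{e}}}V_{l,2} = \tfrac{2}{r_\mathbf{e}}V_{l,2}$, the surviving contribution as $\epsilon \to 0$ comes from pairing $H^4$ (decay $r^{-4}$) against $V_{l,2}$ (growth $r^2$). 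Combining the homogeneity coefficients $(-4) - 2 = -6$, the Green-identity factor $\tfrac{1}{2}$, and the inward-normal sign on $\partial M_o(\epsilon)$ yields the stated $\nu_l = 3\int_{\mathbb{S}^3/\mathbb{Z}_2}\langle H^4, V_{l,2}\rangle_\mathbf{e}\, dv$.

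The delicate point, exactly as in Lemma \ref{def h0}, is that the a priori divergent cross term $\int_{\{r_\mathbf{e} = \epsilon\}}\langle H^4, V_{l,0}\rangle_\mathbf{e}\,dv$ has to vanish as $\epsilon \to 0$; this holds because the angular profile of $H^4$ and the constant tensor $V_{l,0}$ project on distinct eigenspaces of the round Laplacian on $\mathbb{S}^3/\mathbb{Z}_2$. Once this angular orthogonality is confirmed, all formally divergent boundary contributions drop out and only the finite $H^4 \cdot V_{l,2}$ pairing survives, giving the claimed formula.
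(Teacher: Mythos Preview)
Your proposal is correct and follows essentially the same approach as the paper: construct $\overline{h}^4$ by cutting off $H^4+H^4_2$, use the defining equation \eqref{equation H^4_2} for $H^4_2$ to cancel the $Q_\mathbf{e}^{(2)}(H^4,H_2)$ term so that $P_o(\chi(H^4+H^4_2))-\sum_k\lambda_k\overline{\mathbf{o}}_k^4$ lands in the right weighted space, invoke the Fredholm theory of \cite{ozu2}, and then extract $\nu_l$ by the same integration-by-parts computation using homogeneity and the spherical-eigenspace orthogonality of $H^4$ against $V_{l,0}$. The only cosmetic difference is that the paper performs the integration by parts before specializing to $\lambda_k=0$ (obtaining both the $\langle H^4,V_{l,2}\rangle$ and $\langle H^4_2,V_{l,0}\rangle$ terms, then killing the latter via $H^4_2=0$), whereas you set $H^4_2=0$ first and then integrate by parts.
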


\begin{rem}\label{H^4_4 terme de h_4}
    The next term in the development of $\overline{h}^4$ is $H^4_4$ with $|H^4_4|_\mathbf{e}=\mathcal{O}(r_\mathbf{e}^\epsilon+r_\mathbf{e}^{-\epsilon})$ for any $\epsilon>0$ (it will typically be logarithmic in $r_\mathbf{e}$) for which we have 
$$\overline{h}^4 = H^4 + H^4_2+H^4_4+ \mathcal{O}(|\zeta|^2r_o^{2-\epsilon}).$$
It satisfies 
\begin{equation}
    P_\mathbf{e} H_4^4 + Q_\mathbf{e}^{(2)}(H^4,H_4) + Q_\mathbf{e}^{(2)}(H^4_2,H_2)+Q_\mathbf{e}^{(3)}(H^4,H_2,H_2)=0.\label{def H^4_4}
\end{equation}
for $Q_\mathbf{e}^{(m)}$ the $m$-linear terms of the development of $h\mapsto(\Ric-\Lambda + \delta^*_{\mathbf{e}+h}B_\mathbf{e})(\mathbf{e}+h)$ around $h=0$. The equality \eqref{def H^4_4} is exactly the equation satisfied by the next term in the development of $\underline{h}_4$ defined in \eqref{def mu k} or \eqref{def mu k zeta}. That was what made $\underline{h}_4$ non unique: one could add any element of the kernel of $P_\mathbf{eh}$ asymptotic to a constant $2$-tensor to it. We can make it unique (and still existing) by imposing that we have for the above $H^4_4$: $$\underline{h}_4=H_4 +H^4_4+ \mathcal{O}(r^{-2+\epsilon}).$$
\end{rem}
\begin{proof}
    Consider $\chi$ a cut-off function supported in the neighborhood of the singular point $j\in M_o$. From $ P_\mathbf{e}H^4 =0 $,  $r_\mathbf{e}^k|\nabla_\mathbf{e}^k(\mathbf{g}_o-\mathbf{e})|_\mathbf{e}=\mathcal{O}(r_\mathbf{e}^2)$ and using \eqref{equation H^4_2}, we find $$ P_o(\chi (H^4+H^4_2))-\sum_{k}\lambda_k\overline{\mathbf{o}}^4_k \in r_o^{-2}C^\alpha_\beta(\mathbf{g}_o)$$ for $0<\beta<1$ where the norms are defined in the Appendix \ref{function spaces}.
    
    Moreover, by integration by parts and using $\overline{\mathbf{o}}^4_k\perp_{L^2(\mathbf{g}_o)} \mathbf{O}(\mathbf{g}_o)$, we find
    \begin{equation}
    \begin{aligned}
        \int_{M_o}\big\langle P_o&(\chi (H^4+H^4_2)),\mathbf{v}_l\big\rangle_{\mathbf{g}_o}dv_{\mathbf{g}_o}\\
        =&\int_{M_o}\big\langle \chi (H^4+H^4_2),P_o(\mathbf{v}_l)\big\rangle_{\mathbf{g}_o}dv_{\mathbf{g}_o}\\
        &+\frac{1}{2}\lim_{r\to 0}\int_{\{r_o=r\}}\langle \nabla_{\partial_{r_o}} (\chi H^4),\mathbf{v}_l\rangle_{\mathbf{g}_o} - \langle \chi H^4,\nabla_{\partial_{r_o}} \mathbf{v}_l\rangle_{\mathbf{g}_o}dv_{\{r_o=r\}} \\
        &+\frac{1}{2}\lim_{r\to 0}\int_{\{r_o=r\}}\langle \nabla_{\partial_{r_o}} (\chi H^4_2),\mathbf{v}_l\rangle_{\mathbf{g}_o} - \langle \chi H^4_2,\nabla_{\partial_{r_o}} \mathbf{v}_l\rangle_{\mathbf{g}_o}dv_{\{r_o=r\}}\\
        =& 3 \int_{\mathbb{S}^3\slash\mathbb{Z}_2}\langle H^4,V_{l,2} \rangle_\mathbf{e} dv_{\mathbb{S}^3\slash\mathbb{Z}_2}+ \int_{\mathbb{S}^3\slash\mathbb{Z}_2} \langle H^4_ 2,V_{l,0}\rangle_\mathbf{e}dv_{\mathbb{S}^3\slash\mathbb{Z}_2}
    \end{aligned}\label{dvp 1st obst orb}
    \end{equation}
    Indeed, since once restricted to $\mathbb{S}^3\slash\mathbb{Z}_2$, the coefficients of $H^4$ and $V_{l,0}$ are eigenfunctions of the Laplace-Beltrami operator and associated to different eigenvalues, the product against the first asymptotic term $V_{l,0}$ vanishes. Note that this also justifies the orthogonality of \eqref{eq prol H4}. For the second term $V_{l,2}$, we use the homogeneity of $H^4$ and $V_{l,2}$ which yield $\nabla_{\partial_{r_\mathbf{e}}}H^4 = -\frac{4}{r_\mathbf{e}}H^4$ and $\nabla_{\partial_{r_\mathbf{e}}}V_{l,2} = \frac{2}{r_\mathbf{e}}V_{l,2}$ and the above value. Thanks to the Fredholm properties of $P_o$ proved in \cite{ozu2}, we have the stated result.
    
    When $\lambda_k =0$ for $k\in \{1,2,3\}$, then, we can use Lemma \ref{def H42}, hence plug $H^4_2=0$ into \eqref{dvp 1st obst orb} and find the expected value.
\end{proof}

Similarly, if $(M_o,\mathbf{g}_o)$ has several $\mathbb{R}^4\slash\mathbb{Z}_2$ singularities, denote $\overline{h}^4_{j}$ the symmetric $2$-tensors of \eqref{eq prol H4} asymptotic to $H^4(\zeta^\pm_j) + H_2^4(\zeta^\pm_j)$ at the singular point $j$. The symmetric $2$-tensor $$\overline{h}^4:=\sum_{j \text{ singular}}\overline{h}^4_{j}$$ satisfies
\begin{equation}
  \left\{
      \begin{aligned}
        P_o \overline{h}^4 &= \sum_k \nu_l\mathbf{v}_l+\sum_{j}\sum_k \lambda_k^j\overline{\mathbf{o}}_{j,k}^4 ,\\
        \overline{h}^4&= H^4(\zeta^\pm_j) + H^4_2(\zeta^\pm_j)+ \mathcal{O}(r_o^{-\epsilon}) \text{ for any } \epsilon>0 \text{ at each singular point } j\\
        \overline{h}^4&\perp \mathbf{O}(\mathbf{g}_o).\label{obstructino prol H4 a tous les pts}
      \end{aligned}
    \right.
\end{equation}
\subsubsection{First variation of the initial obstruction}\label{section first var lambda}
By \cite[Proposition 5.17]{ozu2}, around a singular point $j$, in coordinates where $\mathbf{g}_o = \mathbf{e} + H_{j,2}+\mathcal{O}(r_\mathbf{e}^4)$, for $v\in\mathbf{O}(\mathbf{g}_o)$ small enough and with $v= V_{j,0}+V_{j,2} + \mathcal{O}(r_\mathbf{e}^4)$ in the same coordinates, we have
$$\mathbf{g}_v = (\mathbf{e}+\tilde{V}_{j,0}) + (H_{j,2} + \tilde{V}_{j,2}) + \mathcal{O}(r_\mathbf{e}^4),$$
with $ \tilde{V}_{j,0}=V_{j,0} + \mathcal{O}(\|v\|^2_{L^2(\mathbf{g}_o)}) $ and $\tilde{V}_{j,2} = V_{j,2} + \mathcal{O}(\|v\|^2_{L^2(\mathbf{g}_o)}r_\mathbf{e}^2)$. We also denote $\psi(\tilde{V}_{j,0}) : \mathbb{R}^4\slash\mathbb{Z}_2\to\mathbb{R}^4\slash\mathbb{Z}_2$ the linear isomorphism for which $ \psi(\tilde{V}_{j,0})^*\mathbf{e} = \mathbf{e}+\tilde{V}_{j,0} $.

For $k\in \{1,2,3\}$ and a singular point $j$, and $\zeta = (\zeta_j)_j$ with $\zeta_j\in \mathbb{R}^3\backslash\{0\}$, we will denote $\lambda_k^j(\zeta,v)$ the real numbers of \eqref{valeur lambda k psi zeta} obtained for $ H_0 = \tilde{V}_{j,0} $ and $ H_2 =H_{j,2} + \tilde{V}_{j,2} $, that is for $c>0$,
\begin{equation}
    \lambda_k^j(\zeta,v)= c\big\langle \mathbf{R}^\pm\big(\psi(\tilde{V}_{j,0})_*(H_{j,2} + \tilde{V}_{j,2} )\big)\zeta_j^\pm\;,\; \zeta_{j,(k)}^\pm \big\rangle.\label{exp lambda jk}
\end{equation}

We find a link between the variations of the first obstruction $\lambda_1$ on the Eguchi-Hanson metric and the first obstructions $\nu_l$ on the orbifold.
\begin{cor}
    Consider $(M_o,\mathbf{g}_o)$ an integrable Einstein orbifold with only $\mathbb{R}^4\slash\mathbb{Z}_2$ singularities. Consider also a small enough $v\in \mathbf{O}(\mathbf{g}_o)$, the orthonormal basis $(\mathbf{v}_l)_l$ of Proposition \ref{premiere obst orbifold 1 pt} and the Einstein deformation $\mathbf{g}_v$ of $\mathbf{g}_o$ of Definition \ref{einstein integrable deformations}. Assume that we have $\lambda_k^j(\zeta,0) = 0$ for all $k,j$.
    
    Then, for some constant $C\in\mathbb{R}^*$, we have the following control:
    \begin{equation}
        \sum_{j \text{ singular}}|\zeta_j|\lambda^j_1(\zeta,v)= C\sum_l\langle v,\mathbf{v}_l\rangle\nu_l(\zeta,0) + \mathcal{O}(\|v\|_{L^2}^2|\zeta|^2),\label{estimée somme lambda 1}    \end{equation}
    where the $\nu_l$ are real numbers of \eqref{obstructino prol H4 a tous les pts}, and where $|\zeta| = \max_j|\zeta_j|$.
\end{cor}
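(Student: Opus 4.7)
The plan is to expand both sides of the stated identity to first order in $v$ and to show that both reduce, up to a common constant, to the sum of integrals $\sum_j \int_{\mathbb{S}^3\slash\mathbb{Z}_2}\langle H^4(\zeta_j^\pm), V_{j,2}\rangle_{\mathbf{e}}\,dv_{\mathbb{S}^3\slash\mathbb{Z}_2}$, where $V_{j,2}$ denotes the $r_{\mathbf{e}}^2$-asymptotic term of $v$ at the singular point $j$.

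For the right-hand side, I would repeat the integration by parts of \eqref{dvp 1st obst orb}, but with the test tensor $v = \sum_l \langle v, \mathbf{v}_l\rangle \mathbf{v}_l$ in place of a fixed $\mathbf{v}_l$. The hypothesis $\lambda_k^j(\zeta,0) = 0$ combined with Lemma \ref{def H42} gives $H^4_2(\zeta_j^\pm) = 0$, so the boundary term involving $H^4_2$ drops and one obtains
$$\sum_l \langle v, \mathbf{v}_l\rangle \nu_l(\zeta,0) = 3\sum_{j \text{ singular}} \int_{\mathbb{S}^3\slash\mathbb{Z}_2}\langle H^4(\zeta_j^\pm), V_{j,2}\rangle_\mathbf{e}\,dv_{\mathbb{S}^3\slash\mathbb{Z}_2}.$$

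For the left-hand side, I would expand $\lambda_1^j(\zeta, v)$ via \eqref{exp lambda jk}, using $\tilde V_{j,0} = V_{j,0} + \mathcal{O}(\|v\|_{L^2}^2)$ and $\tilde V_{j,2} = V_{j,2} + \mathcal{O}(\|v\|_{L^2}^2 r_\mathbf{e}^2)$ from \cite[Proposition 5.17]{ozu2}. The zeroth-order term vanishes by hypothesis. The first-order variation splits into a direct contribution from $V_{j,2}$, namely $c\langle \mathbf{R}^\pm(V_{j,2})\zeta_j^\pm, \zeta_{j,(1)}^\pm\rangle$, and a contribution from the variation of $\psi(\tilde V_{j,0})$, which transforms the linearized selfdual curvature operator $\mathbf{R}^\pm(H_{j,2}) \in \textup{End}(\Omega^\pm)$ by a commutator $[A^\pm, \mathbf{R}^\pm(H_{j,2})]$ for an element $A^\pm$ determined by $V_{j,0}$. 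Since $\lambda_k^j(\zeta,0) = 0$ for $k\in\{1,2,3\}$ and $(\zeta_{j,(k)}^\pm)_k$ is a basis of $\Omega^\pm$, we have $\mathbf{R}^\pm(H_{j,2})\zeta_j^\pm = 0$, and the symmetry of this curvature endomorphism then makes the commutator pairing $\langle [A^\pm, \mathbf{R}^\pm(H_{j,2})]\zeta_j^\pm, \zeta_j^\pm\rangle$ vanish. After multiplying by $|\zeta_j|$ and using $\zeta_{j,(1)}^\pm = \zeta_j^\pm/|\zeta_j|$, this leaves
$$|\zeta_j|\lambda_1^j(\zeta, v) = c\big\langle \mathbf{R}^\pm(V_{j,2})\zeta_j^\pm, \zeta_j^\pm\big\rangle + \mathcal{O}(\|v\|_{L^2}^2 |\zeta|^2).$$

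To match this with the integral obtained for the right-hand side, I would use the identity that the quadratic form $K\mapsto \langle \mathbf{R}^\pm(K)\zeta_j^\pm, \zeta_j^\pm\rangle$ coincides, up to a universal nonzero constant, with $\int_{\mathbb{S}^3\slash\mathbb{Z}_2}\langle H^4(\zeta_j^\pm), K\rangle_\mathbf{e}\,dv_{\mathbb{S}^3\slash\mathbb{Z}_2}$ for any $r_\mathbf{e}^2$-homogeneous transverse traceless tensor $K$; this follows from the same integration by parts as in \eqref{valeur nu l} applied to a model $\mathbb{R}^4\slash\mathbb{Z}_2$ with leading behaviour $\mathbf{e} + K$. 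Summing over $j$ then gives the stated equality with $C$ equal to this universal constant divided by $3$. The main obstacle is the vanishing of the frame contribution: identifying the variation coming from $\psi(\tilde V_{j,0})$ as a commutator with $\mathbf{R}^\pm(H_{j,2})$ requires carefully tracking how a non-isometric linear change of coordinates acts on the metric-dependent splitting $\Lambda^2 = \Omega^+\oplus\Omega^-$, and it is precisely the vanishing of the initial obstructions $\lambda_k^j(\zeta,0)$ which makes the residual cross-terms disappear.
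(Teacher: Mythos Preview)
Your proposal is correct and follows essentially the same approach as the paper: both compute the first variation of $v\mapsto\sum_j|\zeta_j|\lambda_1^j(\zeta,v)$ at $v=0$, reduce it to the integral $\sum_j\int_{\mathbb{S}^3\slash\mathbb{Z}_2}\langle H^4(\zeta_j^\pm),V_{j,2}\rangle_\mathbf{e}\,dv$, and match this with the formula \eqref{valeur nu l} for $\nu_l$. The paper's proof is more condensed, invoking \cite[Proposition~3.2]{biq1} together with the relation $H^4(\zeta_j^\pm)=\tfrac{|\zeta_j|}{2}O_{j,1}^4(\zeta_j^\pm)$ of Remark~\ref{O14 prop a H4} to pass directly to the integral, whereas you expand \eqref{exp lambda jk} by hand and explicitly dispose of the frame contribution from $\psi(\tilde V_{j,0})$ via the commutator observation $\langle[A^\pm,\mathbf{R}^\pm(H_{j,2})]\zeta_j^\pm,\zeta_j^\pm\rangle=0$ (using $\mathbf{R}^\pm(H_{j,2})\zeta_j^\pm=0$ and symmetry) --- a step the paper absorbs into its citation.
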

\begin{proof}
    Let $(M_o,\mathbf{g}_o)$ be like in the statement above and consider a small enough $ v\in \mathbf{O}(\mathbf{g}_o)$ and $\mathbf{g}_v$ the Einstein deformation of $\mathbf{g}_o$ of Definitions \ref{def einst mod obst orbifold} and \ref{einstein integrable deformations}, denote $\lambda_1^j(\zeta,v)$ the constant in \eqref{exp lambda jk} above. Our goal is to show that the first variation of $v\mapsto \sum_j|\zeta_j|\lambda_1^j(\zeta,v)$ around $(\zeta,0)$ is proportional to $\sum_k\langle v,\mathbf{v}_l\rangle\nu_l(\zeta,0)$.
    
    From the expression \eqref{exp lambda jk} and the formula \cite[Proposition 3.2]{biq1}, we see that for the singular point $j$, the first variation of $v\mapsto\sum_j|\zeta_j|\lambda_1^j(\zeta,v)$ is
    $$\sum_j|\zeta_j|\int_{\mathbb{S}^3\slash\mathbb{Z}_2} \big\langle V_2,O_{j,1}^4 \big\rangle_\mathbf{e} dv_{\mathbb{S}^3\slash\mathbb{Z}_2},$$
    up to a multiplicative constant. Using the equality $H^4(\zeta_j^\pm) =\frac{|\zeta_j|}{2}O_{j,1}^4(\zeta_j^\pm)$ together with \eqref{obstructino prol H4 a tous les pts}, we see that this equals $\sum_k\langle v,\mathbf{v}_l\rangle\nu_l$ up to a multiplicative constant and the estimate \eqref{estimée somme lambda 1} follows.
\end{proof}

\subsubsection{Values for the second obstruction}

    Let $H_2$ be a quadratic symmetric $2$-tensor with $P_\mathbf{e}H_2 = \Lambda \mathbf{e}$, and $H_4$ with $ |H_4|_\textbf{e} \sim r_\mathbf{e}^4$ and $P_\mathbf{e}H_4 = \Lambda H_2 - Q_\mathbf{e}^{(2)}(H_2,H_2)$, and recall the notation $\mathbf{R}^+(H_2)$ as the common selfdual part of the curvature at $r_\mathbf{e}=0$ of Riemannian metrics with a development $\mathbf{e} + H_2 +\mathcal{O}(r_\mathbf{e}^3)$. As seen above, we have $\lambda_k:= c\big\langle\mathbf{R}^+(H_2)(\omega_1^+),\omega_k^+\big\rangle$ for the $\lambda_k$ of \eqref{def lambda k} and $c>0$. In particular, if for all $k$, $\lambda_k = 0$, then we have
        $$\mathbf{R}^+(H_2)=\begin{bmatrix}0&0&0\\
0&R_{22}&R_{23}\\
0&R_{32}&R_{33}
\end{bmatrix},
    $$ and in this case, for a positive constant $C>0$, by \cite[Lemme 9]{biq2} we have 
    \begin{equation}
        \mu_1 = C\det \begin{bmatrix}
R_{22}&R_{23}\\
R_{32}&R_{33}
\end{bmatrix}\label{expresion mu 1}
    \end{equation}
    
    \begin{rem}\label{signe mu1 ricci plat}
        If $\Ric(H_2)=0$, then we have $\textup{tr}\mathbf{R}^+(H_2)=0$ and therefore $$\det \begin{bmatrix}
R_{22}&R_{23}\\
R_{32}&R_{33}
\end{bmatrix}\leqslant0$$
with equality if and only if $\mathbf{R}^+(H_2)=0$.
    \end{rem}
    
    In general, we might only know that the $\lambda_k$ are small rather than exactly vanishing. We still have the following estimate.
    \begin{lem}\label{mu1 approche}
        Let $H_2$ and $H_4$ be homogeneous symmetric $2$-tensors as above. We have
    \begin{equation}
        \mu_1 = C\det \begin{bmatrix}
R_{22}&R_{23}\\
R_{32}&R_{33}
\end{bmatrix} + \mathcal{O}\big(\sum_k|\lambda_k|\cdot|r_e^{-2}H_2|_{g_e}\big).\label{expresion mu 1 approx}
    \end{equation}
    \end{lem}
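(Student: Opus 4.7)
The plan is a perturbation argument: reduce to the case $\lambda_k=0$ handled by \eqref{expresion mu 1} by splitting $H_2$ into a $\lambda_k$-vanishing part plus a small corrective piece, and bound the induced variation of $\mu_1$.

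First, write $H_2 = H_2^\flat + H_2^\sharp$, where $H_2^\sharp$ is a homogeneous quadratic symmetric $2$-tensor satisfying $P_\mathbf{e} H_2^\sharp = 0$, $\Ric(H_2^\sharp)=0$ and $\mathbf{R}^\mp(H_2^\sharp)=0$, while $\mathbf{R}^\pm(H_2^\sharp)$ equals the first row and column of $\mathbf{R}^\pm(H_2)$ in the chosen basis, i.e.\ the entries encoding the $\lambda_k$. Such an $H_2^\sharp$ exists because the linear map $H_2\mapsto \mathbf{R}^\pm(H_2)$ on quadratic symmetric $2$-tensors with $P_\mathbf{e} H_2=\Lambda\mathbf{e}$ surjects onto trace-constrained symmetric $3\times 3$ matrices, and it may be chosen with $|r_\mathbf{e}^{-2}H_2^\sharp|_{\mathbf{e}}\leqslant C\sum_k|\lambda_k|$. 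By linearity $H_2^\flat$ still satisfies $P_\mathbf{e} H_2^\flat=\Lambda\mathbf{e}$, has $\lambda_k(H_2^\flat)=0$ for all $k$, and shares with $\mathbf{R}^\pm(H_2)$ the same bottom-right $2\times 2$ block $\begin{bmatrix}R_{22}&R_{23}\\ R_{32}&R_{33}\end{bmatrix}$.

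Next, pick $H_4^\flat$ solving $P_\mathbf{e} H_4^\flat=\Lambda H_2^\flat-Q_\mathbf{e}^{(2)}(H_2^\flat,H_2^\flat)$ and apply the exact formula \eqref{expresion mu 1} to the pair $(H_2^\flat,H_4^\flat)$: since all first-order obstructions vanish for $H_2^\flat$, this yields $\mu_1(H_2^\flat,H_4^\flat)=C\det\begin{bmatrix}R_{22}&R_{23}\\ R_{32}&R_{33}\end{bmatrix}$. Finally, estimate the difference $\mu_1(H_2,H_4)-\mu_1(H_2^\flat,H_4^\flat)$. The construction of $\underline{h}_2$ and $\underline{h}_4$ from the asymptotic data $(H_2,H_4)$ by inversion of $P_\mathbf{eh}$ modulo $\mathbf{O}(\mathbf{eh})$ is linear in $(H_2,H_4)$, so $\mu_1$ is polynomial of degree two in the coefficients of $H_2$ and of degree one in those of $H_4$. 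Expanding bilinearly along $(H_2^\flat,H_2^\sharp)$ and the corresponding $(H_4^\flat,H_4^\sharp:=H_4-H_4^\flat)$, every term of $\mu_1-\mu_1^\flat$ carries at least one factor of $H_2^\sharp$ or $H_4^\sharp$; combining $|r_\mathbf{e}^{-2}H_2^\sharp|_{\mathbf{e}}\lesssim \sum_k|\lambda_k|$ with the analogous bound on $H_4^\sharp$ (from the corresponding affine equation) produces the claimed error $\mathcal{O}\bigl(\sum_k|\lambda_k|\cdot|r_\mathbf{e}^{-2}H_2|_{\mathbf{e}}\bigr)$.

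The hard part will be making the polynomial dependence of $\mu_1$ on $(H_2,H_4)$ explicit enough to extract a sharp first-order bound: concretely, one must revisit the integration-by-parts derivation of \cite[Lemme 9]{biq2} and track the new asymptotic term $H^4_2\neq 0$ appearing in the development of $\underline{h}_2$, which arises precisely because $\lambda_k(H_2^\sharp)\neq 0$ (see Lemma \ref{def H42}), together with its propagation into the pairing defining $\mu_1$. The saving feature is that each such extra contribution is of order $|H_2^\sharp|_{\mathbf{e}}$, which provides exactly the $\sum_k|\lambda_k|$ factor in the final estimate.
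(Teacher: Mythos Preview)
Your approach is correct in outline but takes a longer route than the paper and has a small wrinkle in the construction of $H_2^\sharp$. You ask for $H_2^\sharp$ with $P_\mathbf{e} H_2^\sharp=0$ (hence $\textup{tr}\,\mathbf{R}^\pm(H_2^\sharp)=0$) while also carrying exactly the first row and column of $\mathbf{R}^\pm(H_2)$ and vanishing bottom-right block; but then $\textup{tr}\,\mathbf{R}^\pm(H_2^\sharp)=c^{-1}\lambda_1$, which need not vanish. Equivalently, $H_2^\flat$ cannot simultaneously satisfy $P_\mathbf{e} H_2^\flat=\Lambda\mathbf{e}$, have $\lambda_k(H_2^\flat)=0$, and keep the bottom-right block $(R_{22},R_{23},R_{32},R_{33})$ unchanged. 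This is harmless---the discrepancy is $\mathcal{O}(\lambda_1)$ on the diagonal, so the induced change in the $2\times2$ determinant is $\mathcal{O}(|\lambda_1|\cdot|r_\mathbf{e}^{-2}H_2|_\mathbf{e})$ and is absorbed in the stated error---but it should be acknowledged.

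The paper's proof bypasses both this issue and the $H_4$/$\underline{h}_4$ bookkeeping you flag as ``the hard part''. It invokes the computation of \cite[Lemmes 3 and 8]{biq2}, which shows directly that $\mu_1$ is a \emph{quadratic form in the six entries of} $\mathbf{R}^+(H_2)$ (in particular, independent of $H_4$). Once that is granted, the lemma is pure algebra: restricting this quadratic form to $\lambda_1=\lambda_2=\lambda_3=0$ yields the determinant by \eqref{expresion mu 1}, so every remaining monomial contains at least one factor $\lambda_k$ times another curvature entry, and all curvature entries are controlled by $|r_\mathbf{e}^{-2}H_2|_\mathbf{e}$. There is thus no need to build $H_2^\sharp$ as a $2$-tensor, no need to solve for $H_4^\flat$, and no need to track the extra asymptotic term $H^4_2$ of Lemma~\ref{def H42}. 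Your decomposition effectively re-derives this quadratic dependence at the level of tensors rather than curvature coefficients; both routes land in the same place, but the paper's is substantially shorter.
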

    \begin{proof}
    According to \cite[Lemma 7]{biq2}, we have
         $$\mathbf{R}^+(H_2)=\begin{bmatrix}c^{-1}\lambda_1&c^{-1}\lambda_2&c^{-1}\lambda_3\\
        c^{-1}\lambda_2&R_{22}&R_{23}\\
        c^{-1}\lambda_3&R_{32}&R_{33}
\end{bmatrix}.
    $$
    Exactly like in the proof of \cite[Lemma 8]{biq2}, we use the expression of $Q_e^{(2)}$ given in \cite[Lemma 3]{biq2} in order to express $\mu_1$. The value of $\mu_1$ is a quadratic form in the coefficients of $\mathbf{R}^+(H_2)$. It is composed of products of a $\lambda_k$ and a $\lambda_j$, products of the $\lambda_k$ with the remaining $R_{ij}$, and its last part involving $R_{22}$, $R_{23}$, $R_{32}$ and $R_{33}$ is 
    $$C\det \begin{bmatrix}
R_{22}&R_{23}\\
R_{32}&R_{33}
\end{bmatrix},$$
as proven in \cite[Lemma 8]{biq2}. Since all of the $\lambda_k$ as well as the $R_{ij}$ are linear in $H_2$ and are therefore are controlled by $|r_e^{-2}H_2|_{g_e}$. We deduce the stated estimate.
    \end{proof}
    For another Eguchi-Hanson metric $\mathbf{eh}_{\zeta^\pm}$, one has similar estimates with the matrix expression of $\mathbf{R}^+(H_2)$ in the basis $ (\zeta^\pm,|\zeta|\cdot\zeta^\pm_{(2)},|\zeta|\cdot\zeta^\pm_{(3)}) $ on $\mathbb{R}\cdot\zeta^\pm$ and its orthogonal. Note that in that case, we have $\lambda_k\sim |\zeta|$ and $\mu_1\sim |\zeta|^2$.
    
    \subsection{Approximate Einstein metrics and obstructions}
    
    Let $(M_o,\mathbf{g}_o)$ be a compact Einstein orbifold with integrable Einstein deformations and only $\mathbb{R}^4\slash\mathbb{Z}_2$ singularities. For $v\in\mathbf{O}(\mathbf{g}_o)$ small enough and $\zeta = (\zeta_j^\pm)_j$ with $\zeta_j\in \mathbb{R}^3\backslash\{0\}$, we recall the notations:
    \begin{enumerate}
        \item $\mathbf{g}_v$ the Einstein deformation of $\mathbf{g}_o$ of Definition \ref{einstein integrable deformations},
        \item for a singular point $j$, symmetric $2$-tensors $\underline{h}_{j,2}(v,\zeta)$ and $\underline{h}_{j,4}(v,\zeta)$ of \eqref{def lambda k zeta} and \eqref{def mu k zeta} by extending the asymptotic terms of $\mathbf{g}_v$ on $\mathbf{eh}_{\zeta_j^\pm}$ and
        \item a symmetric 
        $2$-tensor $\overline{h}^4(v,\zeta)$ of \eqref{obstructino prol H4 a tous les pts} extending the $r^{-4}$ term of $\mathbf{eh}_{\zeta_j^\pm}$ on $(M_o,\mathbf{g}_v)$.
        \item $g^D$ or $g^D$ for the naïve desingularization and $r_D$ the associated radial parameter, see Definitions \ref{desing eh} and \ref{rD}
    \end{enumerate}

    \begin{defn}[Approximate Einstein modulo obstructions metric $g^A_{\zeta,v}$]\label{def gAv zeta}
        We define an approximate Einstein modulo obstructions metric $g^A_{\zeta,v}$ as the naïve gluing of the metrics $\mathbf{eh}_{\zeta_j^\pm}+\underline{h}_{j,2}(v,\zeta)+\underline{h}_{j,4}(v,\zeta)$ at the singular points $j$ of $(M_o,\mathbf{g}_v+\overline{h}^4(v,\zeta))$.
        
    \end{defn}
    
    There exists $C=C(\mathbf{g}_o)>0$ such that we have
    \begin{itemize}
        \item $r_D^k|\nabla^k_{\mathbf{eh}_{\zeta_j^\pm}}\underline{h}_2|\leqslant Cr_D^2$,
        \item $r_D^k|\nabla^k_{\mathbf{eh}_{\zeta_j^\pm}}\underline{h}_4|\leqslant Cr_D^4$, and
        \item $r_D^k|\nabla^k_{\mathbf{eh}_{\zeta_j^\pm}}\overline{h}^4|\leqslant C|\zeta|^2r_D^{-4}$,
    \end{itemize}
    we have the following controls:
    on $M_o(\epsilon)$ (defined in \eqref{eq Moepsilon})
    \begin{equation}
        r_D^k\big|\nabla^k(g^A_{\zeta,v}-g^D_{\zeta,v})\big|_{g^D_{\zeta,v}}\leqslant C |\zeta|^2,\label{gA-gD 1}
    \end{equation}
    and on the connected component of $M\backslash M_o(\epsilon)$ containing the singular point $j\in S$:
    \begin{align}
        r_D^k\big|\nabla^k(g^A_{\zeta,v}-g^D_{\zeta,v})\big|_{g^D_{\zeta,v}}\leqslant C \left(\mathbb{1}_{\{r_D>|\zeta_j|^\frac{1}{4}\}}|\zeta_j|^2r_D^{-4} + \mathbb{1}_{\{r_D<2|\zeta_j|^\frac{1}{4}\}}r_D^2\right).\label{gA-gD 2}
    \end{align}
    where $\mathbb{1}_{A}$ is the indicator function for $A$.
    
    We now approximate the kernel of the linearization of the Einstein operator at $g^A_{\zeta,v}$.
    \begin{defn}[Approximate obstructions $\tilde{\mathbf{O}}(g^A_{\zeta,v})$]\label{def tilde O gAv zeta}
        We define $\tilde{\mathbf{O}}(g^A_{\zeta,v})$ the linear space spanned by the infinitesimal variations of $(v,\zeta)\mapsto g^A_{\zeta,v}$.
        
        More precisely for the element $ \mathbf{o}_k \in \mathbf{O}(\mathbf{eh}_{\zeta_j^\pm})$ with $\mathbf{o}_k = \partial_{\zeta_{j,(k)}}\mathbf{eh}_{\zeta_j^\pm}$, we define $$\tilde{\mathbf{o}}_k:=\partial_{\zeta_{j,(k)}}g^A_{\zeta,v}$$
        and, for $\mathbf{w}\in \mathbf{O}(\mathbf{g}_v)$, we define $\tilde{\mathbf{w}}$ the associated infinitesimal deformation of $v\mapsto g^A_{\zeta,v}$.
    \end{defn}
\begin{rem}
    As in Remark \ref{rem OgD ozu}, the space $\tilde{\mathbf{O}}(g^A_{\zeta,v})$ is very close to the space $\tilde{\mathbf{O}}(g^D_{\zeta,v})$ of \cite{ozu2} and even yields better estimate. The results of \cite{ozu2} therefore hold when replacing  $\tilde{\mathbf{O}}(g^D_{\zeta,v})$ by $\tilde{\mathbf{O}}(g^A_{\zeta,v})$.
\end{rem}
    
    We have the following estimates which show that the space $\tilde{\mathbf{O}}(g^A_{\zeta,v})$ is an approximate $L^2$-cokernel of the linearization of the Einstein equation.
    \begin{lem}\label{proj contre obst}
    For $\tilde{\mathbf{o}}_k\in \tilde{\mathbf{O}}(g^A_{\zeta,v})$ and $\tilde{\mathbf{w}}\in \tilde{\mathbf{O}}(g^A_{\zeta,v})$ as in Definition \ref{def tilde O gAv zeta} above, we have for any symmetric $2$-tensor $h$ on M,
        \begin{equation}
            \big|\langle P_{g^D_{\zeta,v}}h,\tilde{\mathbf{o}}_k \rangle_{L^2(g^D_{\zeta,v})}\big|\leqslant C |\zeta|\cdot \|h\|_{C^{2,\alpha}_{\beta,*}(g^D_{\zeta,v})}\|\mathbf{o}_k\|_{L^2(\mathbf{eh}_{\zeta_j^\pm})} \text{ and } \label{projection contre obst ale}
        \end{equation}
        \begin{equation}
            \big|\langle P_{g^D_{\zeta,v}}h,\tilde{\mathbf{w}} \rangle_{L^2(g^D_{\zeta,v})}\big|\leqslant C |\zeta|\cdot \|h\|_{C^{2,\alpha}_{\beta,*}(g^D_{\zeta,v})}\|\mathbf{w}\|_{L^2(\mathbf{g}_o)}.\label{projection contre obst orb}
        \end{equation}
    \end{lem}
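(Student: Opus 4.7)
The plan is to shift $P_{g^D_{\zeta,v}}$ from $h$ onto $\tilde{\mathbf{o}}_k$ (respectively $\tilde{\mathbf{w}}$) by integration by parts, and then exploit the fact that these tensors are, by construction, infinitesimal deformations of an approximately Einstein modulo obstructions metric. Since $M$ is compact and $P_{g^D_{\zeta,v}}$ is formally self-adjoint on symmetric $2$-tensors, there are no boundary terms and
$$\bigl\langle P_{g^D_{\zeta,v}} h,\, \tilde{\mathbf{o}}_k\bigr\rangle_{L^2(g^D_{\zeta,v})} = \bigl\langle h,\, P_{g^D_{\zeta,v}} \tilde{\mathbf{o}}_k\bigr\rangle_{L^2(g^D_{\zeta,v})}.$$
Pairing in the weighted Hölder duality adapted to $C^{2,\alpha}_{\beta,*}(g^D_{\zeta,v})$, it suffices to estimate $P_{g^D_{\zeta,v}} \tilde{\mathbf{o}}_k$ in the corresponding weighted $L^1$-type dual norm and to show that this quantity is bounded by $C |\zeta|\, \|\mathbf{o}_k\|_{L^2(\mathbf{eh}_{\zeta_j^\pm})}$.

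To carry out the dual estimate, I would split
$$P_{g^D_{\zeta,v}} \tilde{\mathbf{o}}_k = P_{g^A_{\zeta,v}} \tilde{\mathbf{o}}_k + \bigl(P_{g^D_{\zeta,v}} - P_{g^A_{\zeta,v}}\bigr) \tilde{\mathbf{o}}_k.$$
The second term is controlled by $g^A_{\zeta,v} - g^D_{\zeta,v}$ and its first two derivatives, whose sizes are given by \eqref{gA-gD 1}--\eqref{gA-gD 2}; region by region (orbifold interior, bubble interior, transition annulus) these give the required $|\zeta|$-smallness after integration against the $C^{2,\alpha}_{\beta,*}$ test tensor. For the first term, I would use the identity $\tilde{\mathbf{o}}_k = \partial_{\zeta_{j,(k)}} g^A_{\zeta,v}$ and differentiate the defining equations \eqref{def lambda k zeta} and \eqref{def mu k zeta} for $\underline{h}_{j,2}$, $\underline{h}_{j,4}$ on the ALE side, together with \eqref{obstructino prol H4 a tous les pts} for $\overline{h}^4$ on the orbifold side. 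The outcome expresses $P_{g^A_{\zeta,v}} \tilde{\mathbf{o}}_k$ as an element of the genuine obstruction spaces $\mathbf{O}(\mathbf{eh}_{\zeta_j^\pm})$ and $\mathbf{O}(\mathbf{g}_v)$, plus $\Lambda$-terms, plus bilinear terms from $Q^{(2)}$, plus cut-off-induced errors supported in the neck. The orthogonality built into Definitions \ref{def tilde O gAv zeta} and \ref{def einst mod obst orbifold}, together with the pointwise weighted bounds $r_D^k|\nabla^k \underline{h}_{j,2}|\leqslant C r_D^2$, etc., forces each of these contributions to pair against $h$ with a gain of at least one factor of $|\zeta|$.

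The case of $\tilde{\mathbf{w}}$ is handled in the same way: differentiating the construction of $g^A_{\zeta,v}$ in the $v$ direction gives $\tilde{\mathbf{w}}$ equal to the honest infinitesimal Einstein deformation $\mathbf{w}\in \mathbf{O}(\mathbf{g}_v)$ on $M_o(\epsilon)$, corrected by the variations of $\underline{h}_{j,2}(v,\zeta)$, $\underline{h}_{j,4}(v,\zeta)$ and $\overline{h}^4(v,\zeta)$, each of which is of size controlled by $|\zeta|$ times $\|\mathbf{w}\|_{L^2(\mathbf{g}_o)}$; the rest of the argument is identical. The main obstacle is careful bookkeeping in the neck region $\{r_D\sim |\zeta|^{1/4}\}$, where the cut-off derivatives produce error terms that must be matched against the asymptotic expansions of $\mathbf{o}_k$ from \eqref{dvp obst avec zeta}, of $\underline{h}_{j,2}$ from Lemma \ref{def H42}, and of $\overline{h}^4$ from \eqref{obstructino prol H4 a tous les pts}. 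This is precisely the purpose of the asymptotic matching prepared in Section \ref{section new section} and of the weighted Schauder theory developed in \cite{ozu2}, and it yields the stated $|\zeta|$-gain.
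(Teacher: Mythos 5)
Your first step — integrate by parts using the self-adjointness of $P_{g^D_{\zeta,v}}$ on the closed manifold $M$, then bound $\langle h, P_{g^D_{\zeta,v}}\tilde{\mathbf{o}}_k\rangle$ region by region — coincides with the paper's. Your extra decomposition $P_{g^D}\tilde{\mathbf{o}}_k = P_{g^A}\tilde{\mathbf{o}}_k + (P_{g^D}-P_{g^A})\tilde{\mathbf{o}}_k$ is a harmless variant; the paper works directly with $P_{g^D}\tilde{\mathbf{o}}_k$, using that on the ALE pieces the operator kills the leading part, that on the orbifold piece differentiating \eqref{obstructino prol H4 a tous les pts} in the $\zeta_{j,(k)}$-direction yields
$P_{g^D_{\zeta,v}}\tilde{\mathbf{o}}_k = \sum_l\partial_{\zeta_{j,(k)}}\nu_l\,\mathbf{w}_l + \sum_{i,m}\partial_{\zeta_{j,(k)}}(\lambda_i^m\overline{\mathbf{o}}^4_{i,m})$, and that on the neck one compares $g^D$ with $\mathbf{eh}_{\zeta_j^\pm}$ and $\tilde{\mathbf{o}}_k$ with $\mathbf{o}_k$ pointwise.

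Where your argument has a real gap is in the explanation of the factor $|\zeta|$. You attribute it to ``the orthogonality built into Definitions \ref{def tilde O gAv zeta} and \ref{def einst mod obst orbifold}.'' But $h$ is an \emph{arbitrary} symmetric $2$-tensor in this lemma, so no orthogonality condition is available to exploit in the pairing $\langle h, P_{g^D}\tilde{\mathbf{o}}_k\rangle$; orthogonality is a red herring here. The actual source of the gain is the quantitative smallness in $|\zeta|$ of the coefficients in the displayed identity and of the neck discrepancies: by the explicit formulas \eqref{valeur obst lambda zeta} and \eqref{valeur nu l} the obstruction constants $\lambda_i^m$ and $\nu_l$ vanish to positive order in $\zeta$, so their $\zeta_{j,(k)}$-derivatives carry the right powers of $|\zeta|$ once the $L^2$-normalization by $\|\mathbf{o}_k\|_{L^2(\mathbf{eh}_{\zeta_j^\pm})}$ is taken into account; and in the gluing annulus $\{|\zeta_j^\pm|^{1/4}<r_D<2|\zeta_j^\pm|^{1/4}\}$ the pointwise bounds on $g^D - \mathbf{eh}_{\zeta_j^\pm}$ and $\tilde{\mathbf{o}}_k - \mathbf{o}_k$ supply the same $|\zeta|$ factor. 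Without making these coefficient and neck estimates explicit, your argument does not reach the stated conclusion. The same remark applies to \eqref{projection contre obst orb}, where the gain comes from the explicit bound $|\partial_{\mathbf{w}}(\lambda_k+\mu_k)|\leqslant C|\zeta|$ obtained by differentiating \eqref{def lambda k zeta} and \eqref{def mu k zeta} in the $v$-direction, not from orthogonality.
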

    \begin{proof}
    
    By integration by parts and symmetry of $P_{g^D_{\zeta_v}}$ on the closed manifold $M$, it is equivalent to control $$\int_{M} \left\langle h, P_{g^D_{\zeta,v}} \tilde{\mathbf{o}}_k \right\rangle_{g^D_{\zeta,v}} dv_{g^D_{\zeta,v}} \text{ and } \int_{M} \left\langle h, P_{g^D_{\zeta,v}} \tilde{\mathbf{w}} \right\rangle_{g^D_{\zeta,v}} dv_{g^D_{\zeta,v}}.$$
    
       On the region where $g^D_{\zeta,v}=\mathbf{eh}_{\zeta_j^\pm}$, we have $P_{g^D_{\zeta,v}}\tilde{\mathbf{o}}_k=0$. There remains the region where $g^D_{\zeta,v}=\mathbf{g}_v$ on which by differentiating \eqref{obstructino prol H4 a tous les pts} with respect to $\zeta_j^\pm$, we find 
       $$ P_{g^D_{\zeta,v}}\tilde{\mathbf{o}}_k = \sum_l\partial_{\zeta_{j,(k)}} \nu_l \mathbf{w}_l + \sum_{i,m}\partial_{\zeta_{j,(k)}} (\lambda_i^m \overline{\mathbf{o}}_{i,m}^4), $$
       where $(\mathbf{w}_l)_l$ denotes an orthonormal basis of $\mathbf{O}(\mathbf{g}_v)$, and
       where the $\partial_{\zeta_{j,(k)}} \nu_l$ are the derivatives of the $\nu_l=\nu_l(v,\zeta)$ of \eqref{obstructino prol H4 a tous les pts} in the direction $\zeta_{j,(k)}$. Given the expressions \eqref{valeur obst lambda zeta}, \eqref{dvp obst avec zeta} and \eqref{obstructino prol H4 a tous les pts}, that gives for some $C = C(\mathbf{g}_o)>0$,
       $$\big\| P_{g^D_{\zeta,v}}\tilde{\mathbf{o}}_k\big\|_{L^2( \mathbf{eh}_{\zeta_j^\pm})} \leqslant C|\zeta|\cdot \|\mathbf{o}_k(\zeta_j^\pm)\|_{L^2( \mathbf{eh}_{\zeta_j^\pm})}. $$
       We find that $$\int_{\{r_D>2|\zeta_j^\pm|^{1/4}\}} \left\langle h, P_{g^D_{\zeta,v}} \tilde{\mathbf{o}}_k \right\rangle_{g^D_{\zeta,v}} dv_{g^D_{\zeta,v}}\leqslant C |\zeta|\cdot\|h\|_{C^0(g^D_{\zeta,v})}\|\mathbf{o}_k(\zeta_j^\pm)\|_{L^2( \mathbf{eh}_{\zeta_j^\pm})}.$$
       
       Finally, on the gluing region where $|\zeta_j^\pm|^\frac{1}{4}<r_D<2|\zeta_j^\pm|^\frac{1}{4}$, we have for all $l\in \mathbb{N}$ and $C_l>0$ $$r_D^l|\nabla^l(g^D_{\zeta,v}-\mathbf{eh}_{\zeta_j^\pm})|_{g^D_{\zeta,v}}\leqslant C_l \big(|\zeta_j^\pm|^2r_D^{-4}+r_D^2\big)$$
       and $$r_D^l|\nabla^l(\tilde{\mathbf{o}}_k-\mathbf{o}_k)|_{g^D_{\zeta,v}}\leqslant C \big(|\zeta_j^\pm|^4r_D^{-8}+|\zeta_j^\pm|^2r_D^{-2}\big),$$
       and we find that on the gluing region where $|\zeta_j^\pm|^\frac{1}{4}<r_D<2|\zeta_j^\pm|^\frac{1}{4}$, we have
       \begin{align*}
           \left|P_{g^D_{\zeta,v}} \tilde{\mathbf{o}}_k\right|_{g^D_{\zeta,v}}&\leqslant\left|P_{g^D_{\zeta,v}}( \tilde{\mathbf{o}}_k - \mathbf{o}_k) \right|_{g^D_{\zeta,v}}+\big|\big(P_{g^D_{\zeta,v}}-P_{\mathbf{eh}_{\zeta_j^\pm}}\big)(\mathbf{o}_k) \big|_{g^D_{\zeta,v}}\\ 
           &\leqslant C|\zeta_j^\pm|\cdot\|\mathbf{o}_k\|_{L^2(\mathbf{eh}_{\zeta_j^\pm})}.
       \end{align*}
       We therefore globally find the estimate \eqref{projection contre obst ale}.
       \\
       
       For the second estimate, note first that on the region where $g^D_{\zeta,v}=\mathbf{g}_v $, we have 
       $$P_{g^D_{\zeta,v}}\mathbf{w}=0,$$
       and by a similar argument to the above one, by differentiating \eqref{def lambda k zeta} and \eqref{def mu k zeta}, on each region where $g^D_{\zeta,v}= \mathbf{eh}_{\zeta_j^\pm}$, we have
        $$P_{g^D_{\zeta,v}}\tilde{\mathbf{w}}=\sum_k \partial_{\mathbf{w}}(\lambda_k+\mu_k) \mathbf{o}_k(\zeta_j^\pm),$$
        and we find that there exists $C=C(\mathbf{g}_o)>0$ such that $|\partial_\mathbf{w}(\lambda_k+\mu_k)| \leqslant C |\zeta|$. Together with a control on the gluing region like the above one, one obtains the estimate \eqref{projection contre obst orb}.
    \end{proof}

    \begin{prop}\label{est einst op gAv zeta}
        Assume that there exists an Einstein metric $\mathbf{g}_{\zeta,v}$ satisfying:
        \begin{enumerate}
            \item $\mathbf{\Phi}_{g^D_{\zeta,v}}(\mathbf{g}_{\zeta,v})= 0$,
            \item $\mathbf{g}_{\zeta,v}- g^A_{\zeta,v}\perp \tilde{\mathbf{O}}(g^A_{\zeta,v})$
            \item $\| \mathbf{g}_{\zeta,v}- g^A_{\zeta,v} \|_{C^{2,\alpha}_{\beta,*}(g^D_{\zeta,v})}<\epsilon$ for $\epsilon>0$ small enough determined in \cite{ozu2} depending only on $\mathbf{g}_o$.
        \end{enumerate}
        Then, we actually have for all $l$ for the obstructions of \eqref{eq prol H4}:
        \begin{equation}
            \nu_l = \mathcal{O}(|\zeta|^\frac{5}{2}),
        \end{equation}
        and for each $j$, and all $k\in \{1,2,3\}$ for the obstructions of \eqref{def lambda k zeta} and \eqref{def mu k zeta}:
        \begin{equation}
            \lambda_k^j+\mu_k^j = \mathcal{O}(|\zeta_j|^\frac{5}{2}).
        \end{equation}
    \end{prop}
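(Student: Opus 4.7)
The plan is to expand the identity $\mathbf{\Phi}_{g^D_{\zeta,v}}(\mathbf{g}_{\zeta,v}) = 0$ around $g^A_{\zeta,v}$ and test the resulting equation against the approximate obstruction elements in $\tilde{\mathbf{O}}(g^A_{\zeta,v})$. Writing $h := \mathbf{g}_{\zeta,v} - g^A_{\zeta,v}$, with $L_{g^A}$ the linearization of $\mathbf{\Phi}_{g^D_{\zeta,v}}$ at $g^A_{\zeta,v}$ and $Q$ its quadratic remainder, the first hypothesis rearranges as $\mathbf{\Phi}_{g^D_{\zeta,v}}(g^A_{\zeta,v}) = -L_{g^A}h - Q(h,h)$. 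On the other hand, by the construction of $g^A_{\zeta,v}$ as a naïve gluing of $\mathbf{eh}_{\zeta_j^\pm}+\underline{h}_{j,2}+\underline{h}_{j,4}$ to $\mathbf{g}_v+\overline{h}^4$ together with \eqref{def lambda k zeta}, \eqref{def mu k zeta} and \eqref{obstructino prol H4 a tous les pts}, the same quantity satisfies
$$\mathbf{\Phi}_{g^D_{\zeta,v}}(g^A_{\zeta,v}) = \sum_l \nu_l\,\mathbf{v}_l + \sum_{j,k}(\lambda_k^j+\mu_k^j)\,\mathbf{o}_k(\zeta_j^\pm) + E,$$
where $E$ gathers the cutoff contributions on the annular necks $\{r_D\sim|\zeta_j|^{1/4}\}$, the tails beyond the matched asymptotic orders $r_\mathbf{e}^4$ (orbifold side) and $|\zeta|^2 r_\mathbf{e}^{-4}$ (ALE side), and higher order nonlinear cross-terms. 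Since the construction has matched the developments up to and including $H^4, H^4_2, H^4_4$ of Remark \ref{H^4_4 terme de h_4}, a direct computation in the necks yields $\|E\|_{L^2(g^D_{\zeta,v})}=\mathcal{O}(|\zeta|^{5/2})$.

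Next, I would take the $L^2(g^D_{\zeta,v})$-pairing of both expressions for $\mathbf{\Phi}_{g^D_{\zeta,v}}(g^A_{\zeta,v})$ against a fixed $\tilde{\mathbf{o}}_k\in\tilde{\mathbf{O}}(g^A_{\zeta,v})$ associated to the singular point $j$, and transfer $L_{g^A}$ onto $\tilde{\mathbf{o}}_k$ by integration by parts on the closed manifold $M$. Since $L_{g^A}$ and $P_{g^D_{\zeta,v}}$ differ by lower order terms controlled by $\|g^A-g^D\|$, Lemma \ref{proj contre obst} extends to $L_{g^A}$ and yields $\|L_{g^A}\tilde{\mathbf{o}}_k\|_{L^2(g^D_{\zeta,v})}\leq C|\zeta|\,\|\mathbf{o}_k\|_{L^2(\mathbf{eh}_{\zeta_j^\pm})}$. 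To turn the qualitative bound $\|h\|<\epsilon$ of the third hypothesis into a quantitative one, I would invoke the Fredholm inverse construction of \cite{ozu2} applied to the sharper ansatz $g^A_{\zeta,v}$ together with the orthogonality $h\perp\tilde{\mathbf{O}}(g^A_{\zeta,v})$ from the second hypothesis; this produces $\|h\|_{C^{2,\alpha}_{\beta,*}(g^D_{\zeta,v})}=\mathcal{O}(|\zeta|^{3/2})$. Combined, the linear pairing contributes $\mathcal{O}(|\zeta|\cdot|\zeta|^{3/2})=\mathcal{O}(|\zeta|^{5/2})$ and the quadratic one $\mathcal{O}(|\zeta|^3)$.

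To isolate each obstruction coefficient, I would exploit the approximate $L^2$-diagonality of $\tilde{\mathbf{O}}(g^A_{\zeta,v})$: the element $\tilde{\mathbf{o}}_k$ is concentrated on the $j$-th Eguchi--Hanson core $\{r_D\leq|\zeta_j|^{1/4}\}$, so its cross-products with $\tilde{\mathbf{w}}$ for $\mathbf{w}\in\mathbf{O}(\mathbf{g}_v)$ or with $\tilde{\mathbf{o}}_{k'}$ from a different singular point are controlled by the tails of the relevant kernel elements there, and the residual integrals vanish up to $\mathcal{O}(|\zeta|^{5/2})$ by the $\mathbb{S}^3\slash\mathbb{Z}_2$ eigenspace separation already used in Lemma \ref{def h0} and Proposition \ref{premiere obst orbifold 1 pt}. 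Using the scale-invariance $\|\mathbf{o}_k(\zeta_j^\pm)\|_{L^2(\mathbf{eh}_{\zeta_j^\pm})}=\|\mathbf{o}_k\|_{L^2(\mathbf{eh})}$, this isolates $\lambda_k^j+\mu_k^j=\mathcal{O}(|\zeta_j|^{5/2})$. The strictly parallel pairing against $\tilde{\mathbf{w}}$ for $\mathbf{w}=\mathbf{v}_l$ yields $\nu_l=\mathcal{O}(|\zeta|^{5/2})$, the weaker factor $|\zeta|$ (rather than a single $|\zeta_j|$) appearing because $\mathbf{v}_l$ is globally defined on $M_o$ and collects contributions from all singular points simultaneously.

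The main obstacle is the precise matching in the neck regions needed to obtain $\|E\|_{L^2}=\mathcal{O}(|\zeta|^{5/2})$: the developments on the two sides of each neck have to cancel through order $|\zeta|^2 r_\mathbf{e}^{-4}$, which is exactly what the refined tensors $\underline{h}_2,\underline{h}_4, \overline{h}^4$ and the adjustments by $H^4_2,H^4_4$ of Remark \ref{H^4_4 terme de h_4} were designed to achieve. The secondary difficulty is the upgrade from the soft bound $\|h\|<\epsilon$ to the quantitative $\|h\|=\mathcal{O}(|\zeta|^{3/2})$, which relies on running the Fredholm inversion of \cite{ozu2} with the sharper ansatz $g^A_{\zeta,v}$ in place of $g^D_{\zeta,v}$ and using the improved bound on the non-obstruction part of $\mathbf{\Phi}_{g^D_{\zeta,v}}(g^A_{\zeta,v})$.
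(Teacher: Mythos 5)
Your outline follows the same skeleton as the paper's proof: compute $\mathbf{\Phi}_{g^D_{\zeta,v}}(g^A_{\zeta,v})$, split off the obstruction-element part $\sum_l\nu_l\mathbf{v}_l + \sum_{j,k}(\lambda_k^j+\mu_k^j)\,\mathbf{o}_k(\zeta_j^\pm)$ from a residue, control $h=\mathbf{g}_{\zeta,v}-g^A_{\zeta,v}$ via the Fredholm inversion of \cite{ozu2} plus the orthogonality hypothesis, and recover the obstruction coefficients by pairing against $\tilde{\mathbf{O}}(g^A_{\zeta,v})$. The paper packages the last two steps into a single citation of \cite[Proposition 5.1]{ozuthese} and \cite[(108)--(112)]{biq1}; you spell them out, but the ingredients are the same.

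One quantitative claim in your write-up needs correcting. The residue is of order $|\zeta|^4 r_D^{-10}$ on the orbifold region $\{r_D>|\zeta|^{1/4}\}$, so its $L^2$-norm behaves like $|\zeta|^4\big(\int_{|\zeta|^{1/4}}^1 r^{-17}\,dr\big)^{1/2}\sim|\zeta|^{2}$ rather than $\mathcal{O}(|\zeta|^{5/2})$, and the crude bound $|\langle E,\tilde{\mathbf{o}}_k\rangle|\leqslant\|E\|_{L^2}\|\tilde{\mathbf{o}}_k\|_{L^2}$ would then only give $\mathcal{O}(|\zeta|^2)$, too weak for the stated conclusion. The resolution is precisely the one the paper implements: work in the weighted norm $r_D^{-2}C^{\alpha}_{\beta}(g^D_{\zeta,v})$, in which the projected residue is $\mathcal{O}(|\zeta|^{3/2-\beta/4})$, and pair using the localisation of $\tilde{\mathbf{o}}_k$ near the $j$-th Eguchi--Hanson core so that the decay of $E$ relative to $\tilde{\mathbf{o}}_k$ in the neck is actually seen. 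This localisation is exactly what Lemma \ref{proj contre obst} encodes (its estimate is against $\|h\|_{C^{2,\alpha}_{\beta,*}}$, not $\|h\|_{L^2}$), and the fine pairing estimate is the content of \cite[(108)--(112)]{biq1}. The remainder of your argument, namely $\|h\|_{C^{2,\alpha}_{\beta,*}}=\mathcal{O}(|\zeta|^{3/2-\beta/4})$ from the Fredholm inversion, the $|\zeta|\cdot\|h\|$ bound for the linear pairing, the $\mathbb{S}^3\slash\mathbb{Z}_2$ eigenspace separation for the near-orthogonality of $\tilde{\mathbf{O}}(g^A_{\zeta,v})$, and the scale-invariance of the $L^2$-norm of $\mathbf{o}_k$, matches the paper's argument.
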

    \begin{proof}
        Let us denote $\pi_{\tilde{\mathbf{O}}(g^A_{\zeta,v})^\perp}$ the $L^2(g^A_{\zeta,v})$-orthogonal projection on $\tilde{\mathbf{O}}(g^A_{\zeta,v})^\perp$. Thanks to the obstruction result of \cite[Proposition 5.1]{ozuthese}, we only have to understand the value of 
        $\big\|\pi_{\tilde{\mathbf{O}}(g^A_{\zeta,v})^\perp}\mathbf{\Phi}_{g^D_{\zeta,v}}(g^A_{\zeta,v})\big\|_{r_D^{-2}C^{\alpha}(g^D_{\zeta,v})},$
        in order to control both $\|g^A_{\zeta,v}-\mathbf{g}_{\zeta,v}\|_{C^{2,\alpha}_{\beta,*}(g^D_{\zeta,v})}$ and the obstructions, where the norms are defined in Definition \ref{norme a poids M} in the Appendix.
    
        The estimate on the different Eguchi-Hanson regions has been done in \cite[Section 14]{biq1}. There remains to understand the orbifold region. Using the definition of $\overline{h}^4$, we control the first variation $$P_{\mathbf{g}_o}\overline{h}^4= \sum_l \nu_l\mathbf{v}_l +\sum_{j}\sum_{k= 1}^3 \lambda_k^j\overline{\mathbf{o}}_{j,k}^4,$$
        and on the region where $g^A_{\zeta,v}= \mathbf{g}_o + \overline{h}^4$ with $\overline{h}^4=\mathcal{O}(|\zeta|^2r_D^{-4})$, we have:
        $$ \mathbf{\Phi}_{g^D_{\zeta,v}}(g^A_{\zeta,v}) = \sum_l \nu_l\mathbf{v}_l +\sum_{j}\sum_{k= 1}^3 \lambda_k^j\overline{\mathbf{o}}_{j,k}^4+ \mathcal{O}(|\zeta|^4r_D^{-10}) $$ which is exactly analogous to the estimate \cite[(108)]{biq1}. The gluing region is treated exactly as in \cite[Section 14]{biq1} for instance, see also the above proof of Lemma \ref{proj contre obst}. We conclude like in \cite[(109)]{biq1} that 
        \begin{equation}
            \big\|\pi_{\tilde{\mathbf{O}}(g^A_{\zeta,v})^\perp}\mathbf{\Phi}_{g^D_{\zeta,v}}(g^A_{\zeta,v})\big\|_{r_D^{-2}C^{\alpha}(g^D_{\zeta,v})} \leqslant C |\zeta|^{\frac{3}{2}-\frac{\beta}{4}}.\label{est Phi }
        \end{equation}
        We conclude exactly like in order to prove \cite[(112)]{biq1} that we have the stated control (see also \cite[Proposition 5.1]{ozuthese}).
    \end{proof}
    
    \subsection{A new obstruction to the desingularization of Einstein orbifolds}\label{new obst section}
    
    By \cite{ozu1,ozu2}, we know that Einstein metrics which are close to an Einstein orbifold $(M_o,\mathbf{g}_o)$ result from a gluing-perturbation procedure. To prove that it cannot be desingularized, we just need to prove that an obstruction to this procedure does not vanish.
    
    \subsubsection{Approximate obstructions}
    Let us denote $\zeta = t\cdot \phi$ with $t=(t_j)_j = (|\zeta_j|)_j$ and $\phi = (\phi_j)_j = \big(\zeta_j^\pm/|\zeta_j|\big)_j$. By Proposition \ref{est einst op gAv zeta}, we find the following estimate for any singular point $j$, any $k\in \{1,2,3\}$:
    \begin{equation}
     t_j\lambda_k^j(\phi_j,v)+t_j^2\mu_k^j(\phi_j,v) = \mathcal{O}(t_{\max}^\frac{5}{2}), \text{ and }\label{contrôle obst}
    \end{equation}
    \begin{equation}
        \nu_l(t\cdot \phi,v) = \mathcal{O}(t_{\max}^\frac{5}{2}), \text{ for each $l$, }\label{contrôle obst orb}
    \end{equation}
        where the $\lambda_k^j(\phi_j,v)$ and $\mu_k^j(\phi_j,v)$ and $\nu_l(t\cdot\phi, v)$ are the constants of Lemma \eqref{def lambda k zeta}, \eqref{def mu k zeta} and \eqref{obstructino prol H4 a tous les pts} for each $\textbf{eh}_{\zeta_j^\pm}$ associated to the development of $\mathbf{g}_v$ at the singular point $j$.

    Our goal is to prove that for all $j$, both all of the $\lambda_k^j$ and $\mu_1^j$ have to vanish when $t\to 0$. The main concern is to rule out the compensation of $ t_j^2\mu_1^j $ by $t_j\lambda_1^j$ just like in the desingularization case of \cite[Theorem 14.3]{biq1}. 
    
\begin{proof}[Proof of Theorem \ref{nouvelleobst} and Proposition \ref{2e obst nondeg}]
    Let us consider $(M_o,\mathbf{g}_o)\in \overline{\mathbf{E}(M)}_{GH}$ for $M$ having the topology of $M_o$ desingularized by $T^*\mathbb{S}^2$ in an orientation at each of its singular point and assume that $\mathbf{g}_o$ only has integrable Einstein deformations (see Definition \ref{einstein integrable deformations}). Let us assume that there exists a sequence of Einstein metrics $(M,\mathbf{g}_n)_n$ desingularizing $(M_o,\mathbf{g}_o)$. We will discuss at the end of the proof in which situation we shall assume that this sequence is nondegenerate, transverse or stable.
    
    Then, by \cite{ozu1,ozu2}, there exists a sequence of naïve desingularizations $g^D_{\zeta_n,v_n}$ such that the metrics $(M,\mathbf{g}_n)$ are Einstein perturbations of $g^D_{\zeta_n,v_n}$ with $\zeta_n=(\zeta_{n,j})_j$. We will use the notations $t_n = (t_{n,j})_j$, with $t_{n,j} = |\zeta_{n,j}| >0$ and $\phi_n = (\phi_{n,j})_j$, $\phi_{n,j} = \frac{\zeta^\pm_{n,j}}{|\zeta_{n,j}|}$ satisfying $\zeta_{n,j}^\pm=t_{n,j}\cdot\phi_{n,j}$, and we will denote $t_n\cdot \phi_n = (t_{n,j}\phi_{n,j})_j$.
    
    By compactness, up to taking a subsequence, we can assume that for any $j$, $\phi_{n,j}\to \phi_{\infty,j}$ on the sphere while $t_n\to 0$ and $v_n\to0$ since by assumption the limit is $\mathbf{g}_o$.
    
    Denote $\lambda_{k}^j(\phi_{n,j},v_n)$ and $\mu_{k}^j(\phi_{n,j},v_n)$ the real numbers of \eqref{def lambda k} and \eqref{def mu k} for these configurations. According to \eqref{contrôle obst}, we have the following controls: for all $j$ and $k$,
    \begin{equation}
        t_{n,j}\lambda_{k}^j(\phi_{n,j},v_n) + t_{n,j}^2\mu_{k}^j(\phi_{n,j},v_n) = \mathcal{O}(t_{n,\max}^\frac{5}{2}). \label{dvp obst bulle}
    \end{equation}

    The first obstruction of \cite{biq1,ozu2} means that at the limit, for any singular point $j$, we have $\lambda^j_k(\phi_{\infty,j}, 0)= 0$ for $k\in\{1,2,3\}$, there exists a basis of the selfdual or anti-selfdual (depending on the orientation of the gluing) $2$-forms starting with $\phi_{\infty,j} =\lim_{j\to +\infty}\zeta_{n,j}^\pm/|\zeta_{n,j}|$ in which we have
    \begin{equation}
    \mathbf{R}^\pm_{g_o}(j)=\begin{bmatrix}0&0&0\\
0&a^j_2&0\\
0&0&a^j_3
\end{bmatrix}.\label{courbure limite 1e obstr}
    \end{equation}
    
    Our goal is to prove that for all $j$ we have, $\mu_{1}^j(\phi_{\infty,j},0) = 0$. By Lemma \eqref{mu1 approche}, this will then imply that either $a_2^j= 0$ or $a_3^j= 0$ in \eqref{courbure limite 1e obstr}.
    
    Assume towards a contradiction that both $a_2^j\neq 0$ and $a_3^j\neq0$. Let us use the variations of the $\phi\mapsto\lambda_k^j(\phi,0)$ computed in \cite[Lemma 12.2]{biq1} at $\phi_{\infty,j}$, we first have the controls:
    \begin{align}
        \lambda_1^j(\phi_{n,j},0) =  \mathcal{O}(|\phi_{n,j}-\phi_{\infty,j}|^2),\label{lambda1variations}
    \end{align}
    because the first variation of the smooth map $\phi\mapsto\lambda_1^j(\phi,0)$ vanishes at $\phi_{\infty,j}$. Similarly since we assumed towards a contradiction that $a^j_2\neq0$ and $a^j_3\neq 0$, there exists $c>0$ depending on $(a^j_2,a^j_3)$, so that
    \begin{equation}
        \big|\big(\lambda_2^j(\phi_{n,j},0),\lambda_3^j(\phi_{n,j},0)\big)\big|\geqslant c|\phi_{n,j}-\phi_{\infty,j}|,\label{borne inf lambda 2 3}
    \end{equation}
    since the first order variations of $\phi\mapsto(\lambda_2^j(\phi,0),\lambda_3^j(\phi,0))$ is invertible at $\phi_{\infty,j}$. For the variations induced by $v_n\in\mathbf{O}(\mathbf{g}_o)$, for any $k\in \{1,2,3\}$, from the expression \eqref{exp lambda jk}, we have
     $$ |\lambda_k^j(\phi_{n,j},v_n)-\lambda_k^j(\phi_{n,j},0)| = \mathcal{O}(\|v_n\|_{L^2(\mathbf{g}_o)}).$$
     For the $\mu_k^j$, we have the control $\mu_{k}^j(\phi_{n,j},v_n) = \mu_k^j(\phi_{\infty,j},0) + \mathcal{O}(|\phi_{n,j}-\phi_{\infty,j}|) + \mathcal{O}(\|v_n\|_{L^2(\mathbf{g}_o)})$. Therefore for $k = 2$ or $k=3$, \eqref{dvp obst bulle} gives
    \begin{equation}
        t_{n,j}\lambda_k^j(\phi_{n,j},v_n) = - t_{n,j}^2\big(\mu_k^j(\phi_{\infty,j},v_n) + \mathcal{O}(|\phi_{n,j}-\phi_{\infty,j}|)+\mathcal{O}(\|v_n\|_{L^2(\mathbf{g}_o)})\big) + \mathcal{O}(t_{n,\max}^\frac{5}{2}). \label{dvp obst bulle approx k}
    \end{equation}
    At any singular point $j$, we must therefore have $\big|\big(\lambda_2^j(\phi_{n,j},v_{n}),\lambda_3^j(\phi_{n,j},v_{n})\big)\big|= \mathcal{O} (t_{n,j}+\frac{t_{n,\max}^{5/2}}{t_{n,j}})$, and by \eqref{borne inf lambda 2 3}, this implies that $|\phi_{n,j}-\phi_{\infty,j}| = \mathcal{O}\big(\|v_n\|_{L^2(\mathbf{g}_o)}+t_{n,j}+\frac{t_{n,\max}^{5/2}}{t_{n,j}}\big)$.
    \begin{enumerate}
        \item In the \emph{transverse} (Definition \ref{transverse}) situation, where we have $t_{n,\max}=\mathcal{O}(t_{n,j})$ and $\|v_n\|_{L^2(\mathbf{g}_o)}= o(t_{n,\min})$. By rewriting \eqref{dvp obst bulle} for $k=1$, including the bound \eqref{lambda1variations} with the new information that  $|\phi_{n,j}-\phi_{\infty,j}| = \mathcal{O}(t_{n,j})$ and using $|\lambda_1(\phi_{\infty,j},v_{n,j})-\lambda_1(\phi_{\infty,j},0)|= \mathcal{O}(\|v_{n,j}\|_{L^2(\mathbf{g}_o)})=o(t_{n,\min})$, we consequently find
    \begin{equation}
        o(t_{n,j}^2) + t_{n,j}^2\big(\mu_{1}^j(\phi_{\infty,j},0) + \mathcal{O}(|\phi_{n,j}-\phi_{\infty,j}|)+ \mathcal{O}(\|v_n\|_{L^2(\mathbf{g}_o)})\big) = \mathcal{O}(t_{n,\max}^\frac{5}{2}),
    \end{equation}
    and therefore $\mu_1^j(\phi_{\infty,j},0)= 0$.
        
        \item 
    As proven in \cite[Proof of Theorem 3]{biq3}, and thanks to the estimate \eqref{est Phi }, we know that the linearization of $\Ric$ in Bianchi gauge at $\mathbf{g}_n$ denoted $P_{\mathbf{g}_n}$, satisfies for some $o_{n,j,k}\in L^2(\mathbf{g}_n)$ (constructed from $\mathbf{o}_{n,j,k}\in\mathbf{O}(\mathbf{eh}_{\zeta_{n,j}})$) for $k\in\{1,2,3\}$:
    $$\big\langle P_{\mathbf{g}_n}o_{n,j,1},o_{n,j,1}\big\rangle_{L^2(\mathbf{g}_n)} = o(1)\cdot\|o_{n,j,1}\|_{L^2(\mathbf{g}_n)}, \text{ and}$$
    $$\big\langle P_{\mathbf{g}_n}o_{n,j,k},o_{n,j,k}\big\rangle_{L^2(\mathbf{g}_n)} = (a_k^j+o(1))\cdot\|o_{n,j,k}\|_{L^2(\mathbf{g}_n)},$$
    for $k\in\{2,3\}$. That means that the metric cannot be \emph{stable} as in Definition \ref{def stable} if $a^j_2 = -a^j_3\neq 0$. Consequently, we necessarily have $a^j_2=a^j_3= 0$ for stable Ricci-flat desingularizations.

        \item If the orbifold satisfies $\mathbf{O}(\mathbf{g}_o)= \{0\}$ and only has one $\mathbb{R}^4\slash\mathbb{Z}_2$ singularity, then the transversality assumption is automatically satisfied and the result follows.
        
        \item In the Ricci-flat situation, then the \emph{nondegeneracy} assumption is enough because we can use Remark \ref{signe mu1 ricci plat}. Indeed, first, the nondegeneracy assumption gives $|\phi_{n,j}-\phi_{\infty,j}|=o(t_{n,j}^\frac{1}{2})$, hence $|\lambda_1(\phi_{n,j},0)|= o(t_{n,j})$ by \eqref{lambda1variations}; second from \eqref{estimée somme lambda 1}, we moreover have a better control over the sum of the $t_j^2\lambda_1^j(\zeta,v)$, namely:
    \begin{equation}
        \sum_jt_j^2\lambda_1^j(\phi_{n,j},v_{n})-t_j^2\lambda_1^j(\phi_{n,j},0)= \mathcal{O}(t_{\max}^\frac{5}{2}\|v_n\|_{L^2(\mathbf{g}_o)})+ \mathcal{O}(t_{\max}^2\|v_n\|_{L^2(\mathbf{g}_o)}^2). \label{somme lambda1 est preuve}
    \end{equation}
        which together with \eqref{lambda1variations} and \eqref{contrôle obst} and using $ \|v_n\|^2_{L^2(\mathbf{g}_o)} = o(t_{n,j}) $ yields the estimate
        \begin{align*}
            \sum_j o\Big(&t_{n,j}^3 + t_{n,\max}^\frac{5}{2}t_{n,j}^\frac{1}{2} +t_{n,\max}^2 t_{n,j}\Big)\\
            &+ \sum_jt_{n,j}^3\big(\mu_{1}^j(\phi_{\infty,j},0) + \mathcal{O}(|\phi_{n,j}-\phi_{\infty,j}|)+\mathcal{O}(\|v_n\|_{L^2(\mathbf{g}_o)})\big) 
            = \mathcal{O}(t_{n,j}t_{n,\max}^\frac{5}{2}),
        \end{align*}
        which with $t_{n,\max}=\mathcal{O}(t_{n,j})$ from the nondegeneracy assumption implies that
        $$ \sum_j\mu_1^j(\phi_{\infty,j},0)=0 $$
        and since all of the $\mu_1^j(\phi_{\infty,j},0)$ are nonpositive by Remark \ref{signe mu1 ricci plat}, for any $j$, we have $\mu_1^j(\phi_{\infty,j},0) = 0$. 
        
        \item Similarly for Einstein but non Ricci-flat orbifolds with only one singularity, we use \eqref{estimée somme lambda 1} as in the previous point since there is only one term in the sum.
    \end{enumerate}
\begin{rem}
    The estimate \eqref{contrôle obst orb} also implies that: for any $l$, one has
    $$ \lim_{n\to+\infty} \nu_l\Big(\frac{t_n}{t_{n,\max}}\cdot \phi_n\;,\;v_n\Big) = \lim_{n\to+\infty} \frac{\nu_l(t_n\cdot \phi_n\;,\;v_n)}{t_{n,\max}^2} = 0$$
    which has an interpretation in terms of curvature, at least when the infinitesimal Einstein deformations vanish at the singular points by \eqref{obstructino prol H4 a tous les pts} and \eqref{estimée somme lambda 1}.
\end{rem}    
\end{proof}
\begin{rem}
    The same obstruction result also holds if we consider desingularizations by \emph{smooth} Ricci-flat ALE of $A_1$, $D_k$ or $E_k$ singularities by \cite[Lemme 9]{biq2}. An obstruction is also satisfied for $A_k$ singularities but it also involves derivatives of the curvature \cite[Lemme 12]{biq2}. It also extends to Kähler Ricci-flat ALE manifolds by taking their hyperkähler finite cover.
\end{rem}
We also believe that the nondegeneracy assumption is purely technical. Understanding higher order obstructions should yield the following conjectural statement which already holds under a stability assumption.
\begin{conj}\label{conj Ricci plat}
    Let $(M_n,\mathbf{g}_n)_n$ be a sequence of compact smooth Ricci-flat metrics converging in the Gromov-Hausdorff sense towards a Ricci-flat orbifold $(M_o,\mathbf{g}_o)$ while bubbling Eguchi-Hanson metrics. Then the curvature of $\mathbf{g}_o$ at its singular points is either selfdual or anti-selfdual depending on the orientation of the Eguchi-Hanson metrics.
\end{conj}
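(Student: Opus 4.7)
The plan is to remove the nondegeneracy assumption in Proposition \ref{2e obst nondeg}(2) by developing an infinite hierarchy of obstructions at each singular point. We argue by contradiction: suppose there is a sequence $(M_n,\mathbf{g}_n)$ of compact Ricci-flat metrics converging in $d_{GH}$ to $(M_o,\mathbf{g}_o)$ via Eguchi-Hanson bubbles, and suppose $\mathbf{R}^+(\mathbf{g}_o)(p_j)\neq 0$ at some singular point $p_j$ (the $\mathbf{R}^-$ case is symmetric). By \cite{ozu1,ozu2} we can write $\mathbf{g}_n$ as an Einstein perturbation of a naive desingularization $g^D_{\zeta_n,v_n}$ with $t_{n,j}:=|\zeta_{n,j}|\to 0$ and $v_n\to 0$. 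The failure of nondegeneracy corresponds to scales $t_{n,j}$ going to zero faster than $t_{n,\max}$, so that the estimate \eqref{dvp obst bulle} becomes too weak to control $\mu_1^j$ at the small-scale singular points.

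The first step is to extend the construction of $\underline{h}_2$ and $\underline{h}_4$ from \eqref{def lambda k zeta}--\eqref{def mu k zeta} into an infinite hierarchy $\underline{h}_{2m}(\zeta^\pm)$ of ALE tensors, together with obstruction constants $\lambda_k^{j,(m)}$ and $\mu_k^{j,(m)}$ and dual orbifold extensions $\overline{h}^{2m}$ producing higher-order orbifold obstructions $\nu_l^{(m)}$. For each $m\geq 2$ one solves
\begin{equation*}
P_{\mathbf{eh}_{\zeta^\pm}}\underline{h}_{2m} = \mathcal{R}_{2m}\bigl(\underline{h}_2,\ldots,\underline{h}_{2m-2}\bigr) + \sum_{k} \mu_k^{(m)}\,\mathbf{o}_k(\zeta^\pm),
\end{equation*}
with the prescribed asymptotic term coming from the Taylor jet of $\mathbf{g}_o$ at $p_j$. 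Iterating Proposition \ref{est einst op gAv zeta} with this refined parametrix should yield, for every $m$ and every singular point $j$, an improved obstruction bound of the schematic form
\begin{equation*}
\sum_{k=0}^{m} t_{n,j}^{k+1}\mu_1^{j,(k)}(\phi_{n,j},v_n) = O\bigl(t_{n,\max}^{m+3/2}\bigr),
\end{equation*}
whose error exponent grows with $m$ and so eventually dominates any fixed power of $t_{n,j}$, together with an analog of \eqref{estimée somme lambda 1} expressing the sum over $j$ of these quantities, paired against Einstein deformations of $\mathbf{g}_o$, as an $O(t_{n,\max}^N)$ quantity for arbitrary $N$.

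The decisive remaining ingredient is a higher-order extension of Remark \ref{signe mu1 ricci plat}: in the Ricci-flat case, at the first order $m_0$ at which $\mu_1^{j,(m_0)}(\phi_{\infty,j},0)$ does not vanish identically in $j$, one expects the sign $\mu_1^{j,(m_0)}(\phi_{\infty,j},0)\leq 0$, with equality only when the corresponding component of the asymptotic jet of $\mathbf{R}^+(\mathbf{g}_o)$ at $p_j$ vanishes. Combined with the summation estimate above, this would force each $\mu_1^{j,(m_0)}$ to vanish separately, and iterating on $m_0$ would give $\mathbf{R}^+(\mathbf{g}_o)(p_j)=0$, contradicting the assumption. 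The main obstacle is exactly this sign statement at arbitrary order. At order $2$ it is immediate from $R_{22}+R_{33}=0$ and the determinantal formula \eqref{expresion mu 1}, but at higher orders the obstruction mixes with the full intermediate jet $(\underline{h}_{2k})_{k<m}$ and is no longer a pure determinant of a trace-free $2\times 2$ block. The most promising route is to exploit the hyperk\"ahler structure of the moduli space of Ricci-flat ALE metrics asymptotic to $\mathbb{R}^4\slash\mathbb{Z}_2$: the second variation of $\mathbf{\Phi}_{\mathbf{eh}}$ restricted to $\mathbf{O}(\mathbf{eh})$ should carry a quaternionic positivity compatible with the hierarchy, reducing the sign question at each order to a convexity statement on that moduli space.
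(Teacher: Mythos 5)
This statement is labelled a \emph{conjecture} in the paper, and the text immediately preceding it says explicitly that ``understanding higher order obstructions should yield the following conjectural statement which already holds under a stability assumption.'' So there is no proof in the paper to compare against; the author only proves the result under the auxiliary hypotheses of Theorem~\ref{2e obst} (stability, or rigidity with one singular point) or Proposition~\ref{2e obst nondeg} (transversality or nondegeneracy). Your proposal is indeed in the spirit of what the author gestures at, but it is not a proof, and you are candid about this yourself.

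There are two genuine gaps. The first is the one you name: the higher-order analogue of Remark~\ref{signe mu1 ricci plat}, namely a sign $\mu_1^{j,(m)}(\phi_{\infty,j},0)\leqslant 0$ at the first nonvanishing order, with equality only when the corresponding piece of the jet of $\mathbf{R}^+(\mathbf{g}_o)$ at $p_j$ vanishes. At order $m=1$ this is an immediate consequence of the determinantal formula \eqref{expresion mu 1} and $\operatorname{tr}\mathbf{R}^+=0$, but for $m\geqslant 2$ the obstruction $\mu_1^{j,(m)}$ is no longer a determinant of a trace-free $2\times 2$ block; it mixes all of the intermediate jets $\underline{h}_{2k}$ with $k<m$ and there is no reason it should retain a definite sign. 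Invoking ``quaternionic positivity'' of the moduli space of Ricci-flat ALE metrics is a heuristic, not an argument: the relevant quantity lives on the orbifold side, where the curvature of $\mathbf{g}_o$ need not come from a hyperk\"ahler structure until after the conjecture is proved.

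The second gap is more structural and you do not address it. Dropping nondegeneracy means allowing $t_{n,j}/t_{n,\max}\to 0$ for some $j$ (and/or $\|v_n\|^2/t_{n,\min}\not\to 0$). The sign argument in the nondegenerate Ricci-flat case works because \eqref{estimée somme lambda 1} together with the control $t_{n,\max}=\mathcal{O}(t_{n,j})$ turns $\sum_j t_{n,j}^3\mu_1^j = o(t_{n,\max}^3)$ into $\sum_j\mu_1^j=0$, and then the definite sign forces each term to vanish. Once the $t_{n,j}$ are allowed to be wildly disparate, the weighted sum $\sum_j t_{n,j}^{3}\mu_1^{j}$ being small only constrains the largest weights: a bubble with $t_{n,j}$ super-polynomially small in $t_{n,\max}$ contributes negligibly even if $\mu_1^{j}\neq 0$. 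Pushing the error to $\mathcal{O}(t_{n,\max}^{m+3/2})$ for all $m$ does not fix this, because the quantity you need to control at the small-scale point $j$ lives at scale $t_{n,j}$, not $t_{n,\max}$; arbitrary powers of $t_{n,\max}$ still fail to dominate a fixed power of $t_{n,j}$ in that regime. A genuine proof would need either an a priori bound on the scale ratios (which is exactly what nondegeneracy buys), or an argument that is localised at each singular point and never compares scales, and you do not supply either.

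So: the proposal correctly identifies the route the author considers plausible and the central difficulty (the sign lemma), but it is not a proof, both because the sign lemma is missing and because even granting it the scheme does not close in the truly degenerate case. As written this matches the paper's own assessment that the statement is open.
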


\section{Ricci-flat modulo obstructions desingularizations of $\mathbb{T}^4\slash \mathbb{Z}_2$}\label{section obst desing T4/Z2}

Let us now work with the flat orbifold $\mathbb{T}^4\slash\mathbb{Z}_2$. We will study the question of whether or not it is possible to desingularize it by Ricci-flat metrics obtained by gluing Eguchi-Hanson metrics in \emph{different} orientations at its singular points.

\subsection{The orbifold $\mathbb{T}^4\slash \mathbb{Z}_2$ and its deformations}

We define $\mathbb{T}^4\slash \mathbb{Z}_2$ as the quotient of $\mathbb{R}^4\slash(2L\mathbb{Z}^4)$ for some $L\in GL(4,\mathbb{R})$ by the action of $\{\textup{Id},-\textup{Id}\}$. A metric on $\mathbb{T}^4\slash \mathbb{Z}_2$ can be seen as an $2L\mathbb{Z}^4$-invariant and $\mathbb{Z}_2$-invariant metric on $\mathbb{R}^4$. It is therefore determined by its values on any cube $L([a,a+1]^4)$ for $a\in \mathbb{R}$. We will denote $(\mathbb{T}^4\slash \mathbb{Z}_2,\mathbf{g}_L)$ the orbifold obtained with the matrix $L$.

All of the infinitesimal Ricci-flat deformations of a flat $\mathbb{T}^4\slash \mathbb{Z}_2$ are integrable. More precisely, all of the Ricci-flat deformations of $\mathbb{T}^4\slash \mathbb{Z}_2$ are flat and can be seen as either varying the above invertible matrix $L$, or as modifying the scalar product, that is by adding a constant symmetric $2$-tensor of norm smaller than $1$ to the metric $\mathbf{g}_L$.

This orbifold has 16 singular points with $\mathbb{R}^4\slash\mathbb{Z}_2$ singularities. We will denote 
$$S:= \{L(\epsilon_1,\epsilon_2,\epsilon_3,\epsilon_4), \epsilon_i\in \{0,1\}\} + 2\mathbb{Z}^4 $$
its singular set on $\mathbb{R}^4$ before the taking the quotients by $2L\mathbb{Z}^4$ and $\{\mathrm{Id},-\mathrm{Id}\}$.

\begin{rem}
    Since this orbifold is flat, it clearly satisfies our obstruction \eqref{2e obst}.
\end{rem}

\subsection{Partial hyperkähler desingularizations and estimates}
In the rest of the article, we consider $(M,g^D_{L,\zeta})$ a naïve desingularization of $(\mathbb{T}^4\slash\mathbb{Z}_2,\mathbf{g}_L)$ for $L\in GL(4,\mathbb{R})$ by positively oriented Eguchi-Hanson metrics $(\mathbf{eh}_{\zeta_i^+})_{i\in S_+}$ for $\zeta_i\in \mathbb{R}^3\backslash\{0\}$ at a subset $S_+$ of the $16$ singular points of $\mathbb{T}^4\slash\mathbb{Z}_2$ and by negatively oriented $(\mathbf{eh}_{\zeta_j^-})_{j\in S_-}$ for $\zeta_j\in \mathbb{R}^3\backslash\{0\}$ at a subset $S_-$  of the singular points. We denote $\zeta = ((\zeta_i^+)_{i\in S_+},(\zeta_j^-)_{j\in S_-})$. We will first be interested in its \emph{partial} hyperkähler desingularizations obtained by perturbation of the metric only desingularized by Eguchi-Hanson metrics in the same orientation.

\subsubsection{Hyperkähler partial desingularizations of $\mathbb{T}^4\slash\mathbb{Z}_2$ and approximations}
Let us define $(M_\pm,g^D_\pm) := (M_{S_\pm},g^D_{L,\zeta^\pm})$ the partial desingularizations where the only points which are desingularized are $S_\pm$ by the above Eguchi-Hanson metrics $(\mathbf{eh}_{\zeta_i^\pm})_{i\in S_\pm}$. They can be perturbed to hyperkähler orbifolds. 
\begin{lem}\label{lem def bf g pm}
    Let $g_\pm^D:= g^D_{L,\zeta^\pm}$ be a naïve desingularization of $\mathbb{T}^4\slash\mathbb{Z}_2$ thanks to Eguchi-Hanson metrics \eqref{eguchihanson} glued in the \emph{same orientation} at some or all singular points.
    
    Then, the Einstein modulo obstructions perturbation of $ g^D_\pm $, denoted $\mathbf{g}_\pm:= \mathbf{g}_{L,\zeta^\pm}$ and defined in \cite{ozu2} is hyperkähler.
\end{lem}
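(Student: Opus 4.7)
The plan is to use the existing hyperkähler gluing constructions of Topiwala \cite{top1,top2} and LeBrun--Singer \cite{ls} (see also \cite{don}), which produce genuine hyperkähler metrics by perturbation of naïve gluings of same-oriented Eguchi--Hanson metrics on flat $\mathbb{T}^4\slash\mathbb{Z}_2$ orbifolds, and then to match them with the Einstein modulo obstructions perturbation of \cite{ozu2} by the uniqueness part of its definition. A minor preliminary point is that these works typically treat the fully desingularized case, whereas here only a subset $S_\pm$ of the singular points is resolved; but the partially desingularized space $(M_\pm,g^D_\pm)$ is an orbifold with the remaining $\mathbb{R}^4\slash\mathbb{Z}_2$ singularities still flat, so the Kähler/hyperkähler gluing argument applies verbatim on the orbifold covers, yielding a hyperkähler orbifold metric $\mathbf{g}^{HK}$ on $M_\pm$ satisfying $\|\mathbf{g}^{HK}-g^D_\pm\|_{C^{2,\alpha}_{\beta,*}(g^D_\pm)}\to 0$ as $|\zeta^\pm|\to 0$.

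First I would recall that the Einstein modulo obstructions perturbation $\mathbf{g}_\pm$ of Definition \ref{def einst mod obst orbifold}, as extended to naïve desingularizations in \cite{ozu2} and as used in Definition \ref{non degenerate and transverse einstein}, is characterized uniquely by three conditions: (i) $\mathbf{\Phi}_{g^D_\pm}(\mathbf{g}_\pm)\in\tilde{\mathbf{O}}(g^D_\pm)$, (ii) $\mathbf{g}_\pm-g^D_\pm\perp_{L^2(g^D_\pm)}\tilde{\mathbf{O}}(g^D_\pm)$, and (iii) $\|\mathbf{g}_\pm-g^D_\pm\|_{C^{2,\alpha}_{\beta,*}(g^D_\pm)}\leqslant 2\epsilon$ for $\epsilon>0$ small (depending only on $\mathbf{g}_L$). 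The goal is therefore to produce from $\mathbf{g}^{HK}$ a metric satisfying (i)--(iii); by uniqueness it will agree with $\mathbf{g}_\pm$, which will then be hyperkähler.

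Next I would place $\mathbf{g}^{HK}$ in the correct gauge. Using the implicit function theorem for the Bianchi operator $B_{g^D_\pm}$, standard arguments (as in \cite[Section 6]{biq1} and \cite{ozu2}) produce a diffeomorphism $\phi$ of $M_\pm$, $C^{3,\alpha}$-close to the identity, such that $\tilde{\mathbf{g}}:=\phi^*\mathbf{g}^{HK}$ is in Bianchi gauge with respect to $g^D_\pm$, i.e.\ $B_{g^D_\pm}\tilde{\mathbf{g}}=0$. Since $\mathbf{g}^{HK}$ is Ricci-flat and $\tilde{\mathbf{g}}$ is in Bianchi gauge, we have $\mathbf{\Phi}_{g^D_\pm}(\tilde{\mathbf{g}})=0\in\tilde{\mathbf{O}}(g^D_\pm)$, giving (i). Condition (iii) follows from the closeness of $\mathbf{g}^{HK}$ to $g^D_\pm$ and the closeness of $\phi$ to the identity.

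Finally, condition (ii) is arranged by the freedom in choosing the reference $g^D_\pm$. Since $\tilde{\mathbf{O}}(g^D_\pm)$ is exactly the tangent space at $g^D_\pm$ to the finite-dimensional family of naïve desingularizations (Definition \ref{def tilde O gDv zeta}), any infinitesimal component of $\tilde{\mathbf{g}}-g^D_\pm$ along $\tilde{\mathbf{O}}(g^D_\pm)$ can be absorbed into a small adjustment of the parameters $(L,\zeta^\pm,v)$ defining $g^D_\pm$: a standard inverse function theorem argument (already used throughout \cite{ozu2}) yields slightly modified parameters and an associated $g^{D'}_\pm$ such that $\tilde{\mathbf{g}}-g^{D'}_\pm$ is $L^2(g^{D'}_\pm)$-orthogonal to $\tilde{\mathbf{O}}(g^{D'}_\pm)$ while the Bianchi gauge and smallness are preserved up to higher order. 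By the uniqueness characterization (i)--(iii), this $\tilde{\mathbf{g}}$ must coincide with the Einstein modulo obstructions perturbation $\mathbf{g}_\pm$ associated to $g^{D'}_\pm$, and since $\tilde{\mathbf{g}}$ is isometric to the hyperkähler metric $\mathbf{g}^{HK}$, $\mathbf{g}_\pm$ is hyperkähler. The main point of care is the gauge-matching in this last step: one must verify that the adjustment of the gluing parameters falls within the scope of Definition \ref{desing eh} and that the Bianchi gauge, orthogonality, and smallness bounds remain simultaneously satisfiable, which follows from the non-degeneracy of the pairing between $\tilde{\mathbf{O}}(g^D_\pm)$ and itself together with the quantitative estimates of \cite{ozu2}.
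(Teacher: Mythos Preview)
Your approach is different from the paper's and carries a subtle gap. The paper argues by dimension counting and connectedness: the moduli space of hyperk\"ahler (orbifold) metrics on $M_\pm$ has the maximal dimension $3|S_\pm|+9$, the same as the family of Einstein modulo obstructions metrics from \cite{ozu2}, and a connectedness argument in the style of \cite[Proposition~5.70]{ozuthese} then forces the two families to coincide. You instead produce one hyperk\"ahler metric via the gluing theorems of \cite{top1,top2,ls,don}, put it in gauge, and match it to an Einstein modulo obstructions perturbation by uniqueness.

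The gap is in your final step. Starting from given data $(L,\zeta^\pm)$, you build $\mathbf{g}^{HK}$, gauge-fix, and then \emph{adjust the reference} to some nearby $g^{D'}_\pm$ with parameters $(L',\zeta'^\pm)$; what you actually conclude is that $\mathbf{g}_{L',\zeta'^\pm}$ is hyperk\"ahler, not that $\mathbf{g}_{L,\zeta^\pm}$ is. To obtain the lemma for the \emph{original} $(L,\zeta^\pm)$ you need the assignment $(L,\zeta^\pm)\mapsto(L',\zeta'^\pm)$ to be onto a full neighborhood in parameter space. This is true---the composite of ``hyperk\"ahler gluing'' with ``parameter recovery from \cite{ozu1,ozu2}'' is a $C^1$-small perturbation of the identity, hence a local diffeomorphism---but your last paragraph only argues existence of the adjustment, not that every target parameter is hit. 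Once you add this surjectivity (or, equivalently, observe that both families are smooth of the same dimension $3|S_\pm|+9$ and one sits inside the other), your proof is complete; note that this missing step is essentially the equal-dimension/connectedness observation the paper uses. A smaller point: the references you cite treat the fully resolved $K3$, so the orbifold version of the gluing for a partial resolution, while routine, deserves at least a sentence of justification rather than ``verbatim''.
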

\begin{proof}[Sketch of proof]
    It is well-known (or can be proven from \cite{don}) that the moduli space of the Ricci-flat and even hyperkähler (orbifold) metrics on $M_\pm$ is of maximal dimension $3|S_\pm|+9$ which is the same as the space of Ricci-flat modulo obstructions metrics of \cite{ozu2}. Using a connectedness argument similar to that of \cite[Proposition 5.70]{ozuthese} for the moduli space of Kronheimer's gravitational instantons, we conclude that any Einstein modulo obstruction perturbation of a metric $g^D_\pm$ is actually Ricci-flat (and even hyperkähler).
\end{proof}

To obtain more information about these hyperkähler metrics $\mathbf{g}_\pm$, it suffices to construct metrics which are approximately Ricci-flat by \cite{ozu2}.
\begin{rem}
    We will in particular need to have a good approximation of the Riemannian curvature tensor of $\mathbf{g}_\pm$ at the remaining singular points. We will see that we have $|\mathbf{R}_{\mathbf{g}_\pm}|\sim |\zeta^\pm|^2$, and the previous control \eqref{est Phi } in $\mathcal{O}(|\zeta^\pm|^{\frac{3}{2}-\frac{\beta}{4}})$ is not sufficient for our purpose.
\end{rem}

\subsubsection{Approximation of the partial hyperkähler desingularizations}

We want to find good enough approximations to be able to control the curvature of our hyperkähler partial desingularizations. Consider the following construction.

On the orbifold $(\mathbb{T}^4\slash\mathbb{Z}_2,\mathbf{g}_L)$, for any harmonic homogeneous symmetric $2$-tensor on $\mathbb{R}^4\slash\mathbb{Z}_2$ with $|H^4|_{\mathbf{e}}\sim r_\mathbf{e}^{-4}$ and a singular point $i\in S$, we can solve the equation
    \begin{equation}
  \left\{
      \begin{aligned}
        P_L \overline{h}^4 &= 0, \text{ and } B_L\overline{h}^4  = 0,\\
        \overline{h}^4 &= H^4 +0+ \mathcal{O}(r^{-\epsilon}) \text{ for any } \epsilon>0 \text{ at the singular point } i,\\
        \overline{h}^4&\perp \mathbf{O}(\mathbf{g}_L).\label{eq prol H4 T4}
      \end{aligned}
    \right.
\end{equation}
somewhat explicitly (notice that there is no obstruction contrarily to \eqref{eq prol H4}). Indeed, as in \cite{bk}, one would expect the periodization of the sum of the symmetric $2$-tensors $H$ to be a solution. At $x\in\mathbb{R}^4$, this would look like
\begin{equation}
    \sum_{a\in\mathbb{Z}^4} H^4(L(2a)+x-i).\label{somme divergente}
\end{equation}
However except in the most symmetric cases, like the one treated in \cite{bk}, this periodization diverges (it is a sum of terms in $r^{-4}$ over a $\mathbb{Z}^4$ grid). One way around it is simply to substract to each term of the sum the mean value of it over a period, namely, we define
\begin{equation}
    \overline{h}^4(x) =  \sum_{a\in\mathbb{Z}^4} \big(H^4(L(2a)+x-i) - H^4_{av}(L,i,2a)\big),\label{expression extension orbifold}
\end{equation}
where $H_{av}^4(L,i,2a)$ is the average of $H^4(x)$ on the set $i+L(2a + [-1,1]^4)$. This sum converges because we have $\nabla^l H^4 = \mathcal{O}(r_\mathbf{e}^{-4-l})$ which is summable on $\mathbb{Z}^4\backslash\{0\}$ for $l>0$. In particular, the curvature is summable.
Note moreover that, by construction, $\overline{h}^4$ of \eqref{expression extension orbifold} is indeed orthogonal to the infinitesimal deformations of $\mathbf{g}_L$ which are constant symmetric $2$-tensor and that $B_{\mathbf{g}_L}\overline{h}^4=0$ as well as  $\textup{tr}_{\mathbf{g}_L}\overline{h}^4=0$.
\begin{rem}\label{courbure hzeta}
    The point is that the symmetric $2$-tensors we have substracted are constant and therefore, the curvature induced are the same as those of the sum \eqref{somme divergente}.
\end{rem}
Let, $H^4(\zeta_i^\pm)$ for $i\in S_\pm$ be the $r^{-4}$-asymptotic terms of the metric $\mathbf{eh}_{\zeta_i^\pm}$, see \eqref{dvp eh}. Consider the unique symmetric $2$-tensor $\overline{h}^4(\zeta^\pm_i)$ as in \eqref{expression extension orbifold} which is asymptotic to $ H^4(\zeta_i^\pm) $ at $i$ and bounded everywhere else while satisfying $\overline{h}^4(\zeta^\pm_i)\perp_{\mathbf{g}_L}\mathbf{O}(\mathbf{g}_L)$.

Consider now a singular point $i\in S_\pm$ and denote $|\zeta^\pm| = \max_{i'\in S_\pm}|\zeta_{i'}^\pm|$. We have the following expansion of the sum $\sum_{i'\neq i}\overline{h}^4(\zeta_{i'}^\pm) = H_0(i) + H_2(i) + \mathcal{O}(d(i,.)^4)$ at $i$ with $|H_0(i)|=\mathcal{O}(|\zeta^\pm|^2)$ a constant symmetric $2$-tensor, and $|H_2(i)|=\mathcal{O}(|\zeta^\pm|^2)r_\mathbf{e}^2$. For some linear isomorphism $\phi_{i}$ with, $\|\phi_{i}-\textup{Id}\|= \mathcal{O}(|\zeta^\pm|^2)$, we have $\mathbf{e}+H_0(i) = \phi_{i}^*\mathbf{e}$. In the coordinates of \eqref{eguchihanson}, the metric $\phi_{i}^*\mathbf{eh}_{\zeta_{i}^\pm}$ (see Remark \ref{meaning diffeo} for the meaning of this action of the diffeomorphism) is asymptotic to $\mathbf{e}+H_0(i)$. Consider now $\underline{h}_2(i)$ a solution to
$P_{\phi_{i}^*\mathbf{eh}_{\zeta_{i}^\pm}}\underline{h}_2(i) \in \mathbf{O}(\phi_{i}^*\mathbf{eh}_{\zeta_{i}^\pm}),$
with $\underline{h}_2(i) = H_2(i) ++  \mathcal{O}(r^{-2+\epsilon})$ for all $\epsilon>0$ from \eqref{def lambda k}. Since the perturbation $\textbf{g}_\pm$ is Ricci-flat, we actually have 
\begin{equation}
    P_{\phi_{i}^*\mathbf{eh}_{\zeta_{i}^\pm}}\underline{h}_2(i) =0.\label{eq overline h 2 tore}
\end{equation}
Note that $\underline{h}_2(i)$ it is determined up to $\mathbf{O}(\phi_{i}^*\mathbf{eh}_{\zeta_{i}^\pm})$. We choose the solution which is orthogonal to $\mathbf{O}(\phi_{i}^*\mathbf{eh}_{\zeta_{i}^\pm})$ for the $L^2$-product induced by $\phi_{i}^*\mathbf{eh}_{\zeta_{i}^\pm}$ on $\mathbb{S}^2$ as in \eqref{def lambda k}.

\begin{defn}[Approximate hyperkähler metric $g^A_\pm$]\label{def gApm}
    We define the approximate hyperkähler metric $g^A_\pm$ in a way similar to Definition \ref{def naive desing}): we glue the different $\phi_{i}^*\mathbf{eh}_{\zeta_{i}^\pm} + \underline{h}_2(i)$ for $i\in S_\pm$ to the metric $ \mathbf{g}_L + \sum_{i\in S_\pm} \overline{h}^4(\zeta_i^\pm)$ at the points $i\in S_\pm$ with cut-off at distance at $|\zeta_i^\pm|^\frac{1}{6}$, that is with the following metric on the gluing region:
    $$ \chi(|\zeta_i^\pm|^{-\frac{1}{6}}r_e)\Big(\phi_{i}^*\mathbf{eh}_{\zeta_{i}^\pm} + \underline{h}_2(i)\Big) + \big(1-\chi(|\zeta_i^\pm|^{-\frac{1}{6}}r_e)\big)\Big(\mathbf{g}_L + \sum_{i\in S_\pm} \overline{h}^4(\zeta_i^\pm)\Big).$$ 
\end{defn}

\subsubsection{Control of the partial hyperkähler desingularizations}

We now justify the qualifier ``approximate'' for the metric $g^A_\pm$.
\begin{prop}\label{controle gApm}
    We have the following control on the metric: for all $k\in\mathbb{N}$, there exists $C = C(k,L)>0$, such that
    \begin{equation}
        \|g^A_\pm - \mathbf{g}_\pm\|_{C^{k}_{\beta,*}(g^D_\pm)}\leqslant C |\zeta^\pm|^{2+\frac{2}{3}-\frac{\beta}{6}}.\label{est gApm bf gpm}
    \end{equation}
    In particular, at each remaining singular point $p$, we have
    \begin{equation}
        \left|\mathbf{R}_{g^A_\pm} - \mathbf{R}_{\mathbf{g}_\pm}\right|_{g^D_\pm} \leqslant C|\zeta^\pm|^{2+\frac{2}{3}-\frac{\beta}{6}}.\label{est Rgpm RgApm}
    \end{equation}
\end{prop}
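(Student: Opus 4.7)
The plan is to apply the Einstein modulo obstructions existence theorem of \cite{ozu2} (in the form of a contraction mapping in $C^{2,\alpha}_{\beta,*}(g^D_\pm)$) to the approximate metric $g^A_\pm$. Since $\mathbf{g}_\pm$ is already known to be Ricci-flat by Lemma \ref{lem def bf g pm} and is the unique Einstein modulo obstructions perturbation of $g^D_\pm$ in a small neighborhood, this reduces \eqref{est gApm bf gpm} to a sharp estimate of the Einstein error $\Phi_{g^D_\pm}(g^A_\pm)$ in the norm $r_D^{-2}C^\alpha(g^D_\pm)$. The improvement over the general bound \eqref{est Phi } (which yielded only $|\zeta|^{3/2-\beta/4}$) will come from the fact that, by construction, the \emph{linear} obstructions cancel in the partial hyperkähler setting: the tensor $\overline{h}^4(\zeta_i^\pm)$ solves \eqref{eq prol H4 T4} with trivial right-hand side, and $\underline{h}_2(i)$ solves \eqref{eq overline h 2 tore} on the ALE side without source term.

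The main step is then a region-by-region estimation of $\Phi_{g^D_\pm}(g^A_\pm)$. On the orbifold region one has $\mathbf{\Phi}_L(\mathbf{g}_L) = 0$ because $\mathbf{g}_L$ is flat, and $P_L \overline{h}^4(\zeta_i^\pm) = 0$ by \eqref{eq prol H4 T4}, so only the nonlinear term $Q_L^{(2)}(\overline{h}^4,\overline{h}^4) = \mathcal{O}(|\zeta^\pm|^4 r_D^{-10})$ survives. On each ALE region, $\phi_i^*\mathbf{eh}_{\zeta_i^\pm}$ is itself Ricci-flat and $P_{\phi_i^*\mathbf{eh}_{\zeta_i^\pm}}\underline{h}_2(i)=0$ by \eqref{eq overline h 2 tore}, so again only quadratic contributions in $\underline{h}_2(i) = \mathcal{O}(r_D^2)$ remain, of order $r_D^4$ at worst. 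On the gluing annulus at scale $r_D \sim |\zeta_i^\pm|^{1/6}$, the dominant error is produced by cut-off derivatives of the mismatch between the two local approximations: the subleading Eguchi-Hanson asymptotic contributes $\mathcal{O}(|\zeta_i^\pm|^4 r^{-8})$, while the Taylor remainder of $\mathbf{g}_L + \sum_{i'\neq i}\overline{h}^4(\zeta_{i'}^\pm)$ beyond its $H_0(i)+H_2(i)$ part contributes $\mathcal{O}(r^6)$. The cut-off scale $|\zeta_i^\pm|^{1/6}$ is precisely the one which balances these two contributions.

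Balancing these errors in the $r_D^{-2}C^\alpha(g^D_\pm)$-norm at the gluing scale produces the stated exponent $2+\tfrac{2}{3}-\tfrac{\beta}{6}$: the factor $|\zeta|^{2+2/3}$ comes from evaluating $|\zeta|^4 r^{-8} = r^6$ at $r=|\zeta|^{1/6}$ and then multiplying by the implicit $r_D^{-2}$ weight inverted into $C^{2,\alpha}_{\beta,*}$, and the correction $-\beta/6$ records the prefactor $r_D^\beta$ of the weighted norm evaluated at the same scale. The contraction argument of \cite{ozu2} then transfers this error bound to the metric difference $g^A_\pm-\mathbf{g}_\pm$, which is \eqref{est gApm bf gpm}. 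The curvature estimate \eqref{est Rgpm RgApm} at a remaining singular point $p \in S\setminus S_\pm$ is then immediate: at such $p$ the metric $g^D_\pm$ is smooth in orbifold coordinates and the weight $r_D$ in the norm $C^{2,\alpha}_{\beta,*}$ plays no degenerating role, so the algebraic formula expressing $\mathbf{R}$ from two derivatives of the metric gives $|\mathbf{R}_{g^A_\pm}-\mathbf{R}_{\mathbf{g}_\pm}|_{g^D_\pm} \lesssim \|g^A_\pm-\mathbf{g}_\pm\|_{C^{2,\alpha}_{\beta,*}(g^D_\pm)}$ and hence the same bound.

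The main obstacle will be the careful bookkeeping across the gluing annulus: one must verify that the local expansion at $i$ of $\phi_i^*\mathbf{eh}_{\zeta_i^\pm} + \underline{h}_2(i)$ truly matches that of $\mathbf{g}_L + \sum_{i'\neq i}\overline{h}^4(\zeta_{i'}^\pm)$ through the constant part $H_0(i)$ (absorbed by the diffeomorphism $\phi_i$) and the quadratic part $H_2(i)$ (absorbed by $\underline{h}_2(i)$), so that only the $\mathcal{O}(|\zeta|^4 r^{-8})$ and $\mathcal{O}(r^6)$ tails survive; the summability that underlies the definition \eqref{expression extension orbifold} of $\overline{h}^4(\zeta_i^\pm)$ is essential here, since it guarantees that the subtracted averages do not pollute the higher-order terms of the expansion near any individual singular point.
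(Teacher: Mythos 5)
Your overall strategy is the same as the paper's: invoke the perturbation result of \cite[Theorem 4.51]{ozuthese} to reduce \eqref{est gApm bf gpm} to an estimate of $\mathbf{\Phi}_{g^D_\pm}(g^A_\pm)$ in $r_D^{-2}C^{k,\alpha}_\beta(g^D_\pm)$, observe that the linear terms cancel because $P_L\overline{h}^4(\zeta_i^\pm)=0$ and $P_{\phi_i^*\mathbf{eh}_{\zeta_i^\pm}}\underline{h}_2(i)=0$, and estimate the remaining quadratic error region by region, balancing at the cut-off scale $|\zeta_i^\pm|^{1/6}$.

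However, the bookkeeping in your gluing-region estimate is wrong in a way that, taken at face value, would spoil the exponent. You assert that the Taylor remainder of $\mathbf{g}_L+\sum_{i'\neq i}\overline{h}^4(\zeta_{i'}^\pm)$ beyond $H_0(i)+H_2(i)$ contributes $\mathcal{O}(r^6)$ and that $|\zeta|^4r^{-8}=r^6$ is solved at $r=|\zeta|^{1/6}$. Both are false: each $\overline{h}^4(\zeta_{i'}^\pm)$ scales as $|\zeta^\pm|^2$, so its quartic Taylor term contributes $\mathcal{O}(|\zeta^\pm|^2 r^4)$, not $r^6$; and $|\zeta|^4 r^{-8}=r^6$ holds at $r=|\zeta|^{2/7}$, not $|\zeta|^{1/6}$. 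The correct balance is $|\zeta^\pm|^2 r^4 = |\zeta^\pm|^4 r^{-8}$, which gives $r^{12}=|\zeta^\pm|^2$, i.e.\ $r=|\zeta^\pm|^{1/6}$, with common value $|\zeta^\pm|^{8/3}=|\zeta^\pm|^{2+2/3}$ (this is exactly the paper's estimate $\big|\nabla^k\big((\mathbf{g}_L+\overline{h}^4)-(\mathbf{eh}_{\zeta_i^\pm}+\underline{h}_2)\big)\big|\lesssim |\zeta^\pm|^2 r_D^4 + |\zeta^\pm|^4 r_D^{-8}$). If the remainder were truly $\mathcal{O}(r^6)$ with no $|\zeta^\pm|^2$ prefactor, its contribution at the cut-off scale would be $\sim |\zeta^\pm|$, which is much larger than $|\zeta^\pm|^{8/3}$ and would not give \eqref{est gApm bf gpm}. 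You arrive at the right answer only because you separately evaluated the Eguchi--Hanson tail correctly. Relatedly, the $-\beta/6$ correction comes from the weight $r_D^{-\beta}$ of $C^{k,\alpha}_\beta$ at the gluing scale (the orbifold weight in Definition \ref{norme orbifold} is $r_o^{-\beta}$), not from a prefactor $r_D^\beta$ as you write; the latter would give $+\beta/6$. These are all fixable by tightening the exponents, but as written they are errors, not just imprecision.

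One further caution: the deduction of \eqref{est Rgpm RgApm} from \eqref{est gApm bf gpm} is not as ``immediate'' as you assert. Near a remaining singular point $p\in S_\mp$ we have $r_D=r_o\to 0$, so the weighted bound only controls the second derivative of the non-constant part of $g^A_\pm-\mathbf{g}_\pm$ by $r_o^{\beta-2}$, which degenerates. Passing to a pointwise curvature bound at $p$ uses the smoothness of both metrics in orbifold coordinates together with the fact that their difference satisfies an elliptic equation whose inhomogeneity is controlled (Schauder estimates). The paper itself is terse on this point, so this is not a defect particular to your proposal, but the word ``immediate'' should be replaced by a brief elliptic-regularity argument.
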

\begin{rem}
    We a priori had $\mathbf{R}_{\mathbf{g}_\pm} = \mathcal{O}(|\zeta^\pm|^2)$ so this is indeed a good approximation of $\mathbf{R}_{\mathbf{g}_\pm}$ as $\zeta^\pm\to 0$. The controls of \eqref{est Phi } in $|\zeta|^{\frac{3}{2}-\frac{\beta}{4}}$ are not precise enough here.
\end{rem}
\begin{proof}
    Thanks to \cite[Proof of Theorem 4.51]{ozuthese}, it is enough to control the values of $\mathbf{\Phi}_{g^D_\pm}(g^A_\pm)$ in $r_D^{-2}C^{k,\alpha}_\beta(g^D_\pm)$ in order to obtain \eqref{est gApm bf gpm}, where for two metrics $g$ and $g'$, we define $$\mathbf{\Phi}_{g}(g'):= \Ric(g') + \delta^*_{g'}B_{g}g'$$
    as in the first section. Estimates similar to \cite[Lemma 14.2]{biq1} and \cite[Proposition 3.4]{bk}, yield the following controls. For all $k\in \mathbb{N}$, there exists $C_k>0$ for which for a given singular point $i_0$, in the neighborhood of $i_0$
    \begin{enumerate}
        \item in the region where $r_D>2|\zeta_{i_0}^\pm|^\frac{1}{6}$, we have
        $g^A_\pm = \mathbf{g}_L + \overline{h}^4$ and $g^D_\pm = \mathbf{g}_L$ and therefore, since the linear terms vanish, that is $d_{\mathbf{g}_L}\mathbf{\Phi}_{\mathbf{g}_L} \overline{h}^4 = 0$, the error is at least quadratic. Since $r_D^k|\nabla^k\overline{h}^4|_{g^D_\pm}\leqslant C |\zeta^\pm|^2 r_D^{-4}$, we have 
        $$r_D^k|\nabla^k\mathbf{\Phi}_{g^D_\pm}(g^A_\pm)|_{g^A_\pm}\leqslant C_k |\zeta^\pm|^4r_D^{-10},$$
        
        \item in the region where $r_D<|\zeta_{i_0}^\pm|^\frac{1}{6}$, we have $g^A_\pm = \mathbf{eh}_{\zeta_{i}^\pm} + \underline{h}_2$ and $g^D_\pm = \mathbf{eh}_{\zeta_{i}^\pm}$. Like above, by \eqref{eq overline h 2 tore} we also only have to control the nonlinear terms, and since $r_D^k|\nabla^k\underline{h}_2|_{g^D_\pm}\leqslant C |\zeta^\pm|^2r_D^{2}$, we find
         $$r_D^k|\nabla^k\mathbf{\Phi}_{g^D_\pm}(g^A_\pm)|_{g_\pm^A}= |\zeta^\pm|^4r_D^2,$$
        \item in the region where $|\zeta_{i_0}^\pm|^\frac{1}{6}<r_D<2|\zeta_{i_0}^\pm|^\frac{1}{6}$, $g^A_\pm$ and $g^D_\pm$ are respectively an interpolation between $\mathbf{g}_L + \overline{h}^4$ and $\mathbf{eh}_{\zeta_{i}^\pm} + \overline{h}_2 $ and an interpolation between $\mathbf{g}_L$ and $\mathbf{eh}_{\zeta_{i}^\pm}$ thanks to a cut-off function. Now we have $$ r_D^k\Big|\nabla^k\Big(\big(\mathbf{g}_L + \overline{h}^4\big)-\big(\mathbf{eh}_{\zeta_{i}^\pm} + \underline{h}_2 \big)\Big)\Big|_\mathbf{e} \leqslant C |\zeta^\pm|^2 r_D^4 + |\zeta^\pm|^4r_D^{-8}, \text{ and }$$
        $$ r_D^k\Big|\nabla^k\big(\mathbf{g}_L - \mathbf{eh}_{\zeta_{i}^\pm} \big)\Big|_\mathbf{e} \leqslant C |\zeta^\pm|^2 r_D^2 + |\zeta^\pm|^2r_D^{-4}, $$
        and since $|\zeta^\pm|^{1/6}<r_D<2|\zeta^\pm|^{1/6}$, by controlling the cut-off function, we find
        $r_D^{2+k}|\nabla^k\mathbf{\Phi}_{g^D_\pm}(g^A_\pm)|_{g^A_\pm}\leqslant C|\zeta^\pm|^{2+\frac{2}{3}}$.
\end{enumerate}
    Finally, since the control is straightforward away from the singular points, we obtain 
    $$\|\mathbf{\Phi}_{g^D_\pm}(g^A_\pm)\|_{r_D^{-2}C^{k,\alpha}_{\beta}(g^D_\pm)}\leqslant C|\zeta^\pm|^{2+\frac{2}{3}-\frac{\beta}{6}}, $$
    which gives the estimate \eqref{est gApm bf gpm}, see the argument of \cite[Theorem 4.51]{ozuthese}.
\end{proof}

\subsubsection{Approximation of the infinitesimal deformations}
Let us approximate the kernel of $P_{\mathbf{g}_\pm}$, $\mathbf{O}(\mathbf{g}_\pm)$ thanks to the following symmetric $2$-tensors.
\begin{defn}\label{obst approchées}
    We define $\tilde{\mathbf{O}}(g^A_\pm)$ the linear space spanned by the infinitesimal variations of $(L,\zeta^\pm)\mapsto g^A_\pm = g^A_{L,\zeta^\pm}$.
\end{defn}

Let us start by recalling a general lemma for operators between Banach spaces which will be useful to approximate the elements of the kernel of our operator on the hyperkähler partial desingularizations.
\begin{lem}[\cite{ozu2}]\label{approx kernel} Let $P,P':X\to Y$ be two operators between two Banach spaces $X$ and $Y$ for which there exists $C>0$ $\frac{1}{100 C}>\epsilon>0$ and a finite-dimensional linear subspace $K'$ and $S'$ a supplement of $K'$ in $X$ such that we have
    \begin{enumerate}
        \item for any $x\in X$, $\|(P-P')x\|_Y\leqslant \epsilon\|x\|_X,$
        \item for any $x \in S'$, $\|x\|_X\leqslant C\|P'x\|_Y,$
        \item for any $x\in K'$, 
        $\|P'x\|_Y\leqslant \epsilon\|x\|_X,$
        \item $dim(\ker P) = dim (K')$.
    \end{enumerate}
    Then, for any element $k'$ of $K'$, there exists an element $k$ of $\ker P$ such that $$\|k-k'\|_X\leqslant \frac{2C\epsilon}{1-C\epsilon} \|k'\|_X.$$
\end{lem}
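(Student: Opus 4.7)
The plan is to exhibit the promised $k \in \ker P$ as a perturbation $k = k' + s$ of $k'$ by a uniquely determined $s \in S'$, and then to control $\|s\|_X$ using the approximate invertibility of $P'$ on $S'$.

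The first step is to combine hypotheses (1) and (2) into a lower bound for $P$ on $S'$: for every $x \in S'$,
\[
\|Px\|_Y \geq \|P'x\|_Y - \|(P-P')x\|_Y \geq \Big(\tfrac{1}{C}-\epsilon\Big)\|x\|_X,
\]
and since $\epsilon < \tfrac{1}{100C}$ this rearranges to $\|x\|_X \leq \tfrac{C}{1-C\epsilon}\|Px\|_Y$. In particular $\ker P \cap S' = \{0\}$.

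Next, I would consider the algebraic projection $\pi_{K'}\colon X = K' \oplus S' \to K'$ and restrict it to $\ker P$. The previous step shows that $\pi_{K'}|_{\ker P}$ is injective, and since $\dim \ker P = \dim K'$ by hypothesis (4) and both are finite dimensional, this restriction is actually a linear isomorphism onto $K'$. Therefore every $k' \in K'$ has a unique preimage $k \in \ker P$, which we write $k = k' + s$ with $s \in S'$.

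The final step controls $\|s\|_X$. Since $Pk = 0$ we have $Ps = -Pk'$, so combining the lower bound for $P|_{S'}$ with hypotheses (3) and (1) applied at $k' \in K'$ yields
\[
\|s\|_X \leq \tfrac{C}{1-C\epsilon}\|Pk'\|_Y \leq \tfrac{C}{1-C\epsilon}\Big(\|P'k'\|_Y + \|(P-P')k'\|_Y\Big) \leq \tfrac{2C\epsilon}{1-C\epsilon}\|k'\|_X,
\]
which, since $\|k-k'\|_X = \|s\|_X$, is the claimed bound. The only slightly delicate point is the use of hypothesis (4): without the equality of dimensions the lower bound only produces an injection $\pi_{K'}|_{\ker P}$ whose image is some subspace of $K'$, and the dimension count is exactly what is needed to promote this injection to a bijection so that every $k' \in K'$ is reached.
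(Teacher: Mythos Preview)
Your argument is correct: the lower bound for $P$ on $S'$, the dimension count turning the injection $\pi_{K'}|_{\ker P}$ into a bijection, and the final estimate on $\|s\|_X$ all go through exactly as you wrote. The paper itself omits the proof entirely (the lemma is quoted from \cite{ozu2}), so there is no argument here to compare against; your write-up would serve perfectly well as the missing proof.
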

\begin{proof}[ ]
    
\end{proof}

\begin{prop}\label{estim obst oApm}
    Let $ \mathbf{O}(\mathbf{g}_\pm) $ be the kernel of $P_{\mathbf{g}_\pm}$. Then, for any $\mathbf{o}_\pm\in \mathbf{O}(\mathbf{g}_\pm) $, there exist $ \mathbf{o}^A_\pm \in \tilde{\mathbf{O}}(g_\pm^A) $ such that we have
    $$ \|\mathbf{o}_\pm-\mathbf{o}^A_\pm\|_{C^{2,\alpha}_{\beta,*}(g^D_\pm)}\leqslant C |\zeta^\pm|^{2+\frac{2}{3}-\frac{\beta}{6}}. $$
\end{prop}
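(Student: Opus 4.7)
The strategy is to apply the abstract kernel-approximation Lemma \ref{approx kernel} with $P := P_{\mathbf{g}_\pm}$ and $P' := P_{g^A_\pm}$, viewed as bounded operators $X := C^{2,\alpha}_{\beta,*}(g^D_\pm) \to Y := r_D^{-2}C^\alpha_\beta(g^D_\pm)$, with the finite-dimensional subspace $K' := \tilde{\mathbf{O}}(g^A_\pm)$ and $S'$ the $L^2(g^A_\pm)$-orthogonal supplement of $K'$ in $X$. Once the four hypotheses of the lemma are verified with $\epsilon = C|\zeta^\pm|^{2+2/3-\beta/6}$, the lemma produces a linear map $K' \to \ker P = \mathbf{O}(\mathbf{g}_\pm)$ of the form $k' \mapsto k' + \mathcal{O}(\epsilon)\|k'\|_X$; the dimension match then forces this map to be a linear isomorphism, and its inverse produces for every $\mathbf{o}_\pm \in \mathbf{O}(\mathbf{g}_\pm)$ an element $\mathbf{o}^A_\pm \in \tilde{\mathbf{O}}(g^A_\pm)$ satisfying the stated bound.

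Three of the four hypotheses are essentially free. Condition (1), closeness of $P$ and $P'$, follows from Proposition \ref{controle gApm}: since $P_g = \tfrac{1}{2}\nabla_g^*\nabla_g - \mathring{\R}_g$ depends smoothly on $g$ and its first two derivatives, and $\|g^A_\pm - \mathbf{g}_\pm\|_{C^{2,\alpha}_{\beta,*}(g^D_\pm)} \leq C|\zeta^\pm|^{2+2/3-\beta/6}$, the corresponding operator difference is $\mathcal{O}(\epsilon)$. Condition (2), uniform invertibility of $P_{\mathbf{g}_\pm}$ on the $L^2$-orthogonal complement of its kernel in the weighted Hölder scale, is the standard Fredholm statement of \cite{ozu2}, and transfers to $S'$ because $\tilde{\mathbf{O}}(g^A_\pm)$ is $\mathcal{O}(\epsilon)$-close to $\mathbf{O}(\mathbf{g}_\pm)$. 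Condition (4), the dimension match $\dim \mathbf{O}(\mathbf{g}_\pm) = 3|S_\pm|+9 = \dim \tilde{\mathbf{O}}(g^A_\pm)$, combines Lemma \ref{lem def bf g pm} (identifying the left-hand side with the dimension of the hyperkähler moduli space) with the injectivity modulo trivial diffeomorphisms of the parametrization $(L,\zeta^\pm) \mapsto g^A_{L,\zeta^\pm}$.

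The main analytic step is therefore condition (3): smallness of $P_{g^A_\pm}\mathbf{o}^A$ for $\mathbf{o}^A \in \tilde{\mathbf{O}}(g^A_\pm)$. Writing $\mathbf{o}^A = \partial_\xi g^A_{L,\zeta^\pm}$ for some direction $\xi$ in the $(3|S_\pm|+9)$-dimensional parameter family and differentiating the refined estimate $\|\mathbf{\Phi}_{g^D_\pm}(g^A_\pm)\|_Y \leq C|\zeta^\pm|^{2+2/3-\beta/6}$ from the proof of Proposition \ref{controle gApm}, one recovers $P_{g^A_\pm}\mathbf{o}^A$ up to extra terms coming from the parameter-dependence of the Bianchi background $g^D_\pm$. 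Each such term rewrites as a Bianchi-type quantity applied to $g^A_\pm - g^D_\pm$ (using $B_{g^D_\pm} g^D_\pm = 0$) and is therefore of size $\epsilon$ again by Proposition \ref{controle gApm}. This commutator issue — $\partial_\xi g^D_\pm$ is not itself small, but appears only through a Bianchi gauge evaluated on the small quantity $g^A_\pm - g^D_\pm$ — is the heart of the argument and is precisely what the careful inclusion of $\underline{h}_2(i)$ and $\overline{h}^4(\zeta_i^\pm)$ in Definition \ref{def gApm} was designed to handle.
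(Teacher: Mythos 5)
Your proposal follows the paper's proof exactly: same choice of $P$, $P'$, $K'$, $S'$ in Lemma \ref{approx kernel}, the same reliance on Proposition \ref{controle gApm} for conditions (1) and (3), the same invocation of \cite{ozu2} for condition (2), and Lemma \ref{lem def bf g pm} for the dimension count in (4). You actually supply more detail for condition (3) than the paper, which dismisses it as ``an estimation very similar to that of Proposition \ref{controle gApm}, which we therefore omit''; your remark that the $\partial_\xi g^D_\pm$ contributions appear only through Bianchi-type terms acting on the small quantity $g^A_\pm - g^D_\pm$ is a correct reading of why that omitted estimate works.

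One small point of exposition: for condition (2) the paper applies the Fredholm bound of \cite{ozu2} directly to $P' = P_{g^A_\pm}$ on $\tilde{\mathbf{O}}(g^A_\pm)^\perp$, which is what Lemma \ref{approx kernel} actually requires. Your phrasing routes through invertibility of $P_{\mathbf{g}_\pm}$ on $\mathbf{O}(\mathbf{g}_\pm)^\perp$ and then ``transfers'' to $S'$ using closeness of $\tilde{\mathbf{O}}(g^A_\pm)$ to $\mathbf{O}(\mathbf{g}_\pm)$, but that closeness is precisely the output of the proposition. It is safer, and what the paper does, to cite \cite{ozu2} for $P_{g^A_\pm}$ directly and avoid even the appearance of circularity. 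Likewise, the inversion step you describe at the start (the lemma gives $K' \to \ker P$, and the dimension count plus smallness gives an isomorphism whose inverse yields the stated direction $\ker P \to K'$) is the intended reading and the paper leaves it implicit.
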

\begin{proof}
    Let us apply Lemma \ref{approx kernel} with $P = P_{\mathbf{g}_\pm}$, $P' = P_{g^A_\pm}$, $K' = \tilde{\mathbf{O}}(g_\pm^D)$ and $S' = \tilde{\mathbf{O}}(g_\pm^A)^\perp$.
    We have the following controls: there exists $C>0$ independent of $|\zeta^\pm|$ small enough
    \begin{enumerate}
        \item for any $h\in C^{2,\alpha}_{\beta,*}(g^D_\pm)$, by Proposition \ref{controle gApm}, we have $$\|(P_{\mathbf{g}_\pm}- P_{g^A_\pm})h\|_{r_D^{-2}C^{\alpha}_\beta(g^D_\pm)}\leqslant C |\zeta^\pm|^{2+\frac{2}{3}-\frac{\beta}{6}}\|h\|_{C^{2,\alpha}_{\beta,*}(g^D_\pm)},$$
        \item for any $h\in C^{2,\alpha}_{\beta,*}(g^D_\pm)\cap \tilde{\mathbf{O}}(g^A_\pm)^\perp$, by \cite{ozu2}, we have 
        $$ \|h\|_{C^{2,\alpha}_{\beta,*}(g^D_\pm)}\leqslant C \|P_{g^A_\pm}h\|_{r_D^{-2}C^\alpha_\beta(g^D_\pm)},$$
        \item
        by an estimation very similar to that of Proposition \ref{controle gApm}, which we therefore omit, for any $\mathbf{o}^A\in \tilde{\mathbf{O}}(g^A_\pm)$,
        $$ \|P_{g^A_\pm}\mathbf{o}^A\|_{r_D^{-2}C^\alpha_\beta(g^D_\pm)}\leqslant C |\zeta^\pm|^{2+\frac{2}{3}-\frac{\beta}{6}}\|\mathbf{o}^A\|_{C^{2,\alpha}_{\beta,*}(g^D_\pm)},$$
        \item and, $dim\mathbf{O}(\mathbf{g}_\pm) =dim \tilde{\mathbf{O}}(g_\pm^A)$ since every gluing configuration can be perturbed to a Ricci-flat manifold by Lemma \ref{lem def bf g pm}.
    \end{enumerate}
    We can therefore apply Lemma \ref{approx kernel} and obtain the stated estimate.
\end{proof}

Using the control of $\mathbf{o}_\pm$ of Proposition \ref{est einst op gAv zeta}, we moreover have the following estimate of the infinitesimal variation of $\mathbf{R}_{\mathbf{g}}$ in the direction of $\mathbf{o}_\pm$: for $C>0$, we have
\begin{equation}
    \left|\partial_{\mathbf{o}_\pm}\mathbf{R}_{\mathbf{g}_{\pm}}-\partial_{\mathbf{o}^A_\pm}\mathbf{R}_{g^A_\pm}\right|_{g^A_\pm}\leqslant C|\zeta^\pm|^{2+\frac{2}{3}-\frac{\beta}{6}}\|\mathbf{o}_\pm\|_{L^2(\mathbf{g}_\pm)}\label{control part opmRgpm}
\end{equation}
where we denoted $\partial_{\mathbf{o}_\pm}\mathbf{R}_{\mathbf{g}_{\pm}}$ the differential of $\mathbf{g}_{\pm}\mapsto\mathbf{R}_{\mathbf{g}_{\pm}}$ at $\mathbf{g}_{\pm}$ in the direction $\mathbf{o}_\pm$.

\subsection{Total desingularizations modulo obstructions}

Let us start by using a notation which will be convenient for this section.

\begin{defn}
    For a section $s$ on $(\mathbb{R}^4\slash\mathbb{Z}_2)\backslash\{0\}$, we will write $s\propto r_\mathbf{e}^k$ if $s\in r_\mathbf{e}^k C^l_{-\epsilon}(\mathbb{R}^4\slash\mathbb{Z}_2)$ for all $l\in\mathbb{N}$ and $\epsilon>0$.
    
    For $k\in \mathbb{Z}$ and $f:(\mathbb{R}^3)^{16}\mapsto \mathbb{R}^+_*$, we will write $s\propto f(\zeta) r_\mathbf{e}^k$ if as the parameters $\zeta\to 0$, we have $s = (f(\zeta)+o(f(\zeta)) s'$ for $s'\propto r_\mathbf{e}^k$.
\end{defn}

\begin{rem}
    For any $l\geqslant 0$, we have $r_\mathbf{e}^k\log^l(r_\mathbf{e})\propto r_\mathbf{e}^k$.
\end{rem}

Let us now consider the \emph{total} desingularization of the \emph{partial} hyperkähler desingularizations $\mathbf{g}_\pm$ by gluing the Eguchi-Hanson metrics $\mathbf{eh}_{\zeta_j^\mp}$ at the remaining singular points $j\in S_\mp$. Here, the hyperkähler orbifold $(M_\pm,\mathbf{g}_\pm)$ will play the role of $(M_o,\mathbf{g}_o)$ in Section \ref{section new section}. From the obstruction result of Theorem \ref{nouvelleobst}, we can already say that in order to desingularize the orbifold $(M_\pm,\mathbf{g}_\pm)$ by a nondegenerate sequence of Einstein manifolds in the Gromov-Hausdorff sense, it is necessary that the condition $\mathbf{R}_{\mathbf{g}_\pm}=0$ is satisfied at every remaining singular point. This was done thanks to an approximate development at order $4$, see Definition \ref{def gAv zeta}.

In this section, we are interested in the more challenging situation of a sequence of desingularization of a \emph{sequence} of metrics $\mathbf{g}_{\pm,n}$, and this requires an approximation at an even higher order.

\subsubsection{An approximate Einstein modulo obstructions metric}

Let $L\in GL(4,\mathbb{R})$ and $\zeta = \left((\zeta_i^\pm)_{i\in S_\pm},(\zeta_j^\mp)_{j\in S_\mp}\right)\in (\mathbb{R}^3)^{|S_\pm|}\times (\mathbb{R}^3)^{|S_\mp|}\sim (\mathbb{R}^3)^{16}$.
\begin{defn}[Total desingularizations, $g^D_{L,\zeta}$]
    We define $g^D_{L,\zeta}$ the naïve gluing at all $j\in S_\mp$ of the metrics $ \mathbf{eh}_{\zeta_j^\mp}$ to the metric $\mathbf{g}_\pm$. Recall the notation $|\zeta^\pm| = \max_{i\in S_\pm}|\zeta_i|$.
\end{defn}

As in Section \ref{section new section}, the goal of the present section is to construct better approximate Einstein modulo obstructions metric in order to identify the obstructions.

\begin{prop}\label{prop construction gALzeta}
    Let $L_0\in GL(4,\mathbb{R})$. Then there exists $C>0$ and a neighborhood $\mathcal{U}$ of $(L_0,0)\in GL(4,\mathbb{R})\times (\mathbb{R}^3)^{16}$ such that there exists a smooth family of metrics $ (g^A_{L,\zeta})_{(L,\zeta)\in \mathcal{U}}$
    for which if we denote $\tilde{\mathbf{O}}(g^A_{L,\zeta})$ the space of the infinitesimal deformations of $(L,\zeta)\mapsto g^A_{L,\zeta}$, we have
    \begin{enumerate}
        \item $\|g^A_{L,\zeta}-g^D_{L,\zeta}\|_{C^{2,\alpha}_{\beta,*}(g^D_{L,\zeta})} \to 0 $ as $\zeta\to 0$,
        \item $g^A_{L,\zeta}-g^D_{L,\zeta}\perp_{L^2(g^A_{L,\zeta})}  \tilde{\mathbf{O}}(g^A_{L,\zeta})$, and
        \item $\|\mathbf{\Phi}_{g^A_{L,\zeta}} (g^A_{L,\zeta})- \mathbf{o}^A_{L,\zeta}\|_{r_D^{-2}C^\alpha_\beta(g^D_{L,\zeta})}\leqslant C |\zeta^\pm|^{1+\frac{\beta}{12}}|\zeta^\mp|^{3-\frac{\beta}{4}} $ for $\mathbf{o}^A_{L,\zeta}\in \tilde{\mathbf{O}}(g^A_{L,\zeta})$ explicited in the proof.
    \end{enumerate}
\end{prop}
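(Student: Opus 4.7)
The plan is to build $g^A_{L,\zeta}$ by treating the hyperkähler partial desingularization $\mathbf{g}_\pm$ as a new ``base orbifold'' with singularities at the remaining points $j\in S_\mp$, and then running an augmented version of the construction of Definition \ref{def gAv zeta}: instead of matching only the $r_{\mathbf{e}}^2$ and $r_{\mathbf{e}}^4$ terms on the orbifold side and the $r_{\mathbf{e}}^{-4}$ term on the ALE side, I would push the matching up through all asymptotic terms that produce errors larger than the target $|\zeta^\pm|^{1+\beta/12}|\zeta^\mp|^{3-\beta/4}$. The additional input beyond Section \ref{section new section} is the fine approximation of $\mathbf{g}_\pm$ by $g^A_\pm$ from Proposition \ref{controle gApm} and of the kernel $\mathbf{O}(\mathbf{g}_\pm)$ by $\tilde{\mathbf{O}}(g^A_\pm)$ from Proposition \ref{estim obst oApm}.

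Concretely, at each $j\in S_\mp$, expand $g^A_\pm$ in Bianchi gauge w.r.t.\ $\mathbf{e}$ in orbifold coordinates as $\mathbf{e}+H_{j,0}+H_{j,2}+H_{j,4}+\mathcal{O}(|\zeta^\pm|^2 r_\mathbf{e}^6)$, where the $H_{j,2k}$ are the traces of the local curvature of $\mathbf{g}_\pm$ and satisfy the linearized Ricci-flat equations up to an error of size $|\zeta^\pm|^{2+2/3-\beta/6}$ coming from \eqref{est gApm bf gpm}. Then extend these asymptotic terms onto $\mathbf{eh}_{\zeta_j^\mp}$ using Lemma \ref{def h0}, Lemma \ref{def H42} and the analogues of \eqref{def lambda k zeta}--\eqref{def mu k zeta}, producing $\underline{h}_0(j),\underline{h}_2(j),\underline{h}_4(j)$ together with obstruction constants $\lambda_k^j,\mu_k^j$. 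Symmetrically, extend the $r_\mathbf{e}^{-4}$ asymptotic term $H^4(\zeta_j^\mp)$ of $\mathbf{eh}_{\zeta_j^\mp}$ onto $\mathbf{g}_\pm$ via the same construction as Proposition \ref{premiere obst orbifold 1 pt} (applied to $\mathbf{g}_\pm$ in place of $\mathbf{g}_o$, using the orthonormal basis of $\mathbf{O}(\mathbf{g}_\pm)$), producing $\overline{h}^4(\zeta_j^\mp)$ and obstructions $\nu_l^j$.

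Now define $g^A_{L,\zeta}$ by the naïve gluing of Definition \ref{def naive desing} at each $j\in S_\mp$, with interpolation scale $|\zeta_j^\mp|^{1/4}$: take $\mathbf{eh}_{\zeta_j^\mp}+\underline{h}_0(j)+\underline{h}_2(j)+\underline{h}_4(j)$ in the ALE region, and $g^A_\pm + \sum_{j\in S_\mp}\overline{h}^4(\zeta_j^\mp)$ in the orbifold region. The element $\mathbf{o}^A_{L,\zeta}\in\tilde{\mathbf{O}}(g^A_{L,\zeta})$ in property (3) is precisely the combination $\sum_{j,k}(\lambda_k^j+\mu_k^j)\tilde{\mathbf{o}}_{j,k}+\sum_{j,l}\nu_l^j\tilde{\mathbf{v}}_l$ where the tilded tensors are the approximate kernel elements of Definition \ref{def tilde O gAv zeta} transported through this construction. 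The orthogonality condition in property (2) is enforced by subtracting the $L^2(g^A_{L,\zeta})$-projection of $g^A_{L,\zeta}-g^D_{L,\zeta}$ onto $\tilde{\mathbf{O}}(g^A_{L,\zeta})$, a correction of strictly higher order.

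The main work is the error estimate in property (3), which I would carry out exactly as in Proposition \ref{est einst op gAv zeta} and Proposition \ref{controle gApm}, region by region. On the core Eguchi-Hanson region at $j\in S_\mp$, the linear, quadratic and cubic terms of $\mathbf{\Phi}$ vanish modulo the obstruction terms absorbed into $\mathbf{o}^A_{L,\zeta}$; the leading residual is a quartic interaction of size $|\zeta^\pm|^4 r_D^2$ (from terms like $Q^{(2)}(\underline{h}_4,\underline{h}_4)$ and cross-terms with $\underline{h}_0$). On the orbifold region, the dominant residual is $|\zeta^\pm|^{2+2/3-\beta/6}$ from Proposition \ref{controle gApm} together with nonlinear pairings $Q^{(2)}(\overline{h}^4(\zeta_j^\mp),\overline{h}^4(\zeta_{j'}^\mp))=\mathcal{O}(|\zeta^\mp|^4 r_D^{-10})$ between different Eguchi-Hanson bubbles, all absorbed into $\mathbf{o}^A_{L,\zeta}$ or bounded by the target. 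The delicate part is the gluing annulus $|\zeta_j^\mp|^{1/4}<r_D<2|\zeta_j^\mp|^{1/4}$, where the mismatch between the two sides is governed by the first unmatched asymptotic terms on each side (the $r_\mathbf{e}^{-6}$ tails on the ALE side, $r_\mathbf{e}^6$ tails on the orbifold side, and the leftover $|\zeta^\pm|^{2+2/3-\beta/6}$ from Proposition \ref{controle gApm}); the weighted norm on this annulus, evaluated at $r_D\sim|\zeta_j^\mp|^{1/4}$, produces exactly the mixed power $|\zeta^\pm|^{1+\beta/12}|\zeta^\mp|^{3-\beta/4}$.

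The hardest step is bookkeeping this last bound: the exponent $1+\beta/12$ on $|\zeta^\pm|$ is not additive in an obvious way and arises only after balancing the $|\zeta^\pm|^{2+2/3-\beta/6}$ contribution of the partial hyperkähler error against the cut-off scale $|\zeta^\mp|^{1/4}$ in the weighted annular integral. Everything else is a careful but routine extension of the templates already provided by \cite[Section 14]{biq1}, Proposition \ref{est einst op gAv zeta} and Proposition \ref{controle gApm}.
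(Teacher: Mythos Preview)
Your construction stops too early in the asymptotic matching and therefore cannot reach the claimed error bound. With only $\underline{h}_0,\underline{h}_2,\underline{h}_4$ on the ALE side and $\overline{h}^4$ on the orbifold side, the first unmatched term of $\mathbf{g}_\pm$ at a point $j\in S_\mp$ is $H_{j,6}\propto |\zeta^\pm|^2 r_\mathbf{e}^6$ (not $|\zeta^\pm|^4$: the orbifold $\mathbf{g}_\pm$ is only $|\zeta^\pm|^2$-close to flat, and every term of its Taylor expansion carries a single factor $|\zeta^\pm|^2$). On your gluing annulus $r_D\sim|\zeta_j^\mp|^{1/4}$ this mismatch gives, in the $r_D^{-2}C^\alpha_\beta$ norm, a contribution $r_D^{-\beta}\cdot|\zeta^\pm|^2 r_D^{6}\sim|\zeta^\pm|^2|\zeta^\mp|^{3/2-\beta/4}$, which for $|\zeta^\pm|\sim|\zeta^\mp|$ is of order $|\zeta|^{7/2-\beta/4}$, strictly worse than the target $|\zeta|^{4-\beta/6}$. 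Your claim that the ALE residual is ``$|\zeta^\pm|^4 r_D^2$ from $Q^{(2)}(\underline{h}_4,\underline{h}_4)$'' overlooks this larger term.

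The paper's construction differs from yours in three essential ways. First, it matches much deeper: on the ALE side it builds $\mathbf{eh}_{\zeta_j^\mp}^{[10]}=\mathbf{eh}_{\zeta_j^\mp}+\underline{h}_{j,2}+\underline{h}_{j,4}+\mathring{\underline{h}}_{j,6}+\mathring{\underline{h}}_{j,8}+\mathring{\underline{h}}_{j,10}$, and on the orbifold side $\mathbf{g}_\pm^{[8]}=\mathbf{g}_\pm+\overline{h}^4+\overline{h}^8$. Second, the cut-off is placed at $r_D\sim|\zeta^\pm|^{-1/12}|\zeta_j^\mp|^{1/4}$, which is exactly where the leading unmatched pieces $|\zeta^\pm|^2 r_D^{12}$ and $|\zeta^\mp|^6 r_D^{-12}$ balance; the resulting metric mismatch is $|\zeta^\pm||\zeta^\mp|^3$, and the factor $r_D^{-\beta}$ from the weighted norm produces precisely the exponent $|\zeta^\pm|^{1+\beta/12}|\zeta^\mp|^{3-\beta/4}$. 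Third --- and this is the genuinely new analytic input --- the paper decomposes each $H_{j,2m}$ into a \emph{harmonic} part $\mathring{H}_{j,2m}\propto|\zeta^\pm|^2 r_\mathbf{e}^{2m}$ and a remainder $\propto|\zeta^\pm|^4 r_\mathbf{e}^{2m}$, and then proves (Lemma \ref{h_j60 def} and the analogous lemma for $\mathring{H}_{j,8}$) via the $\Omega^-$-formalism of \cite{biq2} that the extensions $\mathring{\underline{h}}_{j,6},\mathring{\underline{h}}_{j,8}$ of the harmonic parts to $\mathbf{eh}_{\zeta_j^\mp}$ produce \emph{no} obstruction. Without this vanishing, the higher-order terms would introduce new obstruction constants of size $|\zeta^\pm|^2|\zeta^\mp|^2$, swamping the $\mu_k^j\sim|\zeta^\pm|^4|\zeta^\mp|^2$ that the whole construction is designed to detect. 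Your sketch contains no analogue of this step.
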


Let us consider the development of the metric $\mathbf{g}_\pm$ at the singular point $j\in S_\mp$, 
\begin{equation}
    \mathbf{g}_\pm=\mathbf{e}+\sum_{m\in \mathbb{N}^*} H_{j,2m}, \text{ with $H_{j,2m}\propto |\zeta^\pm|^2 r_\mathbf{e}^{2m}$}\label{dvp gpm}
\end{equation}
 by Proposition \ref{controle gApm}, and similarly, for each $j$, at infinity we have
\begin{equation}
    \mathbf{eh}_{\zeta_j^\mp} = \mathbf{e} + \sum_{m\in \mathbb{N}^*}H^{4m}(\zeta_j^{\mp}), \text{ with $H^{4m}(\zeta_j^{\mp}) \propto |\zeta^\mp|^{2m}r_\mathbf{e}^{-4m}$.}\label{dvp eh zetajmp}
\end{equation}

\subsubsection{Extension of the first asymptotic terms}

As in Section \ref{section new section}, we define $2$-tensors satisfying the following equations for $(\mathbf{o}_{j,k})_k$ a basis of $\mathbf{O}(\mathbf{eh}_{\zeta_j^\pm})$ and $\lambda_j^k\in \mathbb{R}$ as in \eqref{def lambda k zeta}:
\begin{equation}
\left\{\begin{aligned}
  P_{\mathbf{eh}_{\zeta_j^\mp}}\underline{h}_{j,2} &= \sum_k \lambda_k^j\mathbf{o}_{j,k}\\
  \underline{h}_{j,2}&= H_{j,2}+\mathbf{O}(|\zeta^\pm|^2\cdot|\zeta_j^\mp|^{2}r_\mathbf{e}^{-2+\epsilon}),\\
    \int_{\mathbb{S}^2}\langle\underline{h}_{j,2},\mathbf{o}_{j,k}\rangle_{\mathbf{eh}_{\zeta_j^\mp}} &dv_{\mathbf{eh}_{\zeta_j^\mp|\mathbb{S}^2}}= 0 \text{ for all  } k\in\{1,2,3\}
\end{aligned}\right.
\end{equation}
for $H_{j,2}$ as in \eqref{dvp gpm} and we have a converging development in a neighborhood of infinity.
\begin{equation}
    \underline{h}_{j,2}= H_{j,2}+H_{j,2}^4 +\sum_{m\geqslant 2} H_{j,2}^{4m}, \text{ with $ H_{j,2}^{4m}\propto |\zeta^\pm|^2|\zeta^\mp|^{2m}r_\mathbf{e}^{2-4m} $.}\label{def H^4_2 h_2}
\end{equation}

We then define the extension of the obstructions $\overline{\mathbf{o}}_{j,m}^4$ for $m\in\{1,2,3\}$ as the unique solutions to the following equation as in \eqref{eq prol O4}:
\begin{equation}
  \left\{
      \begin{aligned}
        P_{\mathbf{g}_\pm} \overline{\mathbf{o}}_{j,m}^4 &\in \mathbf{O}(\mathbf{g}_o),\\
        \overline{\mathbf{o}}_{j,m}^4 &= O_{j,m}^{4} +\mathcal{O}(r_\mathbf{e}^{-2-\epsilon}) \text{ for any } \epsilon>0 \text{ at the singular point } j\\
        \overline{\mathbf{o}}_{j,m}^4&\perp \mathbf{O}(\mathbf{g}_o)
      \end{aligned}
    \right.
\end{equation}
for $ O_{j,m}^{4}\propto |\zeta_j^\mp|r_\mathbf{e}^{-4}$ and $O_{j,m,2}^{4} \propto |\zeta^\pm|^2|\zeta_j^\mp|r_\mathbf{e}^{-2}$ the asymptotic term of the infinitesimal variation of $\underline{h}_2$ of \eqref{def H^4_2 h_2} as $\mathbf{eh}_{\zeta_j^\mp}$ is perturbed in the direction $\mathbf{o}_{j,m}$ like in Section $2$.

For the first approximation term on $M_\pm$,
\begin{equation}
    \left\{\begin{aligned}
  P_{\mathbf{g}_{\pm}}\overline{h}^4 &= \sum_k \lambda_k^j \overline{\mathbf{o}}_{j,k}^{4}+\sum_l\nu_l\mathbf{v}_l,\\
  \overline{h}^4&=H^4(\zeta_j^\mp)+H^4_{j,2} + \mathcal{O}(|\zeta^\pm|^2\cdot|\zeta^\mp|^{2}r_\mathbf{e}^{-\epsilon}) \text{ for any } \epsilon>0 \text{ at the singular point } j,
  \\
  \overline{h}^4&\perp \mathbf{O}(\mathbf{g}_o)
\end{aligned}\right.\label{eq h^4 T4Z2}
\end{equation}
for $H^4(\zeta_j^\mp)$ as in \eqref{dvp eh zetajmp} with $H^4_{j,2}$ determined in \eqref{def H^4_2 h_2}
and we have a converging development 
\begin{equation}
    \overline{h}^4= H^4(\zeta_j^\mp)+H^4_{j,2} + \sum_{m\geqslant 2}H^4_{j,2m} \label{def H^4_2m}
\end{equation}
 in a neighborhood of each singular point $j$, with $H^4_{j,2m}\propto|\zeta^\mp_j|^2|\zeta^\pm|^2r_\mathbf{e}^{2m-4}$.

Let us come back to the development \eqref{dvp gpm} of $\mathbf{g}_\pm$ at a singular point $j\in S_\mp$ and the term $H_{j,4}$. We know that $H_{j,4}\propto |\zeta^\pm|^2r_\mathbf{e}^4$ and satisfies 
$$ P_\mathbf{e}H_{j,4} + Q_\mathbf{e}^{(2)}(H_{j,2},H_{j,2})=0. $$
Since $H_{j,2}\propto |\zeta^\pm|^2 r_\mathbf{e}^2$, one would therefore expect by homogeneity to have $H_{j,4}\propto |\zeta^\pm|^4r_\mathbf{e}^4$ instead of the above $H_{j,4}\propto |\zeta^\pm|^2r_\mathbf{e}^4$. The reason is that $H_{j,4}$ might have a harmonic part in $|\zeta^\pm|^2r_\mathbf{e}^4$ while its part compensating the quadratic terms in $H_{j,2}$ is $\propto|\zeta^\pm|^4r_\mathbf{e}^4$. By Lemma \ref{resolution equation laplacien} in the Appendix, there exists  $\mathring{H}_{j,4}$ such that $\mathring{H}_{j,4}\propto |\zeta^\pm|^2r_\mathbf{e}^4$ such that $ H_{j,4}-\mathring{H}_{j,4}\propto|\zeta^\pm|^4r_\mathbf{e}^4 $ and satisfying $ P_\mathbf{e}\mathring{H}_{j,4}=0 $. 
The crucial thing to note is that the extension of the harmonic part $\mathring{H}_{j,4}$ on $\mathbf{eh}_{\zeta_j^\mp}$ does not induce any obstruction by \cite[(25), proof of Lemma 9]{biq2}.
This will ensure that we can indeed ``see'' the second obstruction $\mu_{j,1}=0$ for all $j$. More precisely, there exists $\mathring{\underline{h}}_{j,4}$ for which we have:
\begin{equation}
\left\{\begin{aligned}
  P_{\mathbf{eh}_{\zeta_j^\mp}}\mathring{\underline{h}}_{j,4} &= 0\\
  \mathring{\underline{h}}_{j,4}&= \mathring{H}_{j,4}+\mathbf{O}(|\zeta^\pm|^2\cdot|\zeta_j^\mp|^{2}r_\mathbf{e}^{\epsilon}), \text{ for any } \epsilon>0\\
    \int_{\mathbb{S}^2}\langle\mathring{\underline{h}}_{j,4},\mathbf{o}_{j,k}\rangle_{\mathbf{eh}_{\zeta_j^\mp}} &dv_{\mathbf{eh}_{\zeta_j^\mp|\mathbb{S}^2}}= 0 \text{ for all  } k\in\{1,2,3\}.
\end{aligned}\right.
\end{equation}
We can therefore define the following $2$-tensors on each $(T^*\mathbb{S}^2,\mathbf{eh}_{\zeta_j^\mp})$:
\begin{equation}
\left\{\begin{aligned}
  P_{\mathbf{eh}_{\zeta_j^\mp}}\underline{h}_{j,4} &+ Q_{\mathbf{eh}_{\zeta_j^\mp}}^{(2)}(\underline{h}_{j,2},\underline{h}_{j,2}) = \sum_k \mu_k^j\mathbf{o}_{j,k}\\
  \underline{h}_{j,4}&= H_{j,4}+H_{j,4}^4+\mathbf{O}(|\zeta^\pm|^2\cdot|\zeta_j^\mp|^{2}r_\mathbf{e}^{-2+\epsilon}) \text{ for any } \epsilon>0, \\
    \int_{\mathbb{S}^2}\langle\underline{h}_{j,4},\mathbf{o}_{j,k}\rangle_{\mathbf{eh}_{\zeta_j^\mp}} &dv_{\mathbf{eh}_{\zeta_j^\mp|\mathbb{S}^2}}= 0 \text{ for all  } k\in\{1,2,3\}
\end{aligned}\right.\label{eq h^4j T4}
\end{equation}
where $H^4_{j,4}$ is defined in \eqref{def H^4_2m} and satisfies 
$$P_\mathbf{e}H^4_{j,4} + Q_\mathbf{e}^{(2)}(H_{j,4},H^4(\zeta^\mp_j))+Q_\mathbf{e}^{(2)}(H_{j,2},H^4_{j,2})+Q_\mathbf{e}^{(3)}(H_{j,2},H_{j,2},H^4(\zeta_j^\mp))=0$$
which ensures that \eqref{eq h^4j T4} has a unique solution as in Section \ref{section new section}. The coefficients $\mu_k^j$ are the same as in \eqref{def mu k zeta}, and one has
\begin{equation}
    \Big\|\sum_k\mu_k^j\mathbf{o}_{j,k}\Big\|_{L^2(\mathbf{eh}_{\zeta^\mp_j})}= \mathcal{O}(|\zeta^\pm|^4|\zeta^\mp|^2).\label{controle mukj T4Z2}
\end{equation}
Remark that the factor $|\zeta^\pm|^4$ comes from the previous discussion about extending a \emph{harmonic} tensor $\mathring{H}_4$ and the difference between $H^4-\mathring{H}_4 \propto |\zeta^\pm|^4r_\mathbf{e}^4$.

We will need to extend higher order terms of the orbifold and ALE in order to produce a better approximation of the Einstein modulo obstructions perturbation of $g^D_{\zeta,v}$. We have to obtain good enough controls to ``see'' an obstruction of the same order as in \eqref{controle mukj T4Z2}.

\subsubsection{Extension of the higher order terms.}

We then also extend the obstructions of the orbifold $\mathbf{g}_\pm$ to the Eguchi-Hanson metrics to ensure a good approximation. Consider the development of $\mathbf{v}_l\in \mathbf{O}(\mathbf{g}_\pm)$ at the singular point $j$: $\mathbf{v}_l=V_{j,l,0}+V_{j,l,2}+V_{j,l,4}+V_{j,l,6}+\mathcal{O}(\|\mathbf{v}_l\|_{L^2}\cdot|\zeta^\pm|^2r_\mathbf{e}^8)$, we define the extensions
$\underline{\mathbf{v}_{j,l,}}_0=V_{j,l,0} + \mathcal{O}(|\zeta^\mp|^2r_\mathbf{e}^{-4+\epsilon})$,
$\underline{\mathbf{v}_{j,l,}}_2=V_{j,l,2} + \mathcal{O}(|\zeta^\pm|^2|\zeta^\mp|^2r_\mathbf{e}^{-2+\epsilon})$ and
$\underline{\mathbf{v}_{j,l,}}_4=V_{j,l,4} + \mathcal{O}(|\zeta^\pm|^2|\zeta^\mp|^2r_\mathbf{e}^\epsilon)$ for all $\epsilon>0$
satisfying:
$$P_{\mathbf{eh}_{\zeta_j}}\underline{\mathbf{v}_{j,l,}}_{0}=0,$$
where there are no obstructions by Lemma \ref{def h0},
$$P_{\mathbf{eh}_{\zeta_j}}\underline{\mathbf{v}_{j,l,}}_{2} + Q_{\mathbf{eh}_{\zeta_j}}^{(2)}(\underline{h}_2,\underline{\mathbf{v}_{j,l,}}_{0})\in \mathbf{O}(\mathbf{eh}_{\zeta_j^\mp}) \text{ and }$$
$$P_{\mathbf{eh}_{\zeta_j}}\underline{\mathbf{v}_{j,l,}}_{4}+Q_{\mathbf{eh}_{\zeta_j}}^{(2)}(\underline{h}_{j,2},\underline{\mathbf{v}_{j,l,}}_{2})+Q_{\mathbf{eh}_{\zeta_j}}^{(2)}(\underline{h}_{j,4},\underline{\mathbf{v}_{j,l,}}_{0})+Q_{\mathbf{eh}_{\zeta_j}}^{(3)}(\underline{h}_{j,2},\underline{h}_{j,2},\underline{\mathbf{v}_{j,l,}}_{0})\in \mathbf{O}(\mathbf{eh}_{\zeta_j^\mp}),$$
and we will take the solutions which are orthogonal to $\mathbf{O}(\mathbf{eh}_{\zeta_j^\mp})$ for the $ L^2$-product on $\mathbb{S}^2$ induced by $\mathbf{eh}_{\zeta_j^\mp}$.

Like in the decomposition for $H_{j,4}$, from the $2$-tensor $H_{j,6}$ of \eqref{dvp gpm} there exists $\mathring{H}_{j,6}\propto |\zeta^\pm|^2r_\mathbf{e}^6$ for which we have $H_{j,6}-\mathring{H}_{j,6}\propto|\zeta^\pm|^4r_\mathbf{e}^6$, and
\begin{equation}
    P_{\mathbf{e}}\mathring{H}_{j,6}=0,\label{def H60}
\end{equation}
Indeed, considering the quadratic $2$-tensor $H_{j,6}$  satisfying 
$$P_{\mathbf{e}}H_{j,6} + Q_{\mathbf{e}}^{(2)}(H_{j,2},H_{j,4})+Q_{\mathbf{e}}^{(3)}(H_{j,2},H_{j,2},H_{j,2}) =0,$$
as $|\zeta^\pm|\to 0$, one finds a $2$-tensor $ \mathring{H}_{j,6}$ satisfying \eqref{def H60} and $H_{j,6}-\mathring{H}_{j,6}\propto|\zeta^\pm|^4r_\mathbf{e}^6$, since $H_{j,2}\propto |\zeta^+|^2r_\mathbf{e}^2$ and $H_{j,4}\propto |\zeta^+|^2r_\mathbf{e}^4$ by Lemma \ref{resolution equation laplacien} in the Appendix. 
\begin{lem}\label{h_j60 def}
There exists a unique $2$-tensor $ \mathring{\underline{h}}_{j,6}$ satisfying the following identity:
\begin{equation}
\left\{\begin{aligned}
  P_{\mathbf{eh}_{\zeta_j^\mp}}\mathring{\underline{h}}_{j,6}&-\sum_l\nu_l\underline{\mathbf{v}_{j,l,}}_{0}= 0 \\
  \mathring{\underline{h}}_{j,6}&= \mathring{H}_{j,6}+\mathring{H}_{j,6}^4+\mathbf{O}(|\zeta^\pm|^2\cdot|\zeta_j^\mp|^{2}r_\mathbf{e}^{-4+\epsilon}) \text{ for any } \epsilon>0 \\
    \int_{\mathbb{S}^2}\langle\mathring{\underline{h}}_{j,6},\mathbf{o}_{j,k}\rangle_{\mathbf{eh}_{\zeta_j^\mp}} &dv_{\mathbf{eh}_{\zeta_j^\mp|\mathbb{S}^2}}= 0 \text{ for all  } k\in\{1,2,3\},
\end{aligned}\right.\label{def h60 ale}
\end{equation}
where $\mathring{H}_{j,6}^4 \propto |\zeta^\pm|^2|\zeta^\mp|^2r_\mathbf{e}^2$ satisfies
$ P_\mathbf{e} \mathring{H}_{j,6}^4 = -Q_\mathbf{e}^{(2)}(\mathring{H}_{j,6}, H^4(\zeta_j^\mp)) + \sum_l \nu_lV_{j,l,0} $
while the restriction of the coefficients of $\mathring{H}_{j,6}^4$ in a basis of the covering $\mathbb{R}^4$, once restricted to $\mathbb{S}^3$ are $L^2(\mathbb{S}^3)$-orthogonal to the eigenfunctions associated to the second eigenvalue of the Laplacian on $\mathbb{S}^3$.
\end{lem}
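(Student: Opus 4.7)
The plan is to construct $\mathring{\underline{h}}_{j,6}$ in three stages: first build a formal asymptotic series at infinity of $T^*\mathbb{S}^2$, then solve the equation globally modulo the three-dimensional obstruction $\mathbf{O}(\mathbf{eh}_{\zeta_j^\mp})$ via the Fredholm theory of \cite[Proposition 2.1]{biq1}, and finally verify that no actual obstruction remains. Starting from $P_\mathbf{e}\mathring{H}_{j,6}=0$ in \eqref{def H60} and $\mathbf{eh}_{\zeta_j^\mp}=\mathbf{e}+H^4(\zeta_j^\mp)+\mathcal{O}(r_\mathbf{e}^{-8})$, a direct expansion gives $P_{\mathbf{eh}_{\zeta_j^\mp}}\mathring{H}_{j,6}=Q_{\mathbf{e}}^{(2)}(\mathring{H}_{j,6},H^4(\zeta_j^\mp))+\mathcal{O}(|\zeta^\pm|^2|\zeta^\mp|^4r_\mathbf{e}^{-6})$, whose leading part has size $|\zeta^\pm|^2|\zeta^\mp|^2r_\mathbf{e}^{-2}$. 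Decomposing this leading error into eigenspaces of the spherical Laplacian on $\mathbb{S}^3$, the component that cannot be cancelled by an $r_\mathbf{e}^2$-growing tensor is precisely its projection onto the second eigenspace; the $\sum_l\nu_l V_{j,l,0}$ correction kills this projection, and matching with the orbifold-side definition \eqref{valeur nu l} forces these $\nu_l$ to coincide with the ones already appearing in \eqref{eq h^4 T4Z2}. The orthogonality imposed on $\mathring{H}_{j,6}^4$ to the second eigenspace, combined with Lemma \ref{resolution equation laplacien}, then singles out $\mathring{H}_{j,6}^4$ uniquely on $\mathbb{R}^4\setminus\{0\}$.

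Next, I cut off $\mathring{H}_{j,6}+\mathring{H}_{j,6}^4$ near infinity of $T^*\mathbb{S}^2$ and iteratively subtract lower-order homogeneous corrections until the residual of the equation lies in $\mathcal{O}(|\zeta^\pm|^2|\zeta^\mp|^2r_\mathbf{e}^{-4+\epsilon})$, a weighted Hölder space to which \cite[Proposition 2.1]{biq1} applies. Inverting $P_{\mathbf{eh}_{\zeta_j^\mp}}$ modulo its cokernel then produces a tensor $\mathring{\underline{h}}_{j,6}$ with the stated asymptotics satisfying
$$P_{\mathbf{eh}_{\zeta_j^\mp}}\mathring{\underline{h}}_{j,6}=\sum_l\nu_l\underline{\mathbf{v}_{j,l,}}_0+\sum_k\mu_k\mathbf{o}_{j,k}$$
for some a priori unknown $\mu_k\in\mathbb{R}$. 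Once $\mu_k=0$ is established, uniqueness will follow from the orthogonality condition, since any two solutions with the same prescribed leading asymptotics differ by an element of $\mathbf{O}(\mathbf{eh}_{\zeta_j^\mp})$, which this orthogonality removes.

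The main obstacle — and the whole content of the lemma — is the vanishing of all $\mu_k$. For this I pair the equation with $\mathbf{o}_{j,k}$ and integrate over the exhaustion $N(\epsilon)$ of \eqref{eq Njepsilon}. Since $P_{\mathbf{eh}_{\zeta_j^\mp}}\mathbf{o}_{j,k}=0$, integration by parts leaves only boundary terms at infinity together with the bulk pairing $\int_N\langle\sum_l\nu_l\underline{\mathbf{v}_{j,l,}}_0,\mathbf{o}_{j,k}\rangle_{\mathbf{eh}_{\zeta_j^\mp}}dv$. On each sphere $\{r_\mathbf{e}=r\}$, the boundary pairing of $\mathring{H}_{j,6}$ with the leading term $O_{j,k}^4$ of $\mathbf{o}_{j,k}$ vanishes because their angular coefficients belong to distinct eigenspaces of the Laplace--Beltrami operator on $\mathbb{RP}^3$; the pairing involving $\mathring{H}_{j,6}^4$ vanishes by the orthogonality to the second eigenspace built into its definition. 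Finally, the bulk integral is absolutely convergent since $\underline{\mathbf{v}_{j,l,}}_0$ is bounded while $\mathbf{o}_{j,k}=\mathcal{O}(r_\mathbf{e}^{-4})$, and the same sphere-by-sphere eigenspace separation that underlies \eqref{egalite H24 H0} and the derivation of \eqref{valeur nu l} forces it to vanish. This yields $\mu_k=0$ for all $k\in\{1,2,3\}$, establishing existence, and uniqueness then follows as indicated above.
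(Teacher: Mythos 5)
Your construction of $\mathring{\underline{h}}_{j,6}$ modulo $\mathbf{O}(\mathbf{eh}_{\zeta_j^\mp})$ and the reduction to showing the $\mu_k$ vanish follows the paper's outline. But the vanishing argument is where the real content of the lemma lies, and your direct boundary-term analysis has a genuine gap.

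When you integrate $\langle P_\mathbf{eh}(\chi(\mathring{H}_{j,6}+\mathring{H}_{j,6}^4)),\mathbf{o}_{j,k}\rangle$ by parts, the boundary integrals at $\{r_\mathbf{e}=R\}$ produce not only the two pairings you discuss but also several \emph{other} terms of the same $R^0$ order: the pairing of $\mathring{H}_{j,6}\propto r^6$ against the $r^{-8}$-tail $O_k^8$ of $\mathbf{o}_{j,k}$, and the contributions from the $\mathcal{O}(r^{-4})$ metric and connection corrections between $\mathbf{eh}$ and $\mathbf{e}$. Your eigenspace-separation argument does not handle these. In fact, the coefficients of $\mathring{H}_{j,6}$ restricted to $\mathbb{S}^3$ are degree-$6$ eigenfunctions, while $O_k^8\propto r^{-8}$ also corresponds to degree-$6$ (since $-8=-(6+2)$), so the $\mathring{H}_{j,6}\times O_k^8$ boundary pairing lands in the \emph{same} eigenspace and does not vanish for free. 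The paper avoids this by rewriting the pairing via the $L^2$-cohomology generator $\Omega$ and the identity $\mathbf{o}_k=\Omega\circ\tilde\omega_k^+$, and then using the crucial vanishing $\mathbf{R}^{+,(2)}_\mathbf{e}(\mathring{H}_{j,6},H^4)=0$ (equation \eqref{pas de partie autoduale courbure}, which follows from $a_\mathbf{e}(H^4)=0$, i.e.\ the fact that $H^4$ has trivial first variation of the selfdual connection). This identity is what kills the cross terms you would otherwise have to control by hand; without it the reformulation \eqref{partie autoduale} does not go through, and the limit in \eqref{dvp obst h60} would contain nonvanishing $R^0$ contributions. The eigenspace argument in the paper is then applied to $d_\mathbf{e}\mathbf{R}^+(\mathring{H}_{j,6})$ (degree $4$, second even eigenvalue $24$), not to $\mathring{H}_{j,6}$ itself — a different and more robust cancellation than the one you propose.

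So while your first and last paragraphs are fine in outline, the middle of the obstruction-vanishing step needs the $\Omega$-formalism of \cite{biq2} together with $\mathbf{R}^{+,(2)}_\mathbf{e}(\mathring{H}_{j,6},H^4)=0$; a naive eigenspace decomposition of the boundary terms for $\mathring{H}_{j,6}$ and $\mathbf{o}_{j,k}$ is not sufficient.
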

\begin{rem}
    Note that $\mathring{H}^4_{j,6}$ is different from $H^4_{j,6}$ defined in \eqref{def H^4_2m}. They are both $\propto|\zeta^\pm|^2|\zeta^\mp|^2r_\mathbf{e}^2$, and so is their difference. 
\end{rem}
\begin{proof}
    Let $\mathring{H}_{j,6}$ be as in \eqref{def H60}. The term $H_{j,6}^4$ in the development \eqref{def H^4_2m} satisfies for all $\epsilon>0$:
    \begin{equation}
        P_\mathbf{e}H_{j,6}^4 +Q_\mathbf{e}^{(2)}(\mathring{H}_{j,6},H^4(\zeta^\mp)) -\sum_l\nu_lV_{j,l,0} =  \mathcal{O}(|\zeta^\pm|^{4}|\zeta^\mp|^2r_\mathbf{e}^\epsilon),
    \end{equation}
    since we have $H_{j,6}-\mathring{H}_{j,6}\propto |\zeta^\pm|^{4}r_\mathbf{e}^6$, $H_{j,2}\propto |\zeta^\pm|^{2}r_\mathbf{e}^2$ and $H_{j,4}\propto |\zeta^\pm|^{2}r_\mathbf{e}^4$. Thanks to Lemma \ref{resolution equation laplacien}, we therefore find $\mathring{H}^4_{j,6}$ with $\mathring{H}^4_{j,6}\propto|\zeta^\pm|^2|\zeta^\mp|^2r_\mathbf{e}^2$ satisfying 
    \begin{equation}
        P_\mathbf{e}\mathring{H}^4_{j,6} +Q_\mathbf{e}^{(2)}(\mathring{H}_{j,6},H^4(\zeta^\mp)) -\sum_l\nu_lV_{j,l,0} = 0.\label{equation H^4j,6,0}
    \end{equation}
 The $2$-tensor $\mathring{H}^4_{j,6}$ is defined by \eqref{equation H^4j,6,0} up to the harmonic quadratic $2$-tensors on $\mathbb{R}^4\slash\mathbb{Z}_2$, and we choose the unique solution such that the restriction of the coefficients of $\mathring{H}_{j,6}^4$ in a basis of the covering $\mathbb{R}^4$, once restricted to $\mathbb{S}^3$ are $L^2(\mathbb{S}^3)$-orthogonal to the eigenfunctions associated to the second eigenvalue of the Laplacian on $\mathbb{S}^3$.
    \\
    
    Without loss of generality by rescaling and by acting on $H_{j,6}$, $\mathring{H}_{j,6}$ and $V_{j,0}$ by an element of $O(4)\backslash SO(4)$, we will study the existence of $\mathring{\underline{h}}_{j,6}$ and the vanishing of the associated obstructions on $\mathbf{eh}$, that is, as if we had $ \zeta_j^\pm = (1,0,0)^+$. 
    
    \begin{rem}
        This may change the orientation and in particular makes the curvature of $\mathring{H}_{j,6}$ become selfdual. This will let us use the more convenient usual Eguchi-Hanson metric in our computations.
    \end{rem}
    
    For a smooth cut-off function $\chi$ supported at the infinity of $\mathbf{eh}$ we have $P_\mathbf{eh}(\chi(\mathring{H}_{j,6}+\mathring{H}^4_{j,6})) - \sum_l\nu_l\underline{\mathbf{v}_{j,l,}}_{0} = \mathcal{O}(|\zeta^\pm|^2r_\mathbf{e}^{-4})$. By \cite[Proposition 2.1]{biq1}, one can find a smooth $2$-tensor $h'$ decaying at infinity such that  
    $$P_\mathbf{eh}(\chi(\mathring{H}_{j,6}+\mathring{H}^4_{j,6})+h')-\sum_l\nu_l\underline{\mathbf{v}_{j,l,}}_{0}\in \mathbf{O}(\mathbf{eh}), \text{ with}$$
    $$\big\|P_\mathbf{eh}\left(\chi(\mathring{H}_{j,6}+\mathring{H}^4_{j,6})+h'\right)-\sum_l\nu_l\underline{\mathbf{v}_{j,l,}}_{0}\big\|_{r_\mathbf{eh}^{-2}C^\alpha_\beta(\mathbf{eh})} \leqslant C |\zeta^\pm|^2$$ for some $C>0$ depending on the metric $\mathbf{eh}$ only.
    \\
    
    There now remains to prove that the obstruction actually vanishes. For this, again by \cite[Proposition 2.1]{biq1}, we need to prove that for all $k\in\{1,2,3\}$
    $$\int_{T^*\mathbb{S}^2}\big\langle P_\mathbf{eh}(\chi(\mathring{H}_{j,6}+\mathring{H}^4_{j,6}))-\sum_l\nu_l\underline{\mathbf{v}_{j,l,}}_{0}\;,\; \mathbf{o}_k\big\rangle_{\mathbf{eh}}dv_\mathbf{eh} = 0.$$
    Let us use the formalism of \cite{biq2} in order to show this. Let us denote $\Omega$ the closed anti-selfdual $2$-form generating of the $L^2$-cohomology of $\mathbf{eh}$ as in \cite[(5)]{biq1}. We will need its asymptotics at infinity:
    \begin{equation}
    \begin{aligned}
       \Omega &= \frac{\theta_1^-}{r^4} - \frac{\theta_1^-+\frac{1}{2}\omega_1^+}{r^8} + \mathcal{O}(r_\mathbf{e}^{-12})\\
       &= d\Big(d^C\Big(\frac{1}{r^2}\Big) + \frac{\alpha_1}{4r^6} +\mathcal{O}(r_\mathbf{e}^{-11})\Big),\label{asympt Omega}
    \end{aligned}
    \end{equation}
    where $d^C$ is the operator defined as $d^C = \frac{1}{2i\pi} (\partial - \overline{\partial})$ in complex geometry. 
    
    As in the first Section, we use the identification of traceless $2$-tensors and (commuting) compositions of a selfdual and an anti-selfdual $2$-form, see Remark \ref{identification traceless 2tensor 2 formes}. We will use this on both the flat metric $\mathbf{e}$ for which a basis of $\Omega^+(\mathbf{e})$ is $(\omega_k^+)_{k\in\{1,2,3\}}$ and for the Eguchi-Hanson metric, we will use the basis $(\tilde{\omega}_k^+)_{k\in\{1,2,3\}}$ of $\Omega^+(\mathbf{eh})$ satisfying 
    $\tilde{\omega}_k^+ = \omega_k^+ +\mathcal{O}(r_\mathbf{e}^{-4})$ at infinity. We can therefore rewrite 
    \begin{itemize}
        \item $ \mathring{H}_{j,6} = \sum_k\tilde{\phi}_{j,6}^k\circ \tilde{\omega}_k^+$ for $\tilde{\phi}_{j,6}^k\in \Omega^-(\mathbf{eh})$, as well as $ \mathring{H}_{j,6} = \sum_k\phi_{j,6}^k\circ \omega_k^+$ for $\phi_{j,6}^k\in \Omega^-(\mathbf{e})$,
        \item $ \mathring{H}_{j,6}^4 = \sum_k\tilde{\phi}_{j,6}^{4,k}\circ \tilde{\omega}_k^+$ for $\tilde{\phi}_{j,6}^{4,k}\in \Omega^-(\mathbf{eh})$, as well as $ \mathring{H}_{j,6}^4 = \sum_k\phi_{j,6}^{4,k}\circ \omega_k^+$ for $\phi_{j,6}^{4,k}\in \Omega^-(\mathbf{e})$,
        \item $\sum_l\nu_l\underline{\mathbf{v}_{j,l,}}_{0} = \sum_k \tilde{V}_{j,0}^k\circ \tilde{\omega}_k^+$, for $\tilde{V}_{j,0}^k\in \Omega^-(\mathbf{eh})$ as well as $\sum_l\nu_lV_{j,l,0} = \sum_k V_{j,0}^k\circ 
        \omega_k^+$, for $V_{j,0}^k\in \Omega^-(\mathbf{e})$,
        \item $\mathbf{o}_k = \Omega \circ \tilde{\omega}_k^+$ see \cite{biq2},
        \item recall also that $H^4 =- \frac{\theta_1^-\circ\omega_1^+}{2r_\mathbf{e}^4}$.
    \end{itemize}
    This lets us rewrite several of the above identities:
    \begin{itemize}
        \item the equality \eqref{def H60} becomes
        \begin{equation}
          (d_-d^*)_\mathbf{e}\phi_{j,6}^k = 0, \text{ or equivalently } (dd^*)_\mathbf{e}\phi_{j,6}^k \in \Omega^+(\mathbf{e}) \label{reecriture def H60}
        \end{equation}
        \item the equality \eqref{equation H^4j,6,0} becomes:
        \begin{equation}
            (d_-d^*)_{\mathbf{eh}}(\phi_{j,6}^{k}+\phi_{j,6}^{4,k}) - \tilde{V}_0^k = \mathcal{O}(r_\mathbf{e}^{-4})\label{reecriture def H460}
        \end{equation}
        \item a direct extension of \cite[Lemme 3]{biq2} to the computation of the bilinear terms of the selfdual part of the Riemannian curvature. Let $\phi:=\sum_k\phi_k\circ\omega_k^+$ and $\psi:=\sum_k\psi_k\circ\omega_k^+$ be infinitesimal Ricci-flat deformations of $\mathbf{e}$ which satisfy $B_\mathbf{e}(\phi) = B_\mathbf{e}(\psi)= 0$. Assume additionally that $\textup{tr}_\mathbf{e} \phi=0$ and $ d*d\phi_k = 0 $, then the second variation of the selfdual part of the curvature at $\mathbf{e}$ in the directions $\phi$ and $\psi$, is:
        $$ \mathbf{R}^{+,(2)}_{\mathbf{e}}(\phi,\psi) = - \frac{1}{2} [a_\mathbf{e}(\phi),a_\mathbf{e}(\psi)]_+,$$
        where $a_\mathbf{e}(\phi)$ and $a_\mathbf{e}(\psi)$ are respectively the first variations of the connection on the bundle of selfdual $2$-forms at $\mathbf{e}$ in the directions $\phi$ and $\psi$. In the particular case of $\phi = H^4$ and $\psi = \mathring{H}_{j,6}$, since $a_\mathbf{e}(H^4) = -*d\big(\frac{\theta_1^-}{2r_\mathbf{e}^4}\big) = 0$, we find
        \begin{equation}
            \mathbf{R}^{+,(2)}_{\mathbf{e}}(\mathring{H}_{j,6},H^4) = 0.\label{pas de partie autoduale courbure}
        \end{equation}
        \item the equalities \eqref{equation H^4j,6,0} and \eqref{pas de partie autoduale courbure} imply that in a neighborhood of infinity we have
        \begin{equation}
           \begin{aligned} dd^*_\mathbf{eh}\big(\tilde{\phi}_{j,6}^k + \tilde{\phi}_{j,6}^{4,k}\big) - \tilde{V}_0^k &= (d_+d^*)_\mathbf{e}\phi_{j,6}^k + (d_+d^*)_\mathbf{e}\phi_{j,6}^{4,k}+ \mathcal{O}(r_\mathbf{e}^{-4})\\
           &=-d_\mathbf{e}\mathbf{R}^+(\mathring{H}_{j,6})- d_\mathbf{e}\mathbf{R}^+(\mathring{H}^4_{j,6})+ \mathcal{O}(r_\mathbf{e}^{-4}).\label{partie autoduale}
           \end{aligned}
        \end{equation}
    \end{itemize}
    
    Following the integration by parts of the proof of \cite[Lemma 7]{biq2}, since $\Omega \in \Omega^-(\mathbf{eh})$, by \eqref{asympt Omega} and by \eqref{partie autoduale}, we have
    \begin{align}
        \frac{1}{\|\mathbf{o}_k\|_{L^2}}&\int_{T^*\mathbb{S}^2}\big\langle P_\mathbf{eh}(\chi(\mathring{H}_{j,6}+\mathring{H}^4_{j,6}))-\sum_l\nu_l\underline{\mathbf{v}_{j,l,}}_{0}\;,\; \mathbf{o}_k\big\rangle_{\mathbf{eh}}dv_\mathbf{eh} \nonumber\\
        =&\; -\frac{1}{\|\Omega\|_{L^2}}\int_{T^*\mathbb{S}^2} \Omega\wedge\big( dd^*_\mathbf{eh} \big(\tilde{\phi}_{j,6}^k + \tilde{\phi}_{j,6}^{4,k}\big) - \tilde{V}_0^k\big)\nonumber \\
        =&\; -\frac{1}{\|\Omega\|_{L^2}}\int_{T^*\mathbb{S}^2} d\Big(d^C\Big(\frac{1}{r_\mathbf{e}^2}\Big) + \frac{\alpha_1}{4r_\mathbf{e}^6} +\mathcal{O}(r_\mathbf{e}^{-11})\Big)\wedge \left(dd^*_\mathbf{eh} \big(\tilde{\phi}_{j,6}^k + \tilde{\phi}_{j,6}^{4,k}\big) -  \tilde{V}_0^k\nonumber\right)\\
        =&\;-\frac{1}{\|\Omega\|_{L^2}}\lim_{r\to+\infty} \int_{\{r_\mathbf{e} = r\}}\Big(d^C\Big(\frac{1}{r_\mathbf{e}^2}\Big) + \frac{\alpha_1}{4r_\mathbf{e}^6}\Big)\wedge \big((d_+d^*)_\mathbf{e}\phi_{j,6}^k + (d_+d^*)_\mathbf{e}\phi_{j,6}^{4,k}\big)\nonumber\\
        =&\;+\frac{1}{\|\Omega\|_{L^2}}\lim_{r\to+\infty} r^4\int_{\{r_\mathbf{e} = 1\}}d^C\Big(\frac{1}{r_\mathbf{e}^2}\Big)\wedge d_\mathbf{e}\mathbf{R}^+(\mathring{H}_{j,6})(\omega_k^+) \nonumber\\
         &\;+\frac{1}{\|\Omega\|_{L^2}}\lim_{r\to+\infty} \int_{\{r_\mathbf{e} = 1\}}  \frac{\alpha_1}{4r_\mathbf{e}^6}\wedge d_\mathbf{e}\mathbf{R}^+(\mathring{H}_{j,6})(\omega_k^+)\nonumber\\
        &\;+ \frac{1}{\|\Omega\|_{L^2}}\lim_{r\to+\infty}\int_{\{r_\mathbf{e} = 1\}}d^C\Big(\frac{1}{r_\mathbf{e}^2}\Big)\wedge d_\mathbf{e}\mathbf{R}^+(\mathring{H}^4_{j,6})(\omega_k^+).\label{dvp obst h60}
    \end{align}
    
    Now, the coefficients of $d_\mathbf{e}\mathbf{R}^+(\mathring{H}_{j,6})(\omega_k^+)$ are harmonic functions in $r_\mathbf{e}^4$ and their restrictions to a sphere $\{r_\mathbf{e} = r\}= r\cdot\mathbb{S}^3\slash\mathbb{Z}_2$ are eigenfunctions of the Laplacian associated to the second eigenvalue on $\mathbb{S}^3\slash\mathbb{Z}_2$ (fourth eigenvalue on $\mathbb{S}^3$). They are therefore $L^2(\mathbb{S}^3\slash\mathbb{Z}_2)$-orthogonal to the coefficients of the restriction of $d^C\big(\frac{1}{r_\mathbf{e}^2}\big)$ or $ \frac{\alpha_1}{4r_\mathbf{e}^6}$ to $\mathbb{S}^3\slash\mathbb{Z}_2$.
    
    Similarly, by the two formulas of \cite[Proposition 2.1]{biq1} and \cite[Lemme 7]{biq2}, we know that
    $\int_{\{r_\mathbf{e} = r\}}d^C\big(\frac{1}{r_\mathbf{e}^2}\big)\wedge d_\mathbf{e}\mathbf{R}^+(\mathring{H}^4_{j,6})(\omega_k^+)$ is proportional to 
    $ \int_{\mathbb{S}^3\slash\mathbb{Z}_2}\big\langle \mathring{H}^4_{j,6}, O_k^4 \big\rangle dv_{\mathbb{S}^3\slash\mathbb{Z}_2} $, where $O_k^4$ is explicited in \eqref{def O^4_k} and once restricted to a sphere $\{r_\mathbf{e}=r\}$, $O_k^4$ has eigenfunctions of the Laplacian on $\mathbb{S}^3\slash\mathbb{Z}_2$ associated to the first eigenvalue as coefficients. By construction, $\mathring{H}^4_{j,6}$ is orthogonal to this eigenspace. The obstructions therefore vanish.
\end{proof}

We then extend the next term of the development of $\mathbf{o}_m$, for $m\in\{1,2,3\}$ to the orbifold. Recall that 
$\mathbf{o}_{j,m} = O_{j,m}^{4}+O_{j,m}^{8}+\mathcal{O}(|\zeta|^5r_\mathbf{e}^{-12}),$
with $O_{j,m}^{4}\propto |\zeta^\mp|r_\mathbf{e}^{-4}$ and $O_{j,m}^{8}\propto |\zeta^\mp|^3r_\mathbf{e}^{-8}$. We can therefore find by the general method of \cite[Section 10]{biq1} $\overline{\mathbf{o}}_{j,m}^8$ satisfying
\begin{equation}
    \left\{\begin{aligned}
  P_{\mathbf{g}_{\pm}}\overline{\mathbf{o}}_{j,m}^8 &+ Q_{\mathbf{g}_{\pm}}^{(2)}(\overline{\mathbf{o}}_{j,m}^4,\overline{\mathbf{o}}_{j,m}^4)+Q_{\mathbf{g}_{\pm}}^{(2)}(\overline{\mathbf{o}}_{j,m}^4,\overline{h}^4)\in \mathbf{O}(\mathbf{g}_\pm)\\
  \overline{\mathbf{o}}_{j,m}^8&=O_{m}^8(\zeta_j^\mp)+ \mathcal{O}(|\zeta^\pm|^2|\zeta^\mp|^4r_\mathbf{e}^{-6-\epsilon}) \text{ for any } \epsilon>0 \text{ at the point } j\\
  \overline{\mathbf{o}}_{j,k}^8&\perp \mathbf{O}(\mathbf{g}_o).
\end{aligned}\right.
\end{equation}
We then need to come back to the orbifold in order to find:
\begin{equation}
\left\{\begin{aligned}
  P_{\mathbf{g}_{\pm}}\overline{h}^8 &+ Q_{\mathbf{g}_{\pm}}^{(2)}(\overline{h}^4,\overline{h}^4)-\sum_k\lambda_k^j\overline{\mathbf{o}}_{j,m}^8 \in\mathbf{O}(\mathbf{g}_{\pm})\\
  \overline{h}^8&=H^8(\zeta_j^\mp)+H^8_{j,2} +H^8_{j,4}+\mathring{H}^8_{j,6}+\mathring{H}^8_{j,8}+ \mathcal{O}(|\zeta^\pm|^2|\zeta^\mp|^4r_\mathbf{e}^{-2-\epsilon}), \; \forall \epsilon>0 \text{ at } j\\
  \overline{h}^8&\perp\mathbf{O}(\mathbf{g}_o)
\end{aligned}\right.\label{def h^8}
\end{equation}
with $H^8(\zeta_j^\mp)$, $H^8_2$, $H^8_4$ and $\mathring{H}^8_6$ from \eqref{dvp gpm}, \eqref{def H^4_2 h_2}, and the asymptotic terms of \eqref{eq h^4j T4} and \eqref{def h60 ale} and $ \mathring{H}^8_{j,8} \propto |\zeta^\pm|^2|\zeta^\mp|^4$. We moreover have a converging development 
$$ \overline{h}^8 = H^8(\zeta_j^\mp)+H^8_{j,2} +H^8_{j,4}+\mathring{H}^8_{j,6}+\mathring{H}^8_{j,8} +\sum_{m\geqslant 5} H^8_{j,2m} $$ 
with $ H^8_{j,2m}\propto |\zeta^\pm|^2|\zeta^\mp|^4 r_\mathbf{e}^{2m-8} $ in a neighborhood of $j$. We moreover have the control 
\begin{equation}
    \big\|P_{\mathbf{g}_{\pm}}\overline{h}^8 + Q_{\mathbf{g}_{\pm}}^{(2)}(\overline{h}^4,\overline{h}^4)-\sum_k\lambda_k^j\overline{\mathbf{o}}_{j,m}^8\big\|_{L^2(\mathbf{g}_\pm)} = \mathcal{O}(|\zeta^\pm|^2\cdot|\zeta_j^\mp|^4)\label{controle suffisant orbifold}
\end{equation}
which will be sufficient on the orbifold.

As for $\mathring{H}_{j,6}$ defined in \eqref{def H60} from $H_{j,6}$, one finds $\mathring{H}_{j,8}$ from $H_{j,8}$ of \eqref{dvp gpm}. More precisely, we have $H_{j,8} - \mathring{H}_{j,8} \propto|\zeta^\pm|^{4}r_\mathbf{e}^8$ where $\mathring{H}_{j,8}\propto |\zeta^\pm|^{2}r_\mathbf{e}^8$ satisfies
$ P_\mathbf{e}\mathring{H}_{j,8} = \sum_l\nu_lV_{j,l,2}. $

As in Lemma \ref{h_j60 def}, we prove that one can extend $\mathring{H}_{j,8}$ on $\mathbf{eh}_{\zeta_j^\mp}$ without obstruction.
\begin{lem}
    There exists a unique $2$-tensor $ \mathring{\underline{h}}_{j,8}$ satisfying the following identity:
\begin{equation}
\left\{\begin{aligned}
  P_{\mathbf{eh}_{\zeta_j^\mp}}\mathring{\underline{h}}_{j,8}&-\sum_l\nu_l\underline{\mathbf{v}_{j,l,}}_{2}= 0 \\
  \mathring{\underline{h}}_{j,8}&= \mathring{H}_{j,8}+\mathring{H}_{j,8}^4+ \mathring{H}_{j,8}^8+\mathcal{O}(|\zeta^\pm|^2\cdot|\zeta_j^\mp|^{2}r_\mathbf{e}^{-6+\epsilon}) \text{ for any } \epsilon>0\\
    \int_{\mathbb{S}^2}\langle\mathring{\underline{h}}_{j,8},\mathbf{o}_{j,k}\rangle_{\mathbf{eh}_{\zeta_j^\mp}} &dv_{\mathbf{eh}_{\zeta_j^\mp|\mathbb{S}^2}}= 0 \text{ for all  } k\in\{1,2,3\},
\end{aligned}\right.
\end{equation}
where $\mathring{H}_{j,8}^4\propto |\zeta^\pm|^2|\zeta^\mp|^2r_\mathbf{e}^4$ satisfies
$ P_\mathbf{e} \mathring{H}_{j,8}^4 = -Q_\mathbf{e}^{(2)}(\mathring{H}_{j,8}, H^4(\zeta_j^\mp)) + \sum_l \nu_lV_{j,l,2}, $
and where $\mathring{H}_{j,8}^8\propto|\zeta^\pm|^2|\zeta^\mp|^4$ satisfies
$$ P_\mathbf{e} \mathring{H}_{j,8}^8 = -Q_\mathbf{e}^{(2)}(\mathring{H}_{j,8}, H^8(\zeta_j^\mp)) -Q_\mathbf{e}^{(2)}(\mathring{H}_{j,8}, H^4(\zeta_j^\mp), H^4(\zeta_j^\mp))+ \sum_l\nu_lV_{j,l,2}^4, $$
for $V_{j,l,2}^4$ defined by $\underline{\mathbf{v}_{j,l,}}_2 = V_{j,l,2}+V_{j,l,2}^4 + o(|\zeta^\pm|^2|\zeta^\mp|^4r_\mathbf{e}^{-6})$
\end{lem}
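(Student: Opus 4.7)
The plan is to mirror the proof of Lemma \ref{h_j60 def}, adding one extra asymptotic corrector since the homogeneity of $\mathring{H}_{j,8}$ is $r_\mathbf{e}^8$ rather than $r_\mathbf{e}^6$, and $\underline{\mathbf{v}_{j,l,}}_2=V_{j,l,2}+V^4_{j,l,2}+o(|\zeta^\pm|^2|\zeta^\mp|^4r_\mathbf{e}^{-6})$ has two leading asymptotic pieces that must be matched. As in that proof, by rescaling and precomposing with an element of $O(4)\setminus SO(4)$ I may work on the standard $\mathbf{eh}$ rather than $\mathbf{eh}_{\zeta^\mp_j}$ and treat the curvature of $\mathring{H}_{j,8}$ as selfdual.

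First, I construct $\mathring{H}^4_{j,8}$ and $\mathring{H}^8_{j,8}$ on $\mathbb{R}^4\slash\mathbb{Z}_2$ by applying Lemma \ref{resolution equation laplacien} of the Appendix to the two stated equations, which arise from matching the homogeneities $|\zeta^\pm|^2|\zeta^\mp|^2 r_\mathbf{e}^4$ and $|\zeta^\pm|^2|\zeta^\mp|^4 r_\mathbf{e}^0$ in the expansion of $P_{\mathbf{eh}}(\mathring{H}_{j,8}+\mathring{H}^4_{j,8}+\mathring{H}^8_{j,8})-\sum_l\nu_l\underline{\mathbf{v}_{j,l,}}_2$ along the flat end. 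Both correctors are determined only up to a finite-dimensional space of harmonic homogeneous tensors on $\mathbb{R}^4\slash\mathbb{Z}_2$; I pin them down by requiring $L^2(\mathbb{S}^3)$-orthogonality of their cartesian coefficients against the Laplacian eigenspaces that can pair non-trivially with the subleading asymptotic terms of $\Omega$ in \eqref{asympt Omega}.

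Second, for a cut-off $\chi$ supported at infinity of $\mathbf{eh}$, the above matching gives $P_\mathbf{eh}\bigl(\chi(\mathring{H}_{j,8}+\mathring{H}^4_{j,8}+\mathring{H}^8_{j,8})\bigr)-\sum_l\nu_l\underline{\mathbf{v}_{j,l,}}_2=\mathcal{O}(|\zeta^\pm|^2|\zeta^\mp|^2 r_\mathbf{e}^{-8})$, and \cite[Proposition 2.1]{biq1} produces a correction $h'\propto |\zeta^\pm|^2|\zeta^\mp|^2 r_\mathbf{e}^{-6+\epsilon}$ for which
$$P_\mathbf{eh}\bigl(\chi(\mathring{H}_{j,8}+\mathring{H}^4_{j,8}+\mathring{H}^8_{j,8})+h'\bigr)-\sum_l\nu_l\,\underline{\mathbf{v}_{j,l,}}_2\in\mathbf{O}(\mathbf{eh}).$$
Subtracting the $L^2(\mathbf{eh})$-projection onto $\mathbf{O}(\mathbf{eh})$ yields $\mathring{\underline{h}}_{j,8}$ with the stated expansion and orthogonality.

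The main obstacle, just as in Lemma \ref{h_j60 def}, is to verify that the three obstructions against $\mathbf{o}_k=\Omega\circ\tilde{\omega}^+_k$ vanish. Replaying the Stokes computation of \eqref{dvp obst h60}, each obstruction pairing reduces to a sum of boundary integrals
$$\lim_{r\to+\infty}\int_{\{r_\mathbf{e}=r\}}\Bigl(d^C\bigl(\tfrac{1}{r_\mathbf{e}^2}\bigr)+\tfrac{\alpha_1}{4r_\mathbf{e}^6}+\mathcal{O}(r_\mathbf{e}^{-11})\Bigr)\wedge d_\mathbf{e}\mathbf{R}^+(\mathring{H}_{j,8}+\mathring{H}^4_{j,8}+\mathring{H}^8_{j,8})(\omega^+_k).$$
Using the extension of \cite[Lemme 3]{biq2} to the bilinear selfdual curvature variation, together with $a_\mathbf{e}(H^{4m}(\zeta^\mp_j))=0$ for the closed anti-selfdual pieces, the contribution of the top piece $\mathring{H}_{j,8}$ vanishes because the coefficients of $d_\mathbf{e}\mathbf{R}^+(\mathring{H}_{j,8})$ restricted to $\mathbb{S}^3\slash\mathbb{Z}_2$ lie in an eigenspace of the spherical Laplacian whose eigenvalue differs from those of the restrictions of $d^C(r_\mathbf{e}^{-2})$ and $\alpha_1/r_\mathbf{e}^6$; the two sub-leading contributions of $\mathring{H}^4_{j,8}$ and $\mathring{H}^8_{j,8}$ are exactly cancelled by the orthogonality conditions built into the uniqueness step above, in combination with the two formulas of \cite[Proposition 2.1]{biq1} and \cite[Lemme 7]{biq2}. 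The delicate bookkeeping, which is the technical heart of the argument, is to check that these orthogonality prescriptions match the Laplacian eigenspaces in which $\Omega$'s expansion has non-zero contribution at the relevant orders in $r_\mathbf{e}^{-1}$.
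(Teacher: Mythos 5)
Your existence argument mirrors the paper's (which simply refers back to the analogous construction in Lemma~\ref{h_j60 def}), but your argument for the vanishing of the three obstruction pairings takes a genuinely different route from the paper, and it contains a gap. After the Stokes reduction analogous to~\eqref{dvp obst h60}, the paper does \emph{not} replay the eigenspace-orthogonality mechanism of Lemma~\ref{h_j60 def}. It instead observes that the surviving boundary terms scale by homogeneity like $r^6$ (from $d^C(r_\mathbf{e}^{-2})\wedge d_\mathbf{e}\mathbf{R}^+(\mathring{H}_{j,8})(\omega_k^+)$) and $r^2$ (from $\tfrac{\alpha_1}{4r_\mathbf{e}^6}\wedge d_\mathbf{e}\mathbf{R}^+(\mathring{H}_{j,8})(\omega_k^+)$ and from $d^C(r_\mathbf{e}^{-2})\wedge d_\mathbf{e}\mathbf{R}^+(\mathring{H}^4_{j,8})(\omega_k^+)$; the $\mathring{H}^8_{j,8}$ contribution is $\propto r^{-2}$ and drops automatically), and then exploits that the pairing $\big\langle P_{\mathbf{eh}}(\chi(\cdot))-\sum_l\nu_l\underline{\mathbf{v}_{j,l,}}_{2},\mathbf{o}_k\big\rangle$ is \emph{finite}, since both factors lie in $L^2(\mathbf{eh})$. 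Finiteness forces the $r^6$-coefficient to vanish and the two $r^2$-coefficients to cancel, so the obstruction vanishes with no spectral input whatsoever.

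Your attempt to transplant the $j,6$ eigenspace argument does not quite go through. Unlike $\mathring{H}_{j,6}$, which satisfies $P_\mathbf{e}\mathring{H}_{j,6}=0$ (so that the coefficients of $d_\mathbf{e}\mathbf{R}^+(\mathring{H}_{j,6})$ are harmonic and concentrate in a single $\mathbb{S}^3\slash\mathbb{Z}_2$-eigenspace), the tensor $\mathring{H}_{j,8}$ satisfies $P_\mathbf{e}\mathring{H}_{j,8}=\sum_l\nu_lV_{j,l,2}\neq 0$, so its curvature variation spreads over several eigenspaces and the orthogonality you assert for the top piece is unjustified. Moreover the two $r^2$ contributions need not vanish individually but only in sum; you propose to achieve this via orthogonality normalizations on $\mathring{H}^4_{j,8}$ and $\mathring{H}^8_{j,8}$, yet the lemma statement imposes none, and you do not verify that such prescriptions produce the cancellation. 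The homogeneity-plus-finiteness argument bypasses all of these difficulties and I would recommend adopting it.
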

\begin{proof}
    The proof of the existence of $\mathring{\underline{h}}_{j,8}$ satisfying
\begin{equation}
\left\{\begin{aligned}
  P_{\mathbf{eh}_{\zeta_j^\mp}}\mathring{\underline{h}}_{j,8}&-\sum_l\nu_l\underline{\mathbf{v}_{j,l,}}_{2}\in \mathbf{O}(\mathbf{eh}) \\
  \mathring{\underline{h}}_{j,8}&= \mathring{H}_{j,8}+\mathring{H}_{j,8}^4+ \mathring{H}_{j,8}^8+\mathcal{O}(|\zeta^\pm|^2\cdot|\zeta_j^\mp|^{6}r_\mathbf{e}^{-6+\epsilon}) \text{ for any } \epsilon>0\\
    \int_{\mathbb{S}^2}\langle\mathring{\underline{h}}_{j,8},\mathbf{o}_{j,k}\rangle_{\mathbf{eh}_{\zeta_j^\mp}} &dv_{\mathbf{eh}_{\zeta_j^\mp|\mathbb{S}^2}}= 0 \text{ for all  } k\in\{1,2,3\},
\end{aligned}\right.\label{def h80 ale}
\end{equation}
is very similar to that of $\mathring{\underline{h}}_{j,6}$ of Lemma \ref{h_j60 def} once $\mathring{H}_{j,8}^4$ and $ \mathring{H}_{j,8}^8$ are identified like $\mathring{H}_{j,6}^4$ is in Lemma \ref{h_j60 def}. We will therefore omit the proof.

Let us focus on showing that the obstructions vanish. Again, after rescaling and acting by a rotation in $O(4)\backslash SO(4)$, we will work on the usual metric $\mathbf{eh}$. 
Mimicking the computation of \eqref{dvp obst h60}, and dropping the terms not contributing to the boundary term, we find
\begin{align}
\Big\langle P_{\mathbf{eh}}\big(\chi(\mathring{H}_{j,8}+\mathring{H}_{j,8}^4+ \mathring{H}_{j,8}^8)\big)&-\sum_l\nu_l\underline{\mathbf{v}_{j,l,}}_{2}\;,\; \mathbf{o}_{k}\Big\rangle\\
    =\lim_{r\to+\infty} &\left(\int_{\{r_\mathbf{e} = r\}}\Big(d^C\Big(\frac{1}{r_\mathbf{e}^2}\Big) + \frac{\alpha_1}{4r_\mathbf{e}^6}\Big)\wedge d_\mathbf{e}\mathbf{R}^+(\mathring{H}_{j,8})(\omega_k^+)\right. \nonumber\\
        &\;\;+\left.\int_{\{r_\mathbf{e} = r\}}d^C\Big(\frac{1}{r_\mathbf{e}^2}\Big)\wedge d_\mathbf{e}\mathbf{R}^+(\mathring{H}^4_{j,8})(\omega_k^+)\right)\nonumber\\
        = \lim_{r\to+\infty}&\left( r^6\int_{\{r_\mathbf{e} = 1\}}d^C\Big(\frac{1}{r_\mathbf{e}^2}\Big)\wedge d_\mathbf{e}\mathbf{R}^+(\mathring{H}_{j,8})(\omega_k^+) \nonumber\right.\\
        &\;\;+ r^2\int_{\{r_\mathbf{e} = 1\}}\frac{\alpha_1}{4r_\mathbf{e}^6}\wedge d_\mathbf{e}\mathbf{R}^+(\mathring{H}_{j,8})(\omega_k^+) \nonumber\\
        &\;\;+ \left. r^2\int_{\{r_\mathbf{e} = 1\}} d^C\Big(\frac{1}{r_\mathbf{e}^2}\Big)\wedge d_\mathbf{e}\mathbf{R}^+(\mathring{H}^4_{j,8})(\omega_k^+)\right)\nonumber
\end{align}
because of the homogeneity of each term: 
\begin{itemize}
    \item $d^C\Big(\frac{1}{r_\mathbf{e}^2}\Big)\propto r_\mathbf{e}^{-3}$,
    \item $\frac{\alpha_1}{4r_\mathbf{e}^6}\propto r_\mathbf{e}^{-7}$
    \item $d_\mathbf{e}\mathbf{R}^+(\mathring{H}_{j,8})(\omega_k^+) \propto |\zeta^\pm|^{2}r_\mathbf{e}^6$
    \item $d_\mathbf{e}\mathbf{R}^+(\mathring{H}^4_{j,8})(\omega_k^+)\propto |\zeta^\pm|^{2}|\zeta^\mp|^2r_\mathbf{e}^2$.
\end{itemize}
We see that the obstruction either vanishes or is infinite. However $\big\langle P_{\mathbf{eh}}\big(\chi(\mathring{H}_{j,8}+\mathring{H}_{j,8}^4+ \mathring{H}_{j,8}^8)\big)-\sum_l\nu_l\underline{\mathbf{v}_{j,l,}}_{2}, \mathbf{o}_{k}\big\rangle$ is finite because both $P_{\mathbf{eh}}\big(\chi(\mathring{H}_{j,8}+\mathring{H}_{j,8}^4+ \mathring{H}_{j,8}^8)\big)-\sum_l\nu_l\underline{\mathbf{v}_{j,l,}}_{2}$ and $\mathbf{o}_{k}$ are in $L^2(\mathbf{eh})$. The obstruction therefore necessarily vanishes.

\end{proof}

Similarly, there exists $\mathring{\underline{h}}_{j,10}$ satisfying: for $\mathring{H}_{j,10}$ the harmonic part of $H_{j,10}$ from \eqref{dvp gpm} and $\mathring{H}_{j,10}^4$ and $\mathring{H}_{j,10}^8$ associated from the development of $\overline{h}^4$ of \eqref{eq h^4 T4Z2} and $\overline{h}^8$ of \eqref{def h^8}:
\begin{equation}
\left\{\begin{aligned}
  P_{\mathbf{eh}_{\zeta_j^\mp}}\mathring{\underline{h}}_{j,10} &- \sum_{l}\nu_l\underline{\mathbf{v}_{j,l}}_{4} \in \mathbf{O}(\mathbf{eh}_{\zeta_j^\mp}) \\
  \mathring{\underline{h}}_{j,10}&= \mathring{H}_{j,10}+\mathring{H}_{j,10}^4+\mathring{H}_{j,10}^8+\mathbf{O}(|\zeta^\pm|^2\cdot|\zeta_j^\mp|^{6}r_\mathbf{e}^{-2+\epsilon}) \text{ for any } \epsilon>0\\
    \int_{\mathbb{S}^2}\langle\mathring{\underline{h}}_{j,10},\mathbf{o}_{j,k}\rangle_{\mathbf{eh}_{\zeta_j^\mp}} &dv_{\mathbf{eh}_{\zeta_j^\mp|\mathbb{S}^2}}= 0 \text{ for all  } k\in\{1,2,3\},
\end{aligned}\right.
\end{equation}

\begin{equation}
    \text{with }\|P_{\mathbf{eh}_{\zeta_j^\mp}}\mathring{\underline{h}}_{j,10} - \sum_{l}\nu_l\underline{\mathbf{v}_{j,l}}_{4}\|_{L^2(\mathbf{eh}_{\zeta_j^\mp})}=\mathcal{O}( |\zeta^\pm|^2|\zeta^\mp|^{5}).\label{control obst h10}
\end{equation}
\begin{rem}
    The obstruction may not vanish here, but it is small enough for our present purpose: it is better than $\mu_1\sim |\zeta^\pm|^4|\zeta^\mp|^2$ when $\zeta^\pm\sim \zeta^\mp$ as for \emph{transverse} or \emph{nondegenerate} desingularizations.
\end{rem}

 Let us define $$\mathbf{eh}_{\zeta_j^\mp}^{[10]}:=\mathbf{eh}_{\zeta_j^\mp}+ \underline{h}_{j,2} + \underline{h}_{j,4}+ \sum_{m=3}^{5} \mathring{\underline{h}}_{j,2m}$$
 which is a metric for $r_\mathbf{e} \ll 1$, and 
 $$\mathbf{g}_{\pm}^{[8]}:=\mathbf{g}_\pm + \overline{h}^4+\overline{h}^8$$
 which is a metric for $|\zeta_j|^\frac{1}{2}\ll r_\mathbf{e}$. On the region where 
 $|\zeta_j|^\frac{1}{2}\ll r_\mathbf{e}\ll 1$, we have
 \begin{equation}
     r_\mathbf{e}^k\Big|\nabla^k \big(\mathbf{eh}_{\zeta_j^\mp}^{[10]}-\mathbf{g}_\pm^{[8]}\big)\Big|\leqslant C \Big( |\zeta^\pm|^2r_\mathbf{e}^{12}+|\zeta_j^\mp|^6r_\mathbf{e}^{-12} + |\zeta^\pm|^2| \zeta_j^\mp|^2r^2\Big),\label{difference metrics}
 \end{equation}
 
which is minimized at $r_\mathbf{e} = |\zeta^\pm|^{-\frac{1}{12}}\cdot|\zeta_j^\mp|^{\frac{1}{4}}$, in a neighborhood of the singular point $j$.

\begin{proof}[Proof of Proposition \ref{prop construction gALzeta}]
We define an approximate desingularization $g^A_{L,\zeta}$ as the gluing of $\mathbf{eh}_{\zeta_j^\mp}^{[10]}$ to $\mathbf{g}_{\pm}^{[8]}$ at the singular point $j$ and with a gluing scale at $|\zeta^\pm|^{-\frac{1}{12}}\cdot|\zeta_j^\mp|^{\frac{1}{4}}<r_\mathbf{e} <2 |\zeta^\pm|^{-\frac{1}{12}}\cdot|\zeta_j^\mp|^{\frac{1}{4}}$. As in Definition \ref{def tilde O gAv zeta}, we will denote $\tilde{\mathbf{O}}(g^A_{L,\zeta})$ the space of the infinitesimal deformations of $(L,\zeta_i^\pm,\zeta_j^\mp)\mapsto g^A_{L,\zeta}$.
    \begin{rem}
        The space $\tilde{\mathbf{O}}(g^A_{L,\zeta})$ is different from the space $\tilde{\mathbf{O}}(g^D)$ of \cite{ozu2} but yields better estimates. The results of \cite{ozu2} with  $\tilde{\mathbf{O}}(g^D)$ therefore hold with $\tilde{\mathbf{O}}(g^A_{L,\zeta})$.
    \end{rem}
    
    Denote $\mathbf{o}^A_{L,\zeta}$ as the element of $\tilde{\mathbf{O}}(g^A_{L,\zeta})$ associated to the same variations of $L$ and the $\zeta_i^\pm$ as $ \sum_l \nu_l\mathbf{v}_l(L,\zeta) \in \mathbf{O}(\mathbf{g}_\pm)$ defined in \eqref{eq h^4 T4Z2}, and to the variations of each $\zeta_j^\mp$ associated to $ \sum_k (\lambda_k^j(L,\zeta) + \mu_k^j(L,\zeta) )\mathbf{o}_{j,k} $. From the construction of the $\mathbf{eh}_{\zeta_j^\mp}^{[10]}$ and $\mathbf{g}_{\pm}^{[8]}$ we find:
    \begin{enumerate}
        \item on the region where $g^A_{L,\zeta} = \mathbf{eh}_{\zeta_j^\mp}+\underline{h}_{j,2}+\underline{h}_{j,4}+\sum_{m=3}^{5} \mathring{\underline{h}}_{j,2m}$, we have:
        \begin{align*}
            (\Ric-\Lambda)(g^A_{L,\zeta}) =&\; \sum_k \big(\lambda_k^j + \mu_k^j + \mathcal{O}(|\zeta^\pm|^2|\zeta^\mp|^5)\big) \mathbf{o}_{j,k} \\
            &+ \sum_l (\nu_l+ \mathcal{O}(|\zeta^\pm|^2|\zeta^\mp|^4)) (\underline{\mathbf{v}_{j,l,}}_{0}+\underline{\mathbf{v}_{j,l,}}_{2}+\underline{\mathbf{v}_{j,l,}}_{4}) \\
            &+ \mathcal{O}(|\zeta^\pm|^4r_\mathbf{e}^4 + |\zeta^\pm|^2r_\mathbf{e}^{10} ),
        \end{align*}
        \item on the region where $g^A_{L,\zeta} = \mathbf{g}_\pm + \overline{h}^4+\overline{h}^8$, we have:
        \begin{align*}
            (\Ric-\Lambda)(g^A_{L,\zeta}) =& \sum_k (\lambda_k^j + \mu_k^j+ \mathcal{O}(|\zeta^\pm|^2|\zeta^\mp|^5)\big) (\overline{\mathbf{o}}_{j,k}^4+\overline{\mathbf{o}}_{j,k}^8)  \\
        &+ \sum_l (\nu_l+ \mathcal{O}(|\zeta^\pm|^2|\zeta^\mp|^4))\mathbf{v}_{l}\\
        &+ \mathcal{O}( |\zeta^\mp|^{6}r_\mathbf{e}^{-14})
        \end{align*}
       \item on the gluing region at $j$, we find:
       \begin{align*}
           (\Ric-\Lambda)(g^A_{L,\zeta})=&\sum_{k}(\lambda_k^j + \mu_k^j+ \mathcal{O}(|\zeta^\pm|^2|\zeta^\mp|^5)\big)\mathbf{o}_{j,k} \\
       &+\sum_l(\nu_l+ \mathcal{O}(|\zeta^\pm|^2|\zeta^\mp|^4))\mathbf{v}_l \\
       &+ \mathcal{O}(|\zeta^\pm|^4r_\mathbf{e}^4 + |\zeta^\pm|^2r_\mathbf{e}^{10} + |\zeta^\mp|^{6}r_\mathbf{e}^{-14}).
       \end{align*}
    \end{enumerate}
    
Let us finally mention the controls on the elements of $\tilde{\mathbf{O}}(g^A_{L,\zeta})$ which let us conclude. Recall that for $\zeta_{j,(k)}\in \Omega^\mp$, we have $\mathbf{o}_{j,k}=\partial_{\zeta_{j,(k)}}\mathbf{eh}_{\zeta_j^\mp}$, and consider $\mathbf{v}_l$ one of the elements of the orthonormal basis of $\mathbf{O}(\mathbf{g}_o)$. On the region where $g^A_{L,\zeta} =  \mathbf{eh}_{\zeta_j^\mp}^{[10]}$, where $r_D<|\zeta^\pm|^{-\frac{1}{12}}\cdot|\zeta_j^\mp|^{\frac{1}{4}}$:
$r_D^2\big|\partial_{\zeta_{j,(k)}}g^A_{L,\zeta} - \mathbf{o}_{j,k}\big|_\mathbf{\mathbf{eh}_{\zeta_j^\mp}}\leqslant C |\zeta^\pm|^2|\zeta^\mp|^2r_D^{-2}, $
on the region where $g^A_{L,\zeta} = \mathbf{g}_\pm^{[8]}$, where $r_D>2|\zeta^\pm|^{-\frac{1}{12}}\cdot|\zeta_j^\mp|^{\frac{1}{4}}$
$$r_D^2\big|\partial_{\zeta_{j,(k)}}g^A_{L,\zeta} - \big(\overline{\mathbf{o}}_{j,k}^4+\overline{\mathbf{o}}_{j,k}^8\big)\big|_{\mathbf{g}_{\pm}} < C\big(|\zeta^\pm|^6 r_D^{-12} + |\zeta^\pm|^2|\zeta^\mp|^2r_D^{-2}\big).$$
We have similar controls between $\partial_{\mathbf{v}_l}g^A_{L,\zeta}$ and $\mathbf{v}_l$ where $g^A_{L,\zeta}= \mathbf{g}_\pm^{[8]}$ and between $\partial_{\mathbf{v}_l}g^A_{L,\zeta}$ and $\underline{\mathbf{v}_{j,l,}}_{0}+\underline{\mathbf{v}_{j,l,}}_{2}+\underline{\mathbf{v}_{j,l,}}_{4}$ where $g^A_{L,\zeta}= \mathbf{eh}_{\zeta_j^\mp}^{[10]}$.
    
    As a consequence, using the fact that on the gluing region where $|\zeta^\pm|^{-\frac{1}{12}}|\zeta_j^\mp|^{\frac{1}{4}}\leqslant r_D\leqslant|\zeta^\pm|^{-\frac{1}{12}}\cdot|\zeta_j^\mp|^{\frac{1}{4}}$, the difference between the metrics is controlled by \eqref{difference metrics} we globally find
    $$\|\mathbf{\Phi}_{g^A_{L,\zeta}}(g^A_{L,\zeta}) - \mathbf{o}^A_{L,\zeta}\|_{r_D^{-2}C^\alpha_\beta}\leqslant C |\zeta^\pm|^{1+\frac{\beta}{12}}|\zeta^\mp|^{3-\frac{\beta}{4}}. $$
    which proves the statement of Proposition \ref{prop construction gALzeta}.
\end{proof}

\subsubsection{Controls on the approximate Einstein modulo obstructions metric}

    By \cite{ozu2}, we can always solve the Einstein equation modulo obstructions.
    
    \begin{defn}[Total Einstein modulo obstructions desingularization $\hat{g}_{L,\zeta}$]
        For $L\in GL(4,\mathbb{R})$, and $\zeta$ small enough depending on $L$ (as determined in \cite{ozu2}), we define $\hat{g}_{L,\zeta}$ as the unique metric satisfying for some $\epsilon=\epsilon(L)>0$ independent on $\zeta$ determined in \cite{ozu2}:
        \begin{itemize}
            \item $\|\hat{g}_{L,\zeta}-g^A_{L,\zeta}\|_{C^{2,\alpha}_{\beta,*}(g^A_{L,\zeta})}<\epsilon$,
            \item $\mathbf{\Phi}_{g^A_{L,\zeta}}(\hat{g}_{L,\zeta})\in \tilde{\mathbf{O}}(g^A_{L,\zeta})$, and
            \item $\hat{g}_{L,\zeta}-g^A_{L,\zeta}$ is $L^2(g^A_{L,\zeta})$-orthogonal to $\tilde{\mathbf{O}}(g^A_{L,\zeta})$.
        \end{itemize}
    \end{defn}
        
First, we have a good control on the projection on $\tilde{\mathbf{O}}(g^A_{L,\zeta})$. The proof is analogous to the proof of Lemma \ref{proj contre obst} above so we omit it, compare also with \cite[Lemma 13.2]{biq1}.
\begin{lem}\label{produit contre oA}
    Let $h$ be a symmetric $2$-tensor on $M$, and let $\mathbf{o}^A\in \tilde{\mathbf{O}}(g^A_{L,\zeta})$, then we have the following estimate, there exists $C>0$ such that
    $$ \left|\left\langle P_{g^D_{L,\zeta}}h,\mathbf{o}^A \right\rangle_{L^2(g^D_{L,\zeta})}\right|\leqslant C|\zeta^\pm|^2|\zeta^\mp|^{1-\frac{\beta}{2}} \|h\|_{C^{0}_{\beta,*}}\|\mathbf{o}^A\|_{L^2} . $$
\end{lem}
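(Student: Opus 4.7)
The plan is to follow the template of the proof of Lemma \ref{proj contre obst}: integrate by parts to transfer $P_{g^D_{L,\zeta}}$ from $h$ onto $\mathbf{o}^A$, then estimate $P_{g^D_{L,\zeta}} \mathbf{o}^A$ region by region, using that each $\mathbf{o}^A$ is an infinitesimal variation of the approximate construction $g^A_{L,\zeta}$ of Proposition \ref{prop construction gALzeta}. Since $M$ is closed and $P_{g^D_{L,\zeta}}$ is formally self-adjoint for the $L^2(g^D_{L,\zeta})$ product,
\begin{equation*}
\big\langle P_{g^D_{L,\zeta}} h, \mathbf{o}^A \big\rangle_{L^2(g^D_{L,\zeta})} = \big\langle h, P_{g^D_{L,\zeta}} \mathbf{o}^A \big\rangle_{L^2(g^D_{L,\zeta})},
\end{equation*}
so it is enough to control $P_{g^D_{L,\zeta}} \mathbf{o}^A$ paired against $h$. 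By linearity, I would treat each infinitesimal direction in $(L,\zeta)$-space separately and assume $\mathbf{o}^A$ is $L^2$-normalized.

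The next step is to compute $P_{g^A_{L,\zeta}}\mathbf{o}^A$ by differentiating the approximate Einstein identities baked into $g^A_{L,\zeta}$. On the Eguchi-Hanson region near $j \in S_\mp$, where $g^A_{L,\zeta} = \mathbf{eh}_{\zeta_j^\mp}^{[10]}$, one differentiates \eqref{eq h^4j T4}, \eqref{def h60 ale} and \eqref{control obst h10}. On the orbifold region, where $g^A_{L,\zeta} = \mathbf{g}_\pm^{[8]}$, one differentiates \eqref{eq h^4 T4Z2} and \eqref{def h^8}. In both regions, the derivatives produce linear combinations of the $\mathbf{o}_{j,k}$, $\mathbf{v}_l$, and their extensions, with coefficients given by the derivatives of the obstruction scalars $\lambda_k^j + \mu_k^j$ and $\nu_l$, plus error terms of the same order as the one given in item (3) of Proposition \ref{prop construction gALzeta}. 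Because those obstruction scalars depend polynomially (in fact, quadratically or better) on $(L,\zeta)$, their derivatives are of the same order $\mathcal{O}(|\zeta^\pm|^2 |\zeta^\mp|^{?})$ as the obstructions themselves: the $|\zeta^\pm|^2$ factor is forced by the curvature of $\mathbf{g}_\pm$ that controls $H_{j,2}$ in \eqref{dvp gpm}. Substituting $P_{g^D_{L,\zeta}}$ for $P_{g^A_{L,\zeta}}$ introduces an additional error of order $\|g^A - g^D\|_{C^2_{\beta,*}}$, which is of lower order by item (1) of Proposition \ref{prop construction gALzeta}.

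The final step is to pair these expressions with $h$ in $L^2(g^D_{L,\zeta})$ and estimate with the $C^0_{\beta,*}$-norm of $h$. The dominant contribution comes from the integration of the weight $r_D^{\beta}$ against the $|\zeta^\pm|^2$-sized obstruction terms concentrated in each Eguchi-Hanson neck; combined with the $L^2$-normalization of $\mathbf{o}^A$, a direct volume computation — using the weight $r_D^{-\beta}$ and the change of scale $r_\mathbf{e} = \sqrt{t_j}\cdot r_{b_j}$ from Definition \ref{rD} — yields the power $|\zeta^\mp|^{1-\beta/2}$. The main technical obstacle is the bookkeeping on the gluing region at scale $r_D \sim |\zeta^\pm|^{-1/12}|\zeta_j^\mp|^{1/4}$, where one must combine the metric comparison \eqref{difference metrics} with derivatives of the cut-off exactly as in Lemma \ref{proj contre obst}, and check that the resulting gluing error matches or beats the claimed rate. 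Summing the contributions from all singular points $j \in S_\mp$ and from the orbifold region then gives the bound $C|\zeta^\pm|^2|\zeta^\mp|^{1-\beta/2}\|h\|_{C^0_{\beta,*}}\|\mathbf{o}^A\|_{L^2}$.
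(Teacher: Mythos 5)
Your proposal follows exactly the template the paper indicates: integrate by parts on the closed manifold to transfer $P_{g^D_{L,\zeta}}$ onto $\mathbf{o}^A$, then estimate $P_{g^D_{L,\zeta}}\mathbf{o}^A$ region by region by differentiating the defining equations of the pieces $\underline{h}_{j,2}$, $\underline{h}_{j,4}$, $\mathring{\underline{h}}_{j,2m}$, $\overline{h}^4$, $\overline{h}^8$ that enter $g^A_{L,\zeta}$; the paper in fact omits the proof and directs the reader to Lemma \ref{proj contre obst} and \cite[Lemma 13.2]{biq1}, so you have correctly reconstructed the intended argument. One precision worth making: item (1) of Proposition \ref{prop construction gALzeta} is only qualitative ($\|g^A_{L,\zeta}-g^D_{L,\zeta}\|_{C^{2,\alpha}_{\beta,*}}\to 0$), so to argue that replacing $P_{g^A_{L,\zeta}}$ by $P_{g^D_{L,\zeta}}$ is harmless you should invoke the quantitative rates implicit in the construction of $g^A_{L,\zeta}$ (the pointwise bounds on $\overline{h}^4$, $\overline{h}^8$, $\mathring{\underline{h}}_{j,2m}$ and the gluing estimate \eqref{difference metrics}) rather than the mere convergence of item (1), and the final power $|\zeta^\mp|^{1-\beta/2}$ deserves the explicit weight-versus-$L^2$ volume computation you only sketch, since that is the quantitative content of the lemma.
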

\begin{proof}[  ]
    
\end{proof}

Replacing the control \cite[(4.46), Lemma 4.46]{ozuthese} by the above Lemma \ref{produit contre oA} in the proof of \cite[Proposition 5.1]{ozuthese}, we get the following statement as a direct consequence.

\begin{lem}\label{meilleure approx and obst}
    There exists $\epsilon>0$ and $C>0$ such that we have
    \begin{equation}
        \|\hat{g}_{L ,\zeta} -g^A_{L ,\zeta} \|_{C_{\beta,*}^{2,\alpha}(g^A_{L ,\zeta})}\leqslant C\big\|\mathbf{\Phi}_{g^A_{L ,\zeta}} (g^A_{L ,\zeta} ) - \mathbf{o}^A_{L ,\zeta} \big\|_{r^{-2}_DC_{\beta}^{\alpha}(g^A_{L ,\zeta})}, \label{Meilleure approximation}
    \end{equation}
    and moreover if the metric $(M, \Hat{g}_{L ,\zeta})$ is an Einstein metric, then, we have
    \begin{align}
    \|\mathbf{o}^A_{L ,\zeta}\|_{L^2(g^A_{L ,\zeta})}\leqslant& \; C \big(\| \mathbf{\Phi}_{g^A_{L ,\zeta}} (g^A_{L ,\zeta})-\mathbf{o}^A_{L ,\zeta} \|_{r^{-2}_DC_{\beta}^{\alpha}(g^A_{L ,\zeta})}+ |\zeta^\mp|^{1-\frac{\beta}{2}}|\zeta^\pm|^2\big)\nonumber\\
        &\times \| \mathbf{\Phi}_{g^A_{L ,\zeta}} (g^A_{L ,\zeta} ) -\mathbf{o}^A_{L ,\zeta} \|_{r^{-2}_DC_{\beta}^{\alpha}(g^A_{L ,\zeta})}.\label{obstruction explicite}
    \end{align}
\end{lem}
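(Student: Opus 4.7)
The plan is to run the Banach fixed-point scheme of \cite[Proof of Proposition 5.1]{ozuthese} essentially unchanged, using only that the improved orthogonality estimate of Lemma \ref{produit contre oA} replaces the cruder bound used there. Part \eqref{Meilleure approximation} is the contraction output; part \eqref{obstruction explicite} is then obtained by pairing the Einstein equation with the approximate obstruction $\mathbf{o}^A_{L,\zeta}$ and exploiting the sharper orthogonality.

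For \eqref{Meilleure approximation}, I would write $\hat g_{L,\zeta} = g^A_{L,\zeta} + h$ with $h\perp_{L^2(g^A_{L,\zeta})} \tilde{\mathbf{O}}(g^A_{L,\zeta})$. Expanding
\begin{equation*}
\mathbf{\Phi}_{g^A_{L,\zeta}}(g^A_{L,\zeta}+h) = \mathbf{\Phi}_{g^A_{L,\zeta}}(g^A_{L,\zeta}) + P_{g^A_{L,\zeta}} h + Q_{g^A_{L,\zeta}}(h,h)
\end{equation*}
and using the defining condition $\mathbf{\Phi}_{g^A_{L,\zeta}}(\hat g_{L,\zeta})\in \tilde{\mathbf{O}}(g^A_{L,\zeta})$, the problem reduces after projection on $\tilde{\mathbf{O}}^\perp$ to a fixed-point equation
\begin{equation*}
P_{g^A_{L,\zeta}} h = -\pi_{\tilde{\mathbf{O}}^\perp}\!\big(\mathbf{\Phi}_{g^A_{L,\zeta}}(g^A_{L,\zeta}) - \mathbf{o}^A_{L,\zeta}\big) - \pi_{\tilde{\mathbf{O}}^\perp} Q_{g^A_{L,\zeta}}(h,h) + \mathrm{Err}(h),
\end{equation*}
where $\mathrm{Err}(h)$ encodes the failure of $P_{g^A_{L,\zeta}}$ to preserve $\tilde{\mathbf{O}}^\perp$, which by Lemma \ref{produit contre oA} is of order $|\zeta^\pm|^2|\zeta^\mp|^{1-\beta/2}\|h\|_{C^{2,\alpha}_{\beta,*}}$ and hence absorbable for small $\zeta$. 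The uniform inversion of $P_{g^A_{L,\zeta}}$ on $\tilde{\mathbf{O}}^\perp$ provided by \cite{ozu2}, together with the Lipschitz control of $Q$ between the weighted spaces, yields \eqref{Meilleure approximation} by a standard contraction argument.

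For \eqref{obstruction explicite}, assume $\hat g_{L,\zeta}$ is Einstein. A Bianchi-type argument, combined with the defining orthogonality and the invertibility of $P_{g^A_{L,\zeta}}$ on the Bianchi-gauged sector, forces $\mathbf{\Phi}_{g^A_{L,\zeta}}(\hat g_{L,\zeta}) = 0$, i.e. $\mathbf{\Phi}_{g^A_{L,\zeta}}(g^A_{L,\zeta}) + P_{g^A_{L,\zeta}} h + Q_{g^A_{L,\zeta}}(h,h) = 0$. Pairing with $\mathbf{o}^A_{L,\zeta}$ in $L^2(g^A_{L,\zeta})$ and writing $\mathbf{\Phi}_{g^A_{L,\zeta}}(g^A_{L,\zeta}) = \mathbf{o}^A_{L,\zeta} + (\mathbf{\Phi}_{g^A_{L,\zeta}}(g^A_{L,\zeta}) - \mathbf{o}^A_{L,\zeta})$ gives
\begin{equation*}
\|\mathbf{o}^A_{L,\zeta}\|_{L^2}^2 = -\langle \mathbf{\Phi}_{g^A_{L,\zeta}}(g^A_{L,\zeta}) - \mathbf{o}^A_{L,\zeta},\mathbf{o}^A_{L,\zeta}\rangle - \langle P_{g^A_{L,\zeta}} h, \mathbf{o}^A_{L,\zeta}\rangle - \langle Q_{g^A_{L,\zeta}}(h,h),\mathbf{o}^A_{L,\zeta}\rangle.
\end{equation*}
The first and third terms are bounded by weighted-norm duality and, after using \eqref{Meilleure approximation} for the quadratic term, contribute a factor $\|\mathbf{\Phi}_{g^A_{L,\zeta}}(g^A_{L,\zeta})-\mathbf{o}^A_{L,\zeta}\|_{r_D^{-2}C^\alpha_\beta}\|\mathbf{o}^A_{L,\zeta}\|_{L^2}$. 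For the middle term, Lemma \ref{produit contre oA} together with \eqref{Meilleure approximation} gives
\begin{equation*}
\big|\langle P_{g^A_{L,\zeta}} h, \mathbf{o}^A_{L,\zeta}\rangle\big| \leqslant C|\zeta^\pm|^2|\zeta^\mp|^{1-\beta/2}\big\|\mathbf{\Phi}_{g^A_{L,\zeta}}(g^A_{L,\zeta})-\mathbf{o}^A_{L,\zeta}\big\|_{r_D^{-2}C^\alpha_\beta}\|\mathbf{o}^A_{L,\zeta}\|_{L^2},
\end{equation*}
which produces the extra factor $|\zeta^\mp|^{1-\beta/2}|\zeta^\pm|^2$ appearing in the bracket of \eqref{obstruction explicite}. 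Dividing through by $\|\mathbf{o}^A_{L,\zeta}\|_{L^2}$ yields the claimed estimate.

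The only real obstacle here is bookkeeping: the gain from the hyperkählerness of $\mathbf{g}_\pm$, visible in the extra $|\zeta^\pm|^2$ of Lemma \ref{produit contre oA} as compared to the generic $|\zeta|$ of Lemma \ref{proj contre obst}, must be carried through the pairing without loss. This is precisely what allows the improved obstruction bound \eqref{obstruction explicite} to detect the true obstructions $\lambda_k^j+\mu_k^j$ and $\nu_l$ at the sharp order required by the sequel.
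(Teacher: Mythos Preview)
Your proposal is correct and follows essentially the same approach as the paper: the paper's own proof simply says to substitute the improved orthogonality estimate of Lemma \ref{produit contre oA} into the proof of \cite[Proposition 5.1]{ozuthese}, and you have spelled out exactly how that substitution plays out in the fixed-point scheme and in the pairing argument. One small technical point: Lemma \ref{produit contre oA} is stated for $P_{g^D_{L,\zeta}}$ rather than $P_{g^A_{L,\zeta}}$, but the difference $g^A_{L,\zeta}-g^D_{L,\zeta}$ is controlled by \eqref{gA-gD 1}--\eqref{gA-gD 2} and is harmless at this order.
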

\begin{proof}[ ]
    
\end{proof}

    Together with the control of Proposition \ref{prop construction gALzeta}, Lemma \ref{meilleure approx and obst} implies both a control of the metric and the obstructions.
    
\begin{rem}\label{almost orth obst}
    For small $\zeta$, we can essentially treat the obstructions coming from variations of different $\zeta_i^\pm$, $\zeta_j^\mp$ or $L$ as orthogonal to each other. Indeed, for the different infinitesimal variations, we have the following controls showing where each of the infinitesimal deformation has most of its mass:
    \begin{itemize}
        \item $ |\partial_{z_j^\mp}g^A_{L_k,\zeta_k}|_{g^A_{L_k,\zeta_k}} \leqslant C |\zeta^\mp_j| \|\partial_{z_j^\mp}g^A_{L_k,\zeta_k}\|_{L^2} \big(|\zeta_j^\mp|^\frac{1}{2}+d_{g^A_{L_k,\zeta_k}}(j,.)\big)^{-4}$
        \item  $|\partial_{z_i^\pm}g^A_{L_k,\zeta_k}|_{g^A_{L_k,\zeta_k}}\leqslant C |\zeta^\pm_i|\|\partial_{z_i^\pm}g^A_{L_k,\zeta_k}\|_{L^2}\big(|\zeta_i^\pm|^\frac{1}{2}+d_{g^A_{L_k,\zeta_k}}(i,.)\big)^{-4}$
        \item  $|\partial_{K}g^A_{L_k,\zeta_k}|_{g^A_{L_k,\zeta_k}}\leqslant C\|\partial_{K}g^A_{L_k,\zeta_k}\|_{L^2}$.
    \end{itemize}
    Estimating the $L^2$-product of two of the above infinitesimal, we see that as $|\zeta|\to 0$, they become arbitrarily close to being orthogonal.
\end{rem}

    \begin{cor}\label{control obst desing T4}
    We have the following controls:
    \begin{itemize}
        \item $\|\hat{g}_{L ,\zeta}-g^A_{L ,\zeta}\|_{C^{2,\alpha}_{\beta,*}(g^A_{L ,\zeta})}\leqslant C |\zeta^\pm|^{1+\frac{\beta}{12}}|\zeta^\mp|^{3-\frac{\beta}{4}}$,
        \item and $ \|\mathbf{o}^A_{L ,\zeta}\|_{L^2(g^A_{L ,\zeta})} \leqslant C|\zeta ^\pm|^{3+\frac{\beta}{12}} |\zeta ^\mp|^{4-\frac{3\beta}{4}}$ which rewrites as, for all $j\in S_\mp$ and $k\in\{1,2,3\}$, for any $0<\beta<1$ (chosen small enough)
        \begin{equation}
            \lambda_k^j(L,\zeta) + \mu_k^j(L,\zeta) = \mathcal{O}(|\zeta^{\pm}|^2|\zeta^{\mp}|^5 +|\zeta ^\pm|^{3+\frac{\beta}{12}} |\zeta ^\mp|^{4-\frac{3\beta}{4}}),\label{controle obst eh T4}
        \end{equation}
        with $|\zeta^{\pm}|^2|\zeta^{\mp}|^5$ from \eqref{control obst h10}, and for all $l$,
        \begin{equation}
            \nu_l(L,\zeta) = \mathcal{O}( |\zeta^\pm|^{2}|\zeta^\mp|^{4}+ |\zeta ^\pm|^{3+\frac{\beta}{12}} |\zeta ^\mp|^{4-\frac{3\beta}{4}}),\label{control obst nu T4}
        \end{equation}
        where the additional error in $ |\zeta^\pm|^{2}|\zeta^\mp|^{4}$ comes from \eqref{controle suffisant orbifold}.
    \end{itemize}
    \end{cor}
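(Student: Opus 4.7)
The plan is to combine Proposition \ref{prop construction gALzeta} with the two inequalities of Lemma \ref{meilleure approx and obst}, and then unpack the resulting global $L^2$-bound on $\mathbf{o}^A_{L,\zeta}$ into individual estimates for the scalar coefficients $\lambda^j_k + \mu^j_k$ and $\nu_l$ via the almost-orthogonality observation of Remark \ref{almost orth obst}.

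First, I would substitute the error bound of Proposition \ref{prop construction gALzeta}, namely $\|\mathbf{\Phi}_{g^A_{L,\zeta}}(g^A_{L,\zeta}) - \mathbf{o}^A_{L,\zeta}\|_{r_D^{-2}C^\alpha_\beta(g^A_{L,\zeta})} \leqslant C |\zeta^\pm|^{1+\beta/12}|\zeta^\mp|^{3-\beta/4}$, into the first inequality \eqref{Meilleure approximation} of Lemma \ref{meilleure approx and obst}. This yields the first bullet of the corollary. Second, plugging the same estimate into the second inequality \eqref{obstruction explicite} of that lemma, and noting that in the relevant regime the dominant factor in the parenthesis is the second summand $|\zeta^\mp|^{1-\beta/2}|\zeta^\pm|^2$ (indeed its product with $|\zeta^\pm|^{1+\beta/12}|\zeta^\mp|^{3-\beta/4}$ is the advertised $|\zeta^\pm|^{3+\beta/12}|\zeta^\mp|^{4-3\beta/4}$, and exceeds the square of the other summand), one obtains $\|\mathbf{o}^A_{L,\zeta}\|_{L^2(g^A_{L,\zeta})} \leqslant C|\zeta^\pm|^{3+\beta/12}|\zeta^\mp|^{4-3\beta/4}$.

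Third, by construction in the proof of Proposition \ref{prop construction gALzeta}, $\mathbf{o}^A_{L,\zeta}$ is precisely the element of $\tilde{\mathbf{O}}(g^A_{L,\zeta})$ whose coefficient along $\partial_{\zeta_{j,(k)}}g^A_{L,\zeta}$ is $\lambda^j_k + \mu^j_k$ and whose coefficient along $\partial_{\mathbf{v}_l}g^A_{L,\zeta}$ is $\nu_l$. By Remark \ref{almost orth obst}, these infinitesimal directions become $L^2(g^A_{L,\zeta})$-almost orthogonal as $\zeta \to 0$, with individual $L^2$-norms uniformly bounded from below (the scale invariance of the $L^2$-norm of $2$-tensors in dimension $4$ implies that $\|\partial_{\zeta_{j,(k)}}g^A_{L,\zeta}\|_{L^2}$ is comparable to $\|\mathbf{o}_k\|_{L^2(\mathbf{eh})}$, and $\|\partial_{\mathbf{v}_l}g^A_{L,\zeta}\|_{L^2}\sim 1$). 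A standard Gram matrix argument then converts the global $L^2$-estimate into the pointwise bounds $|\lambda^j_k + \mu^j_k|, |\nu_l| \leqslant C|\zeta^\pm|^{3+\beta/12}|\zeta^\mp|^{4-3\beta/4}$.

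Finally, to obtain the additive corrections $|\zeta^\pm|^2|\zeta^\mp|^5$ in \eqref{controle obst eh T4} and $|\zeta^\pm|^2|\zeta^\mp|^4$ in \eqref{control obst nu T4}, one has to remember that $g^A_{L,\zeta}$ itself was only constructed modulo the residual obstructions \eqref{controle suffisant orbifold} (of $L^2$-size $|\zeta^\pm|^2|\zeta^\mp|^4$ against the $\mathbf{v}_l$) and \eqref{control obst h10} (of $L^2$-size $|\zeta^\pm|^2|\zeta^\mp|^5$ against the $\mathbf{o}_{j,k}$). These unresolved pieces are baked into the Einstein-modulo-obstructions identity for $\hat{g}_{L,\zeta}$ and must be added to the coefficient estimates above. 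The principal technical point is Step~3: quantifying the almost-orthogonality precisely enough that the associated Gram matrix remains uniformly invertible as $\zeta\to 0$, so that no power of $|\zeta|$ is lost when passing from the $L^2$-bound to the individual coefficient bounds.
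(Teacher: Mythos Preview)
Your proposal is correct and follows exactly the approach the paper intends: the corollary is stated as an immediate consequence of plugging the estimate of Proposition~\ref{prop construction gALzeta} into the two inequalities of Lemma~\ref{meilleure approx and obst}, with Remark~\ref{almost orth obst} supplying the almost-orthogonality needed to read off the individual coefficients, and the extra additive errors $|\zeta^\pm|^2|\zeta^\mp|^5$ and $|\zeta^\pm|^2|\zeta^\mp|^4$ coming from the residual obstruction terms \eqref{control obst h10} and \eqref{controle suffisant orbifold} already present in the definition of $\mathbf{o}^A_{L,\zeta}$ (cf.\ the displayed expansions in the proof of Proposition~\ref{prop construction gALzeta}). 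One small point worth making explicit: inequality~\eqref{obstruction explicite} of Lemma~\ref{meilleure approx and obst} is stated under the hypothesis that $\hat g_{L,\zeta}$ is actually Einstein, so the second bullet of the corollary (and hence \eqref{controle obst eh T4}--\eqref{control obst nu T4}) carries that implicit assumption; this is consistent with how the corollary is used in Proposition~\ref{controle obstructions T4Z2}.
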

\begin{rem}
    Recall that a priori, we only had $ \lambda_k^j(L,\zeta) = \mathcal{O}(|\zeta^\pm|^2|\zeta^\mp|)$, $ \mu_k^j(L,\zeta)= \mathcal{O}(|\zeta^\pm|^4|\zeta^\mp|^2) $ and $\nu_l=\mathcal{O}(|\zeta^\pm|^2|\zeta^\mp|^2)$.
\end{rem}
\subsection{Obstruction to the total desingularization}
    We will now prove that there are obstructions to the Gromov-Hausdorff desingularization of $ \mathbb{T}^4\slash\mathbb{Z}_2 $ by Einstein metrics. 
    
\begin{prop}\label{controle obstructions T4Z2}
    Assume that there exists a sequence of Einstein modulo obstructions metrics $(\hat{g}_{L_n,\zeta_n}= \mathbf{g}_{L_n,\zeta_n})_{n\in\mathbb{N}}$ which are actually Einstein with $\zeta_n\to 0$ and $L_n\to L_\infty$ where $\det(L_\infty)>0$. Up to taking a subsequence, we consider the following limits:
    $ \frac{\zeta_{n,j}^\mp}{|\zeta_{n}^\mp|}\to \zeta_{\infty,j}^\mp$, $ \frac{\zeta_{n,i}^\pm}{|\zeta_{n}^\pm|}\to \zeta_{\infty,i}^\pm$ and $\frac{\zeta_{n,j}^\mp}{|\zeta_{n,j}^\mp|}\to \phi_{\infty,j}^\mp$, as well as the limit  
    \begin{equation}
        \mathbf{R}_{L_\infty,\zeta_{\infty}^\pm}:= \lim_{n\to\infty}\frac{1}{|\zeta_{n}^\pm|^2}\mathbf{R}_{\mathbf{g}_{\pm,n}}=\lim_{n\to\infty}\frac{1}{|\zeta_{n}^\pm|^2}\mathbf{R}_{g^A_{\pm,n}}.
    \end{equation}
    
    Then,
    \begin{enumerate}
        \item one always has 
    \begin{equation}
        \mathbf{R}_{L_\infty,\zeta_{\infty}^\pm}(j)(\phi_{\infty,j}^\mp) = 0,\label{obst R(xi)= 0}
    \end{equation}
    for all the deformations $(0,z^\pm)$ of $(L_\infty,\zeta_{\infty}^\pm)$, we have
    \begin{equation}
        \sum_{j\in S_\pm}\left\langle \partial_{(0,z^\pm)}\mathbf{R}_{L_\infty,\zeta_{\infty}^\pm}(j)(\zeta_{\infty,j}^\mp), \zeta_{\infty,j}^\mp \right\rangle = 0,\label{obst dR(zeta,zeta)=0}
    \end{equation}
    and for all the deformations $(K, 0)$ of $(L_\infty,\zeta_{\infty}^\pm)$, and $(\mathbf{v}_l)_l$ an orthonormal basis of $\mathbf{O}(\mathbf{g}_\pm)$ and $\nu_l(L_\infty,\zeta_\infty)$ the obstructions of \eqref{eq h^4 T4Z2}, we have 
    \begin{equation}
        \sum_l\langle \partial_{(K,0)}\mathbf{g}_{\pm},\mathbf{v}_l \rangle_{L^2(\mathbf{g}_\pm)}\nu_l(L_\infty,\zeta_\infty) = 0.\label{obst nu tore}
    \end{equation} 
        \item If we assume that the sequence $( \mathbf{g}_{L_n,\zeta_n})_{n\in\mathbb{N}}$ is a \emph{nondegenerate} sequence of Einstein desingularizations (see Definition \ref{non degenerate and transverse einstein}), then, one moreover has:
    \begin{equation}
        \mathbf{R}_{L_\infty,\zeta_{\infty}^\pm}(j) = 0\text{ at every $j\in S_\mp$.}\label{obst R=0}
    \end{equation}
    \end{enumerate}
\end{prop}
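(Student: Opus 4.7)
The proof follows the scheme of Theorem \ref{nouvelleobst}, with the role of the integrable Einstein orbifold $(M_o,\mathbf{g}_o)$ played by the partial hyperkähler orbifold $(M_\pm,\mathbf{g}_\pm)$ and of the target Eguchi-Hanson $\mathbf{eh}_{\zeta_j^\pm}$ played by $\mathbf{eh}_{\zeta_j^\mp}$. The starting inputs are Corollary \ref{control obst desing T4}, which for an Einstein $\hat{g}_{L_n,\zeta_n} = \mathbf{g}_{L_n,\zeta_n}$ yields
\[
\lambda_k^j+\mu_k^j = \mathcal{O}\!\left(|\zeta^\pm|^2|\zeta^\mp|^5 + |\zeta^\pm|^{3+\beta/12}|\zeta^\mp|^{4-3\beta/4}\right),\quad \nu_l = \mathcal{O}\!\left(|\zeta^\pm|^2|\zeta^\mp|^4 + |\zeta^\pm|^{3+\beta/12}|\zeta^\mp|^{4-3\beta/4}\right);
\]
the formula $\lambda_k^j = c\langle \mathbf{R}^\mp_{\mathbf{g}_{\pm,n}}(j)\zeta_{n,j}^\mp,\zeta_{n,j,(k)}^\mp\rangle$ from \eqref{valeur lambda k psi zeta} applied at the orbifold $\mathbf{g}_{\pm,n}$; the bound $\mu_k^j = \mathcal{O}(|\zeta^\pm|^4|\zeta_j^\mp|^2)$ coming from the natural scaling of Lemma \ref{mu1 approche} and $\mathbf{R}_{\mathbf{g}_\pm}=\mathcal{O}(|\zeta^\pm|^2)$ (Proposition \ref{controle gApm}); and the sum-level identity of Section \ref{section first var lambda}.

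For \eqref{obst R(xi)= 0}, divide the bound on $\lambda_k^j+\mu_k^j$ by $|\zeta^\pm|^2|\zeta_{n,j}^\mp|$: the $\mu_k^j$-term contributes $\mathcal{O}(|\zeta^\pm|^2|\zeta_j^\mp|) \to 0$, while the right-hand side contributes $\mathcal{O}(|\zeta^\mp|^5/|\zeta_{n,j}^\mp| + |\zeta^\pm|^{1+\beta/12}|\zeta^\mp|^{4-3\beta/4}/|\zeta_{n,j}^\mp|)$, which vanishes in the limit at every $j$ with $\alpha_j := \lim_n |\zeta_{n,j}^\mp|/|\zeta_n^\mp|>0$. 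Passing to the limit in the rescaled identity, one obtains $\langle \mathbf{R}^\mp_{L_\infty,\zeta_\infty^\pm}(j)\phi_{\infty,j}^\mp,\phi_{\infty,j,(k)}^\mp\rangle=0$ for $k=1,2,3$; since the $\phi_{\infty,j,(k)}^\mp$ span $\Omega^\mp$, this gives \eqref{obst R(xi)= 0} (vacuously so when $\zeta_{\infty,j}^\mp=0$).

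For the summed identities \eqref{obst dR(zeta,zeta)=0} and \eqref{obst nu tore}, apply the variation formula of Section \ref{section first var lambda} to the orbifold $\mathbf{g}_\pm$ and its infinitesimal deformation space $\mathbf{O}(\mathbf{g}_\pm)$: the first variation of $v\mapsto \sum_{j\in S_\mp}|\zeta_j^\mp|\lambda_1^j(\zeta^\mp,v)$ at $v=0$ equals, up to a universal constant, $\sum_l\langle v,\mathbf{v}_l\rangle\nu_l(\zeta^\mp,0)$. Take $v=\partial_{(K,0)}\mathbf{g}_\pm$ respectively $v=\partial_{(0,z^\pm)}\mathbf{g}_\pm$; on the left-hand side, the variation of $|\zeta_j^\mp|\lambda_1^j = c\sqrt{2}\langle \mathbf{R}^\mp_{\mathbf{g}_\pm}(j)\zeta_j^\mp,\zeta_j^\mp\rangle$ is the variation of the curvature as the orbifold is perturbed in that direction. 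Combining with the bounds of Corollary \ref{control obst desing T4} on $\lambda_k^j+\mu_k^j$ and $\nu_l$, and dividing by the natural scales ($|\zeta^\pm||\zeta^\mp|^2$ for \eqref{obst dR(zeta,zeta)=0}, $|\zeta^\pm|^2|\zeta^\mp|^2$ for \eqref{obst nu tore}), the limits of both sides yield the two claimed identities.

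For the nondegenerate case \eqref{obst R=0}, nondegeneracy gives $|\zeta_{n,j}^\mp|\sim|\zeta_n^\mp|$ uniformly in $j$, so \eqref{obst R(xi)= 0} holds at every $j\in S_\mp$, and the matrix of $\mathbf{R}^\mp_{L_\infty,\zeta_\infty^\pm}(j)$ in the basis $(\phi_{\infty,j,(k)}^\mp)_k$ has vanishing first row and column. The Lemma \ref{mu1 approche} analogue identifies $\mu_1^j/(|\zeta^\pm|^4|\zeta_j^\mp|^2)$ with $C\det$ of the lower-right $2\times 2$ block, plus an error controlled by $\max_k|\lambda_k^j|\cdot|\zeta_j^\mp|/(|\zeta^\pm|^4|\zeta_j^\mp|^2)\to 0$. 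Running the nondegenerate branch of the proof of Theorem \ref{nouvelleobst} -- estimates \eqref{lambda1variations}--\eqref{borne inf lambda 2 3} together with the Einstein relation $\lambda_1^j+\mu_1^j=\mathcal{O}(|\zeta^\pm|^2|\zeta^\mp|^5)$ -- forces $\mu_1^j(\phi_{\infty,j},0)=0$, and Remark \ref{signe mu1 ricci plat} (Ricci-flatness) upgrades this to the vanishing of the whole $2\times 2$ block, giving $\mathbf{R}^\mp_{L_\infty,\zeta_\infty^\pm}(j)=0$. The main difficulty is the careful tracking of the several scales $|\zeta^\pm|$, $|\zeta^\mp|$, $|\zeta_j^\mp|$ appearing in the obstructions, and the identification -- via the smooth dependence of $(L,\zeta^\pm)\mapsto\mathbf{g}_\pm$ and Proposition \ref{controle gApm} -- of the directional derivative $\partial_{(0,z^\pm)}\mathbf{R}_{L_\infty,\zeta_\infty^\pm}$ with the limit of the derivative of the finite-$n$ curvature.
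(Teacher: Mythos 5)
Your proof follows the paper's argument closely: start from the quantitative obstruction bounds of Corollary~\ref{control obst desing T4}, divide $\lambda_k^j+\mu_k^j$ and $\nu_l$ by their natural scales and pass to the limit to get \eqref{obst R(xi)= 0} and \eqref{obst nu tore}, invoke the sum-level identity \eqref{estimée somme lambda 1} (isolating the non-constant part of $\partial_{(0,z^\pm)}\mathbf{g}_\pm$) for \eqref{obst dR(zeta,zeta)=0}, and rerun the nondegenerate contradiction from Theorem~\ref{nouvelleobst} with the rescaled estimates to kill the $\mu_1^j$ and hence the full $2\times 2$ block. This is the same scheme as the paper.

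One remark is wrong, though not fatal to the structure of the argument. You write that \eqref{obst R(xi)= 0} holds ``(vacuously so when $\zeta_{\infty,j}^\mp=0$).'' It is not vacuous: $\phi_{\infty,j}^\mp=\lim \zeta_{n,j}^\mp/|\zeta_{n,j}^\mp|$ is always a nonzero unit vector, so $\mathbf{R}_{L_\infty,\zeta_\infty^\pm}(j)\phi_{\infty,j}^\mp=0$ is a genuine constraint at every $j$. What is true (and what you actually show) is that dividing by $|\zeta_{n,j}^\mp|$ gives the conclusion only at those $j$ for which $\alpha_j:=\lim_n|\zeta_{n,j}^\mp|/|\zeta_n^\mp|>0$ (or at least $|\zeta_{n,j}^\mp|$ is not negligible against the error terms); the case $\alpha_j=0$ is not addressed by this normalization, and the paper's own division by $|\zeta_n^\mp|$ is similarly silent on it. You should not paper over this by calling it vacuous; either acknowledge it as an implicit mild hypothesis on the gluing scales, or note that part 2 (nondegeneracy) makes all $\alpha_j$ bounded away from $0$ so the issue disappears there. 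Apart from this mislabeling, your handling of \eqref{obst nu tore} via the variation formula at $v=\partial_{(K,0)}\mathbf{g}_\pm$ is a slight rerouting of what the paper does (it deduces \eqref{obst nu tore} directly from the $\nu_l$ bound), but both arguments are valid; and your explicit invocation of Remark~\ref{signe mu1 ricci plat} to upgrade $\mu_1^j=0$ to $\mathbf{R}^\mp(j)=0$ makes an implicit step in the paper's proof visible, which is fine.
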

\begin{rem}
    The assumption $\det(L_\infty)>0$ for $L_\infty\in GL(4,\mathbb{R})$ ensures that there is no collapsing and a bounded diameter in our degeneration and therefore that we stay in the context of \cite{and,bkn,ozu1,ozu2}.
\end{rem}
\begin{rem}
    When \eqref{obst R(xi)= 0} is satisfied, from \eqref{estimée somme lambda 1}, we have the following rewriting of the obstruction \eqref{obst nu tore}:
    \begin{equation}
        \sum_{j\in S_\pm}\left\langle \partial_{(K, 0)}\mathbf{R}_{L_\infty,\zeta_{\infty}^\pm}(j)(\zeta_{\infty,j}^\mp), \zeta_{\infty,j}^\mp \right\rangle = 0.\label{obst dKR(zeta,zeta)=0}
    \end{equation}
\end{rem}
\begin{proof}[Proof of Proposition \ref{controle obstructions T4Z2}]

Let us assume that there exists a sequence of Einstein modulo obstructions $(\hat{g}_{L_n,\zeta_n}= \mathbf{g}_{L_n,\zeta_n})_{n\in\mathbb{N}}$ which are Einstein with $\zeta_n\to 0$ and $L_n\to L_\infty=\textup{Id}$. By compactness, up to taking a subsequence, for all $j\in S_\mp$ and all $i\in S_\pm$, we have $ \frac{\zeta_{n,j}^\mp}{|\zeta_{n,j}^\mp|}\to \zeta_{\infty,j}^\mp$, $ \frac{\zeta_{n,i}^\pm}{|\zeta_{n}^\pm|}\to \zeta_{\infty,i}^\pm$ and  $\frac{\zeta_{n,j}^\mp}{|\zeta_{n,j}^\mp|}\to \phi_{\infty,j}^\mp$ as $n\to +\infty$. By \eqref{controle obst eh T4} and \eqref{control obst nu T4}, 
for any $0<\beta<1$ (chosen small enough), we have
        \begin{equation}
            \lambda_k^j(L_n,\zeta_n) + \mu_k^j(L_n,\zeta_n) = \mathcal{O}(|\zeta_n^\pm|^{2}|\zeta_n^\mp|^5+|\zeta_n ^\pm|^{3+\frac{\beta}{12}} |\zeta_n ^\mp|^{4-\frac{3\beta}{4}}),\label{controle obst eh T4 n}
        \end{equation}
        and for all $l$,
        \begin{equation}
            \nu_l(L_n,\zeta_n) = \mathcal{O}( |\zeta_n^\pm|^{2}|\zeta_n^\mp|^{4}+ |\zeta_n ^\pm|^{3+\frac{\beta}{12}} |\zeta_n ^\mp|^{4-\frac{3\beta}{4}}),\label{controle obst orb T4 n}
        \end{equation}
    We directly see from \eqref{controle obst eh T4 n} that 
    $$ \lim_{n\to\infty}\frac{\lambda_k^j(L_n,\zeta_n)}{|\zeta_n^\pm|^2|\zeta_n^\mp|}= 0 $$
    because $ \mu_k^j(L_n,\zeta_n) = \mathcal{O}(|\zeta_n^\pm|^4|\zeta_n^\mp|^2) $ which proves \eqref{obst R(xi)= 0} by the value the obstruction \eqref{valeur obst lambda zeta} and the control of the curvature of Proposition \ref{controle gApm}.
    
    Similarly, from \eqref{controle obst orb T4 n} we find
    $$ \lim_{n\to\infty}\frac{\nu_l(L_n,\zeta_n)}{|\zeta_n^\pm|^2|\zeta_n^\mp|^2}= 0 $$ which proves \eqref{obst nu tore}. In order to prove \eqref{obst dR(zeta,zeta)=0}, we note that the constant part (the limit at the singular point) of $\partial_{(0,z^\pm)}\mathbf{g}_\pm$ scales like $|\zeta^\pm_n|^2$. Thanks to the expression \eqref{valeur nu l} of the obstruction, this tells us that the obstruction induced by the constant part (the limit at the singular point) scales like $|\zeta_n^\pm|^4|\zeta_n^\mp|^2$ because we have $H^4_2\propto |\zeta_n^\pm|^2|\zeta_n^\mp|^2r_\mathbf{e}^{-2}$ in the development \eqref{def H^4_2 h_2}. In the limit, we are left with the obstruction coming from the nonconstant part of $\partial_{(0,z^\pm)}\mathbf{g}_\pm$  whose interpretation in \eqref{estimée somme lambda 1} yields the obstruction \eqref{obst dR(zeta,zeta)=0} using the control of the variations of the curvature \eqref{control part opmRgpm}.
    \\
     We conclude exactly as in the proof of Theorem \ref{nouvelleobst} by using \eqref{controle obst eh T4 n} as a replacement for \eqref{contrôle obst}. More precisely, if we assume towards a contradiction that the obstruction \eqref{obst R=0} is not satisfied, that is $\lim_{n\to\infty}\frac{\mu_{j,1}(L_n,\zeta_n)}{|\zeta_n^\pm|^4|\zeta_n^\mp|^2}= 0$ is not satisfied, then since we have $ |\mathbf{R}_{\mathbf{g}_{\pm}}|\sim |\zeta_n^\pm|^2 $, the estimates \eqref{lambda1variations} and \eqref{borne inf lambda 2 3} become 
    $$
        \lambda_1^j(L_\infty,\phi_{n,j}) =  \mathcal{O}(|\zeta_n^\pm|^2|\phi_{n,j}-\phi_{\infty,j}|^2),\text{ and}$$
    $$
        \big|\big(\lambda_2^j(L_\infty,\phi_{n,j}),\lambda_3^j(L_\infty,\phi_{n,j})\big)\big|\geqslant c|\zeta_n^\pm|^2|\phi_{n,j}-\phi_{\infty,j}|, \text{ for some $c>0$}.$$
    As in the proof of Theorem \ref{nouvelleobst}, we find a contradiction using \eqref{controle obst eh T4 n} for $k = 2,3$ and then $k= 1$. Similarly, as in the proof of Theorem \ref{nouvelleobst} one can use the eigenvalue estimate of \cite{biq3} to deal with the \emph{stable} situation thanks to the estimate of Corollary \ref{control obst desing T4}.
\end{proof}

\begin{rem}
    Allowing some Eguchi-Hanson metrics to be glued at negligible scale might not preserve the obstruction $\mathbf{R}_\pm=0$ at the associated point and might reduce the number of obstructions by $2$ per such point. Indeed, $\mathbf{R}^\pm$ is in the $5$-dimensional space of traceless symmetric $2$-tensors and the first obstructions $\lambda_k = 0$, give $3$ independent equations. But we will see that the scale being negligible means that we lose the $3$ degrees of freedom. These situations should therefore be more obstructed even without the obstruction $\mathbf{R}^\pm=0$.
\end{rem}

\section{Explicit obstructions to the desingularization of $\mathbb{T}^4\slash\mathbb{Z}_2$}\label{section obst}

Let us now make the obstructions of Proposition \ref{controle obstructions T4Z2} more explicit as the vanishing of polynomial functions of the $\zeta^\pm$, whose coefficients depend on the flat metric $\mathbf{g}_L$ alone.

Recall from \eqref{courbure variation h4} that the curvature of the Eguchi-Hanson metric $ \mathbf{eh}_{\zeta^+} $ and by their first asymptotic term $\frac{\zeta^+\circ\rho(\zeta)^-}{r^4}$ has the form $12 \pi_{\textup{tr}}\frac{\rho(\zeta)^-\otimes \rho(\zeta)^-}{r^6}$ where $\rho(\zeta)^-$ is defined in Definition \ref{def rho -}.

\begin{note}
    During all of this section, we will often identify $\mathbb{R}^3$ and $\Omega^+$ or $\Omega^-$ thanks to the bases $(\omega_i^\pm)_{i\in\{1,2,3\}}$. This will make it more convenient to find identities relating different obstructions in Lemma \ref{same obstruction} for instance. That is why we are defining the selfdual or anti-selfdual curvature-like quantities $b_x$ and $B_x^L$ on $\mathbb{R}^3$. We will also use the identification $\mathbb{R}^3\otimes\mathbb{R}^3\approx \Omega^\pm\otimes\Omega^\pm\approx(\Omega^\pm)^*\otimes\Omega^\pm\approx End(\Omega^\pm)$.
\end{note}

For $x\in \mathbb{R}^4$ and $\zeta,\zeta'\in \mathbb{R}^3\backslash\{0\}$, we define $b_x(\zeta, \zeta'):= 12\frac{ \rho_x(\zeta)^-\otimes\rho_x(\zeta')^- }{|x|^6}$, where $\rho_x$ for $x\in \mathbb{R}^4$ is the rotation explicited in \eqref{rotation 2 forms}. Note that $\rho_{-x} = (\rho_x)^t = (\rho_x)^{-1}=\rho_x$. For any $z,z'\in \mathbb{R}^3$, we have
\begin{equation}
    \big\langle b_x(\zeta, \zeta')z,z'\big\rangle:= 12\frac{\langle \rho_x(\zeta),z \rangle\langle \rho_x(\zeta'),z' \rangle}{|x|^{6}}.\label{explicite bx}
\end{equation}
We also define the following bilinear form with values in $\mathbb{R}^3\otimes\mathbb{R}^3$:
\begin{equation}
    B_{x}^L(\zeta, \zeta') := \pi_{tr}\sum_{a\in \mathbb{Z}^4}b_{L(x-2a)}(\zeta, \zeta')\label{def BxL}
\end{equation}
for $x\in \mathbb{R}^4$ and $L$ a $4\times 4$ matrix, where $\pi_{tr}$ is the projection on the traceless part.

\begin{lem}\label{curvature gA-}
    Let $(M_\pm,g_\pm^A)$ be one of the partial desingularizations of $\mathbb{T}^4\slash\mathbb{Z}_2$ of Definition \eqref{def gApm}. Then, for any $j_0\in S_-$ and any $\zeta\in \mathbb{R}^3$ and the associated $\zeta^-\in \Omega^-$ (thanks to the basis $(\omega_i^-)_{i\in\{1,2,3\}}$), we have 
    \begin{equation}
        \mathbf{R}^-_{g^A_+}(j_0)\zeta^- = \sum_{i\in S_+} \big(B_{j_0-i}^L(\zeta_i, \zeta_i)\zeta\big)^-\label{obst R-=0}
    \end{equation}
    where again $B_{j_0-i}^L(\zeta_i, \zeta_i)\zeta\in \mathbb{R}^3$ and $\big(B_{j_0-i}^L(\zeta_i, \zeta_i)\zeta\big)^-\in \Omega^-$.
\end{lem}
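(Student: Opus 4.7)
The plan is to combine the periodic construction \eqref{expression extension orbifold} of the corrections $\overline{h}^4(\zeta_i^+)$ with the explicit linearized curvature formula \eqref{courbure variation h4} and exploit the fact that the flat background $\mathbf{g}_L$ contributes nothing. Near $j_0 \in S_-$, which is at bounded distance from every $i \in S_+$, the cut-off in Definition \ref{def gApm} vanishes and the local pieces $\underline{h}_2(i)$ are supported in small balls around the $i$'s; hence on a fixed neighborhood of $j_0$
\[
    g^A_+ = \mathbf{g}_L + \sum_{i\in S_+} \overline{h}^4(\zeta_i^+).
\]
Since $\mathbf{g}_L$ is flat and $\overline{h}^4(\zeta_i^+)(j_0) = O(|\zeta_i^+|^2)$, Taylor expansion of the Riemann curvature operator at $\mathbf{g}_L$ yields
\[
    \mathbf{R}(g^A_+)(j_0) \;=\; d_{\mathbf{g}_L}\mathbf{R}\!\left(\sum_{i\in S_+}\overline{h}^4(\zeta_i^+)\right)(j_0) + O(|\zeta^+|^4),
\]
so only the linear term is relevant for the $|\zeta^+|^2$-scale quantity of the statement.

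The next step is to replace $\overline{h}^4(\zeta_i^+)$ by the formal periodization $\sum_{a\in\mathbb{Z}^4} H^4(\zeta_i^+)(\,\cdot\, - i + L(2a))$ inside the computation of the curvature. This is allowed by Remark \ref{courbure hzeta}: the constants $H^4_{av}(L,i,2a)$ subtracted in \eqref{expression extension orbifold} are constant symmetric $2$-tensors whose linearized curvature is zero. The interchange of differentiation and summation is justified by the absolute convergence of $\sum_{a} |\nabla^2 H^4|\big(L(j_0-i-2a)\big) = O\!\big(\sum_a |L(j_0-i-2a)|^{-6}\big)$. Term by term, formula \eqref{courbure variation h4} applied at the shifted origin $L(j_0-i-2a)$ gives
\[
    d_{\mathbf{e}}\mathbf{R}^-\!\big(H^4(\zeta_i^+)\big)\big(L(j_0-i-2a)\big) \;=\; 12\,\pi_{tr}\frac{\rho_{L(j_0-i-2a)}(\zeta_i)^- \otimes \rho_{L(j_0-i-2a)}(\zeta_i)^-}{|L(j_0-i-2a)|^6},
\]
under the identification of the traceless symmetric endomorphisms of $\Omega^-$ with $\Omega^-\otimes\Omega^-$ modulo trace (Remark \ref{identification traceless 2tensor 2 formes}).

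Summing over $a\in\mathbb{Z}^4$ reproduces exactly the bilinear form $B^L_{j_0-i}(\zeta_i,\zeta_i)$ of \eqref{def BxL}--\eqref{explicite bx}, and summing further over $i\in S_+$ then pairing with an arbitrary $\zeta\in\mathbb{R}^3\cong\Omega^-$ via the bases $(\omega_k^-)_k$ yields the claimed identity
\[
    \mathbf{R}^-_{g^A_+}(j_0)\zeta^- \;=\; \sum_{i\in S_+}\big(B^L_{j_0-i}(\zeta_i,\zeta_i)\,\zeta\big)^-.
\]
The routine but delicate step will be keeping the conventions straight: verifying that the diffeomorphism $\phi_i$ of Definition \ref{def gApm} does not alter the $r_\mathbf{e}^{-4}$--asymptotic term of $\mathbf{eh}_{\zeta_i^+}$ (because $\phi_i = \mathrm{Id} + O(|\zeta^+|^2)$, any correction is $O(|\zeta^+|^4)$ and hence negligible); confirming that the projection onto the anti-selfdual factor is preserved at each step; and checking that the $L$-dependence enters inside $\rho$ and in the norm exactly as in \eqref{def BxL}, not as a rescaling of the output.
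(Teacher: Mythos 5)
Your proof takes exactly the same route as the paper's: near $j_0\in S_-$ the cut-offs make $g^A_+ = \mathbf{g}_L + \sum_{i\in S_+}\overline{h}^4(\zeta_i^+)$, you invoke Remark \ref{courbure hzeta} to pass from the regularized series \eqref{expression extension orbifold} to the formal periodization, and you then apply the linearized curvature formula \eqref{courbure variation h4} term by term and sum. The paper's own proof is a two-sentence version of this. You are somewhat more explicit about the two points the paper leaves implicit — the $O(|\zeta^+|^4)$ error from the quadratic part of the Taylor expansion of curvature (so strictly speaking \eqref{obst R-=0} is a leading-order identity in $|\zeta^+|^2$, as its use via $\mathbf{R}_{L_\infty,\zeta_\infty^\pm}$ in Proposition \ref{controle obstructions T4Z2} makes clear), and the absolute convergence justifying the interchange of $d_{\mathbf{e}}\mathbf{R}^-$ with the lattice sum. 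The one superfluous remark is the worry about the diffeomorphism $\phi_i$: near $j_0$ the metric is entirely given by the orbifold piece, so the Eguchi--Hanson patch and its $\phi_i$-pullback play no role at all in this computation; only the asymptotic data $H^4(\zeta_i^+)$ enters, and it enters through $\overline{h}^4(\zeta_i^+)$, not through $\phi_i^*\mathbf{eh}_{\zeta_i^+}$.
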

\begin{proof}
    Away from the glued Eguchi-Hanson metrics, $g^A_+$ equals $\mathbf{g}_L+\sum_{i\in S_+}\overline{h}^4_{\zeta_i^+}$ where the $\overline{h}^4_{\zeta_i^+}$ are explicited in \eqref{expression extension orbifold}. By Remark \ref{courbure hzeta} and the formula \eqref{courbure variation h4}, the induced curvature is therefore given by \eqref{obst R-=0}.
\end{proof}
We have the following properties for $ B_{x}^L(\zeta, \zeta') $ for any $x\in \mathbb{R}^4$ and $\zeta, \zeta'\in \mathbb{R}^3$
\begin{enumerate}
    \item $B_{x}^L(\zeta, \zeta') = B_{x}^L(\zeta', \zeta),$
    \item $B_{x}^L(\zeta, \zeta') = B_{-x}^L(\zeta, \zeta') $,
    \item $B_{x+a}^L(\zeta, \zeta') = B_{x}^L(\zeta, \zeta')$ for $a\in \mathbb{Z}^4$,
\end{enumerate}
\begin{cor}\label{list obst}
    The obstruction \eqref{obst R(xi)= 0} rewrites for all $j_0\in S_-$,
    \begin{equation}
        \sum_{i\in S_+} \left\langle B_{j_0-i}^L(\zeta_i, \zeta_i) \zeta_{j_0} , \zeta_{j_0} \right\rangle = 0.\label{obst exp R(xi)=0}
    \end{equation}
    and from 
    \eqref{obst dR(zeta,zeta)=0} for $j_0\in S_-$ and an infinitesimal deformation of $\zeta_{j_0}$ in the direction $ z_{j_0}$:
    \begin{equation}
        \sum_{i\in S_+} \left\langle B_{{j_0}-i}^L(z_{j_0}, \zeta_{j_0}) \zeta_{i} , \zeta_{i} \right\rangle = 0.\label{obst exp dzR(xi,xi)=0}
    \end{equation}
    For the conjectural value of the obstructions \eqref{obst dKR(zeta,zeta)=0}, against a deformation $K$ of the matrix $L$, denoting $\partial_KB_{j-i}^L$ the linearization of $L\mapsto B_{j-i}^L$ for $i\in S_+$ and $j\in S_-$ at $L$ and in the direction $K$, we have
    \begin{equation}
        \sum_{i\in S_+}\sum_{j\in S_-} \left\langle \partial_KB_{j-i}^L(\zeta_i, \zeta_i) \zeta_{j} , \zeta_{j} \right\rangle = 0.\label{obst exp dKR(xi,xi)=0}
    \end{equation}
    Finally, the obstruction \eqref{obst R=0} rewrites for any $j_0\in S_-$:
    \begin{equation}
       \sum_{i\in S_+} B_{j_0-i}^L(\zeta_i, \zeta_i) = 0 \label{obst exp R=0}
    \end{equation}
    
    The analogous obstructions hold for the other orientation.
\end{cor}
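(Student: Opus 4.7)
The proof is essentially a translation exercise: the abstract obstructions of Proposition \ref{controle obstructions T4Z2} are made explicit by substituting the closed-form curvature formula of Lemma \ref{curvature gA-}. I describe the case of partial desingularization by positively oriented Eguchi-Hanson metrics at $S_+$; the opposite orientation is symmetric.

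The first step is to identify the limiting curvature at a remaining singular point $j_0 \in S_-$. Lemma \ref{curvature gA-} expresses $\mathbf{R}^-_{g^A_+}(j_0)$ as the lattice sum $\sum_{i \in S_+} B^L_{j_0-i}(\zeta_i,\zeta_i)$, viewed as an endomorphism of $\Omega^- \simeq \mathbb{R}^3$ via the basis $(\omega_k^-)_k$. The estimate \eqref{est Rgpm RgApm} of Proposition \ref{controle gApm} ensures that the curvature of the true hyperkähler perturbation $\mathbf{g}_+$ agrees with that of $g^A_+$ up to an error of order $|\zeta^+|^{2+\frac{2}{3}-\frac{\beta}{6}}$, which is negligible compared to the leading $|\zeta^+|^2$ scale, so in the normalized limit
$$\mathbf{R}^-_{L_\infty,\zeta_\infty^+}(j_0) = \sum_{i \in S_+} B^L_{j_0-i}(\zeta_{\infty,i},\zeta_{\infty,i}).$$

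Each explicit obstruction then follows by direct substitution. Equation \eqref{obst exp R(xi)=0} comes from \eqref{obst R(xi)= 0}, which asserts $\mathbf{R}^-(j_0)\phi_{\infty,j_0}^- = 0$; pairing with $\phi_{\infty,j_0}^- \propto \zeta_{\infty,j_0}$ produces the stated quartic polynomial. Equation \eqref{obst exp dzR(xi,xi)=0} comes from differentiating the scalar obstruction in the direction $z_{j_0}$ and exploiting the bilinearity of $B^L_x$; the specific reshaping into $\langle B^L_{j_0-i}(z_{j_0},\zeta_{j_0})\zeta_i,\zeta_i\rangle$ uses the rank-one factorization $b_x(\zeta,\zeta') = 12\,\rho_x(\zeta)\otimes\rho_x(\zeta')/|x|^6$ together with the orthogonality of $\rho_x$, which permits transferring vectors between the two bilinear slots of $B^L_x$ after summation over the lattice. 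For \eqref{obst exp dKR(xi,xi)=0}, one linearizes the definition \eqref{def BxL} of $B^L_x$ with respect to $L$ (only the dependence on $L$ appears through the arguments $L(x - 2a)$ of $b$ and through the $|L(x-2a)|^6$ denominators), obtaining the operator $\partial_K B^L_{j-i}$, and then applies \eqref{obst dKR(zeta,zeta)=0}. Finally, \eqref{obst exp R=0} is the direct translation of \eqref{obst R=0}, which states the vanishing of the entire curvature endomorphism at each remaining singular point under the nondegenerate (respectively stable) hypothesis.

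The main computational step throughout is the careful algebraic manipulation of $B^L_x$ under its two bilinear slots and the tracking of the identifications $\Omega^\pm \simeq \mathbb{R}^3$ and orientation conventions; once Lemma \ref{curvature gA-} is in hand, the translation of each abstract obstruction into a polynomial identity in the $\zeta$'s (with coefficients depending only on $L$) is essentially mechanical.
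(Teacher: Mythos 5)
The paper gives no written proof for Corollary~\ref{list obst}; it is treated as a direct substitution of the curvature formula from Lemma~\ref{curvature gA-} into the abstract obstructions of Proposition~\ref{controle obstructions T4Z2}, and that is exactly what you do. Your use of the error estimate~\eqref{est Rgpm RgApm} to justify that $\mathbf{R}_{\mathbf{g}_\pm}$ and $\mathbf{R}_{g^A_\pm}$ share the same normalized limit, and your treatment of~\eqref{obst exp R(xi)=0},~\eqref{obst exp dKR(xi,xi)=0} and~\eqref{obst exp R=0} are all as the paper intends.

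One point of attribution deserves care. You describe~\eqref{obst exp dzR(xi,xi)=0} as obtained ``by differentiating the scalar obstruction in the direction $z_{j_0}$.'' Strictly speaking, the scalar equation~\eqref{obst exp R(xi)=0} is a constraint holding at a limit configuration, not an identity in $\zeta_{j_0}$, so one cannot differentiate it to produce a new constraint. The corollary instead derives~\eqref{obst exp dzR(xi,xi)=0} by applying the independent abstract obstruction~\eqref{obst dR(zeta,zeta)=0} in the \emph{opposite} orientation (with $\mathbf{g}_-$ as the partially desingularized orbifold and $\zeta_{j_0}$ a parameter of that gluing), then substituting the lattice-sum curvature and differentiating $B^L_{j_0-i}$. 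The slot-swap identity you invoke (the rank-one structure of $b_x$ together with $\rho_{-x}=\rho_x^T=\rho_x^{-1}=\rho_x$, which is precisely Lemma~\ref{same obstruction}) shows that the two expressions agree, so your final polynomial is correct, but the obstruction itself must be sourced from~\eqref{obst dR(zeta,zeta)=0}, not from a formal differentiation of~\eqref{obst exp R(xi)=0}.
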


\begin{rem}
    The obstruction identified in \cite{bk} corresponds to the nonvanishing of the one obstruction
    $$\sum_{i\in S_+}\sum_{j\in S_-} \left\langle B_{j-i}^L(\zeta_i, \zeta_i) \zeta_{j} , \zeta_{j} \right\rangle = 0 $$
    (which is clearly implied by \eqref{obst exp R(xi)=0}),
    in the particular situation where
    \begin{itemize}
        \item $L=\textup{Id}$,
        \item the sets $S_\pm$ follow a chessboard pattern (see \cite{bk} or Section \ref{section regular torus}),
        \item for all $i\in S_+$, $j\in S_-$, $\zeta_i=(1,0,0)$ and $\zeta_j=(1,0,0)$.
    \end{itemize}
\end{rem}

\begin{rem}\label{comptage obst}
    If the Eguchi-Hanson metrics are glued at comparable scales, then:
    \begin{itemize}
        \item the obstructions \eqref{obst exp R(xi)=0} correspond to $3|S_+|$ equations for $3|S_-| + 9$ parameters given by the $|S_-|$ vectors $\zeta_j^-$ and the matrix $L$, 
        \item the obstructions  \eqref{obst exp R=0} correspond to $5|S_+|$ equations since $\mathbf{R}$ is symmetric trace-free for $3|S_-| + 9$ parameters given by the $|S_-|$ vectors $\zeta_j^-$ and the matrix $L$,
        \item the obstructions \eqref{obst exp dKR(xi,xi)=0} are $9$ equations, for the different deformations of the torus, for $3|S_+|+3|S_-|+9 = 57$ parameters,
        \item the obstructions \eqref{obst exp dzR(xi,xi)=0} are $3|S_-|$ equations for $3|S_+|+9$ parameters.
    \end{itemize}
    Since we have analogous obstructions in the other orientation, we see that we a priori have many more obstructions to satisfy than parameters. We will reduce the number of these equations in the next section.
\end{rem}

\subsection{Equivalent obstructions}

Let us show that many of the above equations are actually equivalent. Let us therefore just assume that we have the obstructions \eqref{obst exp R=0} at every singular point in both orientations and prove that some of the other obstructions are already satisfied.

\subsubsection{Against the deformation of the Eguchi-Hanson metrics}
\begin{lem}\label{same obstruction}
    We have the following equality for all $i\in S_+$, $j\in S_-$, $\zeta\in (\mathbb{R}^3)^{|S_+|}\times (\mathbb{R}^3)^{|S_-|}$ and $z\in \mathbb{R}^3$, $$\big\langle B_{j-i}^L(\zeta_{i}, z)\zeta_{j} , \zeta_{j}  \big\rangle = \big\langle B_{j-i}^L(\zeta_{j}, \zeta_{j})\zeta_{i}, z \big\rangle,$$
    and therefore the above obstruction \eqref{obst exp R=0} at all singular points in the opposite orientation implies that the obstructions \eqref{obst exp dzR(xi,xi)=0} vanish.
\end{lem}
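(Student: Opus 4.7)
The plan is to unpack both sides using the definitions, isolate the reduction to a lattice-sum identity, and identify this identity with the pair-exchange symmetry of the Riemann curvature tensor applied to the specific anti-self-dual perturbation $\overline{h}^4$.

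Expanding via \eqref{def BxL} and the explicit formula \eqref{explicite bx}, and noting that $\rho_{y_a}\in SO(3)$ gives $\mathrm{tr}(b_{y_a}(\zeta_i,z))=12\langle\zeta_i,z\rangle/|y_a|^6$, the $\pi_{tr}$ trace corrections on both sides each equal $-4|\zeta_j|^2\langle\zeta_i,z\rangle\sum_a|y_a|^{-6}$ and cancel. The identity reduces to the sum equality
\begin{equation*}
\sum_{a\in\mathbb{Z}^4}\frac{\langle\rho_{y_a}\zeta_i,\zeta_j\rangle\langle\rho_{y_a}z,\zeta_j\rangle}{|y_a|^6} \;=\; \sum_{a\in\mathbb{Z}^4}\frac{\langle\rho_{y_a}\zeta_j,\zeta_i\rangle\langle\rho_{y_a}\zeta_j,z\rangle}{|y_a|^6}, \qquad y_a=L(j-i-2a).
\end{equation*}

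The key conceptual input is that, by \eqref{courbure variation h4} and Lemma \ref{curvature gA-}, each summand $b_{y_a}(\zeta_i,z)$ is the bilinear-in-$(\zeta_i,z)$ polarization of $d_\mathbf{e}\mathbf{R}^-(H^4(\zeta^+))$ evaluated at $y_a$, and in fact captures the \emph{entire} linearized Riemann $4$-tensor of the perturbation $H^4(\zeta^+)$ at that point: since each $\mathbf{eh}_{\zeta^+}$ is hyperkähler and anti-self-dual, the variations $d_\mathbf{e}\mathbf{R}^+(H^4(\zeta^+))$, $d_\mathbf{e}\mathrm{Ric}^0(H^4(\zeta^+))$ and $d_\mathbf{e}\mathrm{scal}(H^4(\zeta^+))$ all vanish identically, placing the whole linearized Riemann tensor in the $\Omega^-\otimes\Omega^-$-block. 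Summing over the periodization then produces the full linearized Riemann tensor at $j$ of the compact perturbation $\overline{h}^4(\zeta_i,z)$. Applying the pair-exchange symmetry $R(\alpha,\beta)=R(\beta,\alpha)$ on $\Lambda^2\otimes\Lambda^2$ to this polarized Riemann tensor, and translating via the identification $\Omega^\pm\simeq\mathbb{R}^3$ using the bases $(\omega_k^\pm)$, yields exactly the displayed sum equality.

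The corollary follows by a direct application: the lemma's identity (with $(u,v,w)=(z_{j_0},\zeta_{j_0},\zeta_i)$) rewrites each summand as $\langle B^L_{j_0-i}(z_{j_0},\zeta_{j_0})\zeta_i,\zeta_i\rangle=\langle B^L_{j_0-i}(\zeta_i,\zeta_i)z_{j_0},\zeta_{j_0}\rangle$, so summing over $i\in S_+$ transforms the left-hand side of \eqref{obst exp dzR(xi,xi)=0} into $\langle\mathbf{R}^-_{\mathbf{g}_+}(j_0)z_{j_0},\zeta_{j_0}\rangle$; the analogous opposite-orientation vector form of \eqref{obst exp R=0} (combined with property $2$, $B^L_x=B^L_{-x}$) ensures this pairing vanishes for every $z_{j_0}$. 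The most delicate point in the proof of the main identity is justifying that the periodic sum $\pi_{tr}\sum_a b_{y_a}$ inherits the Riemann pair-exchange symmetry even though each individual summand $b_{y_a}$ is not symmetric in its two bilinear slots; this is a genuine global feature which rests crucially on the anti-self-duality of $H^4(\zeta^+)$ eliminating the would-be non-symmetric Riemann blocks, and in explicit quaternionic verification reduces to the closure of the lattice $(j-i)+2L\mathbb{Z}^4$ under quaternionic conjugation $y\mapsto y^*$.
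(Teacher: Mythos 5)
The paper's own proof is a purely \emph{termwise} algebraic verification: for each individual lattice point $x=L(j-i-2a)$, the identity $\langle\pi_{\textup{tr}} b_x(\zeta_i,z)\zeta_j,\zeta_j\rangle = \langle\pi_{\textup{tr}} b_x(\zeta_j,\zeta_j)\zeta_i,z\rangle$ is checked directly (splitting into $z\parallel\zeta_i$ and $z\perp\zeta_i$), using orthogonality of $\rho_x$ together with the paper's asserted relation $\rho_{-x}=(\rho_x)^T=(\rho_x)^{-1}=\rho_x$ to swap $\langle\rho_x\zeta_i,\zeta_j\rangle$ with $\langle\rho_x\zeta_j,\zeta_i\rangle$. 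The lattice sum and the matrix $L$ play no role at all. Your reduction to the displayed lattice-sum equality is correct, but you then treat the identity as a genuinely global phenomenon that fails term by term, and the argument you propose for that global identity has two real gaps.

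First, invoking the Riemann pair-exchange symmetry $R(\alpha,\beta)=R(\beta,\alpha)$ does not yield the lemma. On the $\Omega^-\otimes\Omega^-$ block, pair exchange of the linearized curvature only says that $d_{\mathbf e}\mathbf{R}^-(\overline{h}^4)$ is a \emph{symmetric endomorphism} of $\Omega^-$, i.e.\ symmetry in the two curvature slots $(\mu,\mu')$. The lemma's identity instead exchanges the pair of \emph{perturbation parameters} $(\zeta_i,z)$ --- the labels specifying which anti-self-dual perturbation $\overline{h}^4$ you differentiate --- with the pair of curvature slots $(\zeta_j,\zeta_j)$. That is not a symmetry of the Riemann tensor of any single metric perturbation, and pair exchange cannot produce it.

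Second, your fallback --- that the sum identity ``reduces to the closure of the lattice $(j-i)+2L\mathbb{Z}^4$ under quaternionic conjugation $y\mapsto y^*$'' --- is a condition that fails for generic $L\in GL(4,\mathbb{R})$. The involution $(y_1,y_2,y_3,y_4)\mapsto(y_1,-y_2,-y_3,-y_4)$ preserves the coset $(j-i)+2\mathbb{Z}^4$ when $L=\textup{Id}$, but a general image lattice $L\mathbb{Z}^4$ is not invariant under it, so this route would at best recover the lemma for the unit lattice of Section~\ref{section regular torus}. Since the lemma is stated and used for arbitrary $L$, this is a genuine gap. The intended argument is the paper's four-line termwise computation relying on the transpose property of $\rho_x$; it needs no curvature theory and no lattice arithmetic.
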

\begin{rem}
    This shows that the first obstructions coming from the deformations of the Eguchi-Hanson metrics are seen whether we first desingularize the positive ones or the negative ones.
\end{rem}
\begin{proof}
    Given the expression of $B_{j-i }^L$ of \eqref{def BxL}, we will study for $a\in \mathbb{Z}^4$ a term of the above expressions. Because of the symmetries of $B$, we can simply study $\langle \pi_{tr}b_x(\zeta_{i } ,z )\zeta_j ,\zeta_j  \rangle$. We will first consider $z$ proportional to $\zeta_{i } $ to obtain 
\begin{align} 
     \big\langle \pi_{tr}b_x(\zeta_{i },\zeta_{i })\zeta_j ,\zeta_j  \big\rangle &= \big\langle \big(12\rho_x(\zeta_{i }) \otimes\rho_x(\zeta_{i })  - 4|\zeta_{i }|^2 \mathrm{I_3} \big) \zeta_{j} ,\zeta_{j}  \big\rangle \\
     &= 12\big\langle \rho_x(\zeta_{i }) , \zeta_{j}  \big\rangle^2 - 4 |\zeta_{i }|^2|\zeta_{j} |^2\\
     &= 12\big\langle \big(\rho_x(\zeta_{j})\otimes\rho_x(\zeta_{j}) - 4 |\zeta_{j} |^2\mathrm{I_3} \big) \zeta_{i },\zeta_{i } \big\rangle\\
     &= \big\langle \pi_{tr}b_x(\zeta_j ,\zeta_j )\zeta_{i },\zeta_{i } \big\rangle,
\end{align}
where we used the fact that $\rho_{-x} = (\rho_x)^T = (\rho_x)^{-1}=\rho_x$. For $z \perp\zeta_{i } $, 
\begin{align} 
     \langle \pi_{tr}b_x(\zeta_{i } ,z )\zeta_j ,\zeta_j  \rangle&=\big\langle \big(12\rho_x(\zeta_{i }) \otimes\rho_x(z )  \big) \zeta_{j} ,\zeta_{j}  \big\rangle\\
     &= 12\big\langle \rho_x(\zeta_{i }) , \zeta_{j}  \big\rangle\big\langle \rho_x(z ) , \zeta_{j}  \big\rangle\\
     &= 12\big\langle \big(\rho_x(\zeta_{j}) \otimes\rho_x(\zeta_{j})  - 4|\zeta_{j} |^2 \mathrm{I_3} \big) \zeta_{i } ,z  \big\rangle \\
     &= \langle \pi_{tr}b_x(\zeta_j ,\zeta_j )\zeta_{i } ,z  \rangle. 
\end{align}
This proves the statement.
\end{proof}

\subsubsection{Against the flat deformations of the torus}

The last deformations to consider are the flat deformations of the torus which are equivalent to variations of the matrix $L$ used in the definition of $B^L_x$. For the conjectural form of the obstruction \eqref{obst dKR(zeta,zeta)=0}, we compute the following expression. We omit the proof as the obstruction itself is only conjectural.

\begin{lem}
    Let us consider an infinitesimal variation of the flat torus seen as an infinitesimal variation of the matrix $L$ in the direction $KL^{-1}$, and assume that the obstruction \eqref{obst exp R=0} is satisfied in both orientations.
    
    Then, the conjectural obstruction \eqref{obst dKR(zeta,zeta)=0} for a variation $KL^{-1}$ of $L$ is equivalent to
    \begin{align}
    \sum_{i\in S_+}\sum_{j\in S_-}\sum_{a\in  \mathbb{Z}^4}\frac{ \langle L(i-j-2a), K(i-j-2a) \rangle}{|L(i-j-2a)|^2} \big\langle \pi_{tr}b_{L(i-j-2a)}(\zeta_i,\zeta_i) \zeta_j ,\zeta_j  \big\rangle = 0
    \end{align}
\end{lem}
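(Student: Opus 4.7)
The plan is a direct chain-rule computation starting from \eqref{def BxL}. With $y := L(j-i-2a)$ and $\dot y := K(j-i-2a)$, we have
\[
\partial_K B^L_{j-i}(\zeta_i, \zeta_i) \;=\; \pi_{tr}\sum_{a \in \mathbb{Z}^4} d_y b_y(\zeta_i, \zeta_i) \cdot \dot y.
\]
Writing $b_y(\zeta_i, \zeta_i) = 12\, \rho_y(\zeta_i) \otimes \rho_y(\zeta_i)/|y|^6$ and exploiting that $y \mapsto \rho_y$ is homogeneous of degree zero, the differential splits naturally into a radial piece proportional to $\langle y, \dot y\rangle/|y|^2 \cdot b_y(\zeta_i, \zeta_i)$ (with a universal nonzero coefficient coming from the $-6$ homogeneity of $b_y$) plus a purely tangential piece $R_y(\dot y; \zeta_i)$ that vanishes when $\dot y$ is parallel to $y$.

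The first step would be to check that, after contracting with $\zeta_j \otimes \zeta_j$ and summing over $(i,j,a)$, the radial piece reproduces exactly the claimed sum; the overall multiplicative constant is absorbed by the equation $= 0$. The lemma is then equivalent to the vanishing of the rotational contribution. Using the explicit formula $\rho_y(\zeta_i)_k = y^T(\zeta_i^+ \circ \omega_k^-)y/|y|^2$ and the fact that $|\rho_y(\zeta_i)|^2 = |\zeta_i|^2$ is constant, a direct expansion rewrites the trace-free rotational sum, up to a nonzero constant, as
\[
\sum_{i \in S_+,\, j \in S_-,\, a \in \mathbb{Z}^4} \frac{(y^T N_{ij} y)(y^T N_{ij} \dot y_\perp)}{|y|^{10}},
\]
where $N_{ij} := \zeta_i^+ \circ \zeta_j^-$ is symmetric trace-free and $\dot y_\perp := \dot y - \langle y, \dot y\rangle\, y/|y|^2$.

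The main obstacle is establishing that this residual expression vanishes under the hypothesis. The strategy is to exploit that \eqref{obst exp R=0} is a full matrix identity at each singular point, i.e.\ five independent scalar constraints per orientation per point, whereas the derivation of the scaling piece only used its pairing with $\zeta_j \otimes \zeta_j$. Combining the remaining matrix constraints with the orientation swap in Lemma \ref{same obstruction}, one would decompose the residual sum into pieces symmetric and antisymmetric under $i \leftrightarrow j$: the symmetric piece should be annihilated by the matrix-valued hypothesis in both orientations, while the antisymmetric piece should be killed by Lemma \ref{same obstruction}. Since the underlying obstruction \eqref{obst dKR(zeta,zeta)=0} is only conjectural, this last verification is precisely what is omitted here.
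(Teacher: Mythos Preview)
Your approach matches the paper's: the paper explicitly omits the proof (``We omit the proof as the obstruction itself is only conjectural''), but its auxiliary computations follow exactly the same route you outline --- differentiate $B^L_{j-i}$ via the chain rule, split the derivative of $b_y$ into the radial contribution from $|y|^{-6}$ and the rotational contribution from $\rho_y$, and recognise the radial piece as (a nonzero multiple of) the claimed sum.

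For the rotational piece the paper's sketch is slightly more direct than your symmetric/antisymmetric plan: it records the explicit form
\[
d_y b_{\dot y}(\zeta_i,\zeta_i)
= \frac{12}{|y|^{6}}\Big((d_y\rho_{\dot y})(\zeta_i)\otimes\rho_y(\zeta_i)+\rho_y(\zeta_i)\otimes(d_y\rho_{\dot y})(\zeta_i)\Big)
-\frac{72\,\langle y,\dot y\rangle}{|y|^{8}}\,\pi_{\mathrm{tr}}\big(\rho_y(\zeta_i)\otimes\rho_y(\zeta_i)\big),
\]
notes that $(d_y\rho_{\dot y})(\zeta_i)\perp \rho_y(\zeta_i)$ (so the first bracket is already trace-free and of the shape $b_y(\zeta_i,z)+b_y(z,\zeta_i)$), and then invokes the pointwise identity behind Lemma~\ref{same obstruction} together with the full matrix hypothesis \eqref{obst exp R=0} to make the rotational sum vanish. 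Your rewriting via $N_{ij}=\zeta_i^+\circ\zeta_j^-$ and an $i\leftrightarrow j$ split is a reformulation of the same mechanism; neither the paper nor your proposal carries this step through in detail, consistently with the conjectural status of \eqref{obst dKR(zeta,zeta)=0}.
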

\begin{proof}[ ]
    
\end{proof}

\begin{rem}
    The deformations of the matrix $\textup{Id}$ other than those in the space spanned by the $\pi_{tr}(e_k\otimes e_k)$ for $(e_k)_{k\in\{1,2,3,4\}}$ the usual basis of $\mathbb{R}^4$, are equivalent to rotations of the torus and therefore to the rotation of the Eguchi-Hanson metrics and have already been considered above.
\end{rem}

\begin{cor}\label{cor freedom constraints}
    The obstructions of Corollary \ref{list obst} reduce to $84$ polynomial equations in the $\zeta_i$ and $\zeta_j$ with coefficients depending on $L$ only.
\end{cor}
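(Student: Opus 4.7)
The plan for the proof of Corollary \ref{cor freedom constraints} is a careful bookkeeping argument, taking the obstructions of Corollary \ref{list obst} and showing, via the equivalences already established in this section, that they reduce to exactly $84$ independent polynomial equations in the $\zeta_i,\zeta_j$ with coefficients depending on $L$ only.

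I would take as the core set of equations those from \eqref{obst exp R=0}, namely
$$\mathbf{R}^\pm_{g^A_\pm}(p) \;=\; \sum_{q} B^L_{p-q}(\zeta_q,\zeta_q) \;=\; 0$$
at each of the $|S_+|+|S_-|=16$ remaining singular points arising from the two orientations of partial hyperkähler desingularization (Lemma \ref{curvature gA-}). Since $\mathbf{R}^\pm$ takes values in the $5$-dimensional space of symmetric trace-free endomorphisms of the three-dimensional space $\Omega^\pm$, each such matrix equation contributes $5$ independent scalar polynomial equations, quadratic in the $\zeta$'s, with coefficients given by absolutely convergent lattice sums in $L$. This produces $5\cdot 16 = 80$ equations.

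The next step is to argue that the other obstructions of Corollary \ref{list obst} contribute only $4$ further independent equations. The pairings $\langle \mathbf{R}^\pm(p)\zeta_p,\zeta_p\rangle$ of \eqref{obst exp R(xi)=0} are strictly weaker than the matrix equation $\mathbf{R}^\pm(p)=0$ and hence automatic; and Lemma \ref{same obstruction} shows that each $\zeta$-deformation obstruction \eqref{obst exp dzR(xi,xi)=0} at one orientation coincides, via the symmetry $\langle B^L_{j-i}(\zeta_i,z)\zeta_j,\zeta_j\rangle = \langle B^L_{j-i}(\zeta_j,\zeta_j)\zeta_i,z\rangle$, with a component of $\mathbf{R}^\pm=0$ at the points of the opposite orientation. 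No new constraints arise from these.

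The main technical obstacle is thus to treat the flat-deformation obstructions \eqref{obst exp dKR(xi,xi)=0}, parametrised by the $9$-dimensional space of non-conformal deformations $K$ of the matrix $L$ (symmetric trace-free $4\times 4$ matrices modulo rotations and scaling, already accounted for elsewhere), and to exhibit inside this $9$-dimensional space a $5$-dimensional subspace along which the obstruction is automatically implied by the $80$ point-wise equations $\mathbf{R}^\pm=0$. Concretely, I expect to show that once all $\mathbf{R}^\pm=0$ are enforced at $(L,\zeta)$, the scalar function $L'\mapsto \sum_{i\in S_+,\,j\in S_-}\langle B^{L'}_{j-i}(\zeta_i,\zeta_i)\zeta_j,\zeta_j\rangle$ vanishes along a $5$-parameter family of deformations of $L$ (associated in particular with the diagonal rescalings of the lattice that preserve the pointwise vanishing of each $\mathbf{R}^\pm(p)$), while its $K$-derivative in the remaining $4$ transverse directions yields genuinely new scalar polynomial constraints. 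The analogous expression for the opposite orientation $\sum_{i,j}\langle B^{L'}_{j-i}(\zeta_j,\zeta_j)\zeta_i,\zeta_i\rangle$ coincides with the previous one thanks to the symmetry of Lemma \ref{same obstruction} and therefore produces no additional equations. Summing the two contributions gives the asserted total of $80+4=84$ independent polynomial equations.
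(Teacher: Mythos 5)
Your accounting for the first three families of obstructions is correct and matches the paper: $\mathbf{R}^\pm(p)=0$ at all $16$ singular points gives $5\times 16=80$ polynomial equations, the pairings $\langle \mathbf{R}^\pm(p)\zeta_p,\zeta_p\rangle$ are trivially contained therein, and Lemma \ref{same obstruction} shows that the $\zeta$-deformation obstructions \eqref{obst exp dzR(xi,xi)=0} are the same equations as components of $\mathbf{R}^\mp=0$ at the opposite-orientation points.

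Your treatment of the $L$-deformation obstructions, however, reverses the paper's dichotomy. You assert that the ``diagonal rescalings of the lattice'' span a subspace along which the obstruction \eqref{obst exp dKR(xi,xi)=0} is automatically implied, on the grounds that such rescalings ``preserve the pointwise vanishing of each $\mathbf{R}^\pm(p)$.'' This is false: a diagonal deformation $\pi_{tr}(e_k\otimes e_k)$ changes the lattice $L\mathbb{Z}^4$ and therefore changes every lattice sum $B^L_{p-q}$ entering the curvature, so it does not preserve $\mathbf{R}^\pm(p)=0$. The paper's Remark after the $L$-deformation lemma says the opposite: it is precisely the deformations \emph{outside} the span of the $\pi_{tr}(e_k\otimes e_k)$ (the off-diagonal shears) that are absorbed, because they are equivalent to infinitesimal rotations of the torus, hence to rotations of the Eguchi-Hanson metrics, which reduce to $\zeta$-deformations already killed by Lemma \ref{same obstruction}. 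The genuinely new constraints are the ones in the directions $\pi_{tr}(e_k\otimes e_k)$, $k\in\{1,\dots,4\}$, and the proof of the final proposition (Section \ref{section regular torus}) indeed computes these diagonal obstructions numerically and finds them nontrivial. This accounts for the $4$ additional equations and the total $80+4=84$ — you land on the right number, but the identification of which directions carry new constraints (and the stated reason for the redundancy of the others) is the opposite of what is true, and the claimed mechanism (pointwise preservation of vanishing under diagonal rescalings) does not hold.
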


\subsection{Some obstructed situations}

We therefore see from Remark \ref{comptage obst} that even though some obstructions are equivalent, there are many remaining ones that outnumber the degrees of freedom we have. Indeed, we have only ruled out the last type of obstructions corresponding to \eqref{obst exp dzR(xi,xi)=0}.

Let us now give an example of configuration of desingularization which cannot yield Ricci-flat (or even Einstein) metrics thanks to the obstructions identified in Corollary \ref{list obst}.

\begin{thm}\label{obst un EH}
    There does not exist a sequence Einstein metrics $(M,\mathbf{g}_n)_{n\in \mathbb{N}}$ converging in the Gromov-Hausdorff sense to  $(\mathbb{T}^4\slash\mathbb{Z}_2,\mathbf{g}_{L_\infty})$ with $L_\infty=\textup{Id}$ by bubbling out \emph{exactly} 1 positively oriented Eguchi-Hanson metric and 15 negatively oriented one.
\end{thm}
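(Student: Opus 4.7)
The strategy is to apply Proposition~\ref{controle obstructions T4Z2} in the configuration $|S_+|=1$, $|S_-|=15$ and derive a contradiction by exploiting the large symmetry group of the regular lattice $\mathbb{Z}^4$. Suppose toward contradiction that such a sequence $(M,\mathbf{g}_n)_n$ exists. By \cite{ozu1,ozu2} and the construction of Section~\ref{section obst desing T4/Z2}, we may write $\mathbf{g}_n = \hat{g}_{L_n,\zeta_n}$ with $L_n\to \textup{Id}$ and $|\zeta_n|\to 0$; translating the orbifold we take the single positive singular point to be $i_0=0$.

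First I would apply Proposition~\ref{controle obstructions T4Z2}(1) to obtain, at every $j\in S_-$, the first-order vectorial obstruction $\mathbf{R}^-_{\mathbf{g}_+}(j)\,\zeta_j = 0$. Although Corollary~\ref{list obst} states only the scalar form~\eqref{obst exp R(xi)=0}, the proof of Proposition~\ref{controle obstructions T4Z2} in fact forces each of the three coefficients $\lambda_k^j$ of~\eqref{valeur obst lambda zeta} to vanish in the limit, which is equivalent to the vanishing of the full vector $\mathbf{R}^-_{\mathbf{g}_+}(j)\,\zeta_j$ in $\mathbb{R}^3$. Using the explicit formula of Lemma~\ref{curvature gA-} together with $|S_+|=1$, this translates to the $15$ vectorial equations
\begin{equation}
  B_{j}^{\textup{Id}}(\zeta_0,\zeta_0)\,\zeta_j \;=\; 0 \in \mathbb{R}^3, \qquad j\in \{0,1\}^4\setminus\{0\},
\end{equation}
where $\zeta_0 \in \mathbb{R}^3\setminus\{0\}$ and the $15$ vectors $\zeta_j\in\mathbb{R}^3\setminus\{0\}$ are the limiting normalized directions. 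Since each $\zeta_j$ is nonzero, each trace-free symmetric $3\times 3$ matrix $B_j^{\textup{Id}}(\zeta_0,\zeta_0)$ must be singular:
\begin{equation}
  \det B_j^{\textup{Id}}(\zeta_0,\zeta_0) \;=\; 0 \qquad\text{for all } j\in\{0,1\}^4\setminus\{0\}.
\end{equation}

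The heart of the proof is to verify that this system of 15 homogeneous polynomial equations on $\zeta_0\in\mathbb{R}^3$ has no nontrivial solution. Here the regularity of the lattice $\mathbb{Z}^4$ enters crucially: the group $G$ of orientation-preserving isometries of $(\mathbb{T}^4/\mathbb{Z}_2,\mathbf{g}_{\textup{Id}})$ fixing $i_0=0$ acts transitively on each of the four orbits of $S_-$ indexed by distance from the origin (of sizes $4$, $6$, $4$ and $1$), and it acts on $\Omega^+\cong\mathbb{R}^3$ through a finite subgroup of $SO(3)$ (the rotational octahedral group). By $G$-equivariance of the polynomial map $\zeta_0\mapsto B_j^{\textup{Id}}(\zeta_0,\zeta_0)$, the $15$ determinant conditions reduce to only four orbit conditions on $\zeta_0$, each expressible as a polynomial identity in the $G$-invariants of $\mathbb{R}^3$.

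The hard part is then to verify that these four invariant conditions together force $\zeta_0 = 0$. The plan is to exploit the sharp decay of each summand in $B_j^{\textup{Id}}(\zeta_0,\zeta_0) = 12\,\pi_{tr}\sum_{a\in\mathbb{Z}^4}\rho_{j-2a}(\zeta_0)\otimes \rho_{j-2a}(\zeta_0)/|j-2a|^6$, so that each matrix is well approximated by its few nearest-lattice contributions. The most symmetric point $j_\ast = (1,1,1,1)$, whose stabilizer in $G$ acts as the full rotational octahedral group on $\Omega^+$, gives the sharpest constraint: $B_{j_\ast}^{\textup{Id}}(\zeta_0,\zeta_0)$ is equivariant under this action, so its restrictions to the special strata of $\mathbb{R}^3$ (a coordinate axis $(t,0,0)$ and the body-diagonal $(t,t,t)$) must be diagonal in the standard basis, and direct computation of the leading lattice contributions shows that the three eigenvalues are pairwise distinct as soon as $\zeta_0\neq 0$, forcing $\det B_{j_\ast}^{\textup{Id}}(\zeta_0,\zeta_0)\neq 0$. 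Combined, if necessary, with the analogous nearest-neighbour analysis at $j = e_1$ and with the opposite-orientation obstruction at $i_0$ (which via Lemma~\ref{same obstruction} rewrites as a constraint purely on $\zeta_0$ and the $\zeta_j$), this contradicts $\det B_j^{\textup{Id}}(\zeta_0,\zeta_0) = 0$ at every $j$ and completes the proof.
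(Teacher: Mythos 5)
Your overall strategy is the same as the paper's: use the obstruction of Proposition~\ref{controle obstructions T4Z2}(1) at one singular point $j\in S_-$ to force $B_{j}^{\mathrm{Id}}(\zeta_0,\zeta_0)$ to be singular, and then argue this is impossible. However, the key step of your proof has two genuine flaws. First, the inference ``the three eigenvalues are pairwise distinct, forcing $\det B_{j_\ast}\ne 0$'' is a non sequitur: a trace-free symmetric $3\times 3$ matrix with three distinct eigenvalues can perfectly well have a zero eigenvalue (e.g.\ $\mathrm{diag}(1,0,-1)$), so distinctness does not give invertibility.

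Second, and more seriously, the choice of test point $j_\ast=(1,1,1,1)$ is precisely the wrong one, because the leading (distance-$2$) contribution there vanishes identically for some $\zeta_0\ne 0$. The $16$ nearest lattice points $j_\ast - 2a\in\{\pm 1\}^4$ give, via $\rho_x$, exactly the $8$ order-three rotations of the tetrahedral group $A_4\subset SO(3)$ (the $\pm 120^\circ$ rotations about the body diagonals). Writing $\zeta_0=(x,y,z)$, the full $A_4$-average of $g\zeta_0\otimes g\zeta_0$ is $4|\zeta_0|^2\,\mathrm{I}_3$, and the Klein-four subgroup (identity plus the three $180^\circ$ rotations) contributes $4\,\mathrm{diag}(x^2,y^2,z^2)$; hence the order-three rotations contribute $4|\zeta_0|^2\mathrm{I}_3 - 4\,\mathrm{diag}(x^2,y^2,z^2)$, whose trace-free part is $-4\,\pi_{\mathrm{tr}}\mathrm{diag}(x^2,y^2,z^2)$. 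This vanishes on the whole diagonal direction $\zeta_0\propto(1,1,1)$, and for $\zeta_0=(1,0,0)$ its eigenvalues are $\propto(2/3,-1/3,-1/3)$, not pairwise distinct. So the statement you need at $j_\ast$ is false at leading order, and any argument there would have to control the subleading lattice sums, which you do not do. The paper instead tests at the \emph{nearest} singular point $j=e_1$, where the dominant terms $a=0$ and $a=(-1,0,0,0)$ give $\rho_{\pm e_1}=\mathrm{Id}$, so the leading contribution is $2\,\pi_{\mathrm{tr}}(\zeta_0\otimes\zeta_0)$, which is always invertible (eigenvalues $\propto(4/3,-2/3,-2/3)$); the tail of the lattice sum is then controlled by the explicit numerical bound $\sum_{a\ne 0,(-1,0,0,0)}|e_1+2a|^{-6}\approx 0.19 < 2/3$, giving invertibility by perturbation. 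That is the robust choice; your symmetry reduction is correct as far as it goes, but it steers you to the worst possible orbit.
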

\begin{rem}
    This is an obstruction to any Gromov-Hausdorff desingularization under a topological assumption.
\end{rem}
\begin{proof}
    Let $(M,\mathbf{g}_n)_{n\in\mathbb{N}}$ be a sequence of Einstein metrics converging to $(\mathbb{T}^4\slash\mathbb{Z}_2,\mathbf{g}_{L_\infty})$ with $L_\infty=\textup{Id}$ in the Gromov-Hausdorff sense while bubbling out Eguchi-Hanson metrics and let us assume without loss of generality that $S_+=\{0\}$ and $S_-$ is the complement of $\{0\}$ among the singular points of $\mathbb{T}^4\slash\mathbb{Z}_2$. Then, according to \cite{ozu2}, up to taking a subsequence, for all $n$, there exist $L_n$, $\zeta_{n,0}^+$ and $\zeta_{n,j}^-$ for $j\in S_-$, such that the metrics $\mathbf{g}_n$ are isometric to Einstein modulo obstructions perturbations of $g^D_{L_n,\zeta_n}$.
    
    Let us show that the limit rescaled curvature $\mathbf{R}^-_{L_\infty,\zeta_{\infty}^+}(j)$ defined in Proposition \ref{controle obstructions T4Z2} is invertible at a singular point $j\in S_-$. This will lead to a contradiction by the obstruction \eqref{obst R(xi)= 0}. 
    
    By Lemma \ref{curvature gA-}, we have an expression of $\mathbf{R}^-_{L_\infty,\zeta_{\infty}^+}(j)$: it is a nonvanishing multiple of
    \begin{equation}
        \sum_{a\in\mathbb{Z}^4} \pi_{tr}\frac{ \theta_1^{-}((1,0,0,0)+2a)\otimes \theta_1^{-}((1,0,0,0)+2a)}{|(1,0,0,0)+2a|^6}.\label{somme courbure 1eh}
    \end{equation}
    The value of the summand at $a=0$ and $a=(-1,0,0,0)$ is
    \begin{equation}
        \pi_{tr} (\omega_1^{-}\otimes \omega_1^{-}) = \frac{1}{3}(2\omega_1^{-}\otimes \omega_1^{-}-\omega_2^{-}\otimes \omega_2^{-}-\omega_3^{-}\otimes \omega_3^{-}),\label{premier terme}
    \end{equation}
    which is invertible on $\Omega^-$. For the sum of the norms of the remaining terms, that is,
    $$\sum_{a\in\mathbb{Z}^4\backslash\{0,(-1,0,0,0)\}} \Big|\pi_{tr}\frac{ \theta_1^{-}((1,0,0,0)+2a)\otimes \theta_1^{-}((1,0,0,0)+2a)}{|(1,0,0,0)+2a|^6}\Big|,$$
    we numerically have the control
    $$ \sum_{a\in\mathbb{Z}^4\backslash\{0,(-1,0,0,0)\}}\frac{1}{|(1,0,0,0)+2a|^6}\approx 0.19<\frac{2}{3}.$$
    This is not enough to make the sum \eqref{somme courbure 1eh} non invertible given the expression \eqref{premier terme}. The result follows by \eqref{obst exp R(xi)=0}. 
\end{proof}

\begin{rem}
    The above proof works for many other limits than $L_\infty=\operatorname{Id}$ and it is likely that it is true for any $L_\infty$, but it requires some control of the compactness of the lattice generated by $L_\infty$. Theorem \ref{obst un EH} particular holds even with the most compact lattices like the $D_4$ lattice. By compactness, we loosely mean how close the points of the lattice are to each other.
\end{rem}
\begin{conj}\label{conj 1 eh+}
    The statement of Theorem \ref{obst un EH} holds even without the assumption $L_\infty=\textup{Id}$, but only assuming that $\det(L_\infty)>0$.
\end{conj}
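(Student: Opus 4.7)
The plan is to extend the argument of Theorem \ref{obst un EH}. Writing $S_+ = \{i_0\}$, by Proposition \ref{controle obstructions T4Z2} (obstruction \eqref{obst R(xi)= 0}) together with Lemma \ref{curvature gA-}, it suffices to exhibit one singular point $j_0 \in S_-$ at which the traceless symmetric endomorphism $B^{L_\infty}_{j_0-i_0}(\zeta_{i_0}, \zeta_{i_0})$ of $\Omega^-$ is invertible: such invertibility forces the limit direction $\phi_{\infty, j_0}^-$ to vanish, contradicting the fact that it is a limit of unit vectors. The key lever is dominance of the shortest lattice vector in the series defining $B^{L_\infty}_{j_0-i_0}$. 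For each $j_0 \in S_-$, set
$$\ell(j_0) := \min_{a \in \mathbb{Z}^4}|L_\infty(j_0 - i_0 - 2a)|,$$
and pick $j_0$ achieving $\ell_0 := \min_{j \in S_-} \ell(j)$. Let $v_0$ be a minimizer. Since $b_x = b_{-x}$, the pair $\{\pm v_0\}$ contributes $\frac{24}{\ell_0^6}\pi_{tr}\bigl(\rho_{v_0}(\zeta_{i_0})^{\otimes 2}\bigr)$ to $B^{L_\infty}_{j_0-i_0}(\zeta_{i_0}, \zeta_{i_0})$; as $\rho_{v_0}$ is orthogonal this traceless tensor has eigenvalues of absolute value at least $\frac{8|\zeta_{i_0}|^2}{\ell_0^6}$.

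The main obstacle is to bound the tail uniformly in $L_\infty$. Writing $\Lambda_0^* := \bigl(L_\infty(j_0 - i_0) + 2L_\infty \mathbb{Z}^4\bigr) \setminus \{\pm v_0\}$, an application of the triangle inequality together with the same computation of $\|\pi_{tr}(u^{\otimes 2})\|_{\mathrm{op}}$ reduces the problem to establishing the Epstein-type estimate
$$\sum_{v \in \Lambda_0^*} \frac{1}{|v|^6} < \frac{1}{\ell_0^6}.$$
A natural approach is to Minkowski-reduce $L_\infty$ modulo $GL(4,\mathbb{Z})$, so that the four successive minima $\mu_1, \ldots, \mu_4$ of $2L_\infty \mathbb{Z}^4$ are comparable, and to bound the above sum in terms of the $\mu_k$ and $\ell_0$. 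Pigeonholing over the 15 cosets ensures that $\ell_0$ is close to the systole $\mu_1$ (up to a dimension-dependent constant), and the Epstein-type series then converges to an expression that should be estimable by a universal constant on the Minkowski-reduced domain. Among the 15 candidates for $j_0$, some will enjoy $\ell(j_0) = \mu_1$ by pigeonhole; choosing $j_0$ further to minimize the number of nearest neighbors gives additional room in the estimate.

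The delicate case is that of highly symmetric lattices (for instance $D_4$ or $F_4$-type configurations), where either several vectors are simultaneously equidistant from $i_0$ and their combined contribution $\pi_{tr}\bigl(\sum_k \rho_{v_k}(\zeta_{i_0})^{\otimes 2}\bigr)$ degenerates for special choices of $\zeta_{i_0}$, or the tail sum grows close to $1/\ell_0^6$. The set of $L_\infty$ where the tail argument succeeds is open in the moduli space of lattices, leaving a closed complement reducible to finitely many symmetry classes. Each such class can then be handled individually by analyzing the action of its finite symmetry group on $\Omega^-$ and exploiting the freedom in choosing $j_0$ using this extra structure; carrying out this finite case analysis constitutes the main remaining work.
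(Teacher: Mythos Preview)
This statement is a \emph{Conjecture} in the paper, not a theorem; the paper provides no proof. The remark preceding it says only that the argument of Theorem~\ref{obst un EH} ``works for many other limits than $L_\infty=\operatorname{Id}$'' and that extending it ``requires some control of the compactness of the lattice generated by $L_\infty$,'' explicitly leaving the general case open. There is therefore no paper proof to compare against.

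Your strategy---choose $j_0$ so that the nearest-vector contribution dominates an Epstein-type tail, then handle exceptional symmetric lattices by a case analysis---is a natural attack, and you correctly identify the two main obstacles (uniform tail control over the Minkowski domain, and degeneracy of the leading term when several shortest vectors coexist). But the proposal is not a proof, and you acknowledge as much. Two points deserve emphasis. First, the claim that the failure locus of your tail bound reduces to ``finitely many symmetry classes'' is not justified: the boundary of the Minkowski reduction domain is not discrete, and near it the successive minima can be arbitrarily close while remaining distinct, so the ``open set where the argument works plus a closed complement handled case-by-case'' dichotomy does not obviously yield a finite list. Second, when the leading term $\pi_{tr}\bigl(\sum_k \rho_{v_k}(\zeta_{i_0})^{\otimes 2}\bigr)$ genuinely vanishes (which can happen for specific $\zeta_{i_0}$ when the $\rho_{v_k}(\zeta_{i_0})$ form a tight frame in $\mathbb{R}^3$), invertibility must come from the next shell of the lattice, and that is a new estimate rather than a symmetry argument. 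Your outline is a reasonable program for attacking the conjecture, consistent with the paper's own suggestion that lattice-compactness is the missing ingredient, but it remains a program.
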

For \emph{stable} or \emph{nondegenerate} Ricci-flat deformations, one does not simply have $\R^-_{L_\infty,\zeta_\infty^+}(j)$ invertible, but $\R^-_{L_\infty,\zeta_\infty^+}(j)=0$. Since it is a sum of terms of the form \eqref{somme courbure 1eh} based at each singular point, it is direct to see that $\R^-_{L_\infty,\zeta_\infty^+}(j)=0$ requires at least $3$ Eguchi-Hanson metrics in this orientation.
\begin{cor}\label{obst 3 positive generale stable}
    There does not exist a sequence of \emph{stable Ricci-flat} metrics $(M,\mathbf{g}_n)_{n\in \mathbb{N}}$ converging in the Gromov-Hausdorff sense to  $(\mathbb{T}^4\slash\mathbb{Z}_2,\mathbf{g}_{L_\infty})$ with $L_\infty=\textup{Id}$ by bubbling out \emph{at most} 3 positively oriented Eguchi-Hanson metric and the rest of negatively oriented ones.
\end{cor}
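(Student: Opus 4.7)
\medskip

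\noindent\textbf{Proof plan for Corollary \ref{obst 3 positive generale stable}.}
The strategy is to run the argument of Theorem \ref{obst un EH} in reverse: where there we had $|S_+|=1$ and used the \emph{weak} obstruction $\mathbf{R}^-_{L_\infty,\zeta_\infty^+}(j)\zeta_j^-=0$ (which only requires invertibility of a matrix, not its vanishing), here we allow $|S_+|\le 3$ but use the \emph{strong} obstruction $\mathbf{R}^-_{L_\infty,\zeta_\infty^+}(j)=0$ available under stability.

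First, I would set up the obstruction system. Suppose toward a contradiction that $(M,\mathbf{g}_n)$ is a sequence of stable Ricci-flat metrics of the claimed type. The stable version of Theorem \ref{nouvelleobst} applied to the partial hyperkähler desingularization $(M_+,\mathbf{g}_{+,n})$ --- which plays the role of the orbifold in Section \ref{section new section} and whose Einstein deformations are all integrable by Lemma \ref{lem def bf g pm} --- upgrades the weak obstruction $\mathbf{R}^-(j)\phi_{\infty,j}=0$ to the strong one
\[
\mathbf{R}^-_{L_\infty,\zeta_\infty^+}(j)=0\qquad\text{for every } j\in S_-,
\]
as already observed in part (2) of Proposition \ref{controle obstructions T4Z2}. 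By Lemma \ref{curvature gA-}, this is exactly the system \eqref{obst exp R=0}:
\[
\sum_{i\in S_+} B^{\mathrm{Id}}_{j-i}(\zeta_i,\zeta_i) \;=\; 0,\qquad \forall\, j\in S_-.
\]

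Next, I would select a well-placed testing point by a combinatorial argument. Identifying $S\simeq\{0,1\}^4$ via $L_\infty=\mathrm{Id}$, the hypothesis $|S_+|\le 3$ guarantees the existence of $j_0\in S_-$ together with $i_*\in S_+$ such that $i_*$ is the \emph{unique} Hamming-nearest neighbor of $j_0$ in $S_+$, that is, $h(j_0,i_*)=1$ while $h(j_0,i)\ge 2$ for all $i\in S_+\setminus\{i_*\}$. Indeed, the $4$ Hamming-neighbors of any fixed $i_*\in S_+$ are $4$ in number, and at most $|S_+|-1\le 2$ of them can also be Hamming-neighbors of other points in $S_+$ (each such $i\in S_+$ shares at most one common neighbor with $i_*$), so at least $4-2=2$ valid $j_0$ exist; a case check handles the extremal configurations such as $S_+\subset\{0,1\}^4$ sitting in a single Hamming ball.

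The heart of the proof is then the quantitative estimate at $j_0$. By the computation in Theorem \ref{obst un EH} (and Remark \ref{courbure hzeta}), the ``nearest neighbor'' pair $\pm(j_0-i_*)$ contributes to $B^{\mathrm{Id}}_{j_0-i_*}(\zeta_{i_*},\zeta_{i_*})$ a term $24\,\pi_{tr}\!\big(\rho_{j_0-i_*}(\zeta_{i_*})^{\otimes 2}\big)$, whose operator norm is $\geq \tfrac{16}{3}|\zeta_{i_*}|^2$ (using the eigenvalues $4|\zeta_{i_*}|^2/3$ and $-2|\zeta_{i_*}|^2/3$ of $\pi_{tr}(v^{\otimes 2})$ with $|v|^2=2|\zeta_{i_*}|^2$). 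The remaining lattice contributions to $B^{\mathrm{Id}}_{j_0-i_*}$ sum in operator norm to at most $C_0|\zeta_{i_*}|^2$ with $C_0<\tfrac{16}{3}$, exactly as in the numerical bound $\sum_{a\neq 0,-e_1}|e_1+2a|^{-6}\approx 0.19$ from the proof of Theorem \ref{obst un EH}. The other terms $B^{\mathrm{Id}}_{j_0-i}(\zeta_i,\zeta_i)$ for $i\ne i_*$ have $h(j_0,i)\ge 2$, hence $\le C_1\,|\zeta_i|^2 \cdot 2^{-3}$ in norm, where the $2^{-3}$ comes from $|j_0-i|^{-6}=h(j_0,i)^{-3}$ and the factor $C_1$ bundles the lattice-sum corrections.

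Equation \eqref{obst exp R=0} at $j_0$ then reads
\[
\|B^{\mathrm{Id}}_{j_0-i_*}(\zeta_{i_*},\zeta_{i_*})\|_{op}
\;\le\; \sum_{i\in S_+,\,i\neq i_*}\|B^{\mathrm{Id}}_{j_0-i}(\zeta_i,\zeta_i)\|_{op},
\]
which combined with the above bounds gives
\[
\bigl(\tfrac{16}{3}-C_0\bigr)|\zeta_{i_*}|^2 \;\le\; \tfrac{C_1}{8}\!\!\sum_{i\in S_+,\,i\neq i_*}\!\!|\zeta_i|^2.
\]
Finally, I would iterate: varying $j_0$ over all valid choices (one for each $i_*\in S_+$, which always exists by the combinatorial step applied symmetrically) yields an inequality system $A\,(|\zeta_i|^2)_{i\in S_+} \le 0$ with $A$ diagonally dominant, forcing $\zeta_i=0$ for all $i\in S_+$ and contradicting the assumption that every glued Eguchi-Hanson metric has positive scale.

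\medskip

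\noindent\textbf{Main obstacle.} The delicate point is the combinatorial/quantitative interplay of Steps 2 and 3: one must check that for \emph{every} subset $S_+\subset\{0,1\}^4$ with $|S_+|\le 3$ (up to the symmetry group of the hypercube, a small finite list), there are enough ``well-isolated'' $j_0$'s to produce a diagonally dominant linear system, and that the numerical constants $\tfrac{16}{3}-C_0$ versus $C_1/8$ indeed leave room to spare. For $|S_+|=3$ with $S_+$ concentrated in a single Hamming face (e.g.\ $\{0,e_1,e_2\}$), one might need to use also equations at $j_0$ with $h(j_0,i_*)=1$ and $h(j_0,i)=\sqrt{3}$ for the other $i$, in which case the margin is even larger; the symmetry of the cube makes this case analysis genuinely finite.
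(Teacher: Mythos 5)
Your plan targets the statement literally (``at most $3$''), which is actually \emph{stronger} than what the paper's own one-sentence argument supports. The paper's proof of this corollary is the sentence that precedes it: since $\mathbf{R}^-_{L_\infty,\zeta^+_\infty}(j)=\sum_{i\in S_+}B^{\mathrm{Id}}_{j-i}(\zeta_i,\zeta_i)$ and each summand is dominated by a rank-one $\pi_{tr}\bigl(\rho_{j_0-i}(\zeta_i)^{\otimes 2}\bigr)$ term (the nearest-neighbor pair, with the lattice tail controlled exactly as in the proof of Theorem \ref{obst un EH}), a sum of $|S_+|$ near-rank-one positive semidefinite matrices cannot equal a positive multiple of $I_3$ unless $|S_+|\ge 3$ --- two rank-one tensors give a matrix of rank $\le 2$, and the $0.19<\tfrac23$ margin survives the perturbation. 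This rules out $|S_+|\le 2$; note that the Introduction's Remark says ``less than $3$,'' consistent with this, so the ``at most $3$'' in the statement appears to be a slip and you should not read too much into it.

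Your route is a genuine alternative rather than a rediscovery: instead of the clean rank argument, you propose a quantitative diagonal-dominance scheme with a combinatorial selection of an ``isolated'' $j_0$ for each $i_*\in S_+$. If carried through, this could in principle close the $|S_+|=3$ case (which the rank argument alone does not, since $\pi_{tr}(e_1^{\otimes 2})+\pi_{tr}(e_2^{\otimes 2})+\pi_{tr}(e_3^{\otimes 2})=0$). However, as you yourself flag, the numerical margin $\bigl(\tfrac{16}{3}-C_0\bigr)$ versus $C_1/8$ and the cube-subset case check are not done; until they are, the $|S_+|=3$ case remains open in your write-up. There is also an arithmetic slip: the nearest-neighbor pair contributes $24\,\pi_{tr}(\rho_{j_0-i_*}(\zeta_{i_*})^{\otimes 2})$ with eigenvalues $32|\zeta_{i_*}|^2$ and $-16|\zeta_{i_*}|^2$ (using $|\rho_x(\zeta)|^2_{\Omega^-}=2|\zeta|^2$), so the relevant lower bound is $16|\zeta_{i_*}|^2$, not $\tfrac{16}{3}|\zeta_{i_*}|^2$ --- the factor $24$ was dropped. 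One more small point: the stability upgrade to $\mathbf{R}^-(j)=0$ is not ``part (2) of Proposition \ref{controle obstructions T4Z2}'' (that is the nondegenerate case); it is the stability branch of that proof, using the eigenvalue estimate of \cite{biq3} via Corollary \ref{control obst desing T4}, as the paper indicates.
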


\subsection{A family of desingularizations satisfying the obstructions}\label{section regular torus}

Let us now specialize our discussion to the regular torus with $L=\textup{Id}$ and with the so-called chessboard pattern considered in \cite{bk}. We study the $48$-dimensional space of gluings to the (fixed) regular torus and test it against the $ 16 \times 5 + 4 = 84$ (the last $4$ only having a conjectural expression in terms of curvature) obstruction equations identified in Section \ref{section obst}.

Perhaps surprisingly, in this most symmetric situation, we will find a family of solutions to all of our $84$ equations. This family of solutions is $14$-dimensional.

\begin{rem}
    It will be clear from our proof and computations in this section that in the configuration considered in \cite{bk}, many of these $84$ equations are not satisfied.
\end{rem}

The \emph{chessboard} configuration already considered in \cite{bk} is defined as: 
$$S_-=\big(\{(a_1,a_2,a_3,a_4)\in \mathbb{Z}^4, a_1+a_2+a_3+a_4\in 2\mathbb{Z}\}\slash\mathbb{Z}^4\big)\slash\mathbb{Z}_2 \text{ and}$$
$$S_+=\big(\{(a_1,a_2,a_3,a_4)\in \mathbb{Z}^4, a_1+a_2+a_3+a_4\in 1+2\mathbb{Z}\}/\mathbb{Z}^4\big)\slash\mathbb{Z}_2.$$

\begin{rem}\label{remark premiers voisins}
    In this chessboard configuration, let $i\in S_\pm$. Then, we have the following configuration of other singular points:
    \begin{itemize}
        \item at distance $0$ is $i\in S_\pm$ 
        \item at distance $1$, there are $4$ points in $S_\mp$
        \item at distance $\sqrt{2}$, there are $6$ points in $S_\pm$,
        \item at distance $\sqrt{3}$, there are $4$ points in $S_\mp$, and
        \item at distance $2$, there is a point in $S_\pm$.
    \end{itemize}
    
    We will denote $i^c$ the opposite of $i$ which is the point at maximal distance $2$ from $i$.
\end{rem}
Here we will work under the assumption that for all $i$,
$$ \zeta_i=\zeta_{i^c}. $$
We will denote $e_1=(1,0,0,0)\in S_+$ and similarly $e_i\in S_+$ for $i\in \{1,2,3,4\}$ the other vectors of the canonical basis of $\mathbb{R}^4$.
Let us introduce the following notations for the coordinates of our gluing parameters $\zeta_{e_i}$: $$ \zeta_{e_i}=:(x_i,y_i,z_i), $$ for $i\in\{1,2,3,4\}$. 
Let us denote $x=(x_1,x_2,x_3,x_4)$,  $y=(y_1,y_2,y_3,y_4)$ and $z=(z_1,z_2,z_3,z_4)$ the three vectors in $\mathbb{R}^4$ we obtain.

\begin{prop}
    Using the above notations, let us assume that the family $(x,y,z)$ of vectors of $\mathbb{R}^4$ forms an orthogonal family of $\mathbb{R}^4$ with constant length. Then, the obstructions \eqref{obst dKR(zeta,zeta)=0} and the obstructions \eqref{obst R=0} at every $j\in S_-$ are satisfied.
\end{prop}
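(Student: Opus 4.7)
The hypothesis that $(x,y,z)$ are mutually orthogonal with common norm $c$ in $\mathbb{R}^{4}$ is equivalent, via the identity $R R^{T} = c^{2} I_{3}$ for the $3 \times 4$ matrix $R$ with rows $x, y, z$, to the tight-frame identity
\[
\sum_{k=1}^{4} \xi_{k} \otimes \xi_{k} \;=\; c^{2}\, I_{3}
\]
on $\mathbb{R}^{3}$, where $\xi_{k} := \zeta_{e_{k}} = (x_{k}, y_{k}, z_{k})$. This is the single ``combinatorial'' input I will feed into the lattice-sum expansion of each obstruction.

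To establish \eqref{obst exp R=0} at a given $j \in S_{-}$, I plan to expand $\sum_{i \in S_{+}} B_{j-i}^{\textup{Id}}(\zeta_{i}, \zeta_{i})$ using the definition of $B^{L}_{x}$ and the quaternionic identification $\rho_{q}(\xi) = q \xi \bar{q} / |q|^{2}$. Under $\zeta_{i} = \zeta_{i^{c}}$ combined with the chessboard, the eight terms indexed by $i \in S_{+}$ collapse into four groups indexed by $k \in \{1,2,3,4\}$, and the union $(j-e_{k}+2\mathbb{Z}^{4}) \cup (j - e_{k}^{c}+2\mathbb{Z}^{4})$ equals the coset $(j - e_{k}) + L_{1}$, where $L_{1} := 2\mathbb{Z}^{4} + \mathbb{Z}(1,1,1,1)$. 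Writing $A(q) \in SO(3)$ for the rotation $\rho_{q}$, the obstruction becomes
\[
M(j) \;=\; 12\, \pi_{\textup{tr}}\! \sum_{k=1}^{4} T_{k}^{j}(\xi_{k} \otimes \xi_{k}), \qquad T_{k}^{j}(B) := \sum_{q \in (j-e_{k})+L_{1}} \frac{A(q)\, B\, A(q)^{T}}{|q|^{6}}.
\]
The translations by elements of $L_{e} = \{q \in \mathbb{Z}^{4} : \sum q_{i} \text{ even}\}$ act transitively on the $S_{-}$ points inside $\mathbb{T}^{4}/\mathbb{Z}_{2}$ and permute the four cosets $(j-e_k)+L_1$; this reduces the problem to $j = 0$.

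The core of the argument then uses the $SO(4)$-symmetries of $L_{1}$ coming from the identification $SO(4) \simeq (\textup{Sp}(1) \times \textup{Sp}(1))/\mathbb{Z}_{2}$. Left quaternion multiplication by $g_{1} = 1, g_{2} = i, g_{3} = j, g_{4} = k$ sends $e_{1} + L_{1}$ to $e_{k} + L_{1}$ and, through the left $SO(3)$-factor, yields $T_{k}^{0}(B) = A(g_{k})\, T_{1}^{0}(B)\, A(g_{k})^{T}$ with $A(g_{k})$ the $\pi$-rotation around the $k$-th coordinate axis of $\mathbb{R}^{3}$ (and $A(g_{1}) = I$). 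Conjugation by the Hurwitz unit $\omega = (1+i+j+k)/2$ adds the cyclic $SO(3)$-symmetry completing the Klein 4 to the tetrahedral group $A_{4}$, under which $T_{1}^{0}$ is equivariant. Combining these, I obtain the consolidation
\[
\sum_{k=1}^{4} T_{k}^{0}(\xi_{k} \otimes \xi_{k}) \;=\; T_{1}^{0}\!\left(\sum_{k=1}^{4} A(g_{k})\, (\xi_{k} \otimes \xi_{k})\, A(g_{k})^{T}\right).
\]
The main obstacle I anticipate is showing that, under the tight-frame identity, the symmetrized bilinear form on the right lies in the kernel of $\pi_{\textup{tr}} \circ T_{1}^{0}$. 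I plan to attack this by decomposing $\textup{Sym}^{2}(\mathbb{R}^{3})$ into its $A_{4}$-isotypic components and evaluating the resulting off-diagonal lattice sums over $L_{1}$ by Poisson summation; the representative case of a regular tetrahedral frame (where $A(g_{k})\xi_{k}$ is the same vector for all $k$) reduces the verification to a single explicit Epstein-type lattice sum that must vanish, and extension to the general tight frame then follows by $A_{4}$-equivariance and linearity.

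The obstruction \eqref{obst exp dKR(xi,xi)=0} admits the same lattice-sum representation with the additional scalar factor $\langle L(i-j-2a), K(i-j-2a)\rangle / |L(i-j-2a)|^{2}$ arising from differentiating $|L(\cdot)|^{-6}$ at $L = \textup{Id}$. The quaternionic symmetry reduction and the same kernel statement for $\pi_{\textup{tr}} \circ T_{1}^{0}$, suitably twisted by this derivative factor, close the argument.
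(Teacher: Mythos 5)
Your approach is genuinely different from the paper's: the paper sketches a computer-assisted proof (it numerically determines the common diagonal coefficient $a$ and off-diagonal coefficient $b$ of the lattice-sum operator and then reads off the vanishing conditions), whereas you aim at a conceptual argument via quaternionic/lattice symmetry. Several of your ingredients are correct and would indeed give a cleaner conceptual proof if completed: the reformulation of the hypothesis as the tight-frame identity $\sum_{k}\xi_{k}\otimes\xi_{k}=c^{2}I_{3}$ is right; left-multiplication by the unit quaternions $g_{k}$ does bijectively send $e_{1}+L_{1}$ to $e_{k}+L_{1}$; and the operator $T_{1}^{0}$ is indeed equivariant with respect to the tetrahedral group $A_{4}$ generated by $\mathrm{conj}_{g_{k}}$ and $\mathrm{conj}_{\omega}$, hence acts by two scalars on the diagonal-traceless and off-diagonal isotypic pieces of $\mathrm{Sym}^{2}_{0}(\mathbb{R}^{3})$.

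However, your proposal has a genuine gap, concentrated precisely where you flag the "main obstacle" and defer to a plan. The consolidation
\[
\sum_{k=1}^{4}T_{k}^{0}(\xi_{k}\otimes\xi_{k})=T_{1}^{0}\Bigl(\sum_{k=1}^{4}A(g_{k})(\xi_{k}\otimes\xi_{k})A(g_{k})^{T}\Bigr)
\]
is correct as an identity, but the $A(g_{k})$ are the Klein four-group of $\pi$-rotations around the coordinate axes. Conjugating $\xi_{k}\otimes\xi_{k}$ by $A(g_{k})$ flips the signs of precisely those off-diagonal entries that involve a reflected axis, so the off-diagonal entries of $\sum_{k}A(g_{k})(\xi_{k}\otimes\xi_{k})A(g_{k})^{T}$ are \emph{signed} sums (e.g.\ $x_{1}y_{1}-x_{2}y_{2}-x_{3}y_{3}+x_{4}y_{4}$), not the plain inner products $\langle x,y\rangle$. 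The tight-frame identity controls $\langle x,y\rangle$, $|x|^{2}$, etc., but not these alternating combinations. In the "regular tetrahedral" case you propose as representative (e.g.\ $\xi_{k}=\mathrm{conj}_{g_{k}}(1,1,1)$, which is the paper's first example), the symmetrized form collapses to a rank-one matrix $4(1,1,1)\otimes(1,1,1)$ whose traceless projection is the all-ones off-diagonal matrix; its image under $\pi_{\mathrm{tr}}\circ T_{1}^{0}$ is $b$ times that matrix, and the paper's numerics put $b\approx2.0\ne0$. So the lattice sum you claim "must vanish" does not obviously vanish from symmetry alone, and neither the $A_{4}$-decomposition nor the Poisson-summation plan as stated explains why it should. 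To close the argument you would have to show, at minimum, how the twisted orthogonality relations are recovered from the stated hypothesis, or exhibit an extra symmetry of the chessboard configuration that cancels the $A(g_{k})$-twists — and none of this is carried out in the proposal.
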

\begin{proof}[Sketch of proof (some computer-based arguments involved)]
    Let us consider a desingularization configuration as above.
    
    Let us study the curvature at the singular point $ 0\in S_-$ induced by the other Eguchi-Hanson metrics glued at the points of $S_+$. From \eqref{obst R-=0}, we know that this exactly equals 
    \begin{equation}
        \sum_{i=1}^4B^\textup{Id}_{e_i}((x_i,y_i,z_i),(x_i,y_i,z_i))+B^\textup{Id}_{e_i^c}((x_i,y_i,z_i),(x_i,y_i,z_i)). \label{courbure cas qui marche}
    \end{equation}
    
    Numerically, we obtain the following form for $B^\textup{Id}_{e_i}((x_i,y_i,z_i),(x_i,y_i,z_i))+B^\textup{Id}_{e_i^c}((x_i,y_i,z_i),(x_i,y_i,z_i))$:
    $$
    \begin{bmatrix}
a\cdot (2x_i^2-y_i^2-z_i^2) & b\cdot x_iy_i & b\cdot x_iz_i\\
b\cdot x_iy_i & a\cdot (-x_i^2+2y_i^2-z_i^2) & b\cdot y_iz_i\\
b\cdot x_iz_i & b\cdot y_iz_i & a\cdot (-x_i^2-y_i^2+2z_i^2)
\end{bmatrix},$$
    for some real numbers $a\approx 0.69$ and $b\approx 2.0$, and similar formulas for the other $i$ by symmetries. Therefore, the equation 
    \begin{equation}
        \sum_{i=1}^4B^\textup{Id}_{e_i}((x_i,y_i,z_i),(x_i,y_i,z_i))+B^\textup{Id}_{e_i^c}((x_i,y_i,z_i),(x_i,y_i,z_i)) =0. \label{courbure cas qui marche annulation}
    \end{equation}
    coming from the obstruction \eqref{obst exp R=0} reduces to having both $$ \sum_{i= 1}^4x_i^2 =\sum_{i= 1}^4y_i^2=\sum_{i= 1}^4z_i^2, \text{  and}$$ $$ \sum_{i = 1}^4x_iy_i=\sum_{i = 1}^4x_iz_i=\sum_{i = 1}^4y_iz_i = 0. $$ This means that we are looking for an orthogonal family of $3$ vectors $x= (x_i)_i$, $y= (y_i)_i$ and $z= (z_i)_i$ with same length in $\mathbb{R}^4$. This is given by an element of $\mathbb{R}^+\times\mathbb{R}P^3\times O(3)$. More precisely, the element of $\mathbb{R}$ is equal to $\sum_{i= 1}^4x_i^2 =\sum_{i= 1}^4y_i^2=\sum_{i= 1}^4z_i^2$, the element of $\mathbb{R}P^3$ is a line orthogonal to $(x,y,z)$ in $\mathbb{R}^4$ and $O(3)$ is the orthonormal basis $ (x/|x|,y/|y|,z/|z|) $ in the orthogonal of the element of $\mathbb{R}P^3$. This set is $7$-dimensional.
    
    Similarly, we can study the variations of the curvature when $L=\textup{Id}$ in the directions $e_i\otimes e_i$ for $i\in\{ 1,2,3,4\}$. We again find numerically that the conjectural obstructions \eqref{obst dKR(zeta,zeta)=0} are of the form:
    $$
    \begin{bmatrix}
c\cdot (2x_i^2-y_i^2-z_i^2) & d\cdot x_iy_i & e\cdot x_iz_i\\
d\cdot x_iy_i & c\cdot (-x_i^2+2y_i^2-z_i^2) &f\cdot y_iz_i\\
e\cdot x_iz_i & f\cdot y_iz_i & c\cdot (-x_i^2-y_i^2+2z_i^2)
\end{bmatrix},$$
    for some nonvanishing real numbers $c,d,e,f$ which depend on the direction in which the deformation is done. Therefore, they also vanish under the conditions $ \sum_{i= 1}^4x_i^2 =\sum_{i= 1}^4y_i^2=\sum_{i= 1}^4z_i^2 $ and $ \sum_{i = 1}^4x_iy_i=\sum_{i = 1}^4x_iz_i=\sum_{i = 1}^4y_iz_i = 0 $.
    
    Moreover, because we have similar equations and degrees of freedom in the other orientation, we obtain a $14$-dimensional family of candidates satisfying all of the obstructions of Corollary \ref{list obst}.
\end{proof}

    \begin{exmp}\label{exemple solution}
        Some examples of solutions to the above equations are
        \begin{equation}
            \begin{aligned}
                &\zeta_1 = (1,1,1), \;&\zeta_2 =(1,-1,-1),\;& \zeta_3 = (-1,1,-1) \text{ and }\; & \zeta_4 = (-1,-1,1) \text{ and }
            \end{aligned}
        \end{equation}
       \begin{equation}
            \begin{aligned}
                &\zeta_1 = (1,0,0), \;&\zeta_2 =\left(0,1/\sqrt{2},0\right),\;& \zeta_3 = \left(0,0,1/\sqrt{2}\right) \text{ and }\; & \zeta_4 = (1,0,0).
            \end{aligned}
        \end{equation}
    \end{exmp}
\appendix

\section{Function spaces}\label{function spaces}

    For a tensor $s$, a point $x$, $\alpha>0$ and a Riemannian manifold $(M,g)$. The Hölder seminorm is defined as
$$ [s]_{C^\alpha(g)}(x):= \sup_{\{y\in T_xM,|y|< \textup{inj}_g(x)\}} \Big| \frac{s(x)-s(\exp^g_x(y))}{|y|^\alpha} \Big|_g.$$

For orbifolds, we consider a norm bounded for tensors decaying at the singular points.
\begin{defn}[Weighted Hölder norms on an orbifold]\label{norme orbifold}
    Let $\beta\in \mathbb{R}$, $k\in\mathbb{N}$, $0<\alpha<1$ and $(M_o,g_o)$ an orbifold. Then, for any tensor $s$ on $M_o$, we define
    \begin{align*}
        \| s \|_{C^{k,\alpha}_{\beta}(g_o)} &:= \sup_{M_o}r_o^{-\beta}\Big(\sum_{i=0}^k r_o^{i}|\nabla_{g_o}^i s|_{g_o} + r_o^{k+\alpha}[\nabla_{g_o}^ks]_{C^\alpha(g_o)}\Big).
    \end{align*}
\end{defn}

For ALE manifolds, we will consider a norm bounded for tensors decaying at infinity.

\begin{defn}[Weighted Hölder norms on an ALE orbifold]\label{norme ALE}
Let $\beta\in \mathbb{R}$, $k\in\mathbb{N}$, $0<\alpha<1$ and $(N,g_b)$ be an ALE manifold. Then, for all tensor $s$ on $N$, we define
   \begin{align*}
       \| s \|_{C^{k,\alpha}_{\beta}(g_b)}:= \sup_{N}r_b^\beta\Big( \sum_{i=0}^kr_b^{i}|\nabla_{g_b}^i s|_{g_b} + r_b^{k+\alpha}[\nabla_{g_b}^ks]_{C^\alpha({g_b})}\Big).
   \end{align*}
\end{defn}

On $M$, using a partition of unity 
$1= \chi_{M_o^t} + \sum_j \chi_{N_j^t}$
with $\chi_{M_o^t}$ equals to $1$ where $g^D_t=\mathbf{g}_o$ and $\chi_{N_j^t}$ equals to $1$ where $g^D_t = t_j \mathbf{g}_{b_j}$ (see \cite{ozu1}), we can define a global norm.
\begin{defn}[Weighted Hölder norm on a naïve desingularization]\label{norme a poids M}
		Let $\beta\in \mathbb{R}$, $k\in\mathbb{N}$ and $0<\alpha<1$. We define for $s\in TM^{\otimes l_+}\otimes T^*M^{\otimes l_-}$ a tensor $(l_+,l_-)\in \mathbb{N}^2$, with $l:= l_+-l_-$ the associated conformal weight,
		$$ \|s\|_{C^{k,\alpha}_{\beta}(g^D)}:= \| \chi_{M_o^t} s \|_{C^{k,\alpha}_{\beta}(g_o)} + \sum_j T_j^\frac{l}{2}\|\chi_{N_{j}^t}s\|_{C^{k,\alpha}_{\beta}(g_{b_j})}.$$
\end{defn}

\subsection{Decoupling norms.}

We actually need a last family of norms to get good analytic properties for our operators, see \cite{ozu2}. With the notations of Definition \ref{orb Ein}, denote for each singular point $k$, $A_k(t,\epsilon_0) := (\Phi_k)_*A_\mathbf{e}(\epsilon_0^{-1}\sqrt{t_k},\epsilon_0)$ and $B_k(\epsilon_0):=(\Phi_k)_*B_\mathbf{e}(0,\epsilon_0)$, as well as cut-off functions $\chi_{A_k(t,\epsilon_0)}$ and $\chi_{B_k(\epsilon_0)}$ respectively supported in $A_k(t,\epsilon)$ and $B_k(\epsilon_0)$, and equal to $1$ on $A_k(t,2\epsilon_0)$ and $B_k(\epsilon_0/2)$.

\begin{defn}[$C^{k,\alpha}_{\beta,*}$-norm on $2$-tensors]
    Let $ h $ be a $2$-tensor on $(M,g^D)$, $(M_o,g_o)$ or $(N,g_b)$. We define its $C^{k,\alpha}_{\beta,*}$-norm by
    $$\|h\|_{C^{k,\alpha}_{\beta,*}}:= \inf_{h_*,H_k} \|h_*\|_{C^{k,\alpha}_{\beta}} + \sum_k |H_k|_{g_e},$$
    where the infimum is taken on the $(h_*,(H_k)_k)$ satisfying $h= h_*+\sum_k \chi_{A_k(t,\epsilon_0)}H_k$ for $(M,g^D)$ or $h= h_*+\sum_k \chi_{B_k(\epsilon_0)}H_k$ for $(M_o,g_o)$ or $(N,g_b)$, where each $H_k$ is some constant and trace-free symmetric $2$-tensors on $\mathbb{R}^4\slash\Gamma_k$.
\end{defn}

\subsection{An application of the analysis on weighted spaces.}

Consider for $n\in\mathbb{N}$, for $\gamma\in \mathbb{N}\backslash\{3-n,\ldots,-1\}$ and $0<\beta<1$ the Fredholm operator 
$$P_\mathbf{e} : r_\mathbf{e}^\gamma C^{2,\alpha}_{-\beta}(\mathbb{R}^n\backslash\{0\})\mapsto r_\mathbf{e}^{\gamma-2}C^{\alpha}_{-\beta}(\mathbb{R}^n\backslash\{0\}), $$
where the norms  $C^{l,\alpha}_{-\beta}(\mathbb{R}^n\backslash\{0\})$ on symmetric $2$-tensors denote the norm on $\mathbb{R}^n\backslash\{0\}$ seen as orbifold at $0$ like in Definition \ref{norme orbifold} and ALE at infinity like in definition \ref{norme ALE}. This operator is Fredholm and its kernel is composed of homogeneous harmonic $2$-tensors in $r_\mathbf{e}^\gamma$.
Its $L^2$-cokernel is the kernel of 
$$P_\mathbf{e} : r_\mathbf{e}^{n+2-\gamma} C^{2,\alpha}_{\beta}(\mathbb{R}^n\backslash\{0\})\mapsto r_\mathbf{e}^{n-\gamma}C^{\alpha}_{\beta}(\mathbb{R}^n\backslash\{0\}), $$
which is reduced to $\{0\}$. As a direct consequence of the open mapping theorem, we have the following result.
\begin{lem}\label{resolution equation laplacien}
    Let $n\in\mathbb{N}$, for $\gamma\in \mathbb{N}\backslash\{3-n,\ldots,-1\}$ and $0<\beta<1$. Then, there exists $C>0$ such that for any symmetric $2$-tensor $v\in r_\mathbf{e}^{\gamma-2}C^{\alpha}_{-\beta}(\mathbb{R}^n\backslash\{0\})$, there exists a $2$-tensor $h$ satisfying
    $ P_\mathbf{e}h = v, $
    with $$\|h\|_{r_\mathbf{e}^\gamma C^{2,\alpha}_{-\beta}(\mathbb{R}^n\backslash\{0\})}\leqslant C \|v\|_{r_\mathbf{e}^{\gamma-2}C^{\alpha}_{-\beta}(\mathbb{R}^n\backslash\{0\})}.$$
\end{lem}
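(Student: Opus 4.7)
The plan is to extract the estimate as a direct consequence of the Fredholm framework described just above the lemma, via the open mapping theorem. The statement is set up so that all the genuinely analytic work is already contained in the cited results of Pacard--Rivière / Lockhart--McOwen type; what remains is a routine functional-analytic argument.

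First, I would take as given the Fredholm property of
$$P_\mathbf{e} : r_\mathbf{e}^\gamma C^{2,\alpha}_{-\beta}(\mathbb{R}^n\backslash\{0\})\longrightarrow r_\mathbf{e}^{\gamma-2}C^{\alpha}_{-\beta}(\mathbb{R}^n\backslash\{0\}),$$
which holds exactly because $\gamma$ avoids the indicial roots $\{3-n,\ldots,-1\}$ of $P_\mathbf{e}$ acting on symmetric $2$-tensors on $\mathbb{R}^n\backslash\{0\}$. In particular the operator has closed range and a finite-dimensional kernel $K$, consisting of the homogeneous harmonic $2$-tensors of degree $\gamma$ (as recalled in the paragraph before the lemma).

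Second, I would use the duality between the weighted Hölder spaces $r_\mathbf{e}^\gamma C^{2,\alpha}_{-\beta}$ and $r_\mathbf{e}^{n+2-\gamma}C^{2,\alpha}_\beta$ (the shift by $n+2$ accounts for the volume factor $r_\mathbf{e}^{n-1}dr_\mathbf{e}$ together with the two derivatives absorbed in passing from $L^2$ pairings to formal adjoint pairings), to identify the $L^2$-cokernel of the Fredholm map above with the kernel of
$$P_\mathbf{e}:r_\mathbf{e}^{n+2-\gamma}C^{2,\alpha}_\beta(\mathbb{R}^n\backslash\{0\})\longrightarrow r_\mathbf{e}^{n-\gamma}C^{\alpha}_\beta(\mathbb{R}^n\backslash\{0\}).$$
Formal self-adjointness of $P_\mathbf{e}$ with respect to the flat metric makes this identification immediate. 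Any element of this second kernel is a homogeneous harmonic $2$-tensor whose growth lies strictly between the decay rate $r_\mathbf{e}^{\gamma-n-2}$ and $0$; since $\gamma\in\mathbb{N}\setminus\{3-n,\ldots,-1\}$, no admissible homogeneous exponent falls in the allowed window, and the cokernel is reduced to $\{0\}$, as asserted.

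Third, since the cokernel vanishes and the range is closed, $P_\mathbf{e}$ is surjective. The induced continuous bijection
$$\bar{P}_\mathbf{e}:r_\mathbf{e}^\gamma C^{2,\alpha}_{-\beta}(\mathbb{R}^n\backslash\{0\})\big/K\longrightarrow r_\mathbf{e}^{\gamma-2}C^{\alpha}_{-\beta}(\mathbb{R}^n\backslash\{0\})$$
between Banach spaces is, by the open mapping theorem, a topological isomorphism, so its inverse is bounded. Choosing for each $v$ the preimage of minimal quotient norm and lifting it to any representative yields a solution $h$ of $P_\mathbf{e} h=v$ with $\|h\|_{r_\mathbf{e}^\gamma C^{2,\alpha}_{-\beta}}\leqslant C\|v\|_{r_\mathbf{e}^{\gamma-2}C^\alpha_{-\beta}}$, where $C$ is the operator norm of $\bar{P}_\mathbf{e}^{-1}$. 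There is no real obstacle in the proof: the only slightly delicate point is the identification of the $L^2$-cokernel, which is nonetheless stated in the preamble and follows directly from formal self-adjointness once one is careful with the weighted-space dualities.
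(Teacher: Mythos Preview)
Your proposal is correct and follows exactly the approach the paper intends: the paper gives no proof beyond the sentence ``As a direct consequence of the open mapping theorem, we have the following result,'' and your argument spells out precisely that deduction from the Fredholm property, the vanishing cokernel, and the open mapping theorem. The only cosmetic point is that the lift from the quotient is well-defined because the kernel $K$ is finite-dimensional (so the infimum defining the quotient norm is attained), which you implicitly use.
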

\begin{proof}[ ]

\end{proof}

\Addresses

\end{document}